\documentclass[11pt, a4paper]{amsart}
\usepackage{graphicx} % Required for inserting images
\usepackage[margin=1in]{geometry}
\title{Frobenius--Witt cotangent complexes}
\author{Kanau Shimada}
\date{}

\address{Department of Mathematics, Institute of Science Tokyo, 2-12-1 Ookayama, Meguro, Tokyo 152-8551}
\email{shimada.k.aj@m.titech.ac.jp}

\usepackage{amsmath}
\usepackage{amssymb}
\usepackage{latexsym}
\usepackage{amsthm}
\usepackage{amsfonts}
\usepackage{mathtools}
\usepackage{stmaryrd}
\usepackage{tikz}
\usepackage{tikz-cd}

\theoremstyle{plain}
\newtheorem{theorem}{Theorem}[section]
\newtheorem{lemma}[theorem]{Lemma}
\newtheorem{prop}[theorem]{Proposition}
\newtheorem{coro}[theorem]{Corollary}
\newtheorem{conj}[theorem]{Conjecture}

\theoremstyle{definition}
\newtheorem{definition}[theorem]{Definition}
\newtheorem{remark}[theorem]{Remark}
\newtheorem{example}[theorem]{Example}

\usepackage{mathrsfs}
\usepackage{blindtext}
\usepackage{hyperref}

\begin{document}

\maketitle
\begin{abstract}
    We introduce the notion of the Frobenius--Witt cotangent complex, which can be considered as a derived variant of the module of Frobenius--Witt differentials defined by T. Saito in \cite{Sai22}. This new object also can be seen as an arithmetic variant of the notion of cotangent complex. Furthermore, we establish a relationship between Frobenius--Witt cotangent complexes and the regularity of noetherian local rings, which can be considered as a derived variant of Saito's regularity criterion (\cite[Theorem 3.4]{Sai22}). This proof relies heavily on computations of Frobenius--Witt cotangent complexes in the case of perfectoid rings. We also study the deformation theory of $\delta$-structures using Frobenius--Witt cotangent complexes.
\end{abstract}

\tableofcontents

\section{Introduction}

M. André and D. Quillen introduced the cotangent complex around 1970. It is a derived version of the module of Kähler differentials and is a fundamental invariant controlling the deformation theory of schemes. More recently, in \cite{BMS19}, the following theorem was proved.
\begin{theorem}[\cite{BMS19}]
    The functor $\operatorname{Ring}\to \mathcal{D}(\mathbb Z):A\mapsto \mathbb L_{A/\mathbb Z}$ is an fpqc sheaf.
\end{theorem}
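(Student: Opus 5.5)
The plan is to reduce fpqc descent for $A\mapsto \mathbb L_{A/\mathbb Z}$ to two ingredients: descent for the $0$-th graded piece of a filtration (flat descent for modules), and the behaviour of $\mathbb L$ under flat base change. We work with the Postnikov-type filtration coming from the wedge powers. Concretely, we must show that for a faithfully flat map $A\to B$ with Čech nerve $B^{\otimes_A(\bullet+1)}$, the natural map
\[
\mathbb L_{A/\mathbb Z}\longrightarrow \lim_{\Delta}\mathbb L_{B^{\otimes_A(\bullet+1)}/\mathbb Z}
\]
is an equivalence. Compatibility with finite products is clear, since $\mathbb L$ of a product of rings is the product.

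The first step is to apply the transitivity triangle to $\mathbb Z\to A\to B^{\otimes(n+1)}$:
\[
\mathbb L_{A/\mathbb Z}\otimes_A B^{\otimes(n+1)}\to \mathbb L_{B^{\otimes(n+1)}/\mathbb Z}\to \mathbb L_{B^{\otimes(n+1)}/A}.
\]
Totalization commutes with cofiber sequences. The first term totalizes to $\mathbb L_{A/\mathbb Z}$ by faithfully flat descent for (connective, or arbitrary) complexes of $A$-modules; this is the classical fact that the augmented Čech complex $M\to M\otimes_A B^{\otimes(\bullet+1)}$ is split exact after base change along $A\to B$, and hence exact by faithful flatness. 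It therefore suffices to prove that
\[
\lim_\Delta \mathbb L_{B^{\otimes(\bullet+1)}/A}\simeq 0.
\]

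The second step uses flat base change: $\mathbb L_{B^{\otimes(n+1)}/A}\simeq \bigoplus_{i=0}^{n}\mathbb L_{B/A}\otimes_{B,\,i} B^{\otimes(n+1)}$, where the $i$-th factor is included. This is obtained via the Künneth formula $\mathbb L_{B\otimes_A C/A}\simeq \mathbb L_{B/A}\otimes_B(B\otimes_A C)\oplus \mathbb L_{C/A}\otimes_C(B\otimes_A C)$, valid because the tensor product is derived when $B$ is flat. The cosimplicial object $\mathbb L_{B^{\otimes(\bullet+1)}/A}$ is then, after base change along $A\to B$ (which is harmless by faithful flatness, since totalization of a cosimplicial object of bounded-below complexes commutes with flat base change degreewise, via a Postnikov filtration and the convergent $\lim$), the cosimplicial object attached to a map with a section. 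That object admits an extra codegeneracy, i.e. it is split. More cleanly: the functor $C\mapsto \mathbb L_{C/A}$ on the augmented Čech nerve of $B\to B\otimes_A B$ becomes split, so its totalization equals its value at the $(-1)$ term, which is $\mathbb L_{B/B}=0$.

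The main obstacle is making this splitting and base change rigorous. Totalization does not commute with $-\otimes_A B$ in general, so a bound is needed. Since all the terms are connective, we argue through each truncation $\tau_{\le k}$: the limit of the $k$-truncations is computed by a finite stage of the totalization, and finite limits commute with the exact functor $-\otimes_A B$. Faithful flatness then detects vanishing. Alternatively, we can filter $\mathbb L$ by the Hodge-type wedge powers and reduce to descent for the modules $\wedge^i\mathbb L$ via the same base-change trick, as in \cite{BMS19}.
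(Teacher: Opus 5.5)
Your proposal is correct and follows essentially the same route as the proof in \cite{BMS19}, which the paper cites for this statement and adapts in its proof of Theorem~\ref{thm5.11}. That route is: the transitivity triangle, flat descent for the connective module $\mathbb L_{A/\mathbb Z}\otimes_A B^\bullet$, and vanishing of $\lim_\Delta \mathbb L_{B^\bullet/A}$ after base change to $B$, where the \v{C}ech nerve of $B\to B\otimes_A B$ has a section, so the totalization is $\mathbb L_{B/B}=0$. Your last paragraph gives the right justification for the base-change step: apply the argument to each $\tau_{\le k}$ and then take $\lim_k$, since totalization of merely connective objects need not commute with flat base change and your earlier parenthetical does not cover this. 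Two minor fixes: the split functor should be $C\mapsto \mathbb L_{C/B}$ rather than $\mathbb L_{C/A}$, and the K\"unneth decomposition and wedge-power remarks are unnecessary.
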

This property is of great importance, since, through the Hodge--Tate comparison theorem, it leads to the sheafiness of prismatic cohomology.

On the other hand, in 2022, T. Saito introduced in \cite{Sai22} the Frobenius--Witt differentials $F\Omega_A$ as an arithmetic analogue of the module of Kähler differentials. In that paper, he proved a criterion for the regularity of mixed-characteristic local rings in terms of Frobenius--Witt differentials. A basic ingredient underlying this theory is the following fundamental exact sequence.
\begin{lemma}[\cite{Sai22}]
    Let $A\to B$ be a map of $\mathbb Z_{(p)}$-algebras. Then there is the following right exact sequence of $B/p$-modules.
    \[F\Omega_A\otimes _AB\to F\Omega_B\to F^*\Omega_{(B/p)/(A/p)}\to0,\] where $F^*$ is the pullback along the Frobenius map on $B/p$. 
\end{lemma}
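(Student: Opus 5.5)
Recall Saito's definition: for a $\mathbb Z_{(p)}$-algebra $A$, $F\Omega_A$ is the $A/p$-module (regarded through the Frobenius) generated by symbols $w(a)$ for $a\in A$. These symbols are subject to the following relations:
\[ w(a+b)=w(a)+w(b)-\sum_{0<i<p}\tfrac{\binom p i}{p}\,a^{ip}b^{(p-i)p}\,w(\ldots)\quad\text{(via the Witt addition polynomial)},\qquad w(ab)=a^p w(b)+b^p w(a). \]
Equivalently, $F\Omega_A=\Omega$-type quotient: $F\Omega_A\cong F^*(I/I^2)$ where $I=\ker(W_2(A)\to A)$-flavoured, or, in Saito's formulation, $F\Omega_A$ is the quotient of the free $A/p$-module on $[a]$ by the relations coming from the Frobenius-derivation $w$. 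I will use only the following features. First, the map $w:A\to F\Omega_A$ is a universal ``Frobenius-derivation''. Second, $F\Omega_A$ is functorial in $A$. Third, $p$ (more precisely the element $w(p)$) behaves like $dp$.

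The plan is to prove exactness on the level of universal properties: a sequence $M'\to M\to M''\to 0$ of $B/p$-modules is right exact iff for every $B/p$-module $N$ the sequence $0\to\operatorname{Hom}(M'',N)\to\operatorname{Hom}(M,N)\to\operatorname{Hom}(M',N)$ is left exact. By the universal property, $\operatorname{Hom}_{B/p}(F\Omega_B,N)$ is the set of Frobenius-derivations $D:B\to N$. Adjunction identifies $\operatorname{Hom}_{B/p}(F\Omega_A\otimes_AB,N)$ with Frobenius-derivations $A\to N$. Also $\operatorname{Hom}(F^*\Omega_{(B/p)/(A/p)},N)$ is the set of $A/p$-linear derivations $B/p\to F_*N$, where $F_*N$ is $N$ with $B/p$ acting via the Frobenius.

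So the key step is the following claim. Let $D:B\to N$ be a Frobenius-derivation whose restriction to $A$ vanishes. Then $D$ factors uniquely as $\delta\circ\pi$ with $\pi:B\to B/p$ and $\delta$ an $A/p$-linear derivation $B/p\to F_*N$. Conversely, every such $\delta$ composed with $\pi$ is a Frobenius-derivation. I would carry this out in three steps.

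\textbf{(i) Additivity.} Show that a Frobenius-derivation is additive modulo the correction term. That term is $\sum \frac1p\binom pi a^{i}b^{p-i}$-type expressions multiplied by $w(p)$. Since $p\in\mathbb Z\subset$ image of $A$, the hypothesis gives $D(p)=0$, so $D$ is genuinely additive.

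\textbf{(ii) Leibniz and $p$-divisibility.} The Leibniz rule gives $D(ab)=a^pD(b)+b^pD(a)$, which is exactly a derivation into $F_*N$. Then $D(pb)=p^pD(b)+b^pD(p)=0$, because $N$ is killed by $p$. Hence $D$ factors through $B/p$.

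\textbf{(iii) Linearity and the converse.} $A/p$-linearity follows from $D|_A=0$ together with Leibniz. The converse is a direct check that $\delta\circ\pi$ satisfies the defining relations. This check uses that the Witt correction term lies in $p$-multiples after reduction, or involves $w(p)$, and these vanish under $\pi$.

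Injectivity of the first Hom map is clear because $B\to B/p$ is surjective. The composite of the two Hom maps is zero because $\delta\circ\pi$ restricted to $A$ factors through an $A/p$-linear derivation killing constants.

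The main obstacle is step (i) together with the converse in (iii). One must handle carefully Saito's precise additivity relation $w(a+b)=w(a)+w(b)-\sum_{0<i<p}\frac{1}{p}\binom pi a^ib^{p-i}\,w(p)$ (coming from $(a+b)^p$ and the ghost component). One must also confirm that every correction term is a multiple of $w(p)$, hence killed when $D(p)=0$. I would first write out that relation explicitly from Saito's definition via $W_2$ and verify this; everything else is formal.
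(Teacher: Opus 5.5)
Your proof is correct. It differs from the paper, which does not prove this lemma at all: it quotes it from \cite[Lemma 2.9]{Sai22}.

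Within the paper's framework, the sequence reappears only as the $\pi_0$-part of the fundamental fiber sequence of Theorem~\ref{thm5.5}, combined with Proposition~\ref{pro5.1} and $\pi_0\mathbb L_{(B/p)/(A/p)}=\Omega_{(B/p)/(A/p)}$. That derived route gives more. It yields the whole long exact sequence, so the kernel of $F\Omega_A\otimes_AB\to F\Omega_B$ is identified with the image of $\pi_1$ of the third term. It is not an independent proof, though: Theorem~\ref{thm5.5} is reduced via sifted colimits to the maps $\mathbb Z_{(p)}[X]\to\mathbb Z_{(p)}[X,Y]$, where a result of Saito is invoked.

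Your argument is the direct, self-contained $1$-categorical proof:
\begin{itemize}
\item You test against $B/p$-modules $N$.
\item You identify the three Hom-sets with $\operatorname{FWDer}(A,N)$, $\operatorname{FWDer}(B,N)$ and $\operatorname{Der}_{A/p}(B/p,F_*N)$.
\item You observe that a Frobenius--Witt derivation $D$ on $B$ vanishing on the image of $A$ has $D(p)=0$. Hence it is genuinely additive, and it kills $pB$ because $D(pb)=p^pD(b)+b^pD(p)=0$. It is then an $A/p$-linear derivation into $F_*N$.
\end{itemize}
Your converse direction is exactly the well-definedness of the second map $F\Omega_B\to F^*\Omega_{(B/p)/(A/p)}$, $w(b)\mapsto d\bar b\otimes 1$. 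Surjectivity of that map corresponds to the injectivity you note on Hom-sets.

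Two cosmetic corrections:
\begin{itemize}
\item Drop your first displayed relation, since the exponents $a^{ip}b^{(p-i)p}$ and the unspecified $w(\ldots)$ are wrong, and drop the aside about $F^*(I/I^2)$. The relation you actually use is $w(a+b)=w(a)+w(b)-P(a,b)w(p)$ from Definition~\ref{def2.2}. With it, step (i) is a one-line observation rather than ``the main obstacle''.
\item $F\Omega_A$ is an ordinary $A$-module that happens to be killed by $p$. The Frobenius enters only through the twisted Leibniz rule, not through the module structure, which is also how you in fact use it.
\end{itemize}
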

However, just as in the case of Kähler differentials, this exact sequence is not left exact without additional assumptions such as smoothness. In this paper, we define the Frobenius--Witt cotangent complex $F\mathbb L_A$ for a ring $A$, more precisely for an animated $\mathbb Z_{(p)}$-algebra, as a derived version of the Frobenius--Witt differentials. We then extend the above right exact sequence to a long exact sequence. More precisely, we prove the following fundamental theorem.
\begin{theorem}[Theorem \ref{thm5.5}]
    Let $A\to B$ be a map of animated $\mathbb Z_{(p)}$-algebras. Then there is a canonical fiber sequence
    \[F\mathbb L_A\otimes_A^LB\to F\mathbb L_B\to F^*(\mathbb L_{B/A}\otimes _B^LB/^Lp)\to \] in the derived category $\mathcal D(B/^Lp)$, where $F$ is the derived pullback along the Frobenius map on $B/^Lp$.
\end{theorem}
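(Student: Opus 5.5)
We will build $F\mathbb L_A$ as the animation (left Kan extension) of Saito's $F\Omega$ from polynomial $\mathbb Z_{(p)}$-algebras, and prove the fiber sequence first for polynomial maps and then extend by sifted colimits.

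More precisely, we use the definition of $F\mathbb L$ as the left Kan extension of $P\mapsto F\Omega_P$, viewed as a $P/p$-module, from finitely generated polynomial $\mathbb Z_{(p)}$-algebras $P$ to animated rings. Concretely, for a simplicial resolution $P_\bullet\to A$ by polynomial algebras, $F\mathbb L_A\simeq |F\Omega_{P_\bullet}|$ as a module over $|P_\bullet/p|\simeq A/^Lp$. The functor of pairs $(A\to B)\mapsto$ (each of the three terms) commutes with sifted colimits in the arrow category. For the first two terms this holds by construction, since base change $-\otimes^L_A B$ commutes with colimits. For the third term it holds because $\mathbb L_{B/A}$, $-/^Lp$ and the Frobenius pullback all commute with sifted colimits. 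It therefore suffices to construct the maps naturally and to prove the sequence is a fiber sequence when $A=P$ is a polynomial ring and $B=P[y_1,\dots,y_m]=Q$ is a polynomial ring over it. Every map of animated rings is a sifted colimit (geometric realization) of such free maps $P_\bullet\to Q_\bullet$, obtained by resolving $A$ and then resolving $B$ relative to $P_\bullet$.

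In the polynomial case all terms are discrete. The sequence becomes Saito's right exact sequence
\[F\Omega_P\otimes_P Q\to F\Omega_Q\to F^*\Omega_{(Q/p)/(P/p)}\to 0,\]
and we need left exactness. The right side is $F^*\Omega_{(Q/p)/(P/p)}$, which is free over $Q/p$ on the classes $d y_i$. Saito's structure result for polynomial rings says that $F\Omega_Q$ of a polynomial $\mathbb Z_{(p)}$-algebra is free over $Q/p$ with basis $w(x_j), w(y_i)$ (the Frobenius--Witt differentials of the variables). Similarly, $F\Omega_P\otimes_P Q$ is free on the $w(x_j)$. The map sends basis elements to basis elements, so it is split injective. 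Naturality of the maps follows from functoriality of Saito's sequence in $(A\to B)$, and the composite being zero is witnessed canonically. To get a genuine fiber sequence functorially, not just termwise exactness, we define the third term as the cofiber of the first map and identify that cofiber with $F^*(\Omega_{Q/P}\otimes Q/p)$ naturally in polynomial pairs. Left Kan extending this natural equivalence then gives the identification in general.

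Finally, we identify the left Kan extension of $(P\to Q)\mapsto F^*(\Omega_{Q/P}\otimes_Q Q/p)$ with $F^*(\mathbb L_{B/A}\otimes^L_B B/^Lp)$. Here $\mathbb L_{B/A}$ is the animation of $\Omega_{Q/P}$, and the Frobenius of $B/^Lp$ is the realization of the Frobenii of $Q_\bullet/p$, so derived base change commutes with geometric realization.

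The main obstacle is bookkeeping of module structures. $F\mathbb L_B$ lives over $B/^Lp$, while the Frobenius pullback twists the module structure. We must check that Saito's second map is linear for the Frobenius-twisted structure compatibly along the simplicial resolution, and that the module category over $|Q_\bullet/p|$ correctly receives realizations. I would handle this by working in the $\infty$-category of pairs (animated ring, module) and left Kan extending functors valued there, which avoids choosing resolutions.
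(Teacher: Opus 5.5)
Your proposal is correct and takes essentially the same route as the paper. Both sides commute with sifted colimits in the arrow category, whose compact projective generators are the maps $\mathbb Z_{(p)}[X]\to\mathbb Z_{(p)}[X,Y]$ (Lemma \ref{lem5.4}), so one reduces to the polynomial case; there the paper simply cites \cite[Proposition 2.10.1]{Sai22}, while you check left exactness directly from explicit bases. One small correction: by \cite[Proposition 2.5.3]{Sai22} (cf.\ Proposition \ref{pro5.2}), the bases of $F\Omega_P\otimes_PQ$ and $F\Omega_Q$ also contain $w(p)$, and since $w(p)\mapsto w(p)$ the map is still split injective, with cokernel free on the $w(y_i)$ and mapping isomorphically onto $F^*\Omega_{(Q/p)/(P/p)}$.
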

This is analogous to the existence of the fiber sequence for the cotangent complex
\[\mathbb L_{B/A}\otimes^L_BC\to \mathbb L_{C/A}\to \mathbb L_{C/B}\to \] for a sequence of rings $A \to B \to C$.
Furthermore, we prove that the Frobenius--Witt cotangent complex satisfies the following basic properties, parallel to those of the cotangent complex.
\begin{itemize}
    \item the functor $A\mapsto F\mathbb L_A$ satisfies etale localization,
    \item the functor $A\mapsto F\mathbb L_A$ commutes with all sifted colimits and pushouts
\end{itemize}
The analogue of the theorem mentioned at the beginning also holds for the Frobenius--Witt cotangent complex.
\begin{theorem}[Theorem \ref{thm5.11}]
    The functor $\operatorname{Ring}_{\mathbb Z_{(p)}/}\to \mathcal D(\mathbb F_p):A\mapsto F\mathbb L_A$ is an fpqc sheaf.
\end{theorem}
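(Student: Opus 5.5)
We deduce the statement from the fundamental fiber sequence of Theorem \ref{thm5.5}, applied to $\mathbb Z_{(p)}\to A$, together with the fpqc descent for the cotangent complex of \cite{BMS19}. Note first that $F\mathbb L_A$ is naturally a $\mathbb F_p$-module (it lives in $\mathcal D(A/^Lp)$). For $A=\mathbb Z_{(p)}$ we expect $F\mathbb L_{\mathbb Z_{(p)}}\simeq \mathbb F_p$, generated by the class of $p$, mirroring Saito's computation $F\Omega_{\mathbb Z_{(p)}}=\mathbb F_p\cdot dp$. For a ring $A$ over $\mathbb Z_{(p)}$, Theorem \ref{thm5.5} then gives a natural fiber sequence
\[ A/^Lp \longrightarrow F\mathbb L_A \longrightarrow F^*(\mathbb L_{A/\mathbb Z_{(p)}}\otimes^L_A A/^Lp) \]
in $\mathcal D(A/^Lp)$, and we regard it as a fiber sequence in $\mathcal D(\mathbb F_p)$. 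Since limits of fiber sequences are fiber sequences, it suffices to show that the first and third terms are fpqc sheaves in $A$, with functoriality compatible with the maps in the sequence.

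The functor $A\mapsto A/^Lp=A\otimes^L_{\mathbb Z}\mathbb F_p$ is an fpqc sheaf because $A\mapsto A$ is one, by faithfully flat descent for the structure sheaf in $\mathcal D(\mathbb Z)$, and $-\otimes^L\mathbb F_p$ is a finite colimit, hence commutes with the totalization. For the third term, note that $\mathbb L_{A/\mathbb Z_{(p)}}\simeq \mathbb L_{A/\mathbb Z}$, because $\mathbb Z\to\mathbb Z_{(p)}$ is a localization. By \cite{BMS19}, more precisely the strengthening that $A\mapsto \wedge^i\mathbb L_{A/\mathbb Z}\otimes^L_A M$-type functors are fpqc sheaves, which follows from flat descent for the connective quasicoherent complexes $\mathbb L_{A/\mathbb Z}$ and their Čech nerves, the functor $A\mapsto \mathbb L_{A/\mathbb Z}\otimes^L\mathbb F_p$ is an fpqc sheaf.

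The remaining point, and the main obstacle, is the Frobenius twist $F^*$. We rewrite $F^*N$, for $N\in\mathcal D(A/^Lp)$, as $N\otimes^L_{A/^Lp,\varphi}A/^Lp$. For a faithfully flat $A\to B$ with Čech nerve $B^\bullet$, we need the natural map
\[ F^*(\mathbb L_{A}/^Lp)\to \lim_\Delta F^*(\mathbb L_{B^\bullet}/^Lp) \]
to be an equivalence. I would first reduce, by the base-change formula for cotangent complexes along flat maps, $\mathbb L_{B/\mathbb Z}\simeq$ an extension of $\mathbb L_{A}\otimes_A B$ by $\mathbb L_{B/A}$, to quasicoherent descent. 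Then $F^*(\mathbb L_A\otimes_AB/^Lp)\simeq F^*(\mathbb L_A/^Lp)\otimes_{A/p}B/p$, since Frobenius commutes with base change. So this term is an fpqc sheaf by ordinary flat descent for the module $F^*(\mathbb L_A/^Lp)$ along $A/p\to B/p$. For the $\mathbb L_{B^\bullet/A}$ part, the Frobenius twist of the relative cotangent complex of a flat map in characteristic $p$ is controlled by the relative Frobenius. I would instead argue directly as in \cite{BMS19}: use the Hodge/conjugate filtrations, or reduce via the Postnikov and Hodge-type filtrations to wedge powers, and show that the augmented cosimplicial object $F^*(\mathbb L_{B^\bullet/A}/p)$ is a limit diagram with limit $0$. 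One way is to base change along the faithfully flat $A\to B$, which yields a split augmented cosimplicial object, hence contractible, and then descend the vanishing using the faithful flatness of $A/p\to B/p$ and the fact that $F^*$ commutes with base change. If this step causes trouble, an alternative is to verify the sheaf property separately for Zariski/étale covers, using the étale localization already established, and for faithfully flat maps of affines, and to reduce the latter to the perfectoid computations mentioned in the abstract via quasisyntomic descent as in \cite{BMS19}.
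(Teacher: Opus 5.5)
Your proposal is correct, and its decisive step is the paper's. After base change along the faithfully flat map $A\to B$, the Frobenius-twisted relative term $F^*(\mathbb L_{B^\bullet/A}\otimes^L\mathbb F_p)$ becomes the same construction on the \v{C}ech conerve of $B\to B\otimes^L_AB$; that conerve is split with augmentation $F^*(\mathbb L_{B/B}\otimes^L\mathbb F_p)\simeq 0$, and one then descends the vanishing of the totalization.

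The only difference is a detour. You pass through Corollary \ref{cor5.10} and the ordinary transitivity triangle for $A\to B^\bullet$, whereas the paper applies Theorem \ref{thm5.5} directly to $A\to B^\bullet$. This handles the first term $F\mathbb L_A\otimes^L_AB^\bullet$ by module descent at once, so your separate treatment of $A\mapsto A/^Lp$ and of $A\mapsto F^*(\mathbb L_A\otimes^L\mathbb F_p)$ is unnecessary, as are the BMS19 and perfectoid fallbacks.
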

This statement does not hold for Frobenius--Witt differentials themselves, and thus provides another advantage of considering the Frobenius--Witt cotangent complex. On the other hand, the Frobenius--Witt cotangent complex has the expected properties as a derived version of the Frobenius--Witt differentials.
\begin{itemize}
    \item The $0$-th homology group of $F\mathbb L_A$ coincides with $F\Omega_A$,
    \item if $A$ is a smooth $\mathbb Z_{(p)}$-algebra, $F\mathbb L_A$ coincides with $F\Omega_A$.
\end{itemize}
These results show that the Frobenius--Witt cotangent complex plays for Saito's Frobenius--Witt differentials the same role as the ordinary cotangent complex plays for Kahler differentials.

As an application of the Frobenius--Witt cotangent complex, we prove the following derived extension of Saito’s regularity criterion.
 \begin{theorem}[Corollary \ref{cor6.12}]
     Let $R$ be a $p$-torsionfree noetherian local ring of residue characteristic $p$. Assume that the ring $R/p$ is $F$-finite. Then the following conditions are equivalent:\begin{enumerate}
        \item $R$ is regular,
        \item $F\mathbb L_R$ is a free $R/p$-module of rank $\operatorname{dim}R+\operatorname{log}_p[k:k^p]$,
    \end{enumerate}
    where $k$ is the residue field of $R$.
\end{theorem}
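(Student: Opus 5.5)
Both implications will be driven by the fundamental fiber sequence (Theorem \ref{thm5.5}) together with Saito's criterion \cite[Theorem 3.4]{Sai22}. Saito's criterion says $R$ is regular if and only if $F\Omega_R\otimes_{R/p}k$ has dimension $\dim R+\log_p[k:k^p]$, equivalently (under $F$-finiteness) if and only if $F\Omega_R$ is free of that rank. Since $H_0(F\mathbb L_R)=F\Omega_R$, the implication (2)$\Rightarrow$(1) is immediate: if $F\mathbb L_R$ is free of the stated rank, it is concentrated in degree $0$, so $F\Omega_R$ is free of that rank and Saito's criterion gives regularity. All the work is in (1)$\Rightarrow$(2), which amounts to showing $H_i(F\mathbb L_R)=0$ for $i>0$ when $R$ is regular.

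For (1)$\Rightarrow$(2) I reduce to the complete case and then compute via a Cohen presentation.

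\textbf{Reduction to the complete case.} The completion $R\to\hat R$ is flat, and $R/p\to\hat R/p$ is the $\mathfrak m$-adic completion of an $F$-finite noetherian ring. The cotangent complex $\mathbb L_{\hat R/R}\otimes^L\hat R/p$ vanishes: mod $p$ this is $\mathbb L_{(\hat R/p)/(R/p)}$, and for $F$-finite rings completion is relatively perfect, i.e.\ the relative Frobenius is an isomorphism, which forces vanishing. Theorem \ref{thm5.5} then gives $F\mathbb L_R\otimes^L_{R/p}\hat R/p\simeq F\mathbb L_{\hat R}$. Because $R/p\to\hat R/p$ is faithfully flat, freeness and rank descend, so we may assume $R$ is complete.

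\textbf{Complete case.} Write $R$, regular and complete, $p$-torsionfree with $F$-finite residue field $k$. By Cohen structure theory $R$ is a quotient $C[[x_1,\dots,x_n]]/(f)$ of a power series ring over a Cohen ring $C$ of $k$, or is itself such a power series ring in the unramified case. I would do two things.
\begin{enumerate}
\item Compute $F\mathbb L_P$ for $P=C[[x_1,\dots,x_n]]$. Since $C$ is a filtered colimit/completion of smooth $\mathbb Z_{(p)}$-algebras (ind-étale over localized polynomial rings built from a $p$-basis of $k$), and $F\mathbb L$ commutes with sifted colimits and satisfies étale localization, I expect $F\mathbb L_P$ to be free over $P/p$ on $d(\text{$p$-basis})$ together with $dx_i$ and the class of $p$. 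This gives rank $1+\log_p[k:k^p]+n$, provided one handles the passage from polynomial rings to power series. That passage again uses the relatively perfect property of completions of $F$-finite rings mod $p$, as in the previous step.
\item Apply Theorem \ref{thm5.5} to $P\to R=P/(f)$ with $f$ a regular element. Then $\mathbb L_{R/P}\simeq (f)/(f^2)[1]$ is free of rank one in degree $1$, so we get a fiber sequence
\[F\mathbb L_P\otimes R/p\to F\mathbb L_R\to F^*((f)/(f^2)\otimes R/p)[1].\]
The connecting map $H_1\to H_0$ sends the generator to $F d f$, the Frobenius--Witt differential of $f$. Regularity of $R$ (with $f\notin\mathfrak m_P^2$, arranged via $p\in\mathfrak m\setminus\mathfrak m^2$ or a suitable choice of presentation) should imply that $Fdf$ is part of a basis of $F\Omega_P\otimes R/p$. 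Granting that, the connecting map is a split injection, $H_1(F\mathbb L_R)=0$, and $F\mathbb L_R$ is free of rank $n+\log_p[k:k^p]=\dim R+\log_p[k:k^p]$.
\end{enumerate}

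\textbf{Main obstacle.} The hardest step is the claim in (2) that $Fdf$ is nonzero modulo $\mathfrak m$, and the analogous verification in (1). This is essentially Saito's computation. I would take it from the proof of \cite[Theorem 3.4]{Sai22}, which shows precisely that for regular $R$ the map $\mathfrak m/\mathfrak m^2\to F\Omega_R\otimes k$ is injective. Alternatively, the abstract suggests computing $F\mathbb L$ of perfectoid covers and using fpqc descent (Theorem \ref{thm5.11}); I would fall back on that only if the explicit approach fails on the power series issue.
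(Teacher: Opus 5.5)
Your proposal is correct, but for (1)$\Rightarrow$(2) it takes a genuinely different route from the paper. For (2)$\Rightarrow$(1) you agree with the paper: Proposition~\ref{pro5.1} plus Saito's criterion, which the paper cites in Takeuchi's generalized form.

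\textbf{How the two routes differ.} The paper never analyzes the Frobenius--Witt differential of the defining equation.
\begin{itemize}
\item Theorem~\ref{thm6.10} kills $H_{>0}(F\mathbb L_R)$ using a quasi-syntomic perfectoid cover $R\to S$, the $p$-torsionfree quasiregular semiperfectoid ring $S\widehat\otimes_R S$, and Corollary~\ref{cor6.9}.
\item The second argument in Corollary~\ref{cor6.12} perfects the prism $(C\llbracket x_1,\dots,x_r\rrbracket,(p-f))$. Then $F\mathbb L_S\simeq 0$ (Theorem~\ref{thm6.7}) and $F\mathbb L_{A_{\mathrm{perf}}}\simeq A_{\mathrm{perf}}/p\cdot w(p)$ (Proposition~\ref{pro6.1}) give the splitting $F\mathbb L_A\otimes^L_A S\simeq S/p\oplus(F\mathbb L_R\otimes^L_R S)$ automatically.
\item The non-complete case goes through derived $\mathfrak m$-completion (Proposition~\ref{pro5.11.1}) and the finiteness Lemma~\ref{lem6.15}.
\end{itemize}
You stay elementary instead.
\begin{itemize}
\item $R/p\to\widehat R/p$, and likewise $k[x_1,\dots,x_n]\to k\llbracket x_1,\dots,x_n\rrbracket$, is flat and relatively perfect, since $F$-finiteness makes $F_*$ commute with completion. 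So its cotangent complex vanishes and Corollary~\ref{cor5.6} applies directly. This replaces Proposition~\ref{pro5.11.1} and Lemma~\ref{lem6.15}.
\item The fundamental fiber sequence for the hypersurface $P\to R=P/(g)$ (your $f$) presents $F\mathbb L_R$ as a two-term complex $R/p\to F\Omega_P\otimes_P R/p$. Everything then reduces to injectivity of one map.
\end{itemize}
Your route needs no perfectoid or prismatic input and describes $F\mathbb L$ of a hypersurface explicitly. The paper's route is built to showcase Theorem~\ref{thm6.7} and gets the splitting without identifying any connecting map.

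\textbf{The step you call the main obstacle is easy.} You already use Saito's theorem that $F\Omega_R$ is free of rank $N-1$ for regular $R$, where $N=\dim R+1+\log_p[k:k^p]$ is the rank of $F\Omega_P$. In the exact sequence $0\to H_1(F\mathbb L_R)\to R/p\xrightarrow{\partial}(R/p)^{N}\to F\Omega_R\to 0$, the image of $\partial$ is then a free direct summand of rank one. So $\partial$ is a surjection between free modules of rank one, hence injective. This is the rank count of Corollary~\ref{cor6.2}, and it needs no identification of $\partial$.

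If you want to avoid Saito's harder direction, argue directly:
\begin{itemize}
\item $w$ induces a Frobenius-semilinear map $\mathfrak m_P/\mathfrak m_P^2\to F\Omega_P\otimes_P k$. It sends the basis $p,x_1,\dots,x_n$ to part of a basis, so it is injective.
\item $g\notin\mathfrak m_P^2$ is automatic because $P/(g)$ is regular of dimension $\dim P-1$. Your appeal to $p\in\mathfrak m\setminus\mathfrak m^2$ is not the right justification.
\item The identification $\partial(1)=w(g)$ follows by applying Proposition~\ref{pro4.10.1} to the pushout of $\mathbb Z_{(p)}[t]\to\mathbb Z_{(p)}$ along $t\mapsto g$, exactly as in the second proof of Theorem~\ref{thm6.7}.
\end{itemize}

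\textbf{A minor correction.} Your parenthetical that $C$ is ind-\'etale over localized polynomial rings on a $p$-basis is not right in general. The field $k$ need not be algebraic over $\mathbb F_p(b_1,\dots,b_m)$ for a $p$-basis $(b_i)$, e.g.\ if $k$ contains $\mathbb F_p(t)^{\mathrm{perf}}$. Nothing depends on it, though. As in the paper, the fiber sequence for $\mathbb Z_{(p)}\to C$, together with $\mathbb L_{k/\mathbb F_p}\simeq\Omega_k$ being free of rank $\log_p[k:k^p]$, shows that $F\mathbb L_C$ is free of rank $1+\log_p[k:k^p]$.
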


This theorem can be proved conceptually by using the following vanishing theorem.
\begin{theorem}[Theorem \ref{thm6.7}]
    Let $R$ be a perfectoid ring. Then $F\mathbb L_R\simeq 0$ holds.
\end{theorem}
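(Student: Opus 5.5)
The plan is to combine the fundamental fiber sequence (Theorem \ref{thm5.5}) with two facts: the vanishing of the relative cotangent complex for relatively perfect maps in characteristic $p$, and the explicit structure of perfectoid rings as quotients of perfect prisms. Write $R = A_{\inf}(R)/\xi$, where $A_{\inf}(R)=W(R^\flat)$ and $\xi$ is a distinguished element (a nonzerodivisor). Then $R$ is obtained from the ring $W(R^\flat)$ by a derived pushout:
\[R \simeq W(R^\flat)\otimes^L_{\mathbb Z_{(p)}[x]}\mathbb Z_{(p)},\qquad x\mapsto \xi,\ x\mapsto 0.\]
Since $A\mapsto F\mathbb L_A$ commutes with pushouts (stated in the introduction), $F\mathbb L_R$ is computed from $F\mathbb L_{W(R^\flat)}$, $F\mathbb L_{\mathbb Z_{(p)}[x]}$ and $F\mathbb L_{\mathbb Z_{(p)}}$.

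The first step is to show $F\mathbb L_{W(S)}\simeq 0$ for a perfect $\mathbb F_p$-algebra $S$. I apply Theorem \ref{thm5.5} to $\mathbb Z_{(p)}\to W(S)$. The base term $F\mathbb L_{\mathbb Z_{(p)}}$ should vanish, since $F\Omega_{\mathbb Z_{(p)}}=0$ and $\mathbb Z_{(p)}$ is a localization of a smooth algebra, so it equals its $F\Omega$. The cofiber term is $F^*(\mathbb L_{W(S)/\mathbb Z_{(p)}}\otimes^L W(S)/^Lp)\simeq F^*\mathbb L_{S/\mathbb F_p}$ by base change, and $\mathbb L_{S/\mathbb F_p}=0$ because $S$ is perfect. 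Hence $F\mathbb L_{W(S)}=0$.

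Next I compute the pushout. The functor $F\mathbb L$ sends the pushout to a pushout of modules over $R/^Lp$. Using the transitivity sequence for $\mathbb Z_{(p)}[x]\to W(R^\flat)$ and for $\mathbb Z_{(p)}[x]\to\mathbb Z_{(p)}$, it suffices to check that the map $F\mathbb L_{\mathbb Z_{(p)}[x]}\otimes R/^Lp\to 0$ is compatible with both legs. More precisely, $F\mathbb L_R$ is the cofiber of
\[F\mathbb L_{\mathbb Z_{(p)}[x]}\otimes R/^Lp\to (F\mathbb L_{W(R^\flat)}\oplus F\mathbb L_{\mathbb Z_{(p)}})\otimes R/^Lp = 0.\]
By smoothness, $F\mathbb L_{\mathbb Z_{(p)}[x]}=F\Omega_{\mathbb Z_{(p)}[x]}$, which is free of rank one on the generator $Fdx$. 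So this cofiber is $\Sigma(R/^Lp)$, which is not zero. This means the naive pushout presentation needs more care, and I must account for how the generator $Fdx$ maps.

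The key fact is Saito's formula for the Frobenius--Witt derivative of a distinguished element. Since $\delta(\xi)$ is a unit, $F d\xi$ is a generator: writing $\xi=[a]+p u$ with $u$ a unit, one gets $Fd\xi = Fd[a] - u^p\cdot(\text{unit})$. Here the term $Fd[a]$ vanishes in $F\Omega_{W(R^\flat)}$, while the $p$-part contributes the unit $\pm\delta$-type term. Thus the correct setup is to view $R$ as the pushout along $\mathbb Z_{(p)}[x]\to W(R^\flat)$, $x\mapsto\xi$, and $\mathbb Z_{(p)}[x]\to\mathbb Z_{(p)}$, $x\mapsto 0$. The computation then reduces to showing the two images of $Fdx$ differ by a unit multiple of the canonical generator. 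Concretely, $F\mathbb L$ of the pushout is
\[\operatorname{cofib}\bigl(F\mathbb L_{\mathbb Z_{(p)}[x]}\otimes R/^Lp \xrightarrow{(Fd\xi,\,-Fd0)} \ldots\bigr).\]
To make this work I will instead use Theorem \ref{thm5.5} for $W(R^\flat)\to R$: the first term is $0$, so
\[F\mathbb L_R\simeq F^*(\mathbb L_{R/W(R^\flat)}\otimes R/^Lp)\simeq F^*\bigl(\Sigma(\xi)/(\xi^2)\otimes R/^Lp\bigr).\]
This is nonzero, which contradicts the claim. So the actual identification must be different: the map in Theorem \ref{thm5.5} is not simply the zero map on $F\mathbb L$, and the definition of $F\mathbb L$ involves the $\delta$-structure. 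Consequently, the boundary map $F^*\mathbb L_{R/W}\to\Sigma F\mathbb L_W\otimes R$ should be identified with "multiplication by $\delta(\xi)$" onto a nonzero contribution that cancels it.

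The main obstacle is exactly this boundary-map computation: showing that the generator $\xi\otimes 1$ of $F^*(\xi/\xi^2)$ is hit isomorphically, via Saito's relation $F d(p)=-1$ and $\delta(\xi)\in W(R^\flat)^\times$. First I would verify this on the universal perfectoid $\mathbb Z_p^{\mathrm{cycl}}$ (or on a perfect prism's universal example) and then deduce the general case by base change and the sifted-colimit property.
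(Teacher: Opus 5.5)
Your proposal has a genuine gap in the first step: $F\Omega_{\mathbb Z_{(p)}}$ is not zero. It is $\mathbb F_p\cdot w(p)$, free of rank one (the case $r=0$ of Proposition \ref{pro5.2}). For instance, $n\mapsto \delta(n) \bmod p$ is a Frobenius--Witt derivation $\mathbb Z_{(p)}\to\mathbb F_p$ whose value at $p$ is $\delta(p)=1-p^{p-1}\equiv 1$. Here $w(p)$ is a basis vector, not a scalar, so there is no relation ``$Fd(p)=-1$''.

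This error propagates through the rest of the argument:
\begin{enumerate}
\item Your claim $F\mathbb L_{W(S)}\simeq 0$ for perfect $S$ is false. The fiber sequence for $\mathbb Z_{(p)}\to W(S)$, together with $\mathbb L_{S/\mathbb F_p}\simeq 0$, gives $F\mathbb L_{W(S)}\simeq S\cdot w(p)$ (Proposition \ref{pro6.1}).
\item $F\mathbb L_{\mathbb Z_{(p)}[x]}$ is free of rank two on $w(p),w(x)$, not rank one on $w(x)$.
\item The two ``contradictions'' you reached are artifacts of these errors. Nothing about Theorem \ref{thm5.5} or the definition of $F\mathbb L$ needs revising.
\item Your proposed repair is inconsistent with your own setup. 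If $F\mathbb L_{W(R^\flat)}$ were $0$, the boundary map $F^*(\mathbb L_{R/W(R^\flat)}\otimes R/^Lp)\to \Sigma(F\mathbb L_{W(R^\flat)}\otimes R)$ would have zero target and could not be ``multiplication by $\delta(\xi)$''. Moreover, your own computation $w(\xi)\equiv u^p w(p)$, a unit multiple of a generator, already contradicts $F\Omega_{W(R^\flat)}=0$.
\end{enumerate}

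Once the inputs are corrected, your original pushout setup is exactly the paper's second proof and closes at once. Apply Proposition \ref{pro4.10.1} to $\mathbb Z_{(p)}\xleftarrow{x\mapsto 0}\mathbb Z_{(p)}[x]\xrightarrow{x\mapsto\xi}A_{\mathrm{inf}}(R)$. Then $F\mathbb L_R$ is the pushout of $(R/^Lp)w(p)\leftarrow (R/^Lp)w(p)\oplus(R/^Lp)w(x)\to (R/^Lp)w(p)$. On the left $w(x)\mapsto 0$; on the right $w(x)\mapsto w(\xi)=\delta(\xi)w(p)$, and $w(p)\mapsto w(p)$ on both sides. Cancelling the common $w(p)$ summand gives $F\mathbb L_R\simeq \operatorname{Cofib}\bigl(R/^Lp\xrightarrow{\delta(\xi)}R/^Lp\bigr)\simeq 0$, since $\delta(\xi)$ is a unit. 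No reduction to a universal example is needed.

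Equivalently, in the fiber sequence for $A_{\mathrm{inf}}(R)\to R$, the connecting map $F^*(I/I^2\otimes R/^Lp)\to (R/^Lp)\,w(p)$ is $\xi\mapsto\delta(\xi)w(p)$, which is an isomorphism. This is the paper's Corollary \ref{cor6.2} in the $p$-torsionfree case. The paper's first proof combines that case with Corollary \ref{cor6.5}, using the conservativity statement of Lemma \ref{lem6.6}.
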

Such a connection between the theory of Frobenius--Witt differentials and the theory of perfectoid rings is a completely new phenomenon, which had not appeared in previous work. This theorem may be regarded as a mixed-characteristic analogue of the vanishing of the cotangent complex over $\mathbb{F}_p$ for perfect rings.

As another application of the Frobenius--Witt cotangent complex, we study its relation to $\delta$-structures on rings. A $\delta$-structure is an endomorphism $\delta$ of the underlying set of a ring satisfying certain conditions, and the map $\phi := (-)^p + p\delta$ is then a lift of Frobenius. In this paper, given a Frobenius lift $\phi$, we describe the obstruction class to the existence of a $\delta$-structure $\delta$ satisfying the above formula, as well as the set of all such $\delta$-structures. More precisely, we prove the following theorem.
\begin{theorem}[Theorem \ref{D.2}]
    Let $A$ be a $\mathbb Z_{(p)}$-algebra equipped with a Frobenius lift $\phi$.
\begin{enumerate}
\item Suppose that $A$ admits a $\delta$-structure $\delta_0$ satisfying $\phi = (-)^p + p\delta_0$.
Then the set
\[\{\delta\in \operatorname{End}_{\operatorname{Set}}(A)|\,\delta \text{ is a }\delta\text{-structure on }A \text{ satisfying} \ \phi=(-)^p+p\delta\}\]
is a torsor under $\operatorname{Ext}^0_{A/^Lp}((F\mathbb L_A,w(p)),(A[p],0))$.

\item There exists an element
\[\xi\in \operatorname{Ext}^1_{A/^Lp}((F\mathbb L_A,w(p)),(A[p],0))\]
such that $\xi$ is zero if and only if $A$ admits a $\delta$-structure $\delta$ satisfying $\phi=(-)^p+p\delta$. Furthermore, the element $\xi$ only depends on $A$ and $\phi$.
\end{enumerate}
\end{theorem}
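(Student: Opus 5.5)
We reduce both parts to a deformation problem controlled by $F\mathbb L_A$, in the spirit of the classical Illusie theory, where torsors and obstructions for liftings are governed by $\operatorname{Ext}^0$ and $\operatorname{Ext}^1$ out of a cotangent complex. The starting observation is that, given $\phi$, a $\delta$-structure with $\phi=(-)^p+p\delta$ is the same as a lift of the map
\[A\to A,\quad a\mapsto \frac{\phi(a)-a^p}{p},\]
which is well defined only modulo $A[p]$. More precisely, $\delta$ is a set-theoretic function with $p\delta(a)=\phi(a)-a^p$ that also satisfies the $\delta$-ring identities. Any two such functions satisfying $p\delta=\phi-(-)^p$ differ by a function $A\to A[p]$. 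The $\delta$-axioms then force this difference $D$ to satisfy additivity and Leibniz-type identities twisted by Frobenius. These are exactly the identities defining Saito's Frobenius--Witt derivations $D(a+b)=D(a)+D(b)-\sum_{0<i<p}\frac{1}{p}\binom{p}{i}a^ib^{p-i}\cdot(\text{correction})$ and $D(ab)=a^pD(b)+b^pD(a)$, with values in the $A/p$-module $A[p]$ viewed through Frobenius. The weight notation $(F\mathbb L_A,w(p))\to(A[p],0)$ should encode precisely this Frobenius-twisted structure on the target.

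\textbf{Part (1).} I would first check that the difference $D=\delta-\delta_0$ of two admissible $\delta$-structures is an $F$-derivation $A\to A[p]$. This uses that $pD=0$, so the cross terms in the $\delta$-axioms involving $pD$ vanish. Conversely, $\delta_0+D$ is again a $\delta$-structure for any such $D$. By the universal property of $F\Omega_A$ and the fact that $H_0(F\mathbb L_A)=F\Omega_A$ (stated in the introduction), these derivations are
\[\operatorname{Hom}(F\Omega_A,A[p])=\operatorname{Ext}^0((F\mathbb L_A,w(p)),(A[p],0)).\]
This gives the torsor statement.

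\textbf{Part (2).} Here I would construct $\xi$ by simplicial resolution. Choose a polynomial resolution $P_\bullet\to A$ by free $\mathbb Z_{(p)}$-algebras. The first attempt would be to lift $\phi$ to compatible Frobenius lifts on $P_\bullet$ using freeness. This is the delicate point: $\phi$ lives only on $A$. So instead I would use a two-term presentation $A=P/I$ with $P$ a polynomial ring carrying its standard $\delta$-structure and a Frobenius lift $\phi_P$ lifting $\phi$. Such a lift exists after choosing images of the variables, but it need not be compatible with the $\delta$-structure. The obstruction is the failure of $\delta_P$ to descend, measured on $I$. On generators $x\in I$, the quantity $\delta_P(x)$ modulo the ambiguity produces a map from the conormal term $I/I^2$, twisted by Frobenius, to $A[p]$. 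Changing the choices alters this by the image of $F$-derivations of $P$. Using the fundamental fiber sequence (Theorem \ref{thm5.5}) for $P\to A$, together with the fact that $F\mathbb L_P=F\Omega_P$ is free, this identifies the cokernel with $\operatorname{Ext}^1$. The class is zero exactly when the choices can be adjusted so that $\delta_P$ descends to $A$.

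\textbf{Main obstacle.} The hard step is making the obstruction class well defined and independent of the presentation. In particular, it must be shown that the two-term truncation really computes $\operatorname{Ext}^1$ out of $F\mathbb L_A$ with the weight structure. This requires $\tau_{\le1}F\mathbb L_A$ to be computed by a Frobenius-twisted naive conormal complex $F^*(I/I^2)\to F\Omega_P\otimes A/p$. Showing that would take up most of the work, using the fiber sequence together with $\tau_{\le1}\mathbb L_{A/P}\simeq I/I^2[1]$. One also has to handle $p$-torsion in $A$ carefully, which is where the derived $A/^Lp$ and the target $A[p]$ in degree $0$ enter.
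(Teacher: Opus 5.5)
\textbf{Verdict: there is a genuine gap.} You misread the target group, and this breaks a step in each part.

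In the paper, $\operatorname{Ext}^i_{A/^Lp}((F\mathbb L_A,w(p)),(A[p],0))$ is computed in $\operatorname{Fun}(\Delta^1,\mathcal D(A/^Lp))$ between the arrows $A/^Lp\xrightarrow{w(p)}F\mathbb L_A$ and $0\to A[p]$. The pairs record a \emph{pointing}, not a Frobenius twist, so these groups are simply $\operatorname{Ext}^i$ out of $\operatorname{Cofib}(A/^Lp\xrightarrow{w(p)}F\mathbb L_A)$. Hence $\operatorname{Ext}^0$ consists of the $A/p$-linear maps $F\Omega_A\to A[p]$ that \emph{kill} $w(p)$, i.e.\ Frobenius--Witt derivations $d$ with $d(p)=0$ (equivalently $\operatorname{Der}(A,F_*A[p])$). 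It is not all of $\operatorname{Hom}(F\Omega_A,A[p])$, as you claim.

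This matters. If $0\neq c\in A[p]$, then $a\mapsto\delta_0(a)c$ is a Frobenius--Witt derivation with value $c$ at $p$. But $\delta_0+\delta_0(-)c$ is not a $\delta$-structure, since every $\delta$-structure has $\delta(p)=1-p^{p-1}$. So your claim that $\delta_0+D$ is a $\delta$-structure for every such $D$ is false for the group you name. The same identity $\delta(p)=\delta_0(p)$ shows that every difference satisfies $D(p)=0$. Thus your unspecified ``correction'' term, which is $P(a,b)D(p)$, vanishes and $D$ is genuinely additive. With the pointed group, your part (1) is exactly the paper's argument.

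In part (2) the same misreading propagates. First, you should use $\delta_P:=(\phi_P-(-)^p)/p$, not the standard $\delta$-structure on $P$. Changing $\phi_P$ among lifts of $\phi$ alters $\delta_P$ by derivations $P\to F_*A[p]$, not by arbitrary Frobenius--Witt derivations of $P$. Second, the relevant object is the cofiber of $w(p)$, not $\tau_{\le1}F\mathbb L_A$. The unpointed group can miss the obstruction entirely. Take $A=\mathbb Z_{(p)}\times\mathbb F_p$ with $\phi=\mathrm{id}$: it has no $\delta$-structure at all. Yet by \'etale localization, Proposition \ref{pro5.2} and Lemma \ref{lem6.3}, $F\mathbb L_A\simeq(\mathbb F_p\cdot w(p))\times0$, while $A[p]=0\times\mathbb F_p$. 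So $\operatorname{Ext}^\ast_{A/^Lp}(F\mathbb L_A,A[p])=0$, whereas the pointed $\operatorname{Ext}^1$ is $\mathbb F_p$.

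The step you call the main obstacle is also misformulated when $A$ has $p$-torsion: $F\mathbb L_P\otimes^L_PA\simeq F\Omega_P\otimes_{P/p}A/^Lp$ already has $\pi_1$ a sum of copies of $A[p]$. It is also unnecessary. The missing idea is Corollary \ref{cor5.10}: $\operatorname{Cofib}(w(p))\simeq F^*(\mathbb L_{A/\mathbb Z}\otimes^L_{\mathbb Z}\mathbb F_p)$. By $F^*\dashv F_*$ and base change this gives $\operatorname{Ext}^1((F\mathbb L_A,w(p)),(A[p],0))\cong\operatorname{Ext}^1_A(\mathbb L_{A/\mathbb Z},F_*A[p])$, and the problem becomes classical.

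The paper then defines $\xi$ as the obstruction to lifting $f\colon A\to\overline V(A)$, $a\mapsto(a,\phi(a))$, along Rezk's square-zero extension $W_2(A)\to\overline V(A)$ with kernel $A[p]$ (Lemma \ref{D.1}). Lifts of $f$ are exactly the compatible $\delta$-structures, and $f^*A[p]=F_*A[p]$. So well-definedness, and dependence only on $(A,\phi)$, come for free.

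Once corrected, your presentation argument is a valid hands-on alternative. The class of $x\mapsto\overline{\delta_P(x)}$ lies in
\[
\operatorname{coker}\bigl(\operatorname{Hom}_A(\Omega_P\otimes_PA,F_*A[p])\to\operatorname{Hom}_A(I/I^2,F_*A[p])\bigr)=\operatorname{Ext}^1_A(\mathbb L_{A/\mathbb Z},F_*A[p]),
\]
and it is the same obstruction. You would still need to prove two things: independence of the presentation, and that every compatible $\delta$-structure on $A$ arises by descent from some $\phi_P$ (lift the values $\delta_A(\bar x_i)$ to $P$).
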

This expresses the deformation-theoretic aspect of the Frobenius--Witt cotangent complex.

\vspace{2mm}
\textbf{Structure of this paper}. 
\begin{itemize}
    \item In Section 2, we reformulate Saito’s Frobenius--Witt differentials in terms of the arithmetic square-zero extension $W_2(A,M,m)$ of a ring $A$.
    \item In Section 3, we review the theory of animation, following \cite{CS24}. We also prove several properties of animation that will be needed in order to introduce the Frobenius--Witt cotangent complex.
    \item In Section 4, using the results prepared in Section 3, we introduce the animation of $W_2(A,M,m)$. We then define a derived version of Frobenius--Witt derivations, and introduce the Frobenius--Witt cotangent complex as the object corepresenting the space of such derived Frobenius--Witt derivations. We subsequently discuss several essentially equivalent definitions of this complex.
    \item In Section 5, we study the basic properties of the Frobenius--Witt cotangent complex.
    \item In Section 6, we compute the Frobenius--Witt cotangent complex for several classes of rings, including perfectoid rings and quasiregular semiperfectoid rings. As an application of these computations, we prove the regularity criterion.
    \item In Section 7, we study the relation between the Frobenius--Witt cotangent complex and the deformation theory of $\delta$-structures.
    \item In Section 8, assuming the existence of $\mathbb F_1$, we formulate a conjectural lift of the Frobenius--Witt cotangent complex to $\mathbb Z_p$.
\end{itemize}

\vspace{2mm}
\textbf{Notations}. In this paper, a ring is always assumed to be a unital commutative ring. We fix a prime number $p$. We denote by $\operatorname{Ring}_A$ the category of $A$-algebras for each ring $A$. We use the term $\infty$-category in the sense of quasi-categories as in \cite{Lur09}, and the term animation in the sense of \cite[Section 5.1]{CS24}. We denote by $\mathcal S$ the $\infty$-category of spaces. For an $\infty$-category $\mathscr C$ and its objects $X, Y\in \mathscr C$, we denote by $\operatorname{Hom}_{\mathscr C}(X,Y)$ the mapping space from $X$ to $Y$. We denote by $\operatorname{Fun}(\mathscr C,\mathscr D)$ the $\infty$-category of functors from an $\infty$-category $\mathscr C$ to an $\infty$-category $\mathscr D$. For a map of (animated) rings $A\to B$, we denote by $\mathbb L_{B/A}$ its cotangent complex in the sense of \cite[Definition 2.33]{Mao24}.

\vspace{2mm}
\textbf{Acknowledgments}. 
The author is deeply grateful to Ryo Ishizuka for his contribution to Lemma \ref{lem6.6} and Lemma \ref{lem6.10.1}. The author is also deeply grateful to Ryoma Takeuchi for valuable discussions and comments. The author was inspired by his talk to remove the completeness assumption in Theorem \ref{thm6.16}. The author is also deeply grateful to Kazuki Hayashi for reading the draft version of this paper and giving many valuable comments. Finally, the author would like to express his sincere gratitude to his supervisor Yuichiro Taguchi for constant encouragement and guidance.

\section{Frobenius--Witt derivations via arithmetic extensions}
In this section, we recall the notion of Frobenius--Witt derivations introduced in \cite{Sai22}. After that, we introduce a new object $W_2(A,M,m)$ and reformulate the Frobenius--Witt derivations in terms of this.

\begin{definition}\label{def2.1}
    We write $P(X,Y):=\frac{(X+Y)^p-X^p-Y^p}{p}\in \mathbb Z[X,Y]$.
\end{definition}
\begin{definition}\label{def2.2}
    Let $A$ be a ring and $M$ be an $A$-module. A map $w:A\to M$ is called a {\em Frobenius--Witt derivation} if the following conditions hold for every $x,y\in A$:\begin{enumerate}
        \item $w(x+y)=w(x)+w(y)-P(x,y)w(p)$,
        \item $w(xy)=x^pw(y)+y^pw(x)$.
    \end{enumerate}
    For each element $m\in M$, we denote by $\operatorname{FWDer}(A,(M,m))$ the set of Frobenius--Witt derivations whose value at $p\in A$ is $m$. 
\end{definition}

\begin{remark}\label{rem2.3}
    \begin{itemize}
        \item As in \cite{Sai22}, we mainly work with $\mathbb Z_{(p)}$-algebras.
        \item Let $A$ be a $\mathbb Z_{(p)}$-algebra, $M$ be an $A$-module and $w:A\to M$ be a Frobenius--Witt derivation. Then $w$ has its values in the $p$-torsion part $M[p]$ of $M$ (\cite[Lemma 1.2]{Sai22}). Thus, we mainly consider the case that $M$ is an $A/p$-module.
    \end{itemize}
\end{remark}

To rewrite Definition \ref{def2.2}, we introduce the following new object:
\begin{definition}\label{def2.4}
    Let $A$ be a $\mathbb Z_{(p)}$-algebra, $M$ be an $A/p$-module and $m\in M$ be an element. The arithmetic extension $W_2(A,M,m)$ of $A$ by $(M,m)$ is the ring defined by the following formula:\begin{enumerate}
        \item the underlying set is $W_2(A,M,m):=A\times M$,
        \item additions are defined by $(a,x)+(b,y):=(a+b,x+y-P(a,b)m)$ for every $(a,x), (b,y)\in A\times M$,
        \item multiplications are defined by $(a,x)(b,y):=(ab,a^py+b^px)$ for every $(a,x), (b,y)\in A\times M$.
    \end{enumerate}
\end{definition}

\begin{remark}
    Definition \ref{def2.4} is well-defined. Let $(a,x), (b,y), (c,z)$ be elements of $A\times M$. The assertion follows from the following computations:
    \begin{itemize}
        \item (associativity of addition). Since $((a,x)+(b,y))+(c,z)=(a+b+c,x+y-P(a,b)m+z-P(a+b,c)m)$ holds for each $(a,x), (b,y), (c,z)\in A\times M$, it suffices to show the following equality:\[P(X,Y)-P(X+Y,Z)=P(Y,Z)-P(X,Y+Z)\in \mathbb Z[X,Y,Z].\] This follows from the fact that the both sides coincide with $\frac{-(X+Y+Z)^p+X^p+Y^p+Z^p}{p}$.
        \item (associativity of multiplication). This follows from the following computation. \[((a,x)(b,y))(c,z)=(abc,a^pb^pz+b^pc^px+c^pa^py).\]
        \item The zero element is $(0,0)$ and the identity element is $(1,0)$.
        \item (the existence of the inverse). For any element $(a,x)$, its additive inverse is given by $(-a,-x+P(a,-a)m$.
        \item (distributive law). Since
        \[(a,x)((b,y)+(c,z))=(a(b+c),a^p(y+z-P(b,c)m)+(b+c)^px),\]
        \[(a,x)(b,y)+(a,x)(c,z)=(ab+ac,(a^py+b^px)+(a^pz+c^px)-P(ab,ac)m)\] hold, it suffices to show the following equations:\[((b+c)^p-b^p-c^p)x=0\in M,\] \[P(ab,ac)=a^pP(b,c)\in A.\] The former follows from the fact that $M$ is $p$-torsion, and the latter follows from a direct computation.
    \end{itemize}
\end{remark}\label{lem2.5}

\begin{remark}\label{rem2.6}
    We denote by $\mathrm{pr}_1:W_2(A,M,m)\to A$ the projection onto the first component and by $\mathrm{pr}_2:W_2(A,M,m)\to M$ the projection onto the second component. Note that the former one is a ring homomorphism, but the latter is not.
\end{remark}
\begin{example}\label{exa2.7}
    Let $A$ be a $\mathbb Z_{(p)}$-algebra, $M$ be an $A/p$-module and $m\in M$ be an element. Then we have $p=(p,m)\in W_2(A,M,m)$. To see this, it suffices to prove the following equality:\[-\sum_{n=1}^{p-1}P(n,1)m=m\in M.\] Since $M$ is $p$-torsion, it suffices to show the following equality:\[\sum_{n=1}^{p-1}P(n,1)=p^{p-1}-1\in \mathbb Z.\] This follows from the following elementary computation:\begin{align*}
        \sum_{n=1}^{p-1}P(n,1)&=\frac{1}{p}\sum_{n=1}^{p-1}((n+1)^p-n^p-1)\\&=\frac{1}{p}((\sum_{n=1}^{p-1}(n+1)^p)-(\sum_{n=1}^{p-1}n^p)-(p-1))\\&=\frac{1}{p}((\sum_{n=2}^{p}n^p)-(\sum_{n=1}^{p-1}n^p)-(p-1))\\&=p^{p-1}-1.
    \end{align*}
    This observation implies that the datum $m\in M$ can be recovered from the data consisting of the ring structure on $W_2(A,M,m)=A\times M$ and the map $\mathrm{pr}_2:W_2(A,M,m)=A\times M\to M$ of sets.
\end{example}
\begin{prop}\label{pro2.8}
    Let $A$ be a $\mathbb Z_{(p)}$-algebra and $M$ be an $A/p$-module. Then the following sets are canonically isomorphic.\begin{enumerate}
        \item the set of Frobenius--Witt derivations $w:A\to M$,
        \item the set of ring homomorphisms $s:A\to W_2(A,M,m)$ such that the composition $\mathrm{pr}_1\circ s:A\to A$ is the identity map for some $m\in M$.
    \end{enumerate}
\end{prop}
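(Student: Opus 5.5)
The plan is to write down the two maps explicitly and check that each lands in the right set; the two constructions will then be visibly inverse. Given a Frobenius--Witt derivation $w:A\to M$, set $m:=w(p)$ and define $s_w:A\to W_2(A,M,m)$ by $s_w(a):=(a,w(a))$. In the other direction, given $m\in M$ and a ring homomorphism $s:A\to W_2(A,M,m)$ with $\mathrm{pr}_1\circ s=\mathrm{id}_A$, put $w_s:=\mathrm{pr}_2\circ s$. Because $\mathrm{pr}_1\circ s=\mathrm{id}$, the section is determined by $w_s$, namely $s(a)=(a,w_s(a))$. So the assignments $w\mapsto s_w$ and $s\mapsto w_s$ are inverse to each other as soon as both are well defined and the parameter $m$ is accounted for.

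\textbf{Step 1: $s_w$ is a ring homomorphism.} Additivity of $s_w$ means
\[(a+b,w(a+b))=(a,w(a))+(b,w(b))=(a+b,w(a)+w(b)-P(a,b)m).\]
With $m=w(p)$ this is exactly condition (1) of Definition \ref{def2.2}. Multiplicativity means
\[(ab,w(ab))=(ab,a^pw(b)+b^pw(a)),\]
which is condition (2). For unitality we need $w(1)=0$: apply (2) with $x=y=1$ to get $w(1)=2w(1)$. Since $\mathrm{pr}_1\circ s_w=\mathrm{id}$ by construction, $s_w$ lies in the set (2).

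\textbf{Step 2: $w_s$ is a Frobenius--Witt derivation.} Read the same identities backwards. Additivity of $s$ gives
\[w_s(a+b)=w_s(a)+w_s(b)-P(a,b)m,\]
and multiplicativity gives condition (2). What remains, and this is the main point, is to show $w_s(p)=m$, so that the relation just obtained is condition (1) with $w_s(p)$ in place of $m$. For this I use Example \ref{exa2.7}: in $W_2(A,M,m)$ the element $p\cdot 1$ equals $(p,m)$. Since $s$ is a ring homomorphism, $s(p)=p\cdot s(1)=p\cdot 1=(p,m)$, hence $w_s(p)=\mathrm{pr}_2(s(p))=m$.

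\textbf{Step 3: the parameter $m$ is determined.} The set (2) should be read as pairs $(m,s)$. By Step 2, $m=w_s(p)$, and by Example \ref{exa2.7} $m$ is also recovered from the ring structure together with $\mathrm{pr}_2$. So no extra data is lost, and $w\mapsto s_w$ and $s\mapsto w_s$ are mutually inverse bijections. The same argument restricted to a fixed $m$ identifies $\operatorname{FWDer}(A,(M,m))$ with the set of sections $A\to W_2(A,M,m)$ of $\mathrm{pr}_1$.

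I expect no real obstacle. The only non-formal input is the identity $s(p)=(p,m)$, which is exactly what Example \ref{exa2.7} provides.
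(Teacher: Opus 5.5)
Your proposal is correct and follows the paper's proof: you use the same mutually inverse assignments $w\mapsto\bigl(a\mapsto(a,w(a))\bigr)$ and $s\mapsto\mathrm{pr}_2\circ s$, and the same appeal to Example~\ref{exa2.7} to get $w_s(p)=m$. The only small differences are that you derive $w(1)=0$ directly from the Leibniz rule instead of citing \cite[Lemma 1.2]{Sai22}, and you state explicitly that $m$ is determined by $s$, which the paper leaves implicit.
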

\begin{proof}
    Let $w:A\to M$ be a Frobenius--Witt derivation. Then the corresponding element in (2) is the map defined by $s:A\to W_2(A,M,w(p));a\mapsto(a,w(a))$. Then the following computations ensure that this map is a ring homomorphism:\begin{itemize}
        \item $s(a)s(b)=(a,w(a))(b,w(b))=(ab,a^pw(b)+b^pw(a))=(ab,w(ab))=s(ab)$,
        \item $s(a+b)=(a+b,w(a+b))=(a+b,w(a)+w(b)-P(a,b)w(p))=(a,w(a))+(b,w(b))$,
        \item $s(0)=(0,w(0))=0$,
        \item $s(1)=(1,w(1))=(1,0)=1,$ where we used the fact $w(1)=0$ (\cite[Lemma 1.2]{Sai22}).
    \end{itemize}

    Conversely, let $s:A\to W_2(A,M,m)$ be a map as in (2). Then the corresponding map of (1) is defined by the composition $w:=\mathrm{pr}_2\circ s:A\to M$. One can prove that this map is indeed a Frobenius--Witt derivation by the following computations:\begin{itemize}
        \item (multiplication).\begin{align*}
            w(ab)&=\mathrm{pr}_2(s(ab))\\&=\mathrm{pr}_2(s(a)s(b))\\&=\mathrm{pr}_2((a,w(a))(b,w(b)))\\&=a^pw(b)+b^pw(a),
        \end{align*}
        \item (addition).\begin{align*}
             w(a+b)&=\mathrm{pr}_2(s(a+b))\\&=\mathrm{pr}_2((a,w(a))+(b,w(b)))\\&=\mathrm{pr}_2((a+b,w(a)+w(b)-P(a,b)m))\\&=w(a)+w(b)-P(a,b)m\\&=w(a)+w(b)-P(a,b)w(p),
        \end{align*} where the last equation follows from $m=w(p)\in M$, as shown in the above example.
        
    \end{itemize}
    The fact that these correspondences are one-to-one follows immediately from these constructions.
\end{proof}

In the end of this subsection, we observe the functoriality of $W_2(A,M,m)$.

\begin{definition}\label{def2.9}
\begin{itemize}
    \item We denote by $\operatorname{Ring}$ the category of rings.
    \item We denote by $\operatorname{RingMod}$ the category of pairs $(A,M)$, where $A$ is a ring and $M$ is an $A$-module. Its morphisms are of the form $(\varphi,\alpha):(A,M)\to (B,N)$, where $\varphi:A\to B$ is a ring homomorphism and $\alpha:M\to N$ is a $\varphi$-semilinear map. 
    \item We denote by $\operatorname{RingMod^{p-tor}}$ the category of pairs $(A,M)$, where $A$ is a ring and $M$ is an $A/p$-module. Its morphisms are as in the preceding category.
    \item We denote by $\operatorname{AlgMod^{p-tor}_*}$ the category of triples $(A,M,m)$, where $A$ is a $\mathbb Z_{(p)}$-algebra, $M$ is an $A/p$-module and $m$ is an element of $M$. Its morphisms are of the form $(\varphi,\alpha):(A,M,m)\to (B,N,n)$, where $\varphi:A\to B$ is a ring homomorphism and $\alpha:M\to N$ is a $\varphi$-semilinear map such that $\alpha(m)=n$.
\end{itemize}
\end{definition}
\begin{remark}\label{rem2.10}
    \begin{enumerate}
        \item The category $\operatorname{RingMod^{p-tor}}$ is a full subcategory of $\operatorname{RingMod}$. 
        \item There exists an equivalence of categories $\operatorname{AlgMod^{p-tor}_*}\simeq (\operatorname{RingMod^{p-tor}})_{(\mathbb Z_{(p)},\mathbb F_p)/}$, where the right hand side is the coslice category.
    \end{enumerate}
\end{remark}
\begin{definition}\label{def2.11}
    We denote by $W_2(-,-,-):\operatorname{AlgMod^{p-tor}_*}\to \operatorname{Ring}$ the functor defined by $(A,M,m)\mapsto W_2(A,M,m)$. For a map $(\varphi,\alpha):(A,M,m)\to (B,N,n)$, the induced map $W_2(A,M,m)\to W_2(B,N,n)$ is given by $(a,x)\mapsto (\varphi(a),\alpha(x))$. (One can directly prove that this map is a ring homomorphism using the $\varphi$-semilinearity of $\alpha$).
\end{definition}

\section{Preliminaries on category theory}
In this subsection, we recall some basic facts about the notion of animation. We also prove some preliminary results in order to introduce an animated version of Frobenius--Witt derivations.

\begin{definition}\label{def3.1}
    Let $\mathscr C$ be a category. An object $X\in \mathscr C$ is called {\em compact projective} if and only if the functor $\operatorname{Hom}_{\mathscr C}(X,-):\mathscr C\to \operatorname{Set}$ commutes with all small filtered colimits and all reflexive coequalizers.
    
\end{definition}
\begin{definition}\label{def3.2}
    Let $\mathscr C$ be a category. A set $S$ consisting of objects of $\mathscr C$ is called a {\em set of compact projective generators} of $\mathscr C$ if it consists of compact projective objects of $\mathscr C$ and generates $\mathscr C$ under colimits. 
\end{definition}

\begin{definition}\label{def3.3}
    Let $\mathscr C$ be a category, $S$ be a set of compact projective generators of $\mathscr C$ and $\mathscr C_0$ be the full subcategory of $\mathscr C$ generated by $S$ under all finite colimits. Then we define the {\em animation} $\operatorname{Ani}(\mathscr C)$ of $\mathscr C$ by the $\infty$-category:\[\operatorname{Ani}(\mathscr C):=\operatorname{Fun}_{\prod}(\mathscr C_0^{\mathrm{op}},\mathcal S),\] where the right-hand side is the full subcategory of the $\infty$-category $\operatorname{Fun}(\mathscr C_0^{\mathrm{op}},\mathcal S)$ consisting of functors preserving all finite products.
\end{definition}
\begin{remark}\label{rem3.4}
Let $\mathscr C$ be as in Definition \ref{def3.3}. 
\begin{itemize}
    \item The $\infty$-category $\operatorname{Ani}(\mathscr C)$ depends only on the category $\mathscr C$, not on the choice of $S$. (\cite[Remark A.20]{Mao24}).
    \item The $\infty$-category $\operatorname{Ani}(\mathscr C)$ admits all sifted colimits. (\cite[Proposition 5.5.8.10]{Lur09}).
    \item  Let $\pi_0:\operatorname{Ani}(\mathscr C)\to \operatorname{Ani}(\mathscr C)$ be the functor of $\infty$-categories induced by the truncation functor $\pi_0:\mathcal S\to \mathcal S$ of spaces. Then its essential image is canonically equivalent to the category $\mathscr C$. In this way, we can regard the category $\mathscr C$ as a full subcategory of $\operatorname{Ani}(\mathscr C)$. (\cite[Remark 5.5.8.26]{Lur09})
    \item Each object of $\mathscr C_0$ is compact projective in  $\operatorname{Ani}(\mathscr C)$ and they generate $\operatorname{Ani}(\mathscr C)$ under sifted colimits. (\cite[Proposition 5.5.8.22]{Lur09}).
\end{itemize}
    
\end{remark}

\begin{example}\label{exa3.5}
It is straightforward to check the following assertions. (\cite[Example 5.1.3]{CS24}).
    \begin{itemize}
        \item The category $\operatorname{Set}$ of sets has a set of compact projective generators consisting of all finite sets. The full category $\operatorname{FinSet}$ spanned by this is closed under finite coproducts. Therefore, we can define the animation $\operatorname{Ani(Set)}$ of $\operatorname{Set}$. It is known that this $\infty$-category is canonically equivalent to the $\infty$-category $\mathcal S$ of spaces.
        \item The category $\operatorname{Ring}$ of rings has a set of compact projective generators consisting of all finitely generated polynomial rings. The full subcategory $\operatorname{Poly}$ spanned by this is closed under finite coproducts. Therefore, one can define the animation $\operatorname{Ani}(\operatorname{Ring})$ of the category $\operatorname{Ring}$, whose objects are called animated rings.
        \item The category $\operatorname{Ab}$ of abelian groups has a set of compact projective generators consisting of free abelian groups of finite rank. The full subcategory spanned by this is closed under finite coproducts. Therefore, we can define the animation $\operatorname{Ani}(\operatorname{Ab})$ of $\operatorname{Ab}$, whose objects are called animated abelian groups. It is known that there exists a canonical equivalence $\operatorname{Ani}(\operatorname{Ab})\simeq \mathcal D_{\ge 0}(\mathbb Z)$, where the right-hand side is the full subcategory of the derived $\infty$-category $\mathcal D(\mathbb Z)$ consisting of all connective objects.
    \end{itemize}
\end{example}
To see some examples of animations needed later, we make some 1-categorical preliminaries.

\begin{lemma}\label{lem3.6}
    The category $\operatorname{RingMod}$ defined in Definition \ref{def2.9} has a compact projective generators consisting of pairs $(P,F)$ where $P$ is a finitely generated polynomial ring and $F$ is a finite free $P$-module. The full subcategory spanned by this compact projective generators is closed under finite coproducts.
\end{lemma}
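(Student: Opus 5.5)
The plan is to identify $\operatorname{Hom}_{\operatorname{RingMod}}((P,F),-)$ with a functor built from the forgetful functors to sets, and then check the three required properties separately.

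\textbf{Step 1: the representability formula.} Let $P=\mathbb Z[x_1,\dots,x_n]$ and $F=P^{\oplus r}$. A morphism $(\varphi,\alpha):(P,F)\to(A,M)$ is determined by two pieces of data: the ring map $\varphi$, equivalently $(\varphi(x_i))_i\in A^n$, and the $\varphi$-semilinear map $\alpha$. Such an $\alpha$ is the same as a $P$-linear map $F\to\varphi_*M$, equivalently $(\alpha(e_j))_j\in M^r$. This gives a bijection, natural in $(A,M)$:
\[\operatorname{Hom}_{\operatorname{RingMod}}((P,F),(A,M))\cong A^n\times M^r.\]

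\textbf{Step 2: compact projectivity.} By Step 1, it suffices to show that the forgetful functors $(A,M)\mapsto A$ and $(A,M)\mapsto M$ to $\operatorname{Set}$ preserve filtered colimits and reflexive coequalizers. I will describe colimits in $\operatorname{RingMod}$ explicitly. For a diagram $(A_i,M_i)$, the colimit is $(A,M)$ with $A=\operatorname{colim}A_i$ and $M=\operatorname{colim}(M_i\otimes_{A_i}A)$, the latter computed in $A$-modules. For a filtered diagram, $A$ is the filtered colimit of underlying sets, as for rings. Also $M\cong\operatorname{colim}M_i$ as sets, since $\operatorname{colim}_i (M_i\otimes_{A_i}A)\cong\operatorname{colim}_i\operatorname{colim}_{j\ge i}M_i\otimes_{A_i}A_j$, and by cofinality this reduces to $\operatorname{colim}_j M_j$.

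For a reflexive coequalizer $(A_1,M_1)\rightrightarrows(A_0,M_0)$, the forgetful functor from rings to sets preserves reflexive coequalizers, so $A$ is the set-level coequalizer. For $M$, the coequalizer is $M_0\otimes_{A_0}A$ modulo the relations $f(m)=g(m)$ for $m\in M_1$. I expect this to be the \textbf{main obstacle}: showing that this equals the set coequalizer of $M_1\rightrightarrows M_0$. The argument I would try first is the standard one for algebraic theories. $\operatorname{RingMod}$ is the category of models of a two-sorted Lawvere theory: rings, together with modules over them, with operations of finite arity. For any such multisorted algebraic theory, the sortwise forgetful functors preserve sifted colimits. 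Reflexive coequalizers are computed sortwise because a reflexive pair makes the sortwise set-coequalizer closed under all operations, via the $3\times3$ lemma argument of Adámek–Rosický–Vitale. The representability in Step 1 then finishes compact projectivity.

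\textbf{Step 3: generation under colimits.} Each $(A,M)$ is a reflexive coequalizer of free objects, namely $(\mathbb Z[A],\mathbb Z[A]^{\oplus M})$ and a similar resolution one level up. Each free object is a filtered colimit of objects $(P,F)$ with finitely many variables and basis elements. So the objects $(P,F)$ generate $\operatorname{RingMod}$ under colimits.

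\textbf{Step 4: closure under finite coproducts.} The coproduct is
\[(P,F)\sqcup(P',F')=(P\otimes P',\,F\otimes_PP''\oplus F'\otimes_{P'}P''),\qquad P''=P\otimes P'.\]
Here $P\otimes P'$ is again a polynomial ring and the module is finite free over it. The initial object is $(\mathbb Z,0)$, which also lies in the family.
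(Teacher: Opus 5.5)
Your proof is correct. It shares the paper's key step for compact projectivity, the natural bijection $\operatorname{Hom}_{\operatorname{RingMod}}((P,F),(A,M))\cong A^{n}\times M^{r}$, but handles the remaining points differently.

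\textbf{Compact projectivity.} The paper only records the colimit formula $\operatorname{colim}_i(A_i,M_i)\cong(A,\operatorname{colim}_i(A\otimes_{A_i}M_i))$ with $A=\operatorname{colim}_iA_i$. It then asserts that compact projectivity follows from the bijection. This leaves implicit exactly the point you single out as the main obstacle: that $(A,M)\mapsto M$ commutes with reflexive coequalizers. Your observation that $\operatorname{RingMod}$ is the category of models of a two-sorted finitary algebraic theory settles this, since sifted colimits are then computed sortwise. More precisely, the operations descend to the sortwise quotient because reflexive coequalizers in $\operatorname{Set}$ commute with finite products. Once you invoke this, your separate cofinality argument for filtered colimits is superfluous.

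\textbf{Generation.} The paper builds objects by hand:
\begin{enumerate}
\item $(A,0)\cong\operatorname{colim}_i(P_i,0)$, from a presentation of $A$ by polynomial rings;
\item $(A,A^{\oplus K})\cong(A,0)\sqcup\coprod_K(\mathbb Z,\mathbb Z)$;
\item $(A,M)$ as a colimit of objects $(A,A^{\oplus K_j})$ with identity on the ring component, using a presentation of $M$ by free $A$-modules.
\end{enumerate}
You instead use the canonical reflexive-coequalizer presentation by free objects $(\mathbb Z[S],\mathbb Z[S]^{\oplus T})$, each a filtered colimit of finitely generated ones.

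\textbf{Comparison.} The paper's route is more elementary and needs only the explicit colimit formula. Yours is more uniform: the same two general facts (sortwise sifted colimits and free resolutions) give both compact projectivity and generation. They also transfer verbatim to $\operatorname{RingMod^{p-tor}}$ in Lemma \ref{lem3.7} by adding the equation $p\cdot m=0$ to the theory.
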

\begin{proof}(This is a well-known fact, but we provide a proof for the reader's convenience).
    First, it is straightforward to check that colimits in the category $\operatorname{RingMod}$ can be computed as follows:\[\operatorname{colim}_{i\in I}(A_i,M_i)\cong (\operatorname{colim}_{i\in I}A_i,\operatorname{colim}_{i\in I}((\operatorname{colim}_{i\in I}A_i)\otimes _{A_i}M_i)),\] where, on the right-hand side, the colimit in the first component is taken in the category of rings and the colimit in the second component is taken in the category of $\operatorname{colim}_{i\in I}A_i$-modules. 

    The fact that any object $(P,F)$ as in the statement is compact projective follows from the following natural bijection \[\operatorname{Hom}_{\operatorname{RingMod}}((P,F),(A,M))\cong A^{\times d}\times M^{\times r},\] where $d$ is the number of variables of $P$, $r$ is the rank of $F$ as a $P$-module and $(A,M)$ is an arbitrary object of $\operatorname{RingMod}$.

    Finally, we prove that objects of the form $(P,M)$ as in the statement generate $\operatorname{RingMod}$ under colimits. Denote by $\mathscr C$ the full subcategory spanned by these objects. Let $(A,M)$ be an arbitrary object of $\operatorname{RingMod}$. There exists a diagram of finitely generated polynomials $\{P_i\}_{i\in I}$ such that $A\cong \operatorname{colim}_{i\in I}P_i$. Then we have $(A,0)\cong \operatorname{colim}_{i\in I}(P_i,0)$, so $(A,0)\in \mathscr C$ holds. Since $(A,0)\coprod(\coprod_{K}(\mathbb Z,\mathbb Z))\cong (A,A^{\oplus K})$, we have $(A,A^{\oplus K})\in \mathscr C$ for any set $K$. Writing $M\cong \operatorname{colim}_{j\in J}A^{\oplus K_j}$, we get $(A,M)\in \mathscr C$.
    
\end{proof}

\begin{lemma}\label{lem3.7}
     The category $\operatorname{RingMod^{p-tor}}$ defined in Definition \ref{def2.9} has a compact projective generators consisting of pairs $(P,F)$ where $P$ is a finitely generated polynomial ring and $F$ is a finite free $P/p$-module. The full subcategory spanned by this compact projective generators is closed under finite coproducts.
\end{lemma}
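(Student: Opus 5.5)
We follow the proof of Lemma \ref{lem3.6} almost verbatim, with $P$-modules replaced by $P/p$-modules. The first step is to describe colimits in $\operatorname{RingMod^{p-tor}}$. Since it is a full subcategory of $\operatorname{RingMod}$ (Remark \ref{rem2.10}), we check that it is closed under the colimits computed in Lemma \ref{lem3.6}. Suppose each $M_i$ is an $A_i/p$-module and $A:=\operatorname{colim}A_i$. Then $A\otimes_{A_i}M_i$ is an $A/p$-module, and a colimit of $A/p$-modules taken in $A$-modules is again killed by $p$. Hence colimits in $\operatorname{RingMod^{p-tor}}$ are given by the same formula
\[\operatorname{colim}_{i\in I}(A_i,M_i)\cong \bigl(\operatorname{colim}_{i\in I}A_i,\ \operatorname{colim}_{i\in I}(A\otimes_{A_i}M_i)\bigr).\]

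Next we show that each $(P,F)$ is compact projective. Let $P=\mathbb Z[x_1,\dots,x_d]$ and $F=(P/p)^{\oplus r}$, and let $(A,M)$ be any object with $pM=0$. A morphism $(P,F)\to (A,M)$ consists of a ring map $P\to A$ together with a semilinear map $F\to M$. The second part is the same as a $P$-linear map $F\to M$, where $M$ is regarded as a $P$-module via $P\to A$. Since $M$ is $p$-torsion, this amounts to $r$ arbitrary elements of $M$. This gives a natural bijection
\[\operatorname{Hom}((P,F),(A,M))\cong A^{\times d}\times M^{\times r}.\]
By the description of colimits above, the functors $(A,M)\mapsto A$ and $(A,M)\mapsto M$ to $\operatorname{Set}$ commute with filtered colimits and reflexive coequalizers. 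For a reflexive coequalizer $(A_1,M_1)\rightrightarrows(A_0,M_0)$ with $A=\operatorname{coeq}(A_1\rightrightarrows A_0)$, the underlying set of $A$ is the set-theoretic coequalizer, a standard fact for algebraic theories. The module component is the coequalizer of $A\otimes_{A_1}M_1\rightrightarrows A\otimes_{A_0}M_0$, which we can rewrite as a reflexive coequalizer of abelian groups. Its underlying set is again the set-theoretic coequalizer, since in an algebraic category reflexive coequalizers are computed on underlying sets. This is the same verification as in Lemma \ref{lem3.6}, so we can cite it.

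For generation, let $\mathscr C$ be the closure of these objects under colimits. For any ring $A=\operatorname{colim}P_i$ we have $(A,0)\cong\operatorname{colim}(P_i,0)$, so $(A,0)\in\mathscr C$. Using coproducts in $\operatorname{RingMod^{p-tor}}$,
\[(A,0)\textstyle\coprod\bigl(\coprod_K(\mathbb Z,\mathbb F_p)\bigr)\cong (A,(A/p)^{\oplus K}),\]
since $\mathbb Z$ is initial among rings. Every $A/p$-module $M$ is a colimit (a cokernel) of free $A/p$-modules, so $(A,M)\in\mathscr C$. Finally, the full subcategory of pairs $(P,F)$ is closed under finite coproducts, because
\[(P,F)\textstyle\coprod(Q,G)\cong\bigl(P\otimes Q,\ (F\otimes_P (P\otimes Q))\oplus(G\otimes_Q(P\otimes Q))\bigr),\]
which is again a polynomial ring with a finite free module over its reduction mod $p$.

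The only point needing real care is the closure of $\operatorname{RingMod^{p-tor}}$ under colimits, together with the reflexive-coequalizer compatibility of the module component. Both are routine once the colimit formula is in hand.
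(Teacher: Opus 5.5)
Your proof is correct. The closure of $\operatorname{RingMod^{p-tor}}$ under colimits in $\operatorname{RingMod}$ and the finite-coproduct check are exactly as in the paper, but compact projectivity and generation are handled differently.

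\textbf{Compact projectivity.} The paper does not recompute Hom sets. It notes that $\operatorname{Hom}_{\operatorname{RingMod}}((P,F),(A,M))\cong\operatorname{Hom}_{\operatorname{RingMod^{p-tor}}}((P,F/p),(A,M))$ for $p$-torsion $(A,M)$, so compact projectivity transfers formally from Lemma \ref{lem3.6}. Your formula $A^{\times d}\times M^{\times r}$ amounts to the same thing. The compatibility of $(A,M)\mapsto A$ and $(A,M)\mapsto M$ with filtered colimits and reflexive coequalizers is just Lemma \ref{lem3.6} applied to $(\mathbb Z[X],0)$ and $(\mathbb Z,\mathbb Z)$, plus closure under colimits.

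Your explicit sentence on the module component is slightly misaimed, though. It identifies $M$ with the set-theoretic coequalizer of $A\otimes_{A_1}M_1\rightrightarrows A\otimes_{A_0}M_0$, whereas what you need is the set-theoretic coequalizer of $M_1\rightrightarrows M_0$. Your citation of Lemma \ref{lem3.6} covers this. To see it directly, write the pair as $(f,\alpha),(g,\beta)$ with common section $(s,\sigma)$. Then $\operatorname{im}(\alpha-\beta)$ is an $A_0$-submodule containing every $(f(a)-g(a))x=(\alpha-\beta)(a\sigma(x))$. Hence $M_0/\operatorname{im}(\alpha-\beta)$ is both the module component of the coequalizer and the set-theoretic quotient.

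\textbf{Generation.} The paper takes a presentation $(A,M)\cong\operatorname{colim}(P_i,F_i)$ in $\operatorname{RingMod}$ from Lemma \ref{lem3.6} and reduces it modulo $p$. Using $M\cong M/p$ and that $-\otimes\mathbb F_p$ commutes with colimits, it gets $(A,M)\cong\operatorname{colim}(P_i,F_i/p)$. You instead rerun the generation argument of Lemma \ref{lem3.6} inside $\operatorname{RingMod^{p-tor}}$, building $(A,(A/p)^{\oplus K})$ from $(A,0)$ and copies of $(\mathbb Z,\mathbb F_p)$ and then taking cokernels.

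Your route is self-contained and needs only closure under iterated colimits. The paper's is shorter given Lemma \ref{lem3.6} and exhibits $(A,M)$ directly as a single colimit of generators.
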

\begin{proof}
    It follows from the description of colimits in the category $\operatorname{RingMod}$ (see the first paragraph of the proof of Lemma \ref{lem3.6}) that the full subcategory $\operatorname{RingMod^{p-tor}}\subseteq \operatorname{RingMod}$ is closed under colimits. On the other hand, there exists a natural bijection \[\operatorname{Hom}_{\operatorname{RingMod}}((P,F),(A,M))\cong \operatorname{Hom}_{\operatorname{RingMod^{p-tor}}}((P,F/p),(A,M)),\] where $(P,F)$ is a pair of finitely generated polynomial ring $P$ and a finite free $P$-module $F$, and $(A,M)$ is an object of $\operatorname{RingMod^{p-tor}}$. These observations and the Lemma \ref{lem3.6} imply that all objects of the form $(P,F)$ in $\operatorname{RingMod^{p-tor}}$, where $P$ is a finitely generated polynomial ring and $F$ is a finite free $P/p$-module, are compact projective.

    Next, we prove that these objects generate $\operatorname{RingMod^{p-tor}}$ under colimits. Let $(A,M)$ be an arbitrary object of $\operatorname{RingMod^{p-tor}}$. By the above lemma, there exists a small diagram $\{(P_i,F_i)\}_{i\in I}$ of pairs consisting of finitely generated polynomial rings and finite free modules over them such that \[\operatorname{colim}_{i\in I}(P_i,F_i)\cong (A,M).\] This implies that there exists a following isomorphisms:\[\operatorname{colim}_{i\in I}P_i\cong A,\]\[\operatorname{colim}_{i\in I}(F_i\otimes_{P_i}A)\cong M.\] Since $M$ is $p$-torsion, the latter implies $\operatorname{colim}_{i\in I}(F_i/p\otimes_{P_i}A)\cong M$. This implies $\operatorname{colim}_{i\in I}(P_i,F_i/p)\cong (A,M)$. 

    The last statement is immediate from the description of coproducts in $\operatorname{RingMod^{p-tor}}$.
\end{proof}
\begin{lemma}\label{lem3.8}
    Let $\mathscr C$ be a category, $S$ be a set of compact projective generator of $\mathscr C$ and $X\in \mathscr C$ be an object. Then the set $\{X\to X\coprod Y|\,Y\in S\}$ is a compact projective generator of the coslice category $\mathscr C_{X/}$. Moreover, there exists a canonical equivalence of $\infty$-categories $\operatorname{Ani}(\mathscr C_{X/})\simeq \operatorname{Ani}(\mathscr C)_{X/}$.
\end{lemma}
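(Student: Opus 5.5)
The plan is to prove the 1-categorical statement first and then deduce the $\infty$-categorical one from the universal property of animation together with a standard description of slices of presheaf categories.

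\textbf{Generators in $\mathscr C_{X/}$.} For $Y\in S$ and any object $(X\to Z)$ of $\mathscr C_{X/}$ there is a natural bijection
\[\operatorname{Hom}_{\mathscr C_{X/}}(X\to X\textstyle\coprod Y,\,X\to Z)\cong \operatorname{Hom}_{\mathscr C}(Y,Z).\]
The forgetful functor $\mathscr C_{X/}\to\mathscr C$ preserves filtered colimits and reflexive coequalizers, because both are connected colimits. Hence the right-hand side commutes with these colimits, and $X\to X\coprod Y$ is compact projective.

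For generation, take $X\to Z$ and write $Z$ as a colimit of objects of $S$. More concretely, use the canonical bar-type resolution: $Z$ is a reflexive coequalizer of coproducts of objects of $S$. Apply $X\coprod(-)$, computing the pushout under $X$. Since $X\coprod(-)$ preserves coproducts and connected colimits as a functor into $\mathscr C_{X/}$, this yields $X\coprod Z$ as a colimit in $\mathscr C_{X/}$ of the objects $X\to X\coprod Y$. Finally, $Z$ is the reflexive coequalizer of $X\coprod Z\rightrightarrows X\coprod X\coprod Z$, with maps induced by the two ways of sending $X$, codiagonal/identity. This diagram is a colimit of the generators as well, since $X\coprod X\coprod Z$ is one too. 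Finite coproducts of these objects in $\mathscr C_{X/}$ are $X\to X\coprod Y_1\coprod Y_2$, so the generators stay closed under finite coproducts whenever $S$ is.

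\textbf{Equivalence of animations.} Define a functor $\operatorname{Ani}(\mathscr C_{X/})\to\operatorname{Ani}(\mathscr C)_{X/}$ as the sifted-colimit-preserving extension (\cite[5.5.8.15]{Lur09}) of the functor on generators $(X\to X\coprod Y)\mapsto (X\to X\coprod Y)$, where the coproduct is now taken in $\operatorname{Ani}(\mathscr C)$. Here $X$ is a discrete object, and pushouts in $\operatorname{Ani}(\mathscr C)$ of coproducts of generators are computed in $\mathscr C_0$-presheaves. To prove it is an equivalence, check the criterion \cite[Proposition 5.5.8.22]{Lur09}:
\begin{itemize}
\item the target $\operatorname{Ani}(\mathscr C)_{X/}$ admits sifted colimits, since the forgetful functor creates them because sifted diagrams are weakly contractible;
\item the objects $X\to X\coprod Y$ are compact projective there, via the mapping-space analogue of the bijection above together with Remark \ref{rem3.4} and the fact that the forgetful functor preserves sifted colimits;
\item they generate under sifted colimits, by writing any $X\to Z$ as a geometric realization of a simplicial resolution $X\coprod P_\bullet$ with $P_n$ finite coproducts of generators, and using a bar construction $|X\coprod X^{\coprod\bullet}\coprod Z|\simeq Z$;
\item fully faithfulness on generators follows from comparing mapping spaces: both sides give $\operatorname{Hom}(Y,X\coprod Y')$.
\end{itemize}

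The main obstacle is the generation step in the $\infty$-setting, namely showing that every object of $\operatorname{Ani}(\mathscr C)_{X/}$ is a sifted colimit of the $X\coprod Y$. For this I would use the bar construction $Z\simeq |X^{\coprod(\bullet+1)}\coprod Z|$ in the undercategory, which is a split simplicial object. Then resolve $Z$ and each $X^{\coprod n}$ by sifted colimits of finite coproducts in $\mathscr C_0$, and commute the colimits.
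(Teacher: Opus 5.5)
Your $1$-categorical half is fine (the bar coequalizer should read $X\coprod X\coprod Z\rightrightarrows X\coprod Z\to Z$, with the fold map on $X\coprod X$ and $\mathrm{id}_X\coprod(s,\mathrm{id}_Z)$ for the structure map $s\colon X\to Z$, but that is cosmetic). The gap is in the comparison of animations, where you identify two different objects.

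The generators of $\mathscr C_{X/}$ are the discrete objects $X\coprod Y$ formed in $\mathscr C$. Their hom-sets are $\operatorname{Hom}_{\mathscr C_{X/}}(X\coprod Y,X\coprod Y')=\operatorname{Hom}_{\mathscr C}(Y,X\coprod Y')$. The compact projectives you use in $\operatorname{Ani}(\mathscr C)_{X/}$ are instead the coproducts $X\coprod Y$ formed in $\operatorname{Ani}(\mathscr C)$. Their mapping spaces are $\operatorname{Hom}_{\operatorname{Ani}(\mathscr C)}(Y,X\coprod Y')$, i.e.\ the value at $Y$ of the presheaf $X\coprod Y'$.

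The $1$-categorical coproduct is only $\pi_0$ of the animated one. So a morphism of $\mathscr C_{X/}$ gives $Y\to\pi_0(X\coprod Y')$, which does not define your functor on morphisms. Likewise, your fully faithfulness bullet is exactly the claim that $X\coprod Y'$ computed in $\operatorname{Ani}(\mathscr C)$ is discrete. Your sentence about pushouts of coproducts of generators being computed in presheaves does not give this: $X$ is not a coproduct of generators, and finite coproducts in $\operatorname{Fun}_{\prod}(\mathscr C_0^{\mathrm{op}},\mathcal S)$ are not computed objectwise.

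This cannot be repaired in the stated generality, because the second assertion fails without a hypothesis. Take $\mathscr C$ to be associative rings, $S$ the free rings $\mathbb Z\langle T_1,\dots,T_n\rangle$, and $X=\mathbb F_p$.

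1. For a free simplicial resolution $P_\bullet\to\mathbb F_p$, one has $P_n*\mathbb Z\langle T\rangle\cong\bigoplus_{k\ge0}P_n^{\otimes(k+1)}$ as abelian groups, naturally in $P_n$.
2. Hence $\pi_1$ of the animated coproduct $\mathbb F_p\coprod\mathbb Z\langle T\rangle$ has $\operatorname{Tor}_1^{\mathbb Z}(\mathbb F_p,\mathbb F_p)\neq0$ as a summand.
3. This object is compact projective in $\operatorname{Ani}(\mathscr C)_{\mathbb F_p/}$ with non-discrete endomorphism space.
4. By contrast, every compact projective object of $\operatorname{Ani}(\mathscr C_{\mathbb F_p/})$ is a retract of a representable presheaf on a $1$-category, so it has discrete mapping spaces to the others.

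Hence no equivalence exists.

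What your argument actually needs, and what the lemma should assume, is that $X\coprod Y$ computed in $\operatorname{Ani}(\mathscr C)$ is discrete for every $Y\in S$. This holds in all cases used in the paper, by flatness. For commutative rings, $A\otimes^L_{\mathbb Z}\mathbb Z[T_1,\dots,T_n]\simeq A[T_1,\dots,T_n]$, and the same holds for $X=(\mathbb Z_{(p)},\mathbb F_p)$ in $\operatorname{RingMod^{p-tor}}$. Under that hypothesis the objects $X\coprod Y$ span a full subcategory of $\operatorname{Ani}(\mathscr C)_{X/}$ equivalent to the generators of $\mathscr C_{X/}$. The rest of your outline then goes through: sifted colimits, compact projectivity, generation via the bar resolution, and finally \cite[Proposition 5.5.8.22]{Lur09}. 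The paper itself offers no argument here beyond citing \cite[Lemma 2.12, Corollary 2.14]{Mao24}.
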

\begin{proof}
    This is \cite[Lemma 2.12]{Mao24} and \cite[Corollary 2.14]{Mao24}.
\end{proof}
Combining Lemma \ref{lem3.7} and Lemma \ref{lem3.8}, we can define the animation $\operatorname{Ani(AlgMod_*^{p-tor})}$ of the category $\operatorname{AlgMod_*^{p-tor}}$.

Next, we recall some properties of functors induced on the $\infty$-category obtained by animation.

\begin{lemma}\label{lem3.9}
    Let $\mathscr C_0$ be a small category admitting all finite coproducts and $\mathscr D$ be an $\infty$-category admitting all sifted colimits. Then the Yoneda embedding functor $j:\mathscr C_0\hookrightarrow \mathscr P _{\Sigma}(\mathscr C_0):=\operatorname{Fun}_{\prod}(\mathscr C_0,\mathcal S)$ induces an equivalence of $\infty$-categories\[\operatorname{Fun}_{\Sigma}(\mathscr P _{\Sigma}(\mathscr C_0),\mathscr D)\simeq \operatorname{Fun}(\mathscr C_0,\mathscr D),\] where the left hand-side is the full subcategory of the $\infty$-category of functors $\operatorname{Fun}(P _{\Sigma}(\mathscr C_0),\mathscr D)$ consisting of all functors preserving all sifted colimits.
    Furthermore, the quasi-inverse of this equivalence is given by the left Kan extension along $j$.
\end{lemma}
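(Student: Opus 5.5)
This is the universal property of the sifted-colimit completion, \cite[Proposition 5.5.8.15]{Lur09}. I would either cite it directly or reconstruct its proof along the following lines. (Note that the statement as written uses $\operatorname{Fun}_{\prod}(\mathscr C_0,\mathcal S)$; I read this as $\operatorname{Fun}_{\prod}(\mathscr C_0^{\mathrm{op}},\mathcal S)$, matching Definition \ref{def3.3}, so that the Yoneda embedding lands in it.)

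\textbf{Step 1: the Yoneda embedding lands in $\mathscr P_\Sigma(\mathscr C_0)$ and its image generates.} Representables $j(c)=\operatorname{Hom}(-,c)$ send finite coproducts in $\mathscr C_0$ to products, so $j$ factors through $\mathscr P_\Sigma(\mathscr C_0)$. Next I would show two facts:
\begin{itemize}
\item $\mathscr P_\Sigma(\mathscr C_0)$ is closed under sifted colimits in $\mathscr P(\mathscr C_0)$. This holds because finite products commute with sifted colimits in $\mathcal S$, which is the defining property of siftedness.
\item Every object is a sifted colimit of representables. For $F\in\mathscr P_\Sigma(\mathscr C_0)$, write $F\simeq\operatorname{colim}_{(\mathscr C_0)_{/F}} j$ and check that the indexing category $(\mathscr C_0)_{/F}$ is sifted. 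It has finite coproducts because $F$ preserves products, and an $\infty$-category with finite coproducts is sifted.
\end{itemize}
By Yoneda, each $j(c)$ is compact projective, meaning $\operatorname{Hom}(j(c),-)$ is evaluation at $c$ and so commutes with sifted colimits.

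\textbf{Step 2: left Kan extension gives the inverse.} Given $f:\mathscr C_0\to\mathscr D$, define $\operatorname{Lan}_j f$. It exists because the comma categories $(\mathscr C_0)_{/F}$ are sifted and $\mathscr D$ admits sifted colimits, so $\operatorname{Lan}_j f(F)=\operatorname{colim}_{(\mathscr C_0)_{/F}} f$. Since $j$ is fully faithful, $\operatorname{Lan}_j f\circ j\simeq f$.

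Then I would show that $\operatorname{Lan}_j f$ preserves sifted colimits. This is the \textbf{main obstacle}. My approach is to reduce it to a simpler statement. First check that it preserves filtered colimits and geometric realizations; then use that every sifted colimit in $\mathscr P_\Sigma$ can be built from these two kinds (\cite[Corollary 5.5.8.17]{Lur09}).

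Alternatively, I would argue with compact projectives directly. Because each $j(c)$ is compact projective, the colimit over $(\mathscr C_0)_{/\operatorname{colim}_k F_k}$ can be computed as a colimit over $k$ of the colimits over $(\mathscr C_0)_{/F_k}$. This is a cofinality argument comparing the Grothendieck construction of $k\mapsto(\mathscr C_0)_{/F_k}$ with $(\mathscr C_0)_{/\operatorname{colim}F_k}$.

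\textbf{Step 3: full faithfulness of restriction.} Let $G\in\operatorname{Fun}_\Sigma(\mathscr P_\Sigma(\mathscr C_0),\mathscr D)$. There is a canonical map $\operatorname{Lan}_j(G\circ j)\to G$. It is an equivalence because, by Step 1, every $F$ is the sifted colimit $\operatorname{colim}_{(\mathscr C_0)_{/F}} j$, and both sides preserve this colimit. Hence restriction along $j$ and $\operatorname{Lan}_j$ are mutually inverse equivalences $\operatorname{Fun}_\Sigma(\mathscr P_\Sigma(\mathscr C_0),\mathscr D)\simeq\operatorname{Fun}(\mathscr C_0,\mathscr D)$. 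This also proves the final claim that the quasi-inverse is the left Kan extension along $j$.
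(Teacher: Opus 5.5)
Your proposal matches the paper, which proves this lemma simply by citing \cite[Proposition 5.5.8.15]{Lur09}. You are also right that $\operatorname{Fun}_{\prod}(\mathscr C_0,\mathcal S)$ should be read as $\operatorname{Fun}_{\prod}(\mathscr C_0^{\mathrm{op}},\mathcal S)$, as in Definition \ref{def3.3}.

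If you do reconstruct the proof, handle the sifted-colimit step with your cofinality argument rather than the reduction to filtered colimits and geometric realizations. The cofinality argument works: compact projectivity of the $j(c)$ means $\operatorname{colim}_k F_k$ is computed pointwise, and universality of colimits in $\mathcal S$ then makes the evident functor from the Grothendieck construction of $k\mapsto(\mathscr C_0)_{/F_k}$ to $(\mathscr C_0)_{/\operatorname{colim}_k F_k}$ cofinal. The other route is the problem: that criterion for functors out of $\mathscr P_\Sigma(\mathscr C_0)$ is usually proved by identifying such a functor with the left Kan extension of its restriction, so it presupposes the very step you are proving.
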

\begin{proof}
    This is \cite[Proposition 5.5.8.15]{Lur09}.
\end{proof}

\begin{definition}\label{def3.10}
    Let $\mathscr C$ be a set of category having a set of compact projective generators $S$, $\mathscr C_0$ be the full subcategory of $\mathscr C$ spanned by $S$ and $\mathscr D$ be an $\infty$-category admitting all sifted colimits. Assume that the inclusion $\mathscr C_0\hookrightarrow \mathscr C$ preserves all finite coproducts. Let $F:\mathscr C\to \mathscr D$ be a functor of $\infty$-categories. Then the {\em animation} $\operatorname{Ani}(F)$  of the functor $F$ is the left Kan extension of the composition $\mathscr C_0\hookrightarrow \mathscr C\xrightarrow[]{F}\mathscr D$ along the Yoneda embedding $j:\mathscr C_0\hookrightarrow \operatorname{Ani}(\mathscr C)$.
\end{definition}
\begin{definition}\label{def3.11}
    Let $F:\mathscr C\to \mathscr D$ be a functor of categories such that $\mathscr C$ and $\mathscr D$ have their animations. Then the animation $\operatorname{Ani}(\mathscr C)\to \operatorname{Ani}(\mathscr D)$ of the composition $\mathscr C\xrightarrow[]{F}\mathscr D\hookrightarrow \operatorname{Ani}(\mathscr D)$ is also called the {\em animation} of $F$ and denoted by $\operatorname{Ani}(F)$.
\end{definition}

\begin{remark}\label{rem3.12} 
Let $\mathscr C$, $\mathscr D$ and $F:\mathscr C\to \mathscr D$ be as in the above definition.
    \begin{enumerate}
        \item Assume that the functor $F$ preserves all sifted colimits. Then there exists a canonical equivalence of functors $\pi_0\circ \operatorname{Ani}(F)\simeq F\circ \pi_0\in \operatorname{Fun}(\operatorname{Ani}(\mathscr C),\mathscr D)$. (\cite[Subsection 5.1.4]{CS24}).
        \item If $F$ preserves sifted colimits (resp. small colimits), then the animation $\operatorname{Ani}(F)$ also preserves sifted colimits (resp. small colimits). (\cite[Corollary A.25]{Mao24}).
    \end{enumerate}
\end{remark}

\begin{lemma}\label{lem3.13}
    Let $\mathscr C$ (resp. $\mathscr D, \mathscr E$) be a category, $S$ (resp. $T$, $U$) be a set of compact projective generators of $\mathscr C$ (resp. $\mathscr D$, $\mathscr E$) and $\mathscr C_0$ (resp. $\mathscr D_0, \mathscr E_0$) be the full subcategory of $\mathscr C$ (resp. $\mathscr D, \mathscr E$) generated by $S$ (resp. $T$, $U$) under finite coproducts. Let $F:\mathscr C\to \mathscr D$ and $G:\mathscr D\to \mathscr E$ be functors satisfying either of the following conditions:\begin{enumerate}
        \item $F(\mathscr C_0)\subseteq \operatorname{Ind}(\mathscr D_0)$,
        \item $\operatorname{Ani}(G)(F(\mathscr C_0))\subseteq \mathscr E$.
    \end{enumerate}
    Then the natural transformation\[\operatorname{Ani}(G)\circ \operatorname{Ani}(F)\to \operatorname{Ani}(G\circ F)\] is an equivalence.
\end{lemma}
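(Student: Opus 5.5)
Both sides are functors $\operatorname{Ani}(\mathscr C)\to\operatorname{Ani}(\mathscr E)$, and I will compare them through the universal property of Lemma \ref{lem3.9}. The right-hand side $\operatorname{Ani}(G\circ F)$ is by definition the left Kan extension of $\mathscr C_0\to\mathscr E\hookrightarrow\operatorname{Ani}(\mathscr E)$, so it preserves sifted colimits. The left-hand side also preserves sifted colimits, since $\operatorname{Ani}(F)$ and $\operatorname{Ani}(G)$ are left Kan extensions along $j$ and hence preserve sifted colimits by Lemma \ref{lem3.9}. The natural transformation is the one obtained by the universal property of the left Kan extension from the map
\[\operatorname{Ani}(G)(F(c))\to G(F(c)),\qquad c\in\mathscr C_0.\]
This map is in turn the canonical comparison $\operatorname{Ani}(G)|_{\mathscr D}\to G$, which sends an object $d\in\mathscr D\subseteq\operatorname{Ani}(\mathscr D)$ to the unit-type map $\operatorname{Ani}(G)(d)\to G(d)$. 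It is an equivalence for $d\in\mathscr D_0$, and more generally whenever $d$ is a sifted colimit of objects of $\mathscr D_0$ that is computed the same way in $\mathscr D$ and in $\operatorname{Ani}(\mathscr D)$ and is preserved by $G$. By Lemma \ref{lem3.9}, it therefore suffices to check that this map is an equivalence for every $c\in\mathscr C_0$.

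Under condition (1), $F(c)\simeq\operatorname{colim}_i d_i$ is a filtered colimit in $\mathscr D$ of objects $d_i\in\mathscr D_0$. The key steps are as follows.
\begin{enumerate}
\item The inclusion $\mathscr D\hookrightarrow\operatorname{Ani}(\mathscr D)$ preserves filtered colimits. This holds because a filtered colimit of functors in $\operatorname{Fun}_{\prod}(\mathscr D_0^{\mathrm{op}},\mathcal S)$ is computed pointwise, filtered colimits of discrete spaces are discrete, and the objects of $\mathscr D_0$ are compact in $\mathscr D$.
\item Hence $\operatorname{Ani}(G)(F(c))\simeq\operatorname{colim}_i\operatorname{Ani}(G)(d_i)\simeq\operatorname{colim}_i G(d_i)$, where the colimit is taken in $\operatorname{Ani}(\mathscr E)$.
\item This should be compared with $G(\operatorname{colim}_i d_i)$.
\end{enumerate}
I expect step 3 to be the main obstacle. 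It needs $G$ to preserve filtered colimits, at least up to the inclusion $\mathscr E\hookrightarrow\operatorname{Ani}(\mathscr E)$. This is presumably intended either as a standing hypothesis or as a consequence of how $\operatorname{Ani}(G)$ is defined via Definition \ref{def3.11}. I would try first to read $\operatorname{Ind}(\mathscr D_0)$ as sitting inside $\operatorname{Ani}(\mathscr D)$, so that $F(c)$ already lies in the region where the left Kan extension $\operatorname{Ani}(G)$ is computed by the same filtered colimit that defines it. The equivalence then holds by construction, provided $G$ is taken to commute with filtered colimits on $\operatorname{Ind}(\mathscr D_0)$, which is the case in all intended applications.

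Under condition (2), $\operatorname{Ani}(G)(F(c))$ is discrete. I would resolve $F(c)$ by a simplicial object $d_\bullet$ of $\mathscr D_0$ whose geometric realization in $\operatorname{Ani}(\mathscr D)$ is $F(c)$, for instance the canonical free resolution. Then
\[\operatorname{Ani}(G)(F(c))\simeq|G(d_\bullet)|.\]
Since this realization is $0$-truncated, it equals its $\pi_0$, which is the reflexive coequalizer of $G(d_1)\rightrightarrows G(d_0)$ in $\mathscr E$. This reduces the claim to $G$ preserving that reflexive coequalizer, that is, $\pi_0\circ\operatorname{Ani}(G)\simeq G\circ\pi_0$ on $F(c)$, in the spirit of Remark \ref{rem3.12}(1). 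I would verify this compatibility first and then conclude by Lemma \ref{lem3.9}.
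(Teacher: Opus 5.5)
You follow the same route as the paper, whose proof is just an appeal to Remark \ref{rem3.12} and \cite[Proposition 5.1.5]{CS24}. The route is to reduce to $c\in\mathscr C_0$, using that both sides preserve sifted colimits, and then to identify $\operatorname{Ani}(G)(F(c))$ with $G(F(c))$.

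\textbf{The gap.} The step you single out is a genuine gap, and it cannot be closed from the stated hypotheses. Filtered-colimit preservation of $G$ is not a consequence of Definition \ref{def3.11}, since $\operatorname{Ani}(G)$ only remembers $G|_{\mathscr D_0}$. Here is a counterexample to the literal statement:
\begin{itemize}
\item Take $\mathscr C=\mathscr D=\mathscr E=\operatorname{Set}$, $F$ constant with value $\mathbb N$, and $G(Y)=Y^{\mathbb N}$.
\item Since $\operatorname{Ani}(G)$ preserves filtered colimits, $\operatorname{Ani}(G)(\mathbb N)\simeq\operatorname{colim}_n\{0,\dots,n\}^{\mathbb N}$. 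This is the discrete set of bounded sequences.
\item So conditions (1) and (2) both hold. Yet the comparison map to $\operatorname{Ani}(G\circ F)(\ast)=\mathbb N^{\mathbb N}$ is not surjective.
\end{itemize}
Likewise, the compatibility you propose to ``verify first'' in case (2) is exactly $\pi_0\circ\operatorname{Ani}(G)\simeq G\circ\pi_0$, i.e.\ Remark \ref{rem3.12}(1). That is only available when $G$ preserves sifted colimits.

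So the lemma needs the extra hypothesis that $G$ preserves filtered colimits and reflexive coequalizers. The paper's one-line proof uses this tacitly through Remark \ref{rem3.12}. You should state it as a hypothesis rather than try to derive it or defer to ``intended applications''.

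\textbf{With the hypothesis added.} Your case (1) argument is then complete. In case (2), do not resolve $F(c)$ by objects of $\mathscr D_0$. This is impossible unless $F(c)$ is compact in $\mathscr D$, because $\pi_0$ of such a realization is the coequalizer of $d_1\rightrightarrows d_0$, which is compact. The canonical free resolution has terms that are infinite coproducts of generators, i.e.\ objects of $\operatorname{Ind}(\mathscr D_0)$. The identity $\operatorname{Ani}(G)(d_n)\simeq G(d_n)$ for those terms therefore again uses filtered-colimit preservation.

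It is cleaner to argue as the paper does. If $\operatorname{Ani}(G)(F(c))$ is discrete, then by Remark \ref{rem3.12}(1)
\[
\operatorname{Ani}(G)(F(c))\simeq\pi_0\operatorname{Ani}(G)(F(c))\simeq G(\pi_0F(c))=G(F(c)).
\]
Case (1) is then a special case, because $\operatorname{colim}_iG(d_i)$ is discrete.
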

\begin{proof}
    This follows from Remark \ref{rem3.12}. For more detail, see \cite[Proposition 5.1.5]{CS24}.
    
\end{proof}
\begin{example}\label{exa3.14}
    \begin{enumerate}
        \item Let $[-]:\operatorname{Ani(Ring)}\to \mathcal S\simeq \operatorname{Ani(Set)}$ be the functor defined by \[R\mapsto \operatorname{Hom}_{\operatorname{Ani(Ring)}}(\mathbb Z[X],R).\] Following \cite[Appendix A]{BL22}, we call this $[R]$ the underlying space of $R$. Since this functor preserves all sifted colimits (by Remark \ref{rem3.4}), it coincides with the animation of the forgetful functor $\operatorname{Ring}\to \operatorname{Set}$. 
        \item Let $[-]:\operatorname{Ani(RingMod)}\to \mathcal S$ be the functor defined by \[(A,M)\mapsto \operatorname{Hom}_{\operatorname{Ani(RingMod)}}((\mathbb Z[X],\mathbb Z[X]),(A,M)).\] Since this functor preserves all sifted colimits, this coincides with the animation of the functor $\operatorname{RingMod}\to \operatorname{Set}:(A,M)\mapsto A\times M$. Furthermore, $[-]$ commutes with $\operatorname{RingMod}\to \operatorname{Set}:(A,M)\mapsto A\times M$.
        \item Let $[-]_1:\operatorname{Ani(RingMod)}\to \mathcal S$ be the functor defined by \[(A,M)\mapsto \operatorname{Hom}_{\operatorname{Ani(RingMod)}}((\mathbb Z[X],0),(A,M)).\] Since this functor preserves all sifted colimits, this coincides with the animation of the functor $\operatorname{RingMod}\to \operatorname{Set}:(A,M)\mapsto A$. Furthermore, $[-]_1$ commutes with $\operatorname{RingMod}\to \operatorname{Set}:(A,M)\mapsto A$.
        \item Let $[-]_2:\operatorname{Ani(RingMod)}\to \mathcal S$ be the functor defined by \[(A,M)\mapsto \operatorname{Hom}_{\operatorname{Ani(RingMod)}}((\mathbb Z,\mathbb Z),(A,M)).\] Since this functor preserves all sifted colimits, this coincides with the animation of the functor $\operatorname{RingMod}\to \operatorname{Set}:(A,M)\mapsto M$. Furthermore, $[-]_2$ commutes with $\operatorname{RingMod}\to \operatorname{Set}:(A,M)\mapsto M$.
        \item Let $\operatorname{Ani(RingMod)}\to \mathcal S\times \mathcal S$ be the functor induced by the functors $[-]_1$ and $[-]_2$ defined in (2) and (3). It follows from (2) and (3) that this functor preserves all sifted colimits. Thus, this functor coincides with the animation of the functor $\operatorname{RingMod}\to \operatorname{Set}\times\operatorname{Set}:(A,M)\mapsto (A,M)$. Since the set of objects $\{(\mathbb Z[X],0),(\mathbb Z,\mathbb Z)\}$ generates the $\infty$-category $\operatorname{Ani(RingMod)}$ under colimits, this functor is conservative.
    \end{enumerate}
\end{example}

The same is true for our category $\operatorname{RingMod^{p-tor}}$
\begin{example}\label{exa3.15}
    \begin{enumerate}
        \item Let $[-]:\operatorname{Ani(RingMod^{p-tor})}\to \mathcal S$ be the functor defined by \[(A,M)\mapsto \operatorname{Hom}_{\operatorname{Ani(RingMod^{p-tor})}}((\mathbb Z[X],\mathbb F_p[X]),(A,M)).\] Since this functor preserves all sifted colimits, this coincides with the animation of the functor $\operatorname{RingMod}\to \operatorname{Set}:(A,M)\mapsto A\times M$. Furthermore, $[-]$ commutes with $\operatorname{RingMod}\to \operatorname{Set}:(A,M)\mapsto A\times M$.
        \item Let $[-]_1:\operatorname{Ani(RingMod^{p-tor})}\to \mathcal S$ be the functor defined by \[(A,M)\mapsto \operatorname{Hom}_{\operatorname{Ani(RingMod^{p-tor})}}((\mathbb Z[X],0),(A,M)).\] Since this functor preserves all sifted colimits, this coincides with the animation of the functor \[\operatorname{RingMod^{p-tor}}\to \operatorname{Set}:(A,M)\mapsto A.\] Furthermore, $[-]_1$ commutes with $\operatorname{RingMod^{p-tor}}\to \operatorname{Set}:(A,M)\mapsto A$.
        \item Let $[-]_2:\operatorname{Ani(RingMod^{p-tor})}\to \mathcal S$ be the functor defined by \[(A,M)\mapsto \operatorname{Hom}_{\operatorname{Ani(RingMod^{p-tor})}}((\mathbb Z,\mathbb F_p),(A,M)).\] Since this functor preserves all sifted colimits, this coincides with the animation of the functor \[\operatorname{RingMod^{p-tor}}\to \operatorname{Set}:(A,M)\mapsto M.\] Furthermore, $[-]_2$ commutes with $\operatorname{RingMod^{p-tor}}\to \operatorname{Set}:(A,M)\mapsto M$.
        \item Let $\operatorname{Ani(RingMod^{p-tor})}\to \mathcal S\times \mathcal S$ be the functor induced by the functors $[-]_1$ and $[-]_2$ defined in (2) and (3). It follows from (2) and (3) that this functor preserves all sifted colimits. Thus, this functor coincides with the animation of the functor $\operatorname{RingMod^{p-tor}}\to \operatorname{Set}\times\operatorname{Set}:(A,M)\mapsto (A,M)$. Since the set of objects $\{(\mathbb Z[X],0),(\mathbb Z,\mathbb F_p)\}$ generates the $\infty$-category $\operatorname{Ani(RingMod^{p-tor})}$ under colimits, this functor is conservative.
    \end{enumerate}
\end{example}

Next, we see a relationship between the category $\operatorname{Ani(RingMod^{p-tor})}$ and derived $\infty$-categories. Before this, we define some notations.

\begin{definition}\label{def3.16}(\cite[Notation 25.2.1.1]{Lur18}).
    \begin{itemize}
         
        \item Let $\operatorname{CAlg(Sp)}$ be the $\infty$-category of $\mathbb E_{\infty}$-rings. Then there exists a canonical functor $\operatorname{Ani(Ring)}\to \operatorname{CAlg(Sp)}$ defined by animation. We call the image of an animated ring $A$ by this functor the underlying $\mathbb E_{\infty}$-ring of $A$ and denote it by $A^{\circ}$.
        \item Let $\operatorname{Mod(Sp)}$ be the $\infty$-category of pairs $(A,M)$, where $A$ is an $\mathbb E_{\infty}$-ring and $M$ is an $A$-module.
        \item Let $\operatorname{Mod(Sp)}_{\ge0}$ be the full subcategory of $\operatorname{Mod(Sp)}$ spanned by pairs $(A,M)$ such that $M$ is connective.
        \item Let $\operatorname{Mod(Sp)}_{\ge0}\to \operatorname{CAlg(Sp)}$ be the functor defined by $(A,M)\mapsto A$.
        \item Denote $\operatorname{SCRMod}_{\ge0}:=\operatorname{Ani(Ring)}\times_{\operatorname{CAlg(Sp)}}\operatorname{Mod(Sp)}_{\ge0}$.
    \end{itemize}
\end{definition}
We introduce similar notations:
\begin{definition}\label{def3.17}
    Let $\operatorname{Ani(Ring)}\to \operatorname{CAlg(Sp)}$ be the functor defined by the composition \[\operatorname{Ani(Ring)}\xrightarrow[]{A\mapsto A/^Lp}\operatorname{Ani(Ring)}\xrightarrow[]{A\mapsto A^{\circ}}\operatorname{CAlg(Sp)}.\]
    Let $\operatorname{Mod(Sp)}_{\ge0}\to \operatorname{CAlg(Sp)}$ be the functor defined by $(A,M)\mapsto A$. Using them, we define \[\operatorname{SCRMod^{p-tor}}_{\ge0}:=\operatorname{Ani(Ring)}\times_{\operatorname{CAlg(Sp)}}\operatorname{Mod(Sp)}_{\ge0}.\]
\end{definition}
\begin{prop}\label{pro3.18}
    There exists a canonical equivalence $\operatorname{SCRMod}_{\ge0}\simeq \operatorname{Ani(RingMod)}$ of $\infty$-categories compatible with the projections to the $\infty$-category $\operatorname{Ani(Ring)}$. In particular, the fiber of the projection $\operatorname{Ani(RingMod)}\to \operatorname{Ani(Ring)}$ on an animated ring $A\in \operatorname{Ani(Ring)}$ is equivalent to the connective part $\mathcal D_{\ge0}(A)$ of the derived $\infty$-category $\mathcal D(A)$.
\end{prop}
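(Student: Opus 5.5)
We would prove Proposition \ref{pro3.18} by the standard recognition principle for animations: build a functor out of $\operatorname{Ani(RingMod)}$ by left Kan extension from the compact projective generators of Lemma \ref{lem3.6}, and show it is an equivalence onto $\operatorname{SCRMod}_{\ge0}$. Concretely, let $\mathscr C_0\subseteq \operatorname{RingMod}$ be the full subcategory of pairs $(P,F)$ with $P$ a finitely generated polynomial ring and $F$ finite free over $P$. Then $\mathscr C_0\to \operatorname{SCRMod}_{\ge0}$, $(P,F)\mapsto (P,F)$, is given by regarding $P$ as a discrete animated ring and $F$ as a connective $P^{\circ}$-module. By Lemma \ref{lem3.9}, this extends uniquely to a sifted-colimit-preserving functor
\[\Phi:\operatorname{Ani(RingMod)}\to \operatorname{SCRMod}_{\ge0}.\]
By construction, its composite with the projection to $\operatorname{Ani(Ring)}$ agrees with $\operatorname{Ani}$ of the projection $\operatorname{RingMod}\to\operatorname{Ring}$. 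That functor preserves colimits by the colimit formula in the proof of Lemma \ref{lem3.6}, which gives the compatibility with projections.

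To show $\Phi$ is an equivalence we use the characterization of $\mathscr P_\Sigma(\mathscr C_0)$ (\cite[Proposition 5.5.8.22]{Lur09}). It suffices to check three things.
\begin{enumerate}
\item $\operatorname{SCRMod}_{\ge0}$ admits sifted colimits and the projection to $\operatorname{Ani(Ring)}$ preserves them.
\item The objects $(P,F)$ are compact projective in $\operatorname{SCRMod}_{\ge0}$ and $\Phi$ is fully faithful on $\mathscr C_0$.
\item These objects generate $\operatorname{SCRMod}_{\ge0}$ under sifted colimits.
\end{enumerate}

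For (1), sifted colimits in $\operatorname{Mod(Sp)}_{\ge0}$ are computed by taking the colimit $A$ of the rings and then the colimit of the base changes $A\otimes_{A_i}M_i$. Connectivity is preserved, and $\operatorname{Ani(Ring)}\to\operatorname{CAlg(Sp)}$ preserves sifted colimits.

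For (2), we compute the mapping spaces. A map $(P,F)\to (A,M)$ in the fiber product consists of a map $P\to A$ of animated rings together with a $P^\circ$-linear map $F\to M$, that is, a map $A\otimes_P F\to M$ of $A$-modules. This gives
\[\operatorname{Hom}((P,F),(A,M))\simeq [A]^{\times d}\times \Omega^{\infty}M^{\times r}.\]
Both factors commute with sifted colimits: $[-]$ does by Remark \ref{rem3.4}, and $\Omega^\infty$ on connective modules does because sifted colimits of connective spectra are computed levelwise on $\Omega^\infty$. Restricted to discrete objects, the same formula recovers the Hom sets in $\operatorname{RingMod}$, so $\Phi$ is fully faithful on $\mathscr C_0$.

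Step (3), generation, is the main obstacle. Take $(A,M)$ in $\operatorname{SCRMod}_{\ge0}$. First write $A$ as a sifted colimit of polynomial rings $P_i$, which is possible by Remark \ref{rem3.4}. Then $(A,0)\simeq\operatorname{colim}(P_i,0)$, using that the zero module is preserved by these colimits. Next, $M$ is a geometric realization of free $A$-modules $A^{\oplus K_n}$, for instance via a simplicial resolution from the monad $\operatorname{Sym}$-free / bar construction. Each $(A,A^{\oplus K})$ is a sifted colimit of objects $(P_i,P_i^{\oplus K'})$: finite $K'$ exhausts $K$ filteredly, and the coproduct with $(\mathbb Z,\mathbb Z)$ can be formed compatibly. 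Here one must be careful that the colimit is taken in the total category while the base varies. I would handle this with the formula $\operatorname{colim}(A_i,M_i)=(\operatorname{colim}A_i,\operatorname{colim}A\otimes_{A_i}M_i)$ from step (1).

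Once $\Phi$ is an equivalence, the statement about fibers follows. The fiber of $\operatorname{SCRMod}_{\ge0}\to\operatorname{Ani(Ring)}$ over $A$ is the fiber of $\operatorname{Mod(Sp)}_{\ge0}\to\operatorname{CAlg(Sp)}$ over $A^\circ$, namely the connective $A^\circ$-modules. By the convention of \cite{Lur18}, this is $\mathcal D_{\ge0}(A)$. Alternatively, the whole proposition is \cite[Section 25.2.1]{Lur18}, and we could cite it directly.
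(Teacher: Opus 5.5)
Your argument is correct, but it takes a different route from the paper. The paper proves Proposition~\ref{pro3.18} purely by citing \cite[Proposition 25.2.1.2]{Lur18}. You instead give a self-contained recognition-principle argument via \cite[Proposition 5.5.8.22]{Lur09}.

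Your argument is close to the one the paper writes out for the $p$-torsion variant, Proposition~\ref{pro3.19}, and differs only at the last step.
\begin{itemize}
\item In Proposition~\ref{pro3.19} the paper shows that the animated functor preserves all small colimits and applies the adjoint functor theorem. It then proves that the right adjoint is conservative, using that $(A,M)\mapsto (A,M)\in\mathcal S\times\mathcal S$ is conservative and corepresented by the generators.
\item You instead verify generation under sifted colimits by hand. You resolve $A$ by polynomial rings and $M$ by free modules (via the free-module monad, not $\operatorname{Sym}$), then combine the two resolutions over a product of sifted index categories.
\end{itemize}
The citation is shortest. Your route is self-contained, makes the compatibility with the projections explicit, and transfers verbatim to Proposition~\ref{pro3.19}. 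The conservativity route avoids explicit resolutions, at the price of presentability and the adjoint functor theorem.

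Two small points to tighten:
\begin{itemize}
\item In step (2) you use that $(A,M)\mapsto M$ (underlying spectrum) preserves sifted colimits in $\operatorname{SCRMod}_{\ge0}$. This is not visible from the formula $\operatorname{colim}_i(A\otimes_{A_i}M_i)$ alone. It should be justified, for instance by the bar-construction argument, or by Lurie's result that the forgetful functor from modules to algebras $\times$ spectra preserves sifted colimits.
\item Your remark that $\operatorname{RingMod}\to\operatorname{Ring}$ preserves colimits is beside the point. Compatibility with the projections only needs that $\operatorname{SCRMod}_{\ge0}\to\operatorname{Ani(Ring)}$ preserves sifted colimits, which you already have from step (1).
\end{itemize}
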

\begin{proof}
    This is \cite[Proposition 25.2.1.2]{Lur18}.
\end{proof}
The next proposition is a variant of Proposition \ref{pro3.18}.
\begin{prop}\label{pro3.19}
     There exists a canonical equivalence $\operatorname{SCRMod^{p-tor}}_{\ge0}\simeq \operatorname{Ani(RingMod^{p-tor})}$ of $\infty$-categories compatible with the projections to the $\infty$-category $\operatorname{Ani(Ring)}$. In particular, the fiber of the projection $\operatorname{Ani(RingMod)}\to \operatorname{Ani(Ring)}$ on an animated ring $A\in \operatorname{Ani(Ring)}$ is equivalent to the connective part $\mathcal D_{\ge0}(A/^Lp)$ of the derived $\infty$-category $\mathcal D(A/^Lp)$.
\end{prop}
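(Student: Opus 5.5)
The plan is to deduce Proposition \ref{pro3.19} from Proposition \ref{pro3.18}, by identifying both sides as the same full subcategory of $\operatorname{Ani(RingMod)}\simeq\operatorname{SCRMod}_{\ge0}$.

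\textbf{Step 1: the $\infty$-categorical side.} Restriction of scalars along $A^{\circ}\to (A/^Lp)^{\circ}$ gives a fully faithful functor $\mathcal D_{\ge0}(A/^Lp)\to \mathcal D_{\ge0}(A)$. It is not an equivalence onto $p$-torsion modules in the naive sense. The functor $\operatorname{SCRMod^{p-tor}}_{\ge0}\to \operatorname{SCRMod}_{\ge0}$, $(A,N)\mapsto (A,N)$, is therefore not fully faithful. So I will not try to identify a full subcategory. Instead I will construct a comparison functor and check it on generators.

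\textbf{Step 2: the comparison functor.} Consider the functor
\[\Phi:\operatorname{Ani(RingMod^{p-tor})}\to \operatorname{SCRMod^{p-tor}}_{\ge0}.\]
I define it as the sifted-colimit-preserving extension, via Lemma \ref{lem3.9}, of the functor on compact projective generators of Lemma \ref{lem3.7} given by $(P,F)\mapsto (P,F)$. Here $F$ is regarded as a discrete, hence connective, module over $(P/^Lp)^{\circ}$. This is legitimate because $P$ is $p$-torsionfree, so $P/^Lp\simeq P/p$ is discrete and $F$ is a genuine $P/p$-module.

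For this to make sense, the target must admit sifted colimits, and the projection to $\operatorname{Ani(Ring)}$ must preserve them. Both follow from the fiber-product description: $\operatorname{Mod(Sp)}_{\ge0}\to\operatorname{CAlg(Sp)}$ is a cocartesian fibration whose fibers have colimits, and $A\mapsto A/^Lp$ preserves sifted colimits.

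\textbf{Step 3: full faithfulness and essential surjectivity.} This is the core of the argument, and I will follow the pattern of Lurie's proof of \cite[Proposition 25.2.1.2]{Lur18}.

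For full faithfulness, I check that $\Phi(P,F)$ is compact projective in $\operatorname{SCRMod^{p-tor}}_{\ge0}$ and compute its mapping spaces. Let $d$ be the number of variables of $P$ and $r$ the rank of $F$. I expect
\[\operatorname{Hom}((P,F),(A,N))\simeq [A]^{d}\times \Omega^{\infty}(N)^{r}.\]
This comes from a fibration over $\operatorname{Hom}(P,A)=[A]^d$ whose fiber is
\[\operatorname{Hom}_{P/p}(F,N)\simeq\operatorname{Hom}_{A/^Lp}(F\otimes_{P/p}A/^Lp,N).\]
This base change uses $P/p\otimes^L_P A\simeq A/^Lp$. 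The right-hand side of the displayed equivalence commutes with sifted colimits, because $\Omega^\infty$ does on connective objects. It also matches the mapping spaces in $\operatorname{Ani(RingMod^{p-tor})}$ computed via Example \ref{exa3.15}. Full faithfulness then follows because a sifted-colimit-preserving functor that is fully faithful on compact projective generators, and sends them to compact projectives, is fully faithful.

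For essential surjectivity, I show the images generate under sifted colimits. Given $(A,N)$, I first resolve $A$ simplicially by polynomial rings. Over each level I resolve $N$ by free $A/^Lp$-modules, which are base changes of the free $P/p$-modules. A geometric realization argument then finishes.

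\textbf{Step 4: fibers and the main obstacle.} The statement about fibers follows from the fiber-product definition of $\operatorname{SCRMod^{p-tor}}_{\ge0}$: the fiber over $A$ is the fiber of $\operatorname{Mod(Sp)}_{\ge0}$ over $(A/^Lp)^\circ$, which is $\mathcal D_{\ge0}(A/^Lp)$.

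The main obstacle is Step 3. It requires constructing the straightening data carefully, so that mapping spaces in the fiber product really are fibrations over $[A]^d$ with the stated fibers, and checking the commutation with sifted colimits coherently. I would handle this by copying the structure of Lurie's proof and replacing $\operatorname{Mod}_{A}$ by $\operatorname{Mod}_{A/^Lp}$ throughout.
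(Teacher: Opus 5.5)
Your comparison functor (the sifted-colimit extension of $(P,F)\mapsto(P,F)$) and your full-faithfulness argument match the paper's. In both, the images of the generators are compact projective because $(A,N)\mapsto[A]$ and $(A,N)\mapsto\Omega^\infty N$ preserve sifted colimits on $\operatorname{SCRMod^{p-tor}}_{\ge0}$ and are corepresented by $(\mathbb Z[X],0)$ and $(\mathbb Z,\mathbb F_p)$. The functor is also fully faithful on the generators.

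The genuine difference is essential surjectivity.
\begin{itemize}
\item \textbf{The paper's route.} Since $f$ preserves finite coproducts of generators, its animation preserves all small colimits. Presentability of the fiber product and the adjoint functor theorem then give a right adjoint $G$. This $G$ is conservative because $([-]_1,[-]_2)\circ G$ is the jointly conservative forgetful functor $(A,M)\mapsto([A],\Omega^\infty M)$. A fully faithful left adjoint with a conservative right adjoint is an equivalence.
\item \textbf{Your route.} You show directly that the essential image contains every object. This avoids presentability and the adjoint functor theorem, but it needs explicit control of colimits in the fiber product.
\end{itemize}

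Your sketch needs to be rearranged to work. Resolving $N$ ``over each level'' does not produce a simplicial object of $\operatorname{RingMod^{p-tor}}$. The face maps of a free resolution of $N$ are $A/^Lp$-linear and do not descend to $P_n/p$. Do it in two steps instead.
\begin{enumerate}
\item $(A,(A/^Lp)^{\oplus I})\simeq|(P_\bullet,(P_\bullet/p)^{\oplus I})|$ lies in the image. The module part of this realization is a constant simplicial object.
\item $(A,N)\simeq|(A,G_\bullet)|$ for a free resolution $G_\bullet\to N$ in $\mathcal D_{\ge0}(A/^Lp)$. This holds because colimits over the weakly contractible $\Delta^{\mathrm{op}}$ of diagrams lying over $\mathrm{id}_A$ are computed in the fiber. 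It then lies in the image because the essential image of a fully faithful sifted-colimit-preserving functor is closed under sifted colimits.
\end{enumerate}
You should also state the compatibility with the projections, which the paper checks in its last paragraph. The composite with the projection preserves sifted colimits and restricts to $(P,F)\mapsto P$ on generators.

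Finally, delete Step 1 and the opening sentence, since you never use Proposition \ref{pro3.18}. The claim in Step 1 is also false: restriction along $A\to A/^Lp$ is not fully faithful on connective modules. For example, $\operatorname{Ext}^1_{\mathbb Z_{(p)}}(\mathbb F_p,\mathbb F_p)\cong\mathbb F_p$ while $\operatorname{Ext}^1_{\mathbb F_p}(\mathbb F_p,\mathbb F_p)=0$, and both $\mathbb F_p$ and $\mathbb F_p[1]$ are connective. In general one would need $A/^Lp\otimes^L_AA/^Lp\simeq A/^Lp$. Nothing later in your argument depends on this claim.
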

\begin{proof}
    By \cite[Proposition. 5.5.3.12]{Lur09}, the $\infty$-category $\operatorname{SCRMod^{p-tor}}_{\ge0}$ is presentable, in particular admitting all sifted colimits. Thus, the functor \[f:\operatorname{RingMod^{p-tor}}\to \operatorname{SCRMod^{p-tor}}_{\ge0}:(A,M)\mapsto (A,M)\] induces \[F:\operatorname{Ani(RingMod^{p-tor})}\to \operatorname{SCRMod^{p-tor}}_{\ge0}\] by animation. 
    On the other hand, it follows from the description of colimits in the $\infty$-category defined by a fiber product of presentable $\infty$-categories, together with the fact that the forgetful functors $\operatorname{Ani(Ring)}\to \mathcal S$ and $\operatorname{Mod(Sp)}_{\ge0}\to \mathcal S$ commute with sifted colimits, that the functors \[\operatorname{SCRMod^{p-tor}}_{\ge0}\to \mathcal S:(A,M)\mapsto A\] and \[\operatorname{SCRMod^{p-tor}}_{\ge0}\to \mathcal S:(A,M)\mapsto M\] commute with all sifted colimits. This implies that objects $(\mathbb Z[X],0)$ and $(\mathbb Z,\mathbb F_p)$ are compact projective in $\operatorname{SCRMod^{p-tor}}_{\ge0}$. Therefore, the functor $f$ preserves compact projective objects. This observation and the fact that $f$ is fully faithful imply that $F$ is also fully faithful. Furthermore, it follows from  the description of colimits in the presentable $\infty$-category obtained by fiber products of presentable $\infty$-categories with presentable functors that $f$ preserves finite coproducts after restricting to the full subcategory spanned by compact projective objects. This implies that the functor $F$ preserves all small colimits. Therefore the adjoint functor theorem (\cite[Corollary 5.5.2.9]{Lur09}) implies that there exists a right adjoint $G$ of $F$. Since $F$ is fully faithful, it reduces to show that the functor $G$ is conservative. The composition \[\operatorname{SCRMod^{p-tor}}_{\ge0}\xrightarrow[]{G}\operatorname{Ani(RingMod^{p-tor})}\xrightarrow[]{([-]_1,[-]_2)}\mathcal S\times \mathcal S\] can be written as $(\operatorname{Hom}((\mathbb Z[X],0),G(-)),\operatorname{Hom}((\mathbb Z,\mathbb F_p),G(-)))$. By the right adjointness of $G$, this functor coincides with the functor $\operatorname{SCRMod^{p-tor}}_{\ge0}\to \mathcal S\times\mathcal S$ defined by $(A,M)\mapsto (A,M)$. Since this last functor is conservative, the functor $G$ is also conservative.  

    Next, we prove that the projections to the $\infty$-category $\operatorname{Ani(Ring)}$ is compatible under the identification $\operatorname{SCRMod^{p-tor}}_{\ge0}\simeq \operatorname{Ani(RingMod^{p-tor})}$. This follows from the fact that the composition \[\operatorname{RingMod^{p-tor,cp}}\xrightarrow[]{f}\operatorname{SCRMod^{p-tor}}_{\ge0}\xrightarrow[]{(A,M)\mapsto A}\operatorname{Ani(Ring)}\] coincides with the canonical functor $\operatorname{RingMod^{p-tor,cp}}\xrightarrow[]{(P,F)\mapsto P}\operatorname{Ani(Ring)}$, where $\operatorname{RingMod^{p-tor,cp}}$ is the full subcategory of $\operatorname{RingMod^{p-tor}}$ spanned by the pairs $(P,F)$ of a finitely generated polynomial ring $P$ and a finite free $P/p$-module $F$.
\end{proof}

\begin{remark}\label{rem3.20}
    By Proposition \ref{pro3.19} and Lemma \ref{lem3.8}, objects of the $\infty$-category $\operatorname{Ani(RingMod_*^{p-tor})}$ can be written as triples $(A,M,m)$, where $A$ is an animated $\mathbb Z_{(p)}$-algebra , $M$ is a connective $A/^Lp$-module (i.e. an object of $\mathcal D_{\ge 0}(A/^Lp)$) and $m$ is a map $\mathbb A/^Lp\to M$ of $\mathcal D_{\ge 0}(A/^Lp)$.
\end{remark}

\section{Definitions of Frobenius--Witt cotangent complexes}
Using the preceding preliminaries, we define the animation of Frobenius--Witt derivations.
\begin{definition}\label{def4.1}
    We denote by \[W^{\mathrm{an}}(-,-,-):\operatorname{Ani}(\operatorname{AlgMod^{p-tor}_*})\to \operatorname{Ani}(\operatorname{Ring})\] the animation of the functor
    \[W(-,-,-):\operatorname{AlgMod^{p-tor}_*}\to \operatorname{Ring}.\] 
\end{definition}
\begin{remark}\label{rem4.2}
    \begin{enumerate}
        \item We denote by $\mathrm{pr}_1:W^{\mathrm{an}}(A,M,m)\to R$ the natural transformation defined by the animation of the natural transformation $W(R,M,m)\to R:(a,x)\mapsto a$. Remark that this is a map of animated rings.
        \item We denote by $\mathrm{pr}_2:W^{\mathrm{an}}(A,M,m)\to M$ the natural transformation defined by the animation of the natural transformation $W(R,M,m)\to M:(a,x)\mapsto x$. Remark that this is only a map of spaces.
        \item Let $(\mathrm{pr}_1,\mathrm{pr}_2):W_2(R,M,m)\to R\times M$ be the map of spaces induced by $\mathrm{pr}_1$ and $\mathrm{pr}_2$. Since this functor preserves all sifted colimits by definition, it coincides with the animation of the functor \[\operatorname{AlgMod_*^{p-tor}}\to \operatorname{Set}:(A,M,m)\mapsto A\times M.\]  It follows from this observation and the canonical isomorphism $W_2(A,M,m)\xrightarrow[]{\cong}A\times M$ for each discrete object $(A,M,m)$ that there exists a natural equivalence \[W_2^{an}(A,M,m)\xrightarrow[]{\simeq } A\times M\] of spaces for all $(A,M,m)\in \operatorname{Ani(RingMod_*^{p-tor})}$.
    \end{enumerate}
\end{remark}
The following lemma is an analogue of \cite[Proposition A.3]{BL22}.
\begin{lemma}\label{lem4.3}
    Let $A$ be a $\mathbb Z_{(p)}$-algebra, $M$ be an $A/p$-module and $m$ be an element of $M$. Then there exists a canonical equivalence of animated rings\[W^{\mathrm{an}}_2(A,M,m)\xrightarrow[]{\simeq } W_2(A,M,m).\]
\end{lemma}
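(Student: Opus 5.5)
The plan is to follow the strategy of \cite[Proposition A.3]{BL22}: reduce to showing that the animated ring $W^{\mathrm{an}}_2(A,M,m)$ is discrete with the expected $\pi_0$. First I construct the comparison map. The discrete triple $(A,M,m)$ is a sifted colimit (computed in $\operatorname{AlgMod^{p-tor}_*}$) of compact projective triples $(P_i,F_i,f_i)$. By Lemma \ref{lem3.8} and Lemma \ref{lem3.7}, these are of the form $P_i=\mathbb Z_{(p)}[X_1,\dots,X_d]$ (more precisely $\mathbb Z_{(p)}\otimes$ a polynomial ring) and $F_i=\mathbb F_p\oplus P_i/p^{\oplus r}$, with $f_i$ the image of $1\in\mathbb F_p$. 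Since $W^{\mathrm{an}}_2$ is the left Kan extension and agrees with $W_2$ on compact projectives, the universal property gives a canonical map $W^{\mathrm{an}}_2(A,M,m)\to W_2(A,M,m)$. Equivalently, this is the counit comparing $\operatorname{Ani}(W_2)$ with $W_2$ on discrete objects, followed by $\pi_0$. It suffices to show that this map is an equivalence after applying the underlying-space functor $[-]$ of Example \ref{exa3.14}, because the forgetful functor $\operatorname{Ani(Ring)}\to\mathcal S$ is conservative.

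The key step is Remark \ref{rem4.2}(3), which gives a natural equivalence of spaces $[W^{\mathrm{an}}_2(A,M,m)]\simeq [A]\times[M]$. Here $[A]$ and $[M]$ are computed in $\operatorname{Ani(AlgMod^{p-tor}_*)}$. For discrete $(A,M,m)$, these are the underlying spaces of the discrete objects $A$ and $M$, hence discrete sets. This uses Example \ref{exa3.15}(2),(3), the statement that $[-]_1,[-]_2$ commute with the underived functors, together with the fact that $\operatorname{AlgMod^{p-tor}_*}$ sits inside its animation as the $\pi_0$-truncated objects (Remark \ref{rem3.4}). Hence $[W^{\mathrm{an}}_2(A,M,m)]$ is discrete, so $W^{\mathrm{an}}_2(A,M,m)$ is a discrete animated ring, i.e.\ an ordinary ring. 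It then remains to identify its $\pi_0$ with $W_2(A,M,m)$.

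For that identification I argue as follows. The comparison map is compatible with the maps $(\mathrm{pr}_1,\mathrm{pr}_2)$ to $A\times M$ on both sides, since these projections are the animations of natural transformations whose restriction to discrete objects is the identity bijection $W_2(A,M,m)\cong A\times M$. The map on underlying sets therefore becomes a bijection after composing with two bijections to $A\times M$, so it is itself a bijection. A bijective ring map is an isomorphism, which gives the claimed equivalence.

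The main obstacle I expect is justifying the precise meaning of ``$[-]_1$, $[-]_2$ on a discrete object are discrete''. This requires that the inclusion of $\operatorname{AlgMod^{p-tor}_*}$ into its animation sends an object to a sifted colimit whose image under $[-]_1$ and $[-]_2$ is the $1$-categorical sifted colimit. I would handle this using a simplicial resolution by compact projectives, for which the forgetful functors to sets preserve reflexive coequalizers and filtered colimits. This makes the geometric realization of the resulting simplicial sets discrete with $\pi_0$ equal to $A$ and $M$ respectively.
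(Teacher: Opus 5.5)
Your proposal is correct and follows essentially the paper's argument: the comparison map comes from the universal property of the left Kan extension, its composite with $(\mathrm{pr}_1,\mathrm{pr}_2)$ is the equivalence of spaces from Remark \ref{rem4.2}(3), and conservativity of $\operatorname{Ani(Ring)}\to\mathcal S$ concludes. Your separate discreteness step is not needed, since two-out-of-three applied to the composite $W^{\mathrm{an}}_2(A,M,m)\to W_2(A,M,m)\xrightarrow{\simeq}A\times M$ already suffices; also, the generators should read $F_i\cong P_i/p\oplus(P_i/p)^{\oplus r}$ rather than $\mathbb F_p\oplus\cdots$, but neither point affects correctness.
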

\begin{proof}
    By the universality of the left Kan extension, there exists a natural map $W_2^{an}(A,M,m)\to W_2(A,M,m)$ of animated rings. Then the composition \[W_2^{an}(A,M,m)\to W_2(A,M,m)\xrightarrow[]{\simeq}A\times M\] coincides with the canonical equivalence of spaces defined in Remark \ref{rem4.2}. Thus, we see that the map $W_2^{an}(A,M,m)\to W_2(A,M,m)$ is an equivalence of spaces. Since the forgetful functor $\operatorname{Ani(Ring)}\to \operatorname{Ani(Set)}\simeq \mathcal S$ is conservative, the map $W_2^{an}(A,M,m)\to W_2(A,M,m)$ is also an equivalence of animated rings.
\end{proof}
\begin{remark}\label{rem4.4}
    Lemma \ref{lem4.3} ensures that it is unambiguous to denote $W^{\mathrm{an}}(-,-,-)$ by $W_2(-,-,-)$.
\end{remark}

\begin{definition}\label{def4.5}
    Let $A$ be an animated $\mathbb Z_{(p)}$-algebra, $M$ be an connective $A/^Lp$-module (i.e. an object of the connective part of the derived $\infty$-category $\mathcal D(A/^Lp)$) and $m:A/^Lp\to M$ be a map of $A/^Lp$-modules. The space of Frobenius--Witt derivations $\operatorname{FWDer^{an}}(A,(M,m))\in \mathcal S$ from $A$ to $(M,m)$ is defined by \[\operatorname{Hom}_{\operatorname{Ani(Ring)_{/A}}}(A,W_2(A,M,m)),\] where the structure map $W_2(A,M,m)\to A$ is $\mathrm{pr}_1$. We call an element of the space $\operatorname{FWDer^{an}}(A,(M,m))$ an animated Frobenius--Witt derivation on $A$ valued in $(M,m)$.
\end{definition}
\begin{prop}\label{pro4.6}
    Let $A$ be a $\mathbb Z_{(p)}$-algebra, $M$ be a $A/p$-module and $m$ be an element of $M$. Then there exists a canonical equivalence of spaces\[\operatorname{FWDer}(A,(M,m))\to \operatorname{FWDer^{an}}(A,(M,m))\] In particular, the notion of animated Frobenius--Witt derivations coincides with the notion of Frobenius--Witt derivations defined in \cite{Sai22} in this case.
\end{prop}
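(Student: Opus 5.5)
The plan is to reduce everything to a computation inside the $1$-category $\operatorname{Ring}$, which sits fully faithfully in $\operatorname{Ani(Ring)}$ by Remark \ref{rem3.4}. For discrete $(A,M,m)$, Lemma \ref{lem4.3} identifies $W_2(A,M,m)$ computed in $\operatorname{Ani(Ring)}$ with the classical ring of Definition \ref{def2.4}. The identification is compatible with $\mathrm{pr}_1$, because the comparison map $W_2^{\mathrm{an}}\to W_2$ comes from the universal property of the left Kan extension and $\mathrm{pr}_1$ is itself the animation of the classical projection. Hence the structure map $W_2(A,M,m)\to A$ in Definition \ref{def4.5} is the classical ring map $\mathrm{pr}_1$ between discrete objects.

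Next I compute the mapping space in the slice. By definition, $\operatorname{Hom}_{\operatorname{Ani(Ring)}_{/A}}(A,W_2(A,M,m))$ is the fiber of
\[\operatorname{Hom}_{\operatorname{Ani(Ring)}}(A,W_2(A,M,m))\xrightarrow{\mathrm{pr}_1\circ-}\operatorname{Hom}_{\operatorname{Ani(Ring)}}(A,A)\]
over $\mathrm{id}_A$. Since $A$, $W_2(A,M,m)$ and $A$ are all discrete, full faithfulness of $\operatorname{Ring}\hookrightarrow\operatorname{Ani(Ring)}$ shows that both mapping spaces are discrete sets, namely the sets of ring homomorphisms. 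The fiber of a map of discrete spaces is the discrete set-theoretic fiber. So $\operatorname{FWDer^{an}}(A,(M,m))$ is the set of ring homomorphisms $s:A\to W_2(A,M,m)$ with $\mathrm{pr}_1\circ s=\mathrm{id}_A$.

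Finally I invoke Proposition \ref{pro2.8} with $m$ fixed. It sends a Frobenius--Witt derivation $w$ with $w(p)=m$ to $s=(\mathrm{id},w)$, a section into $W_2(A,M,w(p))=W_2(A,M,m)$. Conversely, a section $s$ into $W_2(A,M,m)$ gives $w=\mathrm{pr}_2\circ s$. By Example \ref{exa2.7}, $p=(p,m)$ in $W_2(A,M,m)$, so $w(p)=\mathrm{pr}_2(s(p))=m$, and $w$ lies in $\operatorname{FWDer}(A,(M,m))$. These constructions are mutually inverse and natural, which yields the claimed equivalence.

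The main obstacle is the first step. One must check that the animated $\mathrm{pr}_1$ corresponds to the classical $\mathrm{pr}_1$ under Lemma \ref{lem4.3}, and that the pair $(A,M,m)$ viewed in $\operatorname{Ani(AlgMod_*^{p-tor})}$ via Remark \ref{rem3.20} is the image of the discrete triple. I would settle the compatibility by naturality of the Kan-extension comparison map applied to the natural transformation $W_2\to\mathrm{id}$. The discreteness claim follows from Remark \ref{rem3.4} together with Lemma \ref{lem3.8}.
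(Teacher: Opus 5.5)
Your proposal is correct and follows the same route as the paper. The paper's proof simply cites Lemma \ref{lem4.3} to identify the animated $W_2(A,M,m)$ with the classical one, and then Proposition \ref{pro2.8}. You additionally make explicit several points the paper leaves implicit: that $\mathrm{pr}_1$ is compatible with the comparison map, that the slice mapping space between discrete objects is discrete, and that Example \ref{exa2.7} forces $w(p)=m$ once $m$ is fixed.
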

\begin{proof}
    The former half follows from Lemma \ref{lem4.3} and the latter follows from the former and Proposition \ref{pro2.8}
\end{proof}
\begin{remark}
    By Proposition \ref{pro4.6}, it is unambiguous to denote $\operatorname{FWDer}^{\mathrm{an}}(A,(M,m))$ by $\operatorname{FWDer}(A,(M,m))$.
\end{remark}

\begin{prop}\label{pro4.7}
    Let $A$ be an animated $\mathbb Z_{(p)}$-algebra. The functor \[\mathcal D_{\ge0}(A/^Lp)_{(A/^Lp)/}\to \mathcal S:(M,m)\mapsto \operatorname{FWDer}(A,(M,m))\] is corepresented by an object $(F\mathbb L_A,\widetilde m)$ of $\mathcal D_{\ge0}(A/^Lp)_{(A/^Lp)/}$.
\end{prop}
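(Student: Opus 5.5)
The plan is to use the adjoint functor theorem. The $\infty$-category $\mathcal D_{\ge0}(A/^Lp)_{(A/^Lp)/}$ is presentable, being an undercategory of a presentable $\infty$-category. It therefore suffices to show that the functor $(M,m)\mapsto \operatorname{FWDer}(A,(M,m))$ is accessible and preserves all small limits. By \cite[Proposition 5.5.2.2]{Lur09}, such a functor is then corepresentable, and the corepresenting object gives the pair $(F\mathbb L_A,\widetilde m)$.

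To verify these two properties, I would rewrite the functor as a composite. The first piece is
\[\mathcal D_{\ge0}(A/^Lp)_{(A/^Lp)/}\to \operatorname{Ani(AlgMod^{p-tor}_*)}_{(A,0,0)/}\times_{\operatorname{Ani(Ring)}}\{A\},\qquad (M,m)\mapsto (A,M,m),\]
using Remark \ref{rem3.20} and Proposition \ref{pro3.19}. This is followed by $W_2(-,-,-)$ into $\operatorname{Ani(Ring)}_{/A}$ via $\mathrm{pr}_1$, and finally by $\operatorname{Hom}_{\operatorname{Ani(Ring)}_{/A}}(A,-)$. The last functor preserves limits, and accessibility of each piece is routine because all of them preserve filtered colimits. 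So the key point is that
\[(M,m)\mapsto W_2(A,M,m)\in \operatorname{Ani(Ring)}_{/A}\]
preserves small limits.

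Limits in $\operatorname{Ani(Ring)}_{/A}$ are detected by the forgetful functor to $\mathcal S_{/[A]}$, since the forgetful functor $\operatorname{Ani(Ring)}\to\mathcal S$ is conservative and preserves limits. Over this, Remark \ref{rem4.2}(3) gives a natural equivalence of spaces $W_2(A,M,m)\simeq [A]\times [M]$, compatible with $\mathrm{pr}_1$. In the fiber over $A$, limits of triples $(A,M_i,m_i)$ are computed as $(A,\lim M_i,\lim m_i)$ in $\mathcal D_{\ge0}(A/^Lp)_{(A/^Lp)/}$. Hence the underlying space over $[A]$ is $[A]\times \lim[M_i]$, which is the limit of the $[A]\times[M_i]$ in $\mathcal S_{/[A]}$. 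Here one uses that $[-]$ preserves limits in connective modules, where limits are formed by connective covers, and that the underlying-space functor sees connective covers correctly.

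The main obstacle is making the previous paragraph rigorous. Naturality of the equivalence in Remark \ref{rem4.2}(3) was established for sifted colimits, not for limits. So I must check that the comparison map $W_2(A,\lim M_i,\lim m_i)\to \lim W_2(A,M_i,m_i)$, built from the universal property, is identified with the obvious map of spaces.

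I would handle this in two steps. First, reduce to finite products, pullbacks and the terminal object; small products and pullbacks generate all limits, and filtered issues don't arise for limits. Then check these cases by the equivalence $[W_2(A,M,m)]\simeq[A]\times[M]$, which is natural in $(A,M,m)$ as a transformation of functors on the entire $\infty$-category. The terminal object $(0,0)$ gives $W_2(A,0,0)=A$. Pullbacks of connective modules in $\mathcal D_{\ge0}$ are $\tau_{\ge0}$ of the derived pullback, and $[-]=\Omega^\infty$ commutes with them.

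Finally, $\widetilde m$ is the image of the identity: it is the base point $A/^Lp\to F\mathbb L_A$ coming from the corepresenting object in the undercategory.
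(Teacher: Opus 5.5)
Your proposal is correct and follows essentially the same route as the paper: use the corepresentability criterion to reduce to showing that $(M,m)\mapsto W_2(A,M,m)\in\operatorname{Ani(Ring)}_{/A}$ preserves filtered colimits and small limits, then check limits on underlying spaces via the natural equivalence $W_2(A,M,m)\simeq A\times M$ of Remark \ref{rem4.2}(3). Two small slips: the covariant criterion you want is \cite[Proposition 5.5.2.7]{Lur09} (the one the paper cites), and the last piece $\operatorname{Hom}_{\operatorname{Ani(Ring)}_{/A}}(A,-)$ is only accessible, not filtered-colimit-preserving, though accessibility is all the argument needs.
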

\begin{proof}
    By \cite[Proposition 5.5.2.7]{Lur09} \cite[Tag 06L4]{Lur26} and \cite[Tag 06LA]{Lur26}, it suffices to show that the functor \[\mathcal D_{\ge0}(A/^Lp)_{(A/^Lp)/}\to \operatorname{Ani(Ring)}_{/A}:(M,m)\mapsto W_2(A,M,m)\] preserves all filtered colimits and all small limits. The former follows from the fact that the functor \[\operatorname{Ani(AlgMod_*^{p-tor})}\to \operatorname{Ani(Ring}):(A,M,m)\mapsto W_2(A,M,m)\] commutes with all filtered colimits. To finish the proof, it remains to prove the latter. Let $\{(M_i,m_i)\}_{i\in I}$ be an arbitrary small diagram in $\mathcal D_{\ge0}(A/^Lp)_{(A/^Lp)/}$. Then there exists a canonical map \[W_2(A,\operatorname{lim}_{i\in I}(M_i,m_i))\to \operatorname{lim}_{i\in I}W_2(A,M_i,m_i).\] it suffices to show that this map is equivalence as a map of spaces. This follows from the fact that this map can be identified with \[A\times \operatorname{lim}_{i\in I}M_i\to \operatorname{lim}_{i\in I}(A\times M_i)\] in $\mathcal S$ (see (3) of Remark \ref{rem4.2}) and the fact that all small limits commutes with products.
\end{proof}

\begin{definition}\label{def4.8}
    For an animated $\mathbb Z_{(p)}$-algebra $A$, we call the object $(F\mathbb L_A,\widetilde m)$ the Frobenius--Witt cotangent complex of $A$. We denote the universal Frobenius--Witt derivation by $w:A\to F\mathbb L_A$ and call it the Frobenius--Witt differential on $A$.
\end{definition}

\begin{prop}\label{pro4.8.1}
    The construction $A\mapsto (A,F\mathbb L_A,\widetilde m)$ defines a functor \[\operatorname{Ani(Ring)}_{\mathbb Z_{(p)}/}\to \operatorname{Ani(AlgMod_{*}^{p-tor})}:A\mapsto (A,F\mathbb L_A,\widetilde m)\] of $\infty$-categories.
\end{prop}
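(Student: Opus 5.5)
The plan is to obtain the functor as a partial left adjoint, assembled from the corepresentability statement of Proposition \ref{pro4.7}. Consider the functor of $\infty$-categories
\[\Phi:\operatorname{Ani(AlgMod_*^{p-tor})}\to \operatorname{Fun}(\Delta^1,\operatorname{Ani(Ring)}_{\mathbb Z_{(p)}/}),\qquad (B,M,m)\mapsto \bigl(W_2(B,M,m)\xrightarrow{\mathrm{pr}_1}B\bigr).\]
This is well defined because $W_2(-,-,-)$ and $\mathrm{pr}_1$ are animations of an honest functor and an honest natural transformation (Definition \ref{def4.1}, Remark \ref{rem4.2}). The $\mathbb Z_{(p)}$-algebra structure comes from composing with the unit map $\mathbb Z_{(p)}\to W_2(\mathbb Z_{(p)},0,0)\to W_2(B,M,m)$, using functoriality in the initial object.

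Next, form the comma/lax-pullback $\infty$-category $\mathscr E$ whose objects are an animated $\mathbb Z_{(p)}$-algebra $A$, an object $(B,M,m)$, and a map $A\to W_2(B,M,m)$. Equivalently, $\mathscr E$ is the pullback of $\operatorname{Ani(AlgMod_*^{p-tor})}\to \operatorname{Ani(Ring)}$ (via $W_2$) along the target projection of the arrow category. We then restrict to the full subcategory $\mathscr E'$ where the composite $A\to W_2(B,M,m)\to B$ is an equivalence. The key observation is that the forgetful functor $q:\mathscr E'\to \operatorname{Ani(Ring)}_{\mathbb Z_{(p)}/}$, $(A,\dots)\mapsto A$, is a cocartesian fibration in the appropriate sense. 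Its fiber over $A$ is $\mathcal D_{\ge0}(A/^Lp)_{(A/^Lp)/}$ equipped with the functor $\operatorname{FWDer}(A,-)$, by Definition \ref{def4.5} and Proposition \ref{pro3.19} combined with Lemma \ref{lem3.8} (Remark \ref{rem3.20}).

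Proposition \ref{pro4.7} says that each such fiber has an initial object, namely $(F\mathbb L_A,\widetilde m)$ together with its universal derivation $w$. We would then apply the standard criterion of \cite[Proposition 4.3.2.15]{Lur09}, or the partial-adjoint criterion of \cite[Lemma 5.2.4.1 / Proposition 5.2.4.2]{Lur09}. By that criterion, a functor admitting pointwise initial objects in its comma categories admits a section picking them out functorially. Concretely, we would show that for each $A$ the object $(A,(A,F\mathbb L_A,\widetilde m),w)$ is initial in the comma category $A\downarrow q$. That comma category consists of maps $A\to A'$ together with an object of $\mathscr E'$ over $A'$. The initiality would reduce to Proposition \ref{pro4.7} once we rewrite a map into $(A',M',m')$ as a map of triples $(A,F\mathbb L_A,\widetilde m)\to (A',M',m')$ lying over $A\to A'$. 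This is in turn a map $(F\mathbb L_A,\widetilde m)\to (M',m')$ in $\mathcal D_{\ge0}(A/^Lp)_{(A/^Lp)/}$ via restriction of scalars, matched with an $A$-derivation into $W_2(A',M',m')\times_{A'}A$. This last matching uses that $W_2$ commutes with base change in the ring variable, which holds on underlying spaces by Remark \ref{rem4.2}(3). The existence of a left adjoint to the section's inclusion then gives the functor; composing with the projection $\mathscr E'\to \operatorname{Ani(AlgMod_*^{p-tor})}$ yields $A\mapsto (A,F\mathbb L_A,\widetilde m)$.

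The main obstacle is this base-change identification: showing that
\[\operatorname{Hom}_{/A'}(A', W_2(A',M',m'))\ \text{pulled back to }A\ \simeq\ \operatorname{FWDer}(A,(M',m')|_{A})\]
with $M'$ viewed as an $A/^Lp$-module. Equivalently, $W_2(A,M,m)\simeq A\times_{A'}W_2(A',M,m)$ for $(A',M,m)$ and $A\to A'$. I would prove this by checking it on underlying spaces, where both sides are $A\times M$ by Remark \ref{rem4.2}(3) and the pullback description of limits, and then invoking conservativity of the forgetful functor $\operatorname{Ani(Ring)}\to\mathcal S$ as in the proof of Lemma \ref{lem4.3}. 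The remaining bookkeeping with straightening, and identifying the fibers with the pointed module categories, is routine.
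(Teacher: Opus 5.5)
Your proof is correct, but it is organized differently from the paper's.

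The paper works with $G\colon(A,M,m)\mapsto\bigl(W_2(A,M,m)\xrightarrow{\mathrm{pr}_1}A\bigr)$, viewed as a functor between the cartesian fibrations $\operatorname{Ani(AlgMod_*^{p-tor})}\to\operatorname{Ani(Ring)}_{\mathbb Z_{(p)}/}$ and $\mathrm{ev}_1\colon\operatorname{Fun}(\Delta^1,\operatorname{Ani(Ring)}_{\mathbb Z_{(p)}/})\to\operatorname{Ani(Ring)}_{\mathbb Z_{(p)}/}$. It notes that $G$ preserves cartesian edges, because $W_2(A,M,m)\simeq A\times M$ on underlying spaces. It then gets a relative left adjoint from \cite[Proposition 7.3.2.6]{Lur17} and restricts it along the diagonal $A\mapsto(A=A)$.

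You instead build the $\infty$-category $\mathscr E'$ of animated Frobenius--Witt derivations. You then produce an absolute left adjoint of its projection $q$ by exhibiting initial objects in the comma categories $A\downarrow q$.

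The inputs are the same in both arguments: Proposition \ref{pro4.7}, and the base-change identity $W_2(A,M'|_A,m')\simeq A\times_{A'}W_2(A',M',m')$, where $M'|_A$ is restriction of scalars. This identity is exactly the statement that $G$ preserves cartesian edges.

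\begin{itemize}
\item Your route uses only corepresentability of $\operatorname{FWDer}(A,-)$, i.e. Proposition \ref{pro4.7} as literally stated. It also makes the role of base change visible in the initiality check.
\item The paper's criterion needs left adjoints of the entire fiber functors $\mathcal D_{\ge0}(A/^Lp)_{(A/^Lp)/}\to\operatorname{Ani(Ring)}_{/A}$. These are supplied by the adjoint functor theorem argument inside the proof of Proposition \ref{pro4.7}. In return, that route is shorter and yields a left adjoint on all arrows, sending $C\to A$ to $(A,F\mathbb L_C\otimes^L_CA,\widetilde m)$.
\end{itemize}

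Two side remarks in your proposal are inaccurate, though neither affects the argument.

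First, $(\mathbb Z_{(p)},0,0)$ is not initial in $\operatorname{AlgMod_*^{p-tor}}$: a map from it to $(B,M,m)$ forces $m=0$. The initial object is $(\mathbb Z_{(p)},\mathbb F_p,1)$. In any case, the $\mathbb Z_{(p)}$-algebra structure on $W_2(B,M,m)$ is automatic. Indeed, $\pi_0W_2(B,M,m)\to\pi_0B$ is surjective with square-zero kernel (compare the proof of Lemma \ref{lem4.9}), so primes $\ell\neq p$ are invertible in $\pi_0W_2(B,M,m)$.

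Second, the lifts for $q$ that you can write down directly are cartesian: they restrict derivations along $A\to A'$, which is again your base-change identity. Cocartesian lifts exist only a posteriori, via pushout along $F\mathbb L_A\otimes^L_AA'\to F\mathbb L_{A'}$. The pointwise initial-object criterion you actually invoke needs no fibration hypothesis on $q$, so you can simply drop that claim.
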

\begin{proof}
    Since the functor \[\operatorname{Ani(RingMod^{p-tor})_{(\mathbb Z_{(p)},0,0)/}}\to \operatorname{Ani(Ring)}_{\mathbb Z_{(p)}/}:(A,M)\mapsto A\] is a cartesian fibration, the functor \[p:\operatorname{Ani(AlgMod_*^{p-tor})}\to \operatorname{Ani(Ring)}_{\mathbb Z_{(p)}/}:(A,M,m)\mapsto A\] is also a cartesian cofibration by \cite[Tag, 01UR]{Lur26}. On the other hand, the functor \[q:\operatorname{Fun}(\Delta^1,\operatorname{Ani(Ring)}_{\mathbb Z_{(p)}/})\to \operatorname{Ani(Ring)}_{\mathbb Z_{(p)}/}:(A\to B)\mapsto B\] is also a cartesian fibration (whose cartesian edges are defined by fiber products). Consider the following commutative diagram of $\infty$-categories: \[ \begin{tikzpicture}[auto]
\node (a) at (0, 2) {$\operatorname{Ani(AlgMod_*^{p-tor})}$}; \node (x) at (6, 2) {$\operatorname{Fun}(\Delta^1,\operatorname{Ani(Ring)}_{\mathbb Z_{(p)}/})$};
\node (b) at (0, 0) {$\operatorname{Ani(Ring)}_{\mathbb Z_{(p)}/}$};   \node (y) at (6, 0) {$\operatorname{Ani(Ring)}_{\mathbb Z_{(p)}/}$};
\draw[->] (a) to node {$\scriptstyle G$} (x);
\draw[->] (x) to node {$\scriptstyle q$} (y);
\draw[->] (a) to node[swap] {$\scriptstyle p$} (b);
\draw[->] (b) to node[swap] {$\scriptstyle id $} (y);
\end{tikzpicture},\]
where we denote by $G$ the functor $(A,M,m)\mapsto W_2(A,M,m)$. Since we have $W_2(A,M,m)\simeq A\times M$ in $\mathcal S$, the functor $G$ sends $p$-cartesian edges to $q$-cartesian edges. By \cite[Proposition 7.3.2.6]{Lur17}, Proposition \ref{pro4.7} and the above observations, $G$ has a relative left adjoint $F$ in the sense of \cite[Definition 7.3.2.2]{Lur17}. Define a functor $F'$ by the following composition\[F':\operatorname{Ani(Ring)}_{\mathbb Z_{(p)}/}\xrightarrow[]{A\mapsto (A=A)} \operatorname{Fun}(\Delta^1,\operatorname{Ani(Ring)}_{\mathbb Z_{(p)}/})\xrightarrow[]{F}\operatorname{Ani(AlgMod_*^{p-tor})}.\] By considering the fiber over $A\in \operatorname{Ani(Ring)}_{\mathbb Z_{(p)}/}$, we see that the object $F'(A)$ corepresents the functor $\mathcal D(A/^Lp)_{(A/^Lp)/}\to \mathcal S:(M,m)\mapsto \operatorname{FWDer}(A,(M,m))$. Therefore, $F'$ is a functor of $\infty$-categories satisfying $F'(A)\simeq (A,F\mathbb L_A,\widetilde m)$.

\end{proof}

\begin{prop}\label{pro4.10.1}
    The functor \[\operatorname{Ani(Ring)}_{\mathbb Z_{(p)}/}\to \operatorname{Ani(AlgMod_{*}^{p-tor})}:A\mapsto (A,F\mathbb L_A,\widetilde m)\] commutes with all sifted colimits and pushouts.

    In particular, the functor 
    \[\operatorname{Ani(Ring)}_{\mathbb Z_{(p)}/}\to \mathcal D(\mathbb F_p)\] commutes with all sifted colimits and pushouts.
\end{prop}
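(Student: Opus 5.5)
The strategy is to recognize the functor $A\mapsto (A,F\mathbb L_A,\widetilde m)$ as a left adjoint, or at least as a colimit-preserving construction built from one, and then read off the preservation properties.

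\textbf{Step 1: identify the left adjoint.} By the proof of Proposition \ref{pro4.8.1}, the functor $F'$ is the composite of the diagonal $A\mapsto (A=A)$ with the relative left adjoint $F$ of
\[G:\operatorname{Ani(AlgMod_*^{p-tor})}\to \operatorname{Fun}(\Delta^1,\operatorname{Ani(Ring)}_{\mathbb Z_{(p)}/}),\qquad (A,M,m)\mapsto (W_2(A,M,m)\to A).\]
Since $q$ is a cartesian fibration and $G$ preserves cartesian edges, this relative left adjoint is in fact an honest left adjoint of $G$ (by \cite[Proposition 7.3.2.6]{Lur17}). I would check this directly. A map $F(B\to A)\to (A',M',m')$ over a map $A\to A'$ is equivalent to a map $B\to W_2(A',M',m')$ over $A'$. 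This follows from the corepresentability of Proposition \ref{pro4.7} in the fiber over $A$, applied to the base change of $(M',m')$ along $A/^Lp\to A'/^Lp$, together with the fact that $W_2$ commutes with pullback of the base. That compatibility is visible on underlying spaces via $W_2(A,M,m)\simeq A\times M$.

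Consequently $F$ preserves all small colimits.

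\textbf{Step 2: sifted colimits.} The diagonal functor $\operatorname{Ani(Ring)}_{\mathbb Z_{(p)}/}\to \operatorname{Fun}(\Delta^1,-)$, $A\mapsto \mathrm{id}_A$, preserves all colimits, because colimits in the arrow category are computed pointwise. Hence $F'=F\circ\mathrm{diag}$ preserves all small colimits, in particular sifted colimits and pushouts.

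If one prefers to avoid the global adjunction, the argument can be done directly. For a sifted diagram $A_i$ with colimit $A$ and a target $(A',M',m')$, one computes
\[\operatorname{Hom}(\operatorname{colim}F'(A_i),(A',M',m'))\simeq \lim_i \operatorname{Hom}_{/A'}(A_i,W_2(A',M',m'))\simeq \operatorname{Hom}_{/A'}(A,W_2(A',M',m')).\]
Pushouts are handled in exactly the same way, since $\operatorname{Hom}(-,W_2(\dots))$ converts colimits in the source to limits.

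\textbf{Step 3: the ``in particular''.} The functor $\operatorname{Ani(AlgMod_*^{p-tor})}\to\mathcal D(\mathbb F_p)$, $(A,M,m)\mapsto M$, regarded as an $\mathbb F_p$-module via restriction, is the composite of
\[(A,M,m)\mapsto (A,M)\]
with the animation of $\operatorname{RingMod^{p-tor}}\to\operatorname{Mod}_{\mathbb F_p}$, $(A,M)\mapsto M$. The first functor preserves sifted colimits and contractible colimits such as pushouts, since it is the forgetful functor from a coslice. The second functor preserves sifted colimits by Remark \ref{rem3.12}, using Example \ref{exa3.15}. For pushouts, however, one must use base change: by the colimit formula in Lemma \ref{lem3.6}, the module of a pushout is the pushout of the $(A\otimes_{A_0}A')/^Lp$-modules
\[M\otimes_{A}(A\otimes A')\ \leftarrow\ M_0\otimes\cdots\ \rightarrow\ M'\otimes\cdots.\]
So the pushout statement in $\mathcal D(\mathbb F_p)$ should be read in this base-changed sense; equivalently, it amounts to the fiber sequence of Theorem \ref{thm5.5}.

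\textbf{Main obstacle.} The hardest step is Step 1: upgrading the relative adjunction to a genuine adjunction, and justifying that $W_2$ commutes with base change of modules. I would first try to verify the latter on underlying spaces, where the space-level description $W_2\simeq A\times M$ makes the cartesian-edge compatibility transparent.
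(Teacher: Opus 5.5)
Your argument is correct, but it is organized differently from the paper's.

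\textbf{The paper's route.} The paper does not invoke a global adjunction. It fixes $A=\operatorname{colim}_iA_i$ and tests the canonical map $\operatorname{colim}_i(F\mathbb L_{A_i}\otimes^L_{A_i}A)\to F\mathbb L_A$ only against pointed modules $(M,m)\in\mathcal D(A/^Lp)_{(A/^Lp)/}$. It then uses the cartesian square $W_2(A_i,M)\simeq A_i\times_AW_2(A,M)$ to rewrite $\operatorname{FWDer}(A_i,M)$ as maps $A_i\to W_2(A,M)$ over $A$. After that, $\operatorname{Hom}_{/A}(-,W_2(A,M))$ turns the (weakly contractible) colimit into a limit.

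\textbf{Your route.} You note that the relative left adjoint $F$ of Proposition \ref{pro4.8.1} is an honest left adjoint of $G$. Since $\mathrm{diag}\dashv\mathrm{ev}_0$, the composite $F'=F\circ\mathrm{diag}$ is left adjoint to $\mathrm{ev}_0\circ G=W_2$, and so it preserves all small colimits.

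\textbf{Comparison.} Both arguments rest on the same input: $W_2$ is compatible with restriction along maps of base rings. Your route gives more.
\begin{itemize}
\item It handles arbitrary colimits, not just sifted ones and pushouts.
\item It proves the adjunction of Proposition \ref{pro4.10} directly. The paper obtains that adjunction only via animation and Corollary \ref{cor4.11.1}, both of which depend on the present proposition.
\end{itemize}
The paper's route yields at once the fiberwise formula $\operatorname{colim}_i(F\mathbb L_{A_i}\otimes^L_{A_i}A)\simeq F\mathbb L_A$. This is the form used later, for example in the pushout squares in $\mathcal D(R/^Lp)$ in the second proof of Theorem \ref{thm6.7} and in Corollary \ref{cor6.8}. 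On your route this formula has to be extracted by unwinding colimits in $\operatorname{Ani(AlgMod_*^{p-tor})}$, as you do in Step 3.

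\textbf{Minor points.}
\begin{itemize}
\item Your caveat in Step 3 is right: for pushouts, the second assertion holds only after base change to the pushout, which is exactly what the paper's computation proves.
\item Your gloss that this ``amounts to the fiber sequence of Theorem \ref{thm5.5}'' is inaccurate. Appealing to Theorem \ref{thm5.5} here would also be circular, since its proof uses this proposition.
\item In your direct check, $(M',m')$ must be \emph{restricted} along $A/^Lp\to A'/^Lp$, not base changed.
\item The ``direct'' alternative in Step 2 still uses the global adjunction in its first equivalence, and those mapping spaces are not over $A'$. To genuinely avoid the adjunction you would work in the fiber over $A$, which is precisely the paper's proof.
\end{itemize}
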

\begin{proof}
Let $\{A_i\}_{i\in I}$ be an arbitrary sifted or pushout diagram in $\operatorname{Ani(Ring)}_{\mathbb Z_{(p)}/}$. Denote by $A$ its colimit. By functoriality (Proposition\ref{pro4.8.1}), there exists a canonical map $\operatorname{colim}_{i\in I}(F\mathbb L_{A_i}\otimes^L_{A_i}A)\to F\mathbb L_A$. For any $(M,m)\in \mathcal D(A/^Lp)_{(A/^Lp)/}$, we have the following equivalences in $\mathcal S$:\begin{align*}
    \operatorname{Hom}_{\mathcal D(A/^Lp)_{(A/^Lp)/}}(\operatorname{colim}_{i\in I}(F\mathbb L_{A_i}\otimes^L_{A_i}A),M)&\simeq \operatorname{lim}_{i\in I}\operatorname{Hom}_{\mathcal D(A/^Lp)_{(A/^Lp)/}}(F\mathbb L_{A_i}\otimes^L_{A_i}A,M)\\
    &\simeq \operatorname{lim}_{i\in I}\operatorname{FWDer}(A_i,M)\\
    &\simeq \operatorname{lim}_{i\in I}\operatorname{Hom}_{\operatorname{Ani(Ring)}_{\mathbb Z_{(p)}//A_i}}(A_i,W_2(A_i,M))\\
    &\simeq \operatorname{lim}_{i\in I}\operatorname{Hom}_{\operatorname{Ani(Ring)}_{\mathbb Z_{(p)}//A}}(A_i,W_2(A,M))\\
    &\simeq \operatorname{Hom}_{\operatorname{Ani(Ring)}_{\mathbb Z_{(p)}//A}}(\operatorname{colim}_{i\in I} A_i,W_2(A,M))\\
    &\simeq \operatorname{FWDer}(A,M)\\
    &\simeq \operatorname{Hom}_{\mathcal D(A/^Lp)_{(A/^Lp)/}}(F\mathbb L_A,M), 
\end{align*}
where, for simplicity of notation, we write $N$ for an object $(N,n)$ of $\mathcal D(A/^Lp)_{(A/^Lp)/}$. Remark that in the 4-th equivalence in the above computation, we used the fact that the following diagram of animated rings is a pullback square:
\[ \begin{tikzpicture}[auto]
\node (a) at (0, 2) {$W_2(A_i,M)$}; \node (x) at (4, 2) {$W_2(A,M)$};
\node (b) at (0, 0) {$A_i$};   \node (y) at (4, 0) {$A$};
\draw[->] (a) to node {$\scriptstyle $} (x);
\draw[->] (x) to node {$\scriptstyle $} (y);
\draw[->] (a) to node[swap] {$\scriptstyle $} (b);
\draw[->] (b) to node[swap] {$\scriptstyle $} (y);
\end{tikzpicture},\]
which follows from (3) of Remark \ref{rem4.2}. Thus we proved the proposition.
\end{proof}

\begin{prop}\label{pro5.2}
    Let $A:=\mathbb Z_{(p)}[X_1,\dots,X_r]$ be a polynomial $\mathbb Z_{(p)}$-algebra. Then there is a canonical equivalence $F\Omega_{A}\cong F\mathbb L_A$ in $\mathcal D(A/^Lp)$, which is a free $A/p$-module of rank $r+1$.
\end{prop}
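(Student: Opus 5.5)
The plan is to compute $\operatorname{FWDer}(A,(M,m))$ directly for every connective $A/p$-module $M$ and every map $m:A/p\to M$. We then compare the result with the functor corepresented by a free module of rank $r+1$.

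First note that $A=\mathbb Z_{(p)}[X_1,\dots,X_r]$ is discrete and $p$-torsionfree, so $A/^Lp\simeq A/p$ is discrete. Next, $A$ is a filtered colimit of the compact projective objects $\mathbb Z[X_1,\dots,X_r]$ localized at elements prime to $p$; equivalently, $\mathbb Z_{(p)}$ is a filtered colimit of $\mathbb Z[1/n]$, which is étale over $\mathbb Z$. The cleanest route is to work in $\operatorname{Ani(Ring)}_{\mathbb Z_{(p)}/}$, where $A$ is the free object on $r$ generators. In that category the relevant mapping space is
\[\operatorname{Hom}_{\operatorname{Ani(Ring)}_{\mathbb Z_{(p)}//A}}(A,W_2(A,M,m))\simeq \operatorname{fib}\bigl([W_2(A,M,m)]^{\times r}\to [A]^{\times r}\bigr)\times(\text{the space of lifts of }\mathbb Z_{(p)}\to A).\]
By Remark~\ref{rem4.2}(3), $[W_2(A,M,m)]\simeq [A]\times[M]$ as spaces, compatibly with $\mathrm{pr}_1$. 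So the fiber over $(X_1,\dots,X_r)$ is $M^{\times r}$ (the underlying space of $M$).

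The remaining issue is the structure map from $\mathbb Z_{(p)}$. By the proof of Proposition~\ref{pro2.8} and Example~\ref{exa2.7}, the element $p$ of $W_2(A,M,m)$ is $(p,m)$. Hence the space of lifts of $\mathbb Z_{(p)}\to A$ through $\mathrm{pr}_1$ is contractible, because the elements of $\mathbb Z$ prime to $p$ act invertibly; this needs checking animatedly, using that $\mathbb Z\to\mathbb Z_{(p)}$ is a localization and that $W_2(A,M,m)\to A$ has nilpotent kernel on $\pi_0$. Compatibility with $m$ is automatic in this description.

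I expect the main subtlety to be this: the datum $m$ is \emph{not} free, since $w(p)=m$ is forced. So the naive count gives $r$, not $r+1$. The resolution is that the corepresenting object lives in the coslice $\mathcal D_{\ge0}(A/p)_{(A/p)/}$, and maps out of $(A/p\oplus (A/p)^{\oplus r},\ \iota_0)$ in the coslice are exactly $r$-tuples in $M$ together with the given $m$. Concretely, I will show that
\[\operatorname{FWDer}(A,(M,m))\simeq M^{\times r}\simeq \operatorname{Hom}_{(A/p)/}\bigl((A/p)^{\oplus r+1},(M,m)\bigr),\]
naturally in $(M,m)$. Here the free module has basis $w(p),w(X_1),\dots,w(X_r)$ and its distinguished point is the element $w(p)$. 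By Yoneda (Proposition~\ref{pro4.7}), $F\mathbb L_A\simeq (A/p)^{\oplus r+1}$, with $\widetilde m=w(p)$.

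Finally, $F\mathbb L_A$ is discrete, so it equals $\pi_0$. Restricting the universal property to discrete $M$ gives Saito's universal property of $F\Omega_A$ (Proposition~\ref{pro4.6}). This identifies $F\Omega_A\cong F\mathbb L_A$ and recovers Saito's computation that $F\Omega_A$ is free on $w(p),w(X_1),\dots,w(X_r)$.

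The step needing most care is the contractibility of lifts for $\mathbb Z_{(p)}$ in the animated setting. I would handle it by writing $\mathbb Z_{(p)}=\operatorname{colim}\mathbb Z[1/n]$ and using Proposition~\ref{pro4.10.1} (commutation with sifted colimits) to reduce to $\mathbb Z[1/n][X_1,\dots,X_r]$ over $\mathbb Z$. For these rings, invertibility of $n$ is a property of $\pi_0$ and lifts uniquely along $\mathrm{pr}_1$.
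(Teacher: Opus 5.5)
Your argument is correct and is essentially the paper's proof. Both identify $\operatorname{FWDer}(A,(M,m))$ with $M^{\times r}$ via the free--forgetful adjunction for $\operatorname{Ani(Ring)}_{\mathbb Z_{(p)}/}$ and Remark~\ref{rem4.2}(3), and match this with maps out of the pointed free module on $w(p),w(X_1),\dots,w(X_r)$. The only difference is direction: the paper takes Saito's basis of $F\Omega_A$ as input and checks that $\alpha\mapsto\alpha\circ w$ is the identity of $M^{\times r}$, whereas you derive the free corepresenting object first and recover Saito's computation afterwards.

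One step in your plan should be dropped: the reduction via Proposition~\ref{pro4.10.1} to $\mathbb Z[1/n][X_1,\dots,X_r]$ is not available, because $W_2(-,-,-)$ and $F\mathbb L$ are only defined for $\mathbb Z_{(p)}$-algebras. It is also not needed. Since $\mathbb Z\to\mathbb Z_{(p)}$ is a localization, $\operatorname{Hom}_{\operatorname{Ani(Ring)}}(\mathbb Z_{(p)},W)$ is contractible as soon as integers prime to $p$ are units in $\pi_0W$. This holds for $W=W_2(A,M,m)$ because $\pi_0W\to A$ has square-zero kernel. The paper never addresses this point; it simply works in $\operatorname{Ani(Ring)}_{\mathbb Z_{(p)}/}$ from the start.
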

\begin{proof}
    By \cite[Proposition 2.5.3]{Sai22}, there is a canonical isomorphism\[F\Omega_A\cong (A/p\cdot w(p))\oplus (\bigoplus_{i=1}^rA/p\cdot w(X_i)),\] where $w$ is the universal Frobenius--Witt derivation. For any object $(M,m)\in \mathcal D_{\ge 0}(A/^Lp)_{(A/^Lp)/}$, there exists a canonical map\[\operatorname{Hom}_{\mathcal D_{\ge 0}(A/^Lp)_{(A/^Lp)/}}((F\Omega_A,w(p)),(M,m))\xrightarrow[]{\alpha\mapsto \alpha\circ w}\operatorname{FWDer}(A,(M,m)).\]
    Let $F$ be a left adjoint functor of the forgetful functor $\operatorname{Ani(Ring)}_{\mathbb Z_{(p)}/}\to \operatorname{Ani(Set)}$. Then, $F(\{1,2,\dots,r\})\simeq \mathbb Z_{(p)}[X_1,\dots,X_r]$ holds. The space $\operatorname{FWDer}(A,(M,m))$ can be canonically identified with the space of the following commutative diagrams in $\operatorname{Ani(Ring)}_{\mathbb Z_{(p)}/}$ :
    \[\begin{tikzpicture}[auto]
\node (a) at (0, 2) {$\mathbb Z_{(p)}[X_1,\dots,X_r]$}; \node (x) at (6, 2) {$W_2(A,M,m)$};
\node (b) at (0, 0) {$\mathbb Z_{(p)}[X_1,\dots,X_r]$};   \node (y) at (6, 0) {$\mathbb Z_{(p)}[X_1,\dots,X_r]$};
\draw[->] (a) to node {$\scriptstyle $} (x);
\draw[->] (x) to node {$\scriptstyle $} (y);
\draw[->] (a) to node[swap] {$\scriptstyle \mathrm{id}$} (b);
\draw[->] (b) to node[swap] {$\scriptstyle \mathrm{id}$} (y);
\end{tikzpicture}.\]
By adjunction, this space is equivalent to the space of the following commutative diagrams in $\mathcal S$

\[\begin{tikzpicture}[auto]
\node (a) at (0, 2) {$\{1,2,\dots,r\}$}; \node (x) at (6, 2) {$[W_2(A,M,m)]$};
\node (b) at (0, 0) {$\{1,2,\dots,r\}$};   \node (y) at (6, 0) {$[A]$};
\draw[->] (a) to node {$\scriptstyle $} (x);
\draw[->] (x) to node {$\scriptstyle $} (y);
\draw[->] (a) to node[swap] {$\scriptstyle \mathrm{id}$} (b);
\draw[->] (b) to node[swap] {$\scriptstyle i\mapsto X_i$} (y);
\end{tikzpicture}.\]
This space is equivalent to $M^{\times r}$ by Remark \ref{rem4.2}. Combining these observations, we see that the map
\[\operatorname{Hom}_{\mathcal D_{\ge 0}(A/^Lp)_{(A/^Lp)/}}((F\Omega_A,w(p)),(M,m))\xrightarrow[]{\alpha\mapsto \alpha\circ w}\operatorname{FWDer}(A,(M,m))\]
can be identified with $M^{\times r}\xrightarrow[]{id}M^{\times r}$. This implies that the object $(F\Omega_A,w(p))$ satisfies the universal property of $(F\mathbb L_A,\widetilde m)$. This implies that there exists a canonical equivalence $F\Omega_A\simeq F\mathbb L_A$ in $\mathcal D_{\ge0}(A/p)$.
\end{proof}

\begin{coro}\label{cor4.11.1}
    The functor \[\operatorname{Ani(Ring)}_{\mathbb Z_{(p)}/}\to \mathcal D_{\ge0}(\mathbb F_p):A\mapsto F\mathbb L_A\] coincides with the animation of the functor \[\operatorname{Ring}_{\mathbb Z_{(p)}/}\to \operatorname{Mod}(\mathbb F_p):A\mapsto F\Omega_A.\]
\end{coro}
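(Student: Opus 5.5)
The plan is to use the universal property of animation, Lemma \ref{lem3.9}. The source $\operatorname{Ani(Ring)}_{\mathbb Z_{(p)}/}$ is, by Lemma \ref{lem3.8}, the animation of $\operatorname{Ring}_{\mathbb Z_{(p)}/}$. Its compact projective generators are the polynomial algebras $\mathbb Z_{(p)}[X_1,\dots,X_r]$, and they span a full subcategory $\operatorname{Poly}_{\mathbb Z_{(p)}}$ closed under finite coproducts. By Lemma \ref{lem3.9}, a functor out of $\operatorname{Ani(Ring)}_{\mathbb Z_{(p)}/}$ that preserves sifted colimits is the left Kan extension of its restriction to $\operatorname{Poly}_{\mathbb Z_{(p)}}$. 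So it suffices to prove two things: the functor $A\mapsto F\mathbb L_A$ preserves sifted colimits, and its restriction to $\operatorname{Poly}_{\mathbb Z_{(p)}}$ agrees functorially with $A\mapsto F\Omega_A$ followed by $\operatorname{Mod}(\mathbb F_p)\hookrightarrow \mathcal D_{\ge0}(\mathbb F_p)$. By Definition \ref{def3.10}, that left Kan extension is exactly the animation of $A\mapsto F\Omega_A$.

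\textbf{Sifted colimits.} Here we use Proposition \ref{pro4.10.1}. The functor $A\mapsto (A,F\mathbb L_A,\widetilde m)$ into $\operatorname{Ani(AlgMod_*^{p-tor})}$ preserves sifted colimits. The functor to $\mathcal D_{\ge0}(\mathbb F_p)$ is the composite with $(A,M,m)\mapsto M$, viewed as an $\mathbb F_p$-module by restriction of scalars along $\mathbb F_p\to A/^Lp$. We must check this composite preserves sifted colimits. By Example \ref{exa3.15}(3) and Lemma \ref{lem3.8}, the underlying space functor $[-]_2$ preserves sifted colimits. Since $\mathcal D_{\ge0}(\mathbb F_p)\simeq \operatorname{Ani}(\operatorname{Mod}_{\mathbb F_p})$ and its forgetful functor to $\mathcal S$ is conservative and preserves sifted colimits, this is enough. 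Alternatively, it follows directly from the fiber-product description of Proposition \ref{pro3.19}.

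\textbf{Agreement on polynomial rings.} Proposition \ref{pro5.2} gives $F\Omega_A\simeq F\mathbb L_A$ for $A=\mathbb Z_{(p)}[X_1,\dots,X_r]$, and this equivalence must be made natural in $A\in\operatorname{Poly}_{\mathbb Z_{(p)}}$. To do so, construct a natural comparison map $F\mathbb L_A\to F\Omega_A$ for all discrete $A$, as follows. Saito's universal derivation $A\to F\Omega_A$ is a Frobenius--Witt derivation with values in the discrete module $F\Omega_A$. By Proposition \ref{pro4.6} it is an animated Frobenius--Witt derivation, so the universal property of Proposition \ref{pro4.7} yields a map $F\mathbb L_A\to F\Omega_A$. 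Functoriality in $A$ follows from the relative-adjoint construction in Proposition \ref{pro4.8.1}: the map is the counit-type comparison, natural because both sides classify derivations compatibly with pullback of $W_2$. Equivalently, the map is $F\mathbb L_A\to \pi_0F\mathbb L_A\cong F\Omega_A$, which is natural. Proposition \ref{pro5.2} shows this map is an equivalence on polynomial rings.

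The main obstacle is coherence in the $\infty$-categorical sense. Objectwise equivalences are not enough, so the comparison must be produced as an honest natural transformation of functors $\operatorname{Poly}_{\mathbb Z_{(p)}}\to\mathcal D_{\ge0}(\mathbb F_p)$. I would handle this by observing that on $\operatorname{Poly}_{\mathbb Z_{(p)}}$ the functor $F\mathbb L$ lands in discrete objects, because $F\mathbb L_A$ is a free $A/p$-module there. The full subcategory of discrete objects is the $1$-category $\operatorname{Mod}(\mathbb F_p)$, so naturality only needs to be checked $1$-categorically. For this it suffices that the identification $\pi_0$ of the universal property, namely $\operatorname{Hom}(F\mathbb L_A,M)\simeq\operatorname{FWDer}(A,M)$ for discrete $M$, is natural in $A$, which is clear from Proposition \ref{pro2.8} and Proposition \ref{pro4.6}.
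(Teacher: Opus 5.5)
Your proposal is correct and takes the paper's route: the paper deduces the corollary from Proposition \ref{pro4.10.1} (preservation of sifted colimits) and Proposition \ref{pro5.2} (agreement with $F\Omega$ on polynomial $\mathbb Z_{(p)}$-algebras), combined with the universal property of animation in Lemma \ref{lem3.9}. Your additional step makes the identification on $\operatorname{Poly}_{\mathbb Z_{(p)}}$ an honest natural equivalence, by noting that $F\mathbb L$ is discrete there so only a $1$-categorical check is needed; this fills in a point the paper leaves implicit.
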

\begin{proof}
    This follows from Proposition \ref{pro4.10.1} and Proposition \ref{pro5.2}.
\end{proof}

Next, we define the Frobenius--Witt cotangent complex in another way, and prove that these definitions are equivalent.

\begin{lemma}\label{lem4.9}
   The functor \[\operatorname{AlgMod_*^{p-tor}}\to \operatorname{Ring}_{\mathbb Z_{(p)}}:(A,M,m)\mapsto W_2(A,M,m)\] has a left adjoint given by \[\operatorname{Ring}_{\mathbb Z_{(p)}}\to \operatorname{AlgMod_*^{p-tor}}:A\mapsto (A,F\Omega_A,\widetilde m),\] where $m$ is the image of $p$ by the universal Frobenius--Witt derivation.
\end{lemma}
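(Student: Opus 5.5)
We will construct the adjunction directly at the 1-categorical level by exhibiting a natural bijection
\[\operatorname{Hom}_{\operatorname{AlgMod_*^{p-tor}}}((A,F\Omega_A,w(p)),(B,N,n))\cong \operatorname{Hom}_{\operatorname{Ring}_{\mathbb Z_{(p)}}}(A,W_2(B,N,n))\]
for a $\mathbb Z_{(p)}$-algebra $A$ and an object $(B,N,n)$. Here $F\Omega_A$ is Saito's module of Frobenius--Witt differentials: an $A/p$-module with a universal Frobenius--Witt derivation $w:A\to F\Omega_A$, universal among Frobenius--Witt derivations into $A/p$-modules (compare Remark \ref{rem2.3}). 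The unit $A\to W_2(A,F\Omega_A,w(p))$ will be the map $a\mapsto(a,w(a))$ of Proposition \ref{pro2.8}.

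First, we define the forward map. A morphism $(\varphi,\alpha)$ consists of a ring map $\varphi:A\to B$ and a $\varphi$-semilinear $\alpha:F\Omega_A\to N$ with $\alpha(w(p))=n$. Compose the unit with $W_2(\varphi,\alpha)$ (Definition \ref{def2.11}) to obtain $a\mapsto(\varphi(a),\alpha(w(a)))$. This is a ring homomorphism, and since $\mathbb Z_{(p)}$ maps uniquely into any ring, it is automatically a map of $\mathbb Z_{(p)}$-algebras.

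Second, we define the inverse. Let $s:A\to W_2(B,N,n)$ be a ring map. Set $\varphi:=\mathrm{pr}_1\circ s$, which is a ring homomorphism by Remark \ref{rem2.6}, and $w':=\mathrm{pr}_2\circ s:A\to N$. Regard $N$ as an $A/p$-module via $\varphi$. The computations in the proof of Proposition \ref{pro2.8} go through verbatim with $B$ in place of $A$ and show:
\begin{itemize}
\item $w'(xy)=\varphi(x)^pw'(y)+\varphi(y)^pw'(x)$;
\item $w'(x+y)=w'(x)+w'(y)-P(\varphi(x),\varphi(y))n$.
\end{itemize}
By Example \ref{exa2.7} we have $s(p)=p=(p,n)$ in $W_2(B,N,n)$, so $w'(p)=n$. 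Since $P$ has integer coefficients, $P(\varphi(x),\varphi(y))=\varphi(P(x,y))$. Hence $w'$ is a Frobenius--Witt derivation $A\to\varphi_*N$ with $w'(p)=n$. The universal property of $F\Omega_A$ then gives a unique $A$-linear map $F\Omega_A\to\varphi_*N$, that is, a $\varphi$-semilinear $\alpha$, with $\alpha\circ w=w'$. In particular $\alpha(w(p))=n$, so $(\varphi,\alpha)$ is a morphism of $\operatorname{AlgMod_*^{p-tor}}$.

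Third, we check that the two constructions are mutually inverse and natural. Both compositions reduce to the identities $\mathrm{pr}_1(\varphi(a),\alpha w(a))=\varphi(a)$ and $\mathrm{pr}_2=\alpha\circ w$, together with uniqueness in the universal property of $F\Omega_A$. Naturality in $(B,N,n)$ is immediate from the formula in Definition \ref{def2.11}. Naturality in $A$ follows from the functoriality of $F\Omega$, i.e. from the compatibility of the universal derivations along a map $A\to A'$.

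The main obstacle is bookkeeping rather than conceptual. We must confirm that the inverse construction really lands in semilinear maps once the target ring $B$ differs from $A$, which requires the identity $P(\varphi(x),\varphi(y))=\varphi(P(x,y))$. We must also check that $w'(p)=n$ is forced rather than imposed, which is exactly what Example \ref{exa2.7} supplies.
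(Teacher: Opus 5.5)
Your proof follows the paper's argument. The forward map $(\varphi,\alpha)\mapsto\bigl(a\mapsto(\varphi(a),\alpha(w(a)))\bigr)$ is the same. So is the inverse $s\mapsto(\mathrm{pr}_1\circ s,\alpha)$, with $\alpha$ induced from $\mathrm{pr}_2\circ s$ by the universal property of $F\Omega_A$. Your extra checks, namely $w'(p)=n$ via Example~\ref{exa2.7} and $P(\varphi(x),\varphi(y))=\varphi(P(x,y))$, fill in details the paper calls straightforward.

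One justification needs fixing. It is not true that $\mathbb Z_{(p)}$ maps into every ring; for instance, it does not map to $\mathbb Z$. What is true is that $\operatorname{Ring}_{\mathbb Z_{(p)}}$ is a full subcategory of $\operatorname{Ring}$, so ring maps between $\mathbb Z_{(p)}$-algebras are automatically $\mathbb Z_{(p)}$-algebra maps. You still have to know that $W_2(B,N,n)$ is itself a $\mathbb Z_{(p)}$-algebra. This is needed already for the functor in the statement to land in $\operatorname{Ring}_{\mathbb Z_{(p)}}$.

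The paper checks this first. The kernel $\ker(\mathrm{pr}_1)=\{(0,x)\}$ is square-zero, since $(0,x)(0,y)=(0,0)$. Hence any element whose image in $B$ is a unit is itself a unit in $W_2(B,N,n)$. Integers prime to $p$ are units in $B$, so they are invertible in $W_2(B,N,n)$. With this added, your proof is complete.
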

\begin{proof}
    First, remark that the canonical surjection \[W_2(A,M,m)\xrightarrow[]{\mathrm{pr}_1}A\] has a square-zero kernel, so the ring $W_2(A,M,m)$ is a $\mathbb Z_{(p)}$-algebra for all $(A,M,m)\in \operatorname{AlgMod_*^{p-tor}}$.

    Next, we prove that there exists a natural bijection \[\operatorname{Hom}_{\operatorname{Ring}_{\mathbb Z_{(p)}}}(A,W_2(B,N,n))\cong \operatorname{Hom}_{\operatorname{AlgMod_*^{p-tor}}}((A,F\Omega_A,\widetilde m),(B,N,n))\] for any $A\in \operatorname{Ring}_{\mathbb Z_{(p)}}$ and $(B,N,n)\in \operatorname{AlgMod_*^{p-tor}}$. For any ring homomorphism $\phi:A\to W_2(B,N,n)$, let $\varphi:A\to B$ be the composition $A\to W_2(B,N,n)\xrightarrow[]{\mathrm{pr}_1}B$, let $\alpha:F\Omega_A\to N$ be the $\varphi$-semilinear map induced from the composition $A\to W_2(B,N,n)\xrightarrow[]{\mathrm{pr}_2}N$ by the universal property of $F\Omega_A$. Then we get a natural map\[\operatorname{Hom}_{\operatorname{Ring}_{\mathbb Z_{(p)}}}(A,W_2(B,N,n))\to \operatorname{Hom}_{\operatorname{AlgMod_*^{p-tor}}}((A,F\Omega_A,\widetilde m),(B,N,n)):\phi\mapsto (\varphi,\alpha).\]

    On the other hand, Let $(\varphi,\alpha)$ be an element of $\operatorname{Hom}_{\operatorname{AlgMod_*^{p-tor}}}((A,F\Omega_A,\widetilde m),(B,N,n))$. define \[\phi:A\to W_2(B,N,n):a\mapsto (\varphi(a),\alpha (w(a))),\] where $w:A\to F\Omega_A$ is the universal Frobenius--Witt derivation. It is straightforward to see that the map $\phi$ is a ring homomorphism, so we get a natural map \[\operatorname{Hom}_{\operatorname{AlgMod_*^{p-tor}}}((A,F\Omega_A,\widetilde m),(B,N,n))\to \operatorname{Hom}_{\operatorname{Ring}_{\mathbb Z_{(p)}}}(A,W_2(B,N,n)):(\varphi,\alpha)\mapsto \phi.\]

    It is straightforward to check that these maps between mapping sets are inverse to each other.
\end{proof}
\begin{prop}\label{pro4.10}
    The functor \[W_2(-,-,-):\operatorname{Ani}(\operatorname{AlgMod^{p-tor}_*})\to \operatorname{Ani}(\operatorname{Ring}_{\mathbb Z_{(p)}}):(A,M,m)\mapsto W_2(A,M,m)\] has a left adjoint $A\mapsto (A,F\mathbb L_A,\widetilde m)$.
\end{prop}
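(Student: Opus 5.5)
The plan is to reduce the adjunction to the discrete one in Lemma \ref{lem4.9} on compact projective generators, and then extend by sifted colimits. A useful framework is the following general principle. If $L_0:\mathscr C_0\to\mathscr D$ is a left adjoint between 1-categories with compact projective generators, and $L_0$ sends compact projective generators to compact projective objects, then $\operatorname{Ani}(L_0)$ is left adjoint to the functor obtained from the right adjoint by animation. I would not rely on that directly, though, because $F\Omega_P$ is free (Proposition \ref{pro5.2}) but the right adjoint $W_2$ must be identified with its animation. Instead I would argue directly with mapping spaces.

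First, note that the functor $L:A\mapsto (A,F\mathbb L_A,\widetilde m)$ of Proposition \ref{pro4.8.1} commutes with sifted colimits (Proposition \ref{pro4.10.1}). By Corollary \ref{cor4.11.1} it is the animation of $A\mapsto (A,F\Omega_A,\widetilde m)$. For a finitely generated polynomial $\mathbb Z_{(p)}$-algebra $P$, the object $L(P)=(P,F\Omega_P,w(p))$ lies in the compact projective generators of $\operatorname{Ani}(\operatorname{AlgMod_*^{p-tor}})$ described by Lemma \ref{lem3.7} and Lemma \ref{lem3.8}. This holds because $F\Omega_P$ is free on $w(p),w(X_i)$, so the triple is $(\mathbb Z_{(p)},\mathbb F_p,1)\coprod(P,\bigoplus_i P/p)$.

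Next, I construct a natural map of spaces
\[\operatorname{Hom}_{\operatorname{Ani(AlgMod_*^{p-tor})}}((A,F\mathbb L_A,\widetilde m),(B,N,n))\to \operatorname{Hom}_{\operatorname{Ani}(\operatorname{Ring}_{\mathbb Z_{(p)}})}(A,W_2(B,N,n)).\]
A morphism on the left gives, by cartesian transport along $\varphi:A\to B$, a map of pointed $A/^Lp$-modules $(F\mathbb L_A,\widetilde m)\to(\varphi^*N,n)$. By Proposition \ref{pro4.7} this is an animated Frobenius--Witt derivation $A\to W_2(A,\varphi^*N,n)$ over $A$. Then compose with $W_2(A,\varphi^*N,n)\to W_2(B,N,n)$. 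The left side fibers over $\operatorname{Hom}(A,B)$ with fiber $\operatorname{Hom}((F\mathbb L_A,\widetilde m),(\varphi^*N,n))\simeq\operatorname{FWDer}(A,(\varphi^*N,n))$. The right side fibers over $\operatorname{Hom}(A,B)$ via $\mathrm{pr}_1$, with fiber $\operatorname{Hom}_{/B}(A,W_2(B,N,n))$. So it suffices to compare fibers. The key input is the pullback square $W_2(A,\varphi^*N,n)\simeq A\times_B W_2(B,N,n)$, which is the same fact used in the proof of Proposition \ref{pro4.10.1}. It follows from (3) of Remark \ref{rem4.2}, since on underlying spaces both sides are $A\times N$, and the forgetful functor to $\mathcal S$ is conservative and preserves limits. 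Given this, $\operatorname{Hom}_{/A}(A,A\times_BW_2(B,N,n))\simeq\operatorname{Hom}_{/B}(A,W_2(B,N,n))$, and the fiberwise comparison is an equivalence.

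The main obstacle is making the first step honest. One must show that the identification of the fibre of $\operatorname{Ani(AlgMod_*^{p-tor})}\to\operatorname{Ani(Ring)}_{\mathbb Z_{(p)}/}$ with $\mathcal D_{\ge0}(A/^Lp)_{(A/^Lp)/}$ (Proposition \ref{pro3.19}, Remark \ref{rem3.20}) gives $\operatorname{Hom}((A,M,m),(B,N,n))$ as the total space over $\operatorname{Hom}(A,B)$ with fibers $\operatorname{Hom}((M,m),(\varphi^*N,n))$. This is the standard formula for mapping spaces in a cartesian fibration, \cite[Tag 01UR]{Lur26}-type statements. One must also check that the resulting map is natural and agrees with the cartesian transport of $W_2$ used in Proposition \ref{pro4.8.1}. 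Alternatively, this is exactly the relative left adjoint $F$ constructed in Proposition \ref{pro4.8.1}, and \cite[Proposition 7.3.2.6]{Lur17} upgrades a relative adjunction whose fibrewise left adjoints exist to a global adjunction. I would try that route first and then verify via the fibre computation above. As a sanity check, on discrete objects the map reduces to the bijection of Lemma \ref{lem4.9}.
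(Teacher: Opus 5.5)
Your argument is correct, but it differs from the paper's proof, which is exactly the ``general principle'' you set aside. The paper invokes \cite[Corollary 2.3]{Mao24} to promote the discrete adjunction of Lemma \ref{lem4.9} to an adjunction between animations. It then uses Corollary \ref{cor4.11.1} to recognize the animated left adjoint as $A\mapsto(A,F\mathbb L_A,\widetilde m)$. Your stated reason for avoiding this is not a real obstacle. $W_2$ on $\operatorname{Ani}(\operatorname{AlgMod^{p-tor}_*})$ is \emph{defined} as the animation in Definition \ref{def4.1}, so nothing needs identifying. Moreover, your observation that $(P,F\Omega_P,w(p))$ is a compact projective generator is precisely the sort of input that principle needs.

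You instead prove corepresentability of $(B,N,n)\mapsto\operatorname{Hom}(A,W_2(B,N,n))$ directly. You fibre both mapping spaces over $\operatorname{Hom}(A,B)$ and compare fibres via Proposition \ref{pro4.7} and the base-change square $W_2(A,\varphi^*N,n)\simeq A\times_BW_2(B,N,n)$; this is sound. The naturality issue you flag disappears if you write your comparison map as $\eta^*\circ W_2$, with $\eta\colon A\to W_2(A,F\mathbb L_A,\widetilde m)$ the universal derivation. The map is then natural in $(B,N,n)$, and it lies over $\operatorname{Hom}(A,B)$ since $\mathrm{pr}_1$ is natural and $\mathrm{pr}_1\circ\eta\simeq\mathrm{id}_A$. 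The unit criterion for adjoints then applies.

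Your fallback via Proposition \ref{pro4.8.1} is even shorter than you suggest. The relative adjunction there is in particular an adjunction $F\dashv G$ with $G(A,M,m)=(W_2(A,M,m)\to A)$. Since $A\mapsto(A=A)$ is left adjoint to source evaluation $\mathrm{ev}_0$ on $\operatorname{Fun}(\Delta^1,-)$, composing gives $F'\dashv\mathrm{ev}_0\circ G=W_2$ at once.

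The paper's route has the advantage of exhibiting the left adjoint as an animation, so agreement with $F\Omega$ on polynomial rings and sifted-colimit preservation are built in. The cost is an external result plus Lemma \ref{lem4.9} and Corollary \ref{cor4.11.1}. Yours stays inside the corepresentability definition, and its only substantive input is the base-change square already used for Proposition \ref{pro4.10.1}.
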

\begin{proof}
    This follows from \cite[Corollary 2.3]{Mao24}, Lemma \ref{lem4.9} and Corollary \ref{cor4.11.1}.
\end{proof}

\begin{remark}
    By Corollary \ref{cor4.11.1} and Proposition \ref{pro4.10}, we see that the Frobenius--Witt cotangent complex $F\mathbb L_A$ for $A\in \operatorname{Ani(Ring)}_{\mathbb Z_{(p)}/}$ can be constructed by the following equivalent ways:\begin{itemize}
        \item Define $F\mathbb L_A$ as an object corepresenting the functor \[(M,m)\mapsto \operatorname{FWDer}(A,(M,m)).\]
        \item Define $F\mathbb L_A$ by the animation of the functor \[\operatorname{Ring}_{\mathbb Z_{(p)}}\to \operatorname{AlgMod_*^{p-tor}}:A\mapsto (A,F\Omega_A,w(p)).\]
        \item Define $F\mathbb L_A$ by the animation of the functor \[\operatorname{Ring}_{\mathbb Z_{(p)}}\to \operatorname{Mod}(\mathbb F_p):A\mapsto F\Omega_A.\]
        \item Define $F\mathbb L_A$ as an left adjoint of the functor \[W_2(-,-,-):\operatorname{Ani}(\operatorname{AlgMod^{p-tor}_*})\to \operatorname{Ani}(\operatorname{Ring}_{\mathbb Z_{(p)}}):(A,M,m)\mapsto W_2(A,M,m).\]
    \end{itemize}
\end{remark}

\section{Properties of Frobenius--Witt cotangent complexes}
%In this section, we show that properties analogous to $F\Omega_A$, proved in \cite{Sai22}, also hold for $F\mathbb L_A$.
In this section, we prove the basic properties of Frobenius--Witt cotangent complexes. The most important among them is the existence of fundamental fiber sequence (Theorem \ref{thm5.5}), which prolongs the right exact sequence \[F\Omega_A\otimes_AB\to F\Omega_B\to F^*(\Omega_{B/A}\otimes_AA/p)\to 0\] proved in \cite[Lemma 2.9]{Sai22}. Many other properties, as well as the computations in the next section, follow from it. We also prove that the functor $A\mapsto F\mathbb L_A$ is an fpqc sheaf (Theorem \ref{thm5.11}), which may be regarded as an analogue of the fact that the cotangent complex forms an fpqc sheaf.

\begin{prop}\label{pro5.1}
    Let $A$ be an animated $\mathbb Z_{(p)}$-algebra. Then there exists a canonical isomorphism $\pi_0(F\mathbb L_A)\xrightarrow[]{\simeq} F\Omega_{\pi_0(A)}$ of $\pi_0(A)/p$-modules.
\end{prop}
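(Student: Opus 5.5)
We reduce to Remark \ref{rem3.12}(1) via Corollary \ref{cor4.11.1}. By Corollary \ref{cor4.11.1}, the functor $A\mapsto F\mathbb L_A$ is the animation of the functor $\Phi:\operatorname{Ring}_{\mathbb Z_{(p)}/}\to \operatorname{Mod}(\mathbb F_p)$, $A\mapsto F\Omega_A$. More precisely, we use the refined form of Lemma \ref{lem4.9} and Proposition \ref{pro4.10}: it is the animation of $A\mapsto (A,F\Omega_A,w(p))$ into $\operatorname{AlgMod_*^{p-tor}}$. This refined form lets us keep track of the module structure over $\pi_0(A)/p$. Remark \ref{rem3.12}(1) then gives $\pi_0\circ\operatorname{Ani}(\Phi)\simeq \Phi\circ\pi_0$, provided $\Phi$ preserves sifted colimits. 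So the main step is to check that $A\mapsto (A,F\Omega_A,w(p))$ commutes with filtered colimits and reflexive coequalizers in the $1$-category.

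For this we use Lemma \ref{lem4.9}: the functor $A\mapsto (A,F\Omega_A,\widetilde m)$ is a left adjoint of $W_2(-,-,-)$. It therefore preserves all colimits in $\operatorname{AlgMod_*^{p-tor}}$, in particular sifted ones. Colimits in $\operatorname{AlgMod_*^{p-tor}}$ are computed as in the proof of Lemma \ref{lem3.6}: the ring part is the colimit of rings, and the module part is the colimit of base changes. Hence, for a sifted diagram $A_i$ with colimit $A$, we get $F\Omega_A\cong\operatorname{colim}_i(F\Omega_{A_i}\otimes_{A_i}A)$. This is exactly the compatibility required to apply Remark \ref{rem3.12}(1) to the functor into $\operatorname{AlgMod_*^{p-tor}}$. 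We obtain $\pi_0(A,F\mathbb L_A,\widetilde m)\simeq(\pi_0A,F\Omega_{\pi_0A},w(p))$.

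It remains to identify $\pi_0$ of the triple with the module $\pi_0(F\mathbb L_A)$. Under Proposition \ref{pro3.19} and Remark \ref{rem3.20}, the truncation $\pi_0$ on $\operatorname{Ani}(\operatorname{RingMod^{p-tor}})$ sends $(A,M)$ to $(\pi_0A,\pi_0M)$. Here $\pi_0M$ is a $\pi_0(A/^Lp)=\pi_0(A)/p$-module. To see this, compare with the functors $[-]_1,[-]_2$ of Example \ref{exa3.15}, which commute with $\pi_0$ because they are corepresented by compact projective objects. The resulting map $\pi_0(F\mathbb L_A)\to F\Omega_{\pi_0 A}$ is then an isomorphism of $\pi_0(A)/p$-modules.

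The main obstacle is this last identification. One must check that the discrete truncation of the pair computed in the animated category agrees with taking $\pi_0$ of the module spectrum, including its module structure. I would handle it by resolving $A$ by polynomial algebras, writing it as a geometric realization of $P_\bullet$. By Proposition \ref{pro5.2}, $F\mathbb L_A\simeq|F\Omega_{P_\bullet}\otimes_{P_\bullet}A|$, and then $\pi_0$ is the coequalizer of the first two terms. This coequalizer equals $F\Omega_{\pi_0A}$ by the right exactness and colimit-preservation just established.
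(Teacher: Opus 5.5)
Your proposal is correct, but it follows a genuinely different route from the paper. The paper argues by Yoneda directly from the defining corepresentability (Proposition \ref{pro4.7}). Take a discrete $\pi_0(A)/p$-module $M$ and $m\in M$. Animated Frobenius--Witt derivations of $A$ into $(M,m)$ are identified with Saito's Frobenius--Witt derivations of $\pi_0(A)$; this tacitly uses $W_2(A,M,m)\simeq A\times_{\pi_0(A)}W_2(\pi_0(A),M,m)$ together with discreteness of the second factor (Lemma \ref{lem4.3}). On the other side, since $F\mathbb L_A$ is connective, maps $(F\mathbb L_A,\widetilde m)\to(M,m)$ are the same as $\pi_0(A)/p$-linear maps $(\pi_0(F\mathbb L_A),\widetilde m)\to(M,m)$. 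Hence $\pi_0(F\mathbb L_A)$ has the universal property of $F\Omega_{\pi_0(A)}$. That argument uses neither the animation description nor the polynomial computation.

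You instead use that $A\mapsto(A,F\mathbb L_A,\widetilde m)$ is the animation of the $1$-categorical left adjoint of Lemma \ref{lem4.9} (Proposition \ref{pro4.10}), which preserves reflexive coequalizers and filtered colimits. Your last paragraph is already a complete, self-contained proof, and it makes the appeal to Remark \ref{rem3.12}(1) and the truncation comparison unnecessary. Take a simplicial resolution $P_\bullet\to A$ by polynomial $\mathbb Z_{(p)}$-algebras. Proposition \ref{pro4.10.1} gives $F\mathbb L_A\simeq|F\Omega_{P_\bullet}\otimes_{P_\bullet}A|$; you should cite it alongside Proposition \ref{pro5.2}, since it is what produces the geometric realization. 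Hence $\pi_0(F\mathbb L_A)\cong\operatorname{coeq}(F\Omega_{P_1}\otimes_{P_1}\pi_0(A)\rightrightarrows F\Omega_{P_0}\otimes_{P_0}\pi_0(A))$. This is $F\Omega_{\pi_0(A)}$ as a $\pi_0(A)/p$-module, because $\pi_0(A)=\operatorname{coeq}(P_1\rightrightarrows P_0)$ is a reflexive coequalizer of rings and the left adjoint preserves it. Connected colimits in $\operatorname{AlgMod_*^{p-tor}}$ are computed as you describe.

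The trade-offs are as follows. The paper's argument is shorter, uses only the definition, and yields the canonical map and its compatibility with $w(p)$ automatically. However, it glosses over the comparison of $\operatorname{FWDer}(A,(M,m))$ with $\operatorname{FWDer}(\pi_0(A),(M,m))$ for non-discrete $A$. Your argument relies on the heavier inputs Proposition \ref{pro4.10.1} and Proposition \ref{pro5.2}, the latter extended to infinitely many variables via filtered colimits. In exchange, every step is an explicit computation, and the statement reduces transparently to the right exactness of $F\Omega$ along reflexive coequalizers.
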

\begin{proof}
    It suffices to show that the ring $\pi_0(F\mathbb L_A)$ also satisfies the universal property of $F\Omega_{\pi_0(A)}$. This follows from the following canonical equivalences of spaces:
    \begin{align*}
    \operatorname{Hom}_{\operatorname{Ring_{/A}}}(A,W_2(A,M,m))&\simeq \operatorname{Hom}_{\operatorname{Ani(Ring)_{/A}}}(A,W_2(A,M,m))\\&\simeq \operatorname{Hom}_{\mathcal D(A/^Lp)_{(A/^Lp)/}}((F\mathbb L_A,\widetilde m),(M,m))\\&\simeq \operatorname{Hom}_{\operatorname{Mod}(\pi_0(A)/p)}((\pi_0(F\mathbb L_A),\widetilde m),(M,m)), 
    \end{align*}
   for any $\pi_0(A/^Lp)=\pi_0(A)/p$-module $M$ and any element $m\in M$.
\end{proof}

%\begin{prop}\label{pro5.3}
   % The functor $F\mathbb L_{(-)}:\operatorname{Ani(Ring)}_{\mathbb Z_{(p)}}\to \mathcal D_{\ge0}(\mathbb F_p)$ preserves all sifted colimits.
%\end{prop}
%\begin{proof}
 %   This follows from Corollary \ref{cor4.11}.
%\end{proof}

\begin{lemma}\label{lem5.4}
    The category $\operatorname{Fun}(\Delta^1,\operatorname{Ani(Ring)})_{/(\mathbb Z_{(p)}=\mathbb Z_{(p)})}$ of maps of animated $\mathbb Z_{(p)}$-algebras has a set of compact projective generators $\{\mathbb Z_{(p)}[X]\to \mathbb Z_{(p)}[X,Y]|\,X,Y\in \operatorname{FinSet}\}$.
\end{lemma}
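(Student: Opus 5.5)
The plan is to verify the statement directly in the $\infty$-category $\mathscr A:=\operatorname{Fun}(\Delta^1,\operatorname{Ani(Ring)}_{\mathbb Z_{(p)}/})$ of maps $A\to B$ of animated $\mathbb Z_{(p)}$-algebras. I then identify $\mathscr A$ with the $\mathcal P_\Sigma$ of the full subcategory spanned by the proposed generators. This mirrors the argument of Proposition \ref{pro3.19}.

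\textbf{Step 1: corepresented functors.} For finite sets $X,Y$, write $G_{X,Y}$ for the object $\mathbb Z_{(p)}[X]\to\mathbb Z_{(p)}[X,Y]$. Mapping spaces in an arrow category are computed as a fiber product. Using $\mathbb Z_{(p)}[X,Y]\simeq \mathbb Z_{(p)}[X]\otimes_{\mathbb Z_{(p)}}\mathbb Z_{(p)}[Y]$ and the free--forgetful adjunction (as in the proof of Proposition \ref{pro5.2}), I would compute
\[\operatorname{Hom}_{\mathscr A}(G_{X,Y},(A\to B))\simeq [A]^{X}\times_{[B]^{X}}\bigl([B]^{X}\times[B]^{Y}\bigr)\simeq [A]^{X}\times[B]^{Y},\]
where $[-]$ is the underlying space functor of Example \ref{exa3.14}. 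Colimits in $\mathscr A$ are computed pointwise, and $[-]$ preserves sifted colimits. Hence each $G_{X,Y}$ is compact projective, meaning its corepresented functor preserves sifted colimits. The full subcategory $\mathscr A_0$ spanned by the $G_{X,Y}$ is closed under finite coproducts, since $G_{X,Y}\sqcup G_{X',Y'}\simeq G_{X\sqcup X',Y\sqcup Y'}$ because coproducts are pointwise tensor products of polynomial rings.

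\textbf{Step 2: generation.} Let $F:\mathcal P_\Sigma(\mathscr A_0)\to\mathscr A$ be the sifted-colimit-preserving extension of the inclusion given by Lemma \ref{lem3.9}. Since the objects of $\mathscr A_0$ are compact projective in $\mathscr A$, the functor $F$ is fully faithful. Since $F$ preserves finite coproducts on $\mathscr A_0$, it preserves all small colimits. Because $\mathscr A$ is presentable, $F$ has a right adjoint $G$ by the adjoint functor theorem. It remains to show that $G$ is conservative. By Step 1, composing $G$ with evaluation at $G_{\{1\},\emptyset}$ and $G_{\emptyset,\{1\}}$ gives the functor $(A\to B)\mapsto([A],[B])$. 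This functor is conservative because $[-]$ is conservative on $\operatorname{Ani(Ring)}$ and equivalences in an arrow category are detected pointwise. Therefore $F$ is an equivalence, so the $G_{X,Y}$ generate $\mathscr A$ under sifted colimits.

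The same computation at the 1-categorical level shows that $\operatorname{Hom}(G_{X,Y},(A\to B))=A^X\times B^Y$ for discrete $A\to B$. Thus the $G_{X,Y}$ are also compact projective generators of $\operatorname{Fun}(\Delta^1,\operatorname{Ring}_{\mathbb Z_{(p)}})$, and $\mathscr A$ is its animation. I expect the main obstacle to be the bookkeeping in Step 2: checking that $F$ preserves finite coproducts of generators and that the composite with $G$ really recovers $([A],[B])$. This is the same subtlety as in Proposition \ref{pro3.19}, and I would follow that argument verbatim.
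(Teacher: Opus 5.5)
Your argument is correct, and it is more explicit than the paper's. The paper gives no argument of its own: it simply defers to the discussion preceding \cite[Definition 2.33]{Mao24}, where the $\infty$-category of maps of animated rings is set up as the animation of the 1-category of ring maps in order to define $\mathbb L_{B/A}$.

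You instead verify this special case by hand, following the template of Proposition \ref{pro3.19}. The formula $\operatorname{Hom}_{\mathscr A}(G_{X,Y},(A\to B))\simeq [A]^{X}\times[B]^{Y}$ gives compact projectivity, since colimits in your $\mathscr A$ are pointwise and finite products commute with sifted colimits in $\mathcal S$. It also gives closure under finite coproducts. Generation then comes from the recognition argument: $F\colon\mathcal P_\Sigma(\mathscr A_0)\to\mathscr A$ is fully faithful and colimit-preserving, and its right adjoint is conservative because evaluating at $G_{\{1\},\emptyset}$ and $G_{\emptyset,\{1\}}$ recovers $([A],[B])$.

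What each route buys: the citation gives the statement in Mao's general framework, the same one used to define the cotangent complex appearing in Theorem \ref{thm5.5}. Your route is self-contained, using only Lemma \ref{lem3.9} and tools already in the paper. It also yields exactly the form needed later, $\mathscr A\simeq\mathcal P_\Sigma(\mathscr A_0)$, so sifted-colimit-preserving functors on $\mathscr A$ are determined by their values on the $G_{X,Y}$.

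One minor point concerns your closing remark. The 1-categorical Hom computation only shows compact projectivity in $\operatorname{Fun}(\Delta^1,\operatorname{Ring}_{\mathbb Z_{(p)}})$. Generation there needs one more line, for example applying the colimit-preserving truncation $\tau_{\le 0}$ to the $\infty$-categorical presentation of a discrete object. This is not needed for the lemma as stated. You also correctly read the slice in the statement as the coslice under $(\mathbb Z_{(p)}=\mathbb Z_{(p)})$.
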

\begin{proof}
    This follows from the argument preceding \cite[Definition 2.33]{Mao24}.
\end{proof}
The next theorem is very useful to endow fundamental properties of Frobenius--Witt cotangent complexes.
\begin{theorem}\label{thm5.5}
    Let $A\to B$ be a morphism of animated $\mathbb Z_{(p)}$-algebras. Then there is a canonical equivalence\[\operatorname{Cofib}(F\mathbb L_A\otimes_A^LB\to F\mathbb L_B)\xrightarrow[]{\simeq }F^*(\mathbb L_{(B/^Lp)/(A/^Lp)})\in \mathcal D_{\ge0}(\mathbb F_p),\] where $F^*$ is the derived pull-back along the absolute Frobenius map $F:B/^Lp\to B/^Lp$.
\end{theorem}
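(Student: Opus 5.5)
We reduce to polynomial algebras by animation. By Lemma \ref{lem5.4}, the $\infty$-category of maps $A\to B$ of animated $\mathbb Z_{(p)}$-algebras is the animation of the category of maps of $\mathbb Z_{(p)}$-algebras. Its compact projective generators are the maps $P:=\mathbb Z_{(p)}[X]\to Q:=\mathbb Z_{(p)}[X,Y]$ with $X,Y$ finite sets.

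Both sides of the claimed equivalence are functors of $(A\to B)$ with values in $\mathcal D_{\ge0}(\mathbb F_p)$ (or, more precisely, in pairs (ring, module) over $B/^Lp$), and we check that both commute with sifted colimits:
\begin{itemize}
\item The source $\operatorname{Cofib}(F\mathbb L_A\otimes^L_AB\to F\mathbb L_B)$ does, by Proposition \ref{pro4.10.1}, since base change and cofibers commute with colimits.
\item The target $F^*(\mathbb L_{(B/^Lp)/(A/^Lp)})$ does, because $(A\to B)\mapsto (A/^Lp\to B/^Lp)$ preserves sifted colimits, the relative cotangent complex is the animation of $\Omega$ (so it preserves sifted colimits), and the Frobenius pullback is a base change along a natural endomorphism.
\end{itemize}
Hence both functors are left Kan extended from the generators. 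It therefore suffices to construct the comparison map naturally on all $A\to B$, or at least on discrete polynomial pairs in a functorial way, and then check that it is an equivalence for $P\to Q$.

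The comparison map comes from the universal property. For $(M,m)$ a connective $B/^Lp$-module under $B/^Lp$, a map out of the cofiber classifies a Frobenius--Witt derivation of $B$ whose restriction to $A$ is trivialized. We then identify the space of animated Frobenius--Witt derivations of $B$ that vanish on $A$ (with $m=0$, since $p\in A$) with ring maps $B\to W_2(B,M,0)$ over $A$ and over $B$. The key point is that with $m=0$, $W_2(B,M,0)$ is the square-zero extension of $B$ by $M$ twisted by Frobenius. Concretely, $W_2(B,M,0)\simeq B\times_{B/^Lp}(B/^Lp\oplus F_*M)$, since on discrete objects $(a,x)(b,y)=(ab,a^py+b^px)$ and the addition is untwisted. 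This identification is to be made at the level of functors on $\operatorname{AlgMod^{p-tor}_*}$ restricted to $m=0$ and then animated.

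The space of sections over $A$ is then $\operatorname{Hom}_{A/}(B, B\oplus F_*M)$, viewed as sections over $B$. This equals $\operatorname{Map}_{B}(\mathbb L_{B/A},F_*M)=\operatorname{Map}_{B/^Lp}(F^*(\mathbb L_{B/A}\otimes^L_BB/^Lp),M)$, and we use $\mathbb L_{B/A}\otimes_B B/^Lp\simeq\mathbb L_{(B/^Lp)/(A/^Lp)}$ by base change. This gives a fiber sequence of mapping spaces $\operatorname{Map}(F^*\mathbb L,M)\to \operatorname{FWDer}(B,M)\to \operatorname{FWDer}(A,M)$ and hence, by Yoneda, the cofiber sequence. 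Here the cofiber is computed in $\mathcal D(B/^Lp)$ and forgets the pointing, so the pointings must be handled carefully: one works with fibers of $\operatorname{Hom}$ in the under-category, which agree with the fibers computed in $\mathcal D$ because $\widetilde m$ comes from $A$.

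As a sanity check in the polynomial case, Proposition \ref{pro5.2} gives the source as $\bigoplus_{y\in Y}Q/p\cdot w(y)$ and the target as $F^*\Omega_{(Q/p)/(P/p)}$, both free on $Y$, matching \cite[Lemma 2.9]{Sai22}.

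I expect the main obstacle to be the identification $W_2(B,M,0)\simeq B\oplus F_*M$ as animated rings functorially in the animated setting. In particular, we need the extension to be split and compatible with $\mathrm{pr}_1$, and we need the resulting mapping spaces to be identified with derivations into $F_*M$. I would first prove this discretely via Definition \ref{def2.4}, then deduce it for animated triples $(B,M,0)$ by checking that both sides commute with sifted colimits and agree on compact projective generators, as in Lemma \ref{lem4.3}.
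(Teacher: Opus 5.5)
Your argument is correct, but it takes a different route from the paper.

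\textbf{The paper's route.} The proof there is only a reduction. Both sides commute with sifted colimits (Proposition~\ref{pro4.10.1}), so by Lemma~\ref{lem5.4} it suffices to treat $\mathbb Z_{(p)}[X]\to\mathbb Z_{(p)}[X,Y]$. In that case the equivalence is quoted from Saito's Proposition 2.10.1. The comparison map itself is left implicit, as the animation of the discrete one.

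\textbf{Your route.} You prove a corepresentability statement directly. Because the map $F\mathbb L_A\otimes^L_AB\to F\mathbb L_B$ is compatible with the pointings, maps from the cofiber into $M$ are the fiber of $\operatorname{FWDer}(B,(M,0))\to\operatorname{FWDer}(A,(M,0))$ over the zero derivation. The key input is that $W_2(B,M,0)$ is the Frobenius-twisted trivial square-zero extension $B\oplus F_*M$. With that, this fiber becomes $\operatorname{Map}_B(\mathbb L_{B/A},F_*M)\simeq\operatorname{Map}_{B/^Lp}(F^*(\mathbb L_{B/A}\otimes_BB/^Lp),M)$. Yoneda in $\mathcal D_{\ge0}(B/^Lp)$ then finishes, since both objects are connective.

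\textbf{The reduction to generators is only needed once.} Once you have $W_2(B,M,0)\simeq B\oplus F_*M$ naturally in $(B,M)$, the Yoneda argument works for every $A\to B$. So reducing the theorem itself to generators, and checking that the target commutes with sifted colimits, is superfluous. The generator argument is needed only for the identification of $W_2(-,-,0)$. There it amounts to Lemma~\ref{lem4.3} plus two checks:
\begin{enumerate}
\item $(B,M)\mapsto(B,M,0)$ preserves sifted colimits, since sifted colimits in coslice categories are computed in the underlying category.
\item $(B,M)\mapsto B\oplus F_*M$ preserves sifted colimits. Sifted colimits in $\operatorname{Ani}(\operatorname{RingMod})$ are computed on underlying objects, and restriction of scalars along Frobenius does not change the underlying module.
\end{enumerate}

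\textbf{What each approach buys.} Your route gives:
\begin{enumerate}
\item an explicit, canonical comparison map;
\item the equivalence in $\mathcal D(B/^Lp)$, not just in $\mathcal D(\mathbb F_p)$;
\item independence from Saito's computation, whose polynomial case becomes a consequence rather than an input.
\end{enumerate}
It is also the mechanism behind Corollary~\ref{cor5.10} and the identification used in the proof of Theorem~\ref{D.2}. The paper's route is shorter, given Saito's result and the sifted-colimit machinery already in place.
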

\begin{proof}
    By Proposition \ref{pro4.10.1}, the both sides commutes with all sifted colimits. By this observation and Lemma \ref{lem5.4}, it suffices to show the statement in the case that $A\to B$ is the canonical inclusion $\mathbb Z_{(p)}[X]\to \mathbb Z_{(p)}[X,Y]$, where $X$ and $Y$ are finite sets. In this case, the equivalence follows from \cite[Proposition 2.10.1]{Sai22}.
\end{proof}

\begin{coro}\label{cor5.6}
    Let $A\to B$ be a morphism of animated $\mathbb Z_{(p)}$-algebras such that $\mathbb L_{B/A}\otimes_A^LA/^Lp$ vanishes. Then there exists a canonical equivalence \[F\mathbb L_A\otimes^L_AB\xrightarrow[]{\simeq }F\mathbb L_B.\]
\end{coro}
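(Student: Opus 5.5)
The plan is to deduce the corollary directly from the fundamental fiber sequence of Theorem \ref{thm5.5}. That theorem identifies the cofiber of $F\mathbb L_A\otimes^L_AB\to F\mathbb L_B$ with $F^*(\mathbb L_{(B/^Lp)/(A/^Lp)})$. It therefore suffices to show that this term vanishes under the hypothesis $\mathbb L_{B/A}\otimes^L_AA/^Lp\simeq 0$. A map in the stable $\infty$-category $\mathcal D(B/^Lp)$, or in $\mathcal D(\mathbb F_p)$, with vanishing cofiber is an equivalence.

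The first step is base change for the cotangent complex. The square of animated rings with vertices $A$, $B$, $A/^Lp$, $B/^Lp$ is a pushout, since $B/^Lp\simeq B\otimes^L_AA/^Lp$. Flat-free base change for $\mathbb L$ (valid for arbitrary pushouts of animated rings) then gives
\[\mathbb L_{(B/^Lp)/(A/^Lp)}\simeq \mathbb L_{B/A}\otimes^L_BB/^Lp\simeq \mathbb L_{B/A}\otimes^L_AA/^Lp .\]
By hypothesis the right-hand side is $0$, so $\mathbb L_{(B/^Lp)/(A/^Lp)}\simeq 0$.

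The second step is to note that the derived pullback $F^*=(-)\otimes^L_{B/^Lp,F}B/^Lp$ is an exact functor and sends $0$ to $0$. Hence $F^*(\mathbb L_{(B/^Lp)/(A/^Lp)})\simeq 0$, and Theorem \ref{thm5.5} gives $\operatorname{Cofib}(F\mathbb L_A\otimes^L_AB\to F\mathbb L_B)\simeq 0$. The canonical map is therefore an equivalence.

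The only point needing care is making sure the base-change identification is compatible with the way Theorem \ref{thm5.5} is stated. The hypothesis is phrased via $\otimes_A^LA/^Lp$, and $\mathbb L_{B/A}\otimes^L_AA/^Lp\simeq\mathbb L_{B/A}\otimes^L_BB/^Lp$ because $\mathbb L_{B/A}$ is a $B$-module. Also, vanishing of an object does not depend on the ambient category in which the cofiber is computed: an object of $\mathcal D(B/^Lp)$ is zero if and only if its image in $\mathcal D(\mathbb F_p)$ is zero, since the forgetful functor is conservative. So the equivalence holds in $\mathcal D(B/^Lp)$ as well as in $\mathcal D_{\ge0}(\mathbb F_p)$.
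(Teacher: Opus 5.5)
Your proposal is correct and is essentially the paper's proof. The paper likewise combines Theorem~\ref{thm5.5} with the base-change identification $\mathbb L_{(B/^Lp)/(A/^Lp)}\simeq \mathbb L_{B/A}\otimes^L_{\mathbb Z_{(p)}}\mathbb F_p$, which agrees with your $\mathbb L_{B/A}\otimes^L_AA/^Lp$; the cofiber therefore vanishes and the canonical map is an equivalence.
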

\begin{proof}
    This follows from Theorem \ref{thm5.5} and the fact that $\mathbb L_{(B/^Lp)/(A/^Lp)}\simeq \mathbb L_{(B/A)}\otimes_{\mathbb Z_{(p)}}^L\mathbb F_p$ holds.
\end{proof}
\begin{remark}\label{rem5.7}
    The condition $\mathbb L_{B/A}\otimes_A^LA/^Lp\simeq 0$ holds in either of the following cases.\begin{itemize}
        \item $A\to B$ is an etale map of $\mathbb Z_{(p)}$-algebras.
        \item $A\to B$ is a derived base change of a map between perfectoid rings. (To see this, we may assume that $A$ and $B$ are perfectoid. Since $B\simeq A_{\mathrm{inf}}(B)\otimes^L_{A_{\mathrm{inf}}(A)}A$ holds, where $A_{\mathrm{inf}}(A)\to A$ is the Fontaine's $\theta$-map (for definition, see \cite[Section 3.1]{BMS18}), the conclusion follows from $\mathbb L_{A_{\mathrm{inf}}(B)/A_{\mathrm{inf}}(A)}\otimes_{A_{\mathrm{inf}}(A)}A_{\mathrm{inf}}(A)/p\simeq \mathbb L_{B^{\flat}/A^{\flat}}\simeq 0$, where $A^{\flat}$ is the tilt of $A$).
    \end{itemize}
\end{remark}
\begin{coro}\label{cor5.8}
    Let $A$ be an animated $\mathbb Z_{(p)}$-algebra and $\widehat A$ be its derived $p$-adic completion. Then the canonical map $F\mathbb L_A\to F\mathbb L_{\widehat A}$ is an equivalence.
\end{coro}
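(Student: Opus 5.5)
The plan is to apply Corollary \ref{cor5.6} to the completion map $A\to\widehat A$. That corollary gives $F\mathbb L_A\otimes^L_A\widehat A\xrightarrow{\simeq}F\mathbb L_{\widehat A}$ as soon as $\mathbb L_{\widehat A/A}\otimes^L_AA/^Lp\simeq 0$. It then remains to identify the source $F\mathbb L_A\otimes^L_A\widehat A$ with $F\mathbb L_A$.

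\textbf{Step 1: vanishing of the cotangent complex mod $p$.} Derived $p$-completion does not change the derived reduction mod $p$, so the map $A/^Lp\to \widehat A/^Lp$ is an equivalence of animated rings. By base change for the cotangent complex,
\[\mathbb L_{\widehat A/A}\otimes^L_AA/^Lp\simeq \mathbb L_{(\widehat A/^Lp)/(A/^Lp)}.\]
The right-hand side is the cotangent complex of an equivalence, hence vanishes. (This is the same identification used in the proof of Corollary \ref{cor5.6}.) The hypothesis of Corollary \ref{cor5.6} is therefore satisfied, and we get $F\mathbb L_A\otimes^L_A\widehat A\simeq F\mathbb L_{\widehat A}$.

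\textbf{Step 2: identifying the source.} The object $F\mathbb L_A$ is an $A/^Lp$-module. Hence
\[F\mathbb L_A\otimes^L_A\widehat A\simeq F\mathbb L_A\otimes^L_{A/^Lp}(A/^Lp\otimes^L_A\widehat A)\simeq F\mathbb L_A\otimes^L_{A/^Lp}\widehat A/^Lp\simeq F\mathbb L_A,\]
where the last equivalence again uses $A/^Lp\simeq\widehat A/^Lp$. Composing with Step 1, we must check that the resulting equivalence $F\mathbb L_A\to F\mathbb L_{\widehat A}$ is the canonical map induced by functoriality (Proposition \ref{pro4.8.1}). This holds because the map in Corollary \ref{cor5.6} is the base change of the functorial map.

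\textbf{Main obstacle.} The key input is the equivalence $A/^Lp\simeq\widehat A/^Lp$ for animated rings. I would verify it at the level of underlying modules: write $\widehat A=\lim_n A\otimes^L_{\mathbb Z}\mathbb Z/^Lp^n$, which is the derived completion computed in $\mathcal D(\mathbb Z)$. The fiber of $A\to\widehat A$ is then $p$-adically local, i.e. $p$ acts invertibly on it, so tensoring with $\mathbb Z/^Lp$ kills it. Since the forgetful functor from animated rings to modules is conservative, this gives the equivalence of animated rings.
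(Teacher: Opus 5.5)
Your proposal is correct and is essentially the paper's argument. The paper also uses the fundamental fiber sequence (you route through Corollary~\ref{cor5.6}) together with $A/^Lp\simeq\widehat A/^Lp$ to get $F\mathbb L_A\otimes^L_A\widehat A\simeq F\mathbb L_{\widehat A}$, and then applies the same tensor identity to identify the source with $F\mathbb L_A$. Your justification of $A/^Lp\simeq\widehat A/^Lp$ (the fiber of $A\to\widehat A$ has $p$ acting invertibly) supplies a step the paper takes for granted.
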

\begin{proof}
    By the above theorem and the equivalence $A/^Lp\xrightarrow[]{\simeq }\widehat A/^Lp$, it suffices to show the equivalence of the canonical map $F\mathbb L_A\to F\mathbb L_A\otimes_A^L\widehat A$. This follows from the following calculation:\begin{align*}
        F\mathbb L_A\otimes_A^L\widehat A&\simeq F\mathbb L_A\otimes_{A/^Lp}A/^Lp\otimes_A^L\widehat A\\&\simeq F\mathbb L_A\otimes_{A/^Lp}A/^Lp\\&\simeq F\mathbb L_A.
    \end{align*}
\end{proof}

\begin{coro}\label{cor5.9}
    Let $A$ be a smooth $\mathbb Z_{(p)}$-algebra. Then there is a canonical equivalence \[F\mathbb L_A\xrightarrow[]{\simeq}F\Omega_A.\]
\end{coro}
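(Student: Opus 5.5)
We deduce Corollary \ref{cor5.9} from the fundamental fiber sequence of Theorem \ref{thm5.5}, applied to the structure map $\mathbb Z_{(p)}[X_1,\dots,X_r]\to A$ coming from a local presentation.

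\textbf{Reduction to an \'etale cover of a polynomial ring.} Since $A$ is smooth over $\mathbb Z_{(p)}$, Zariski locally on $\operatorname{Spec}A$ there is an \'etale map $P:=\mathbb Z_{(p)}[X_1,\dots,X_r]\to A$. First we treat the case where such a global \'etale map exists. For $P\to A$ \'etale we have $\mathbb L_{A/P}\simeq 0$, so Remark \ref{rem5.7} and Corollary \ref{cor5.6} give
\[F\mathbb L_P\otimes^L_PA\xrightarrow{\simeq}F\mathbb L_A.\]
By Proposition \ref{pro5.2}, $F\mathbb L_P\simeq F\Omega_P$ is a free $P/p$-module of rank $r+1$. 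Hence the derived tensor product $F\mathbb L_P\otimes_P^LA\simeq F\Omega_P\otimes_{P/p}A/p$ is discrete, namely a free $A/p$-module. In particular $F\mathbb L_A$ is discrete. By Proposition \ref{pro5.1}, $\pi_0(F\mathbb L_A)\cong F\Omega_A$, so the canonical truncation map $F\mathbb L_A\to\pi_0(F\mathbb L_A)=F\Omega_A$ is an equivalence.

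\textbf{The general case.} In general it suffices to show that $\pi_i(F\mathbb L_A)=0$ for $i>0$, since Proposition \ref{pro5.1} then gives the equivalence with $F\Omega_A$. Choose a Zariski cover by localizations $A\to A_f$ such that each $A_f$ admits an \'etale map from a polynomial ring. Localization $A\to A_f$ is \'etale, so Corollary \ref{cor5.6} gives
\[F\mathbb L_{A_f}\simeq F\mathbb L_A\otimes_A A_f.\]
Localization is flat, so this identifies $\pi_i(F\mathbb L_{A_f})$ with $\pi_i(F\mathbb L_A)_f$. By the first step, $\pi_i(F\mathbb L_{A_f})=0$ for $i>0$. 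Vanishing of a module is Zariski local, so $\pi_i(F\mathbb L_A)=0$ for $i>0$, and we conclude.

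\textbf{Main obstacle.} The main point to check is that the identification $F\mathbb L_A\otimes_A^L A_f\simeq F\mathbb L_{A_f}$ is compatible with the module structures. Here $F\mathbb L_A$ is an $A/^Lp$-module, and base change to $A_f$ is computed as $-\otimes_{A/p}A_f/p$. We also need to know that this functor is exact on homotopy groups; this holds because $A/p\to A_f/p$ is a flat localization. Everything else is routine bookkeeping with Theorem \ref{thm5.5}.
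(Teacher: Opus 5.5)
Your proposal is correct and follows essentially the same route as the paper, whose proof is the one-line combination of Corollary \ref{cor5.6} (\'etale base change from a polynomial ring) with Proposition \ref{pro5.2}. You also make explicit two points the paper leaves implicit: the Zariski-localization step (a smooth algebra is only locally \'etale over affine space, which you handle via flatness of $A\to A_f$), and the use of Proposition \ref{pro5.1} together with the $p$-torsion-freeness of $A$ to identify the discrete complex $F\mathbb L_A$ with $F\Omega_A$.
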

\begin{proof}
    This follows from Corollary \ref{cor5.6} and Proposition \ref{pro5.2}.
\end{proof}
\begin{coro}\label{cor5.10}
    Let $A$ be an animated $\mathbb Z_{(p)}$-algebra. Then there exists a canonical equivalence\[\operatorname{Cofib}(A/^Lp\xrightarrow[]{w(p)}F\mathbb L_A)\xrightarrow[]{\simeq }F^*(\mathbb L_{(A/^Lp)/\mathbb F_p})\]
\end{coro}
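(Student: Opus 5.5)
We apply Theorem \ref{thm5.5} to the structure map $\mathbb Z_{(p)}\to A$. The fundamental fiber sequence then reads
\[F\mathbb L_{\mathbb Z_{(p)}}\otimes^L_{\mathbb Z_{(p)}}A\to F\mathbb L_A\to F^*(\mathbb L_{(A/^Lp)/(\mathbb Z_{(p)}/^Lp)}),\]
and $\mathbb Z_{(p)}/^Lp\simeq\mathbb F_p$ because $\mathbb Z_{(p)}$ is $p$-torsionfree. So the whole task is to identify the first term and the first map with $A/^Lp\xrightarrow{w(p)}F\mathbb L_A$.

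\textbf{Step 1: compute $F\mathbb L_{\mathbb Z_{(p)}}$.} We use Proposition \ref{pro5.2} with $r=0$. It gives $F\mathbb L_{\mathbb Z_{(p)}}\simeq F\Omega_{\mathbb Z_{(p)}}$, which is free of rank one over $\mathbb F_p$ with generator $w(p)$. Thus, as an object of $\mathcal D(\mathbb F_p)_{\mathbb F_p/}$, the pair $(F\mathbb L_{\mathbb Z_{(p)}},\widetilde m)$ is $(\mathbb F_p,1)$: the pointing map $\widetilde m:\mathbb F_p\to F\mathbb L_{\mathbb Z_{(p)}}$ is an equivalence. Base changing along $\mathbb Z_{(p)}\to A$ gives
\[F\mathbb L_{\mathbb Z_{(p)}}\otimes^L_{\mathbb Z_{(p)}}A\simeq \mathbb F_p\otimes^L_{\mathbb Z_{(p)}}A\simeq A/^Lp.\]

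\textbf{Step 2: identify the map with $w(p)$.} The map $F\mathbb L_{\mathbb Z_{(p)}}\otimes A\to F\mathbb L_A$ comes from the functoriality of Proposition \ref{pro4.8.1}. That functoriality lands in $\operatorname{Ani(AlgMod_*^{p-tor})}$, so the maps respect the pointings $\widetilde m$. Hence the composite
\[A/^Lp\simeq \mathbb F_p\otimes A\xrightarrow{\widetilde m\otimes A} F\mathbb L_{\mathbb Z_{(p)}}\otimes A\to F\mathbb L_A\]
is the pointing $\widetilde m_A$ of $F\mathbb L_A$. By construction (Definition \ref{def4.8}, together with Example \ref{exa2.7} for the discrete case), $\widetilde m_A$ is the value $w(p)$ of the universal derivation. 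Since the first map in this composite is an equivalence, the first map of the fiber sequence is $w(p)$ up to this identification. Taking cofibers then gives the claimed equivalence.

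\textbf{Main obstacle.} The delicate point is Step 2: making precise that the map of Theorem \ref{thm5.5} is the pointed map, so that it is compatible with the pointings. If needed, I would check this by reducing to polynomial algebras via sifted colimits, as in the proof of Theorem \ref{thm5.5}. Both sides commute with sifted colimits by Proposition \ref{pro4.10.1}. For $A=\mathbb Z_{(p)}[X_1,\dots,X_r]$, the map sends the generator $1$ to $w(p)$ in the explicit basis of $F\Omega_A$ from Proposition \ref{pro5.2}. Its cofiber is then free on the $w(X_i)$, and this matches $F^*\Omega_{\mathbb F_p[X]/\mathbb F_p}$ by \cite[Proposition 2.10.1]{Sai22}.
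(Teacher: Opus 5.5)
Your proposal is correct and follows the paper's own argument: apply Theorem~\ref{thm5.5} to the structure map $\mathbb Z_{(p)}\to A$ and use $F\mathbb L_{\mathbb Z_{(p)}}\simeq F\Omega_{\mathbb Z_{(p)}}\cong \mathbb F_p\cdot w(p)$. Your Step 2 identifies the first map with the pointing $w(p)$ via the pointed functoriality of Proposition~\ref{pro4.8.1}; this only makes explicit a point the paper leaves implicit.
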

\begin{proof}
    This follows from the isomorphism $F\Omega_{\mathbb Z_{(p)}}\cong \mathbb F_p\cdot w(p)$ and Theorem \ref{thm5.5} by taking $A=\mathbb Z_{(p)}$ and $B=A$.
\end{proof}
The following result is one of the main theorems of this paper.
\begin{theorem}\label{thm5.11}
    The functor $F\mathbb L_{(-)}:\operatorname{Ani(Ring)}_{\mathbb Z_{(p)}}\to \mathcal D_{\ge0}(\mathbb F_p)$ is an fpqc sheaf. (Being an fpqc sheaf means that, for any faithfully flat map $A\to B$ of animated $\mathbb Z_{(p)}$-algebras, the canonical map $F\mathbb L_{A}\to \operatorname{lim}_{\Delta}F\mathbb L_{(B^{\bullet})}$ is an equivalence, where $B^{\bullet}$ is the Čech conerve of $A\to B$).
\end{theorem}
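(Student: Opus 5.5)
The plan is to reduce fpqc descent for $F\mathbb L_{(-)}$ to descent for the two ingredients in the fundamental fiber sequence. We use Corollary \ref{cor5.10}, which gives for every animated $\mathbb Z_{(p)}$-algebra $A$ a cofiber sequence
\[A/^Lp\xrightarrow{w(p)}F\mathbb L_A\to F^*(\mathbb L_{(A/^Lp)/\mathbb F_p}),\]
functorial in $A$ by Proposition \ref{pro4.8.1} and the naturality of the equivalence in Theorem \ref{thm5.5}. Fix a faithfully flat map $A\to B$ with Čech conerve $B^\bullet$. Totalization $\lim_\Delta$ in $\mathcal D(\mathbb F_p)$ is exact, since $\mathcal D(\mathbb F_p)$ is stable. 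It therefore suffices to show that the outer terms satisfy descent, and the five lemma then gives that $F\mathbb L_A\to\lim_\Delta F\mathbb L_{B^\bullet}$ is an equivalence.

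The left term is $A\mapsto A/^Lp=A\otimes^L_{\mathbb Z}\mathbb F_p$. Flat descent for the structure sheaf, i.e. $A\simeq\lim_\Delta B^\bullet$ via the usual faithfully flat descent for connective modules, is preserved by $-\otimes^L_{\mathbb Z}\mathbb F_p$. This holds because $\mathbb F_p$ is a perfect $\mathbb Z$-complex, so tensoring with it commutes with all limits.

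For the right term, note first that $A/^Lp\to B^\bullet/^Lp$ is again the Čech conerve of the faithfully flat map $A/^Lp\to B/^Lp$ of animated $\mathbb F_p$-algebras. So we reduce to showing that $R\mapsto F_R^*\mathbb L_{R/\mathbb F_p}=\mathbb L_{R/\mathbb F_p}\otimes^L_{R,F}R$ is an fpqc sheaf on animated $\mathbb F_p$-algebras. By \cite{BMS19}, $\mathbb L_{-/\mathbb F_p}$ is an fpqc sheaf; more precisely, $\mathbb L_{R/\mathbb F_p}\simeq\lim_\Delta\mathbb L_{S^\bullet/\mathbb F_p}$. The main obstacle is to control the Frobenius twist, since the base change along $F$ does not obviously commute with the totalization.

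There are two ways to handle this twist.

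1. \emph{Filtration argument.} As in BMS, one uses the fundamental sequences $\mathbb L_{R/\mathbb F_p}\otimes_R S^n\to\mathbb L_{S^n/\mathbb F_p}\to\mathbb L_{S^n/R}$. The terms $M\otimes_R S^\bullet$ satisfy descent for any $R$-module $M$ (flat descent for modules), and applying $F^*$ to $M\otimes_RS^\bullet$ gives $(F_R^*M)\otimes_RS^\bullet$, which again satisfies descent. The relative terms $\mathbb L_{S^\bullet/R}$, and their wedge powers, have vanishing totalization by the standard BMS argument: $\mathbb L_{S^\bullet/R}\simeq\mathbb L_{S/R}\otimes_S S^{\bullet+1}$ is split after base change to $S$. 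This splitting survives the Frobenius pullback, because $F^*$ commutes with base change along $R\to S^n$, as Frobenius is natural.

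2. \emph{Direct comparison.} Alternatively, one observes that $F_R^*N\simeq N\otimes_{R,F}R$ for $N=\lim_\Delta N^\bullet$ with $N^n\in\mathcal D(S^n)$ coming from a descent datum. Here $F_{S^n}^*N^n\simeq N^n\otimes_{S^n,F}S^n$, and by flat base change along the relative Frobenius this reduces to module descent along $R\to S$.

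I would carry out the first route: first check descent after a faithfully flat base change $R\to S$, where the augmented cosimplicial object acquires an extra degeneracy and hence becomes a limit diagram, then descend using faithful flatness. Boundedness or connectivity issues with totalizations of connective objects should be handled by working in all of $\mathcal D(\mathbb F_p)$, where limits are computed without truncation.
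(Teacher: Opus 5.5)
Your route 1 is essentially the paper's proof: the paper applies Theorem \ref{thm5.5} directly to $A\to B^\bullet$ (your detour through Corollary \ref{cor5.10} and the transitivity triangle for $\mathbb F_p\to R\to S^\bullet$ amounts to the same thing), uses flat descent for $F\mathbb L_A\otimes^L_A B^\bullet$, and kills $\lim_\Delta F^*\mathbb L_{B^\bullet/A}$ by base changing along $A\to B$, where $B^\bullet\otimes^L_A B$ becomes the \v{C}ech conerve of $B\to B\otimes^L_A B$, which has a section. The one point you mislocate is the boundedness input: ``descend using faithful flatness'' needs $-\otimes^L_A B$ to commute with totalizations of uniformly bounded-below cosimplicial objects (the paper cites \cite[Lemma 4.22]{BS22}; it applies because all terms are connective), not merely working in all of $\mathcal D(\mathbb F_p)$; also your formula $\mathbb L_{S^\bullet/R}\simeq\mathbb L_{S/R}\otimes_S S^{\bullet+1}$ is wrong (cosimplicial degree $n$ has $n+1$ summands), though the extra-degeneracy argument never uses it.
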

\begin{proof}
    By Theorem \ref{thm5.5}, there exists a natural cofiber sequence of cosimplicial animated modules:\[F\mathbb L_A\otimes_A^LB^{\bullet}\to F\mathbb L_{B^{\bullet}}\to F^*(\mathbb L_{B^{\bullet}/A}\otimes_{\mathbb Z_{(p)}}^L\mathbb F_p)\to \] in $\mathcal D(\mathbb F_p)$, where $F$ is the absolute Frobenius map on $B^{\bullet}/^Lp$. By the fpqc descent for modules, there exists a natural equivalence $F\mathbb L_A\simeq \operatorname{lim}_{\Delta}(B^{\bullet}\otimes^L_{A}F\mathbb L_{A})$. Thus, it suffices to show the vanishing of $\operatorname{lim}_{\Delta}F^*(\mathbb L_{B^{\bullet}/A}\otimes_{\mathbb Z_{(p)}}^L\mathbb F_p)$. By the base change property of cotangent complexes and the fact that the $B^{\bullet}/^Lp$ coincides with the Čech conerve of the faithfully flat map $A/^Lp\to B/^Lp$, it suffices to show the vanishing of $\operatorname{lim}_{\Delta}F^*\mathbb L_{B^{\bullet}/A}$ for any faithfully flat map $A\to B$ of animated $\mathbb F_p$-algebras. Write $C^{\bullet}:=B^{\bullet}\otimes_AB$, which coincides with the Čech conerve of the map $B\to B\otimes_A^LB:=C$. Then there are following equivalences:\begin{align*}
        (\operatorname{lim}_{\Delta}F^*_{B^{\bullet}}\mathbb L_{B^{\bullet}/A})\otimes^L_AB&\simeq \operatorname{lim}_{\Delta}((F^*_{B^{\bullet}}\mathbb L_{B^{\bullet}/A})\otimes^L_AB)\\&\simeq \operatorname{lim}_{\Delta}(\mathbb L_{B^{\bullet}/A}\otimes^L_{B^{\bullet}}C^{\bullet}\otimes^L_{C^{\bullet},F_{C^{\bullet}}}C^{\bullet})\\&\simeq \operatorname{lim}_{\Delta}(\mathbb L_{C^{\bullet}/B}\otimes^L_{C^{\bullet},F_{C^{\bullet}}}C^{\bullet}),
    \end{align*}
    where, in the first equivalence, we used \cite[Lemma 4.22]{BS22}.
    Let $F:\operatorname{Ani(Ring)}_{B/}\to \mathcal D(\mathbb F_p)$ be the functor defined by $F(B'):=\mathbb L_{B'/B}\otimes^L_{B',F_{B'}}B'$, then there exists a canonical equivalence $F(B)\xrightarrow[]{\simeq }\operatorname{lim}_{\Delta}F(C^{\bullet})$ since $B\to C$ has a section (see the proof of \cite[Theorem 3.1]{BMS19}). By this observation and the fact that $F(B)=\mathbb L_{B/B}\otimes^L_{B,F_{B}}B\simeq 0$, we get the conclusion.
    
\end{proof}

Next, we see a relationship between Frobenius--Witt cotangent complexes and derived completions. The following proposition is inspired by a talk of R. Takeuchi.

\begin{prop}\label{pro5.11.1}
    Let $A$ be an animated $\mathbb Z_{(p)}$-algebra, $I$ be a finitely generated ideal of $\pi_0(A)$. Then there is a canonical equivalence \[(F\mathbb L_A)_I^{\wedge}\simeq (F\mathbb L_{A_I^{\wedge}})_I^{\wedge},\] where $(-)_I^{\wedge}$ is the derived $I$-completion functor.
\end{prop}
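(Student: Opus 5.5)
Apply Theorem \ref{thm5.5} to the completion map $A\to A_I^{\wedge}$. This gives a cofiber sequence
\[F\mathbb L_A\otimes^L_AA_I^{\wedge}\to F\mathbb L_{A_I^{\wedge}}\to F^*(\mathbb L_{(A_I^{\wedge}/^Lp)/(A/^Lp)})\]
in $\mathcal D(A_I^{\wedge}/^Lp)$. Derived $I$-completion is exact, so it suffices to prove two things:
\begin{enumerate}
\item the map $F\mathbb L_A\to F\mathbb L_A\otimes^L_AA_I^{\wedge}$ becomes an equivalence after $I$-completion;
\item the third term has vanishing $I$-completion.
\end{enumerate}

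For (1), write $F\mathbb L_A\otimes^L_AA_I^{\wedge}\simeq F\mathbb L_A\otimes^L_{A/^Lp}(A/^Lp\otimes^L_AA_I^{\wedge})$, as in the proof of Corollary \ref{cor5.8}. Derived $I$-completion of a module $N$ is detected by $N\otimes^L_{\mathbb Z[\underline f]}\mathbb Z[\underline f]/(\underline f^n)$, where $\underline f=(f_1,\dots,f_r)$ are generators of $I$. Since $A\to A_I^{\wedge}$ induces equivalences $A\otimes^L_{\mathbb Z[\underline f]}\mathbb Z[\underline f]/(\underline f^n)\simeq A_I^{\wedge}\otimes^L_{\mathbb Z[\underline f]}\mathbb Z[\underline f]/(\underline f^n)$ (the Koszul quotients do not see completion), the map in (1) becomes an equivalence after tensoring with each Koszul quotient. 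Hence it is an equivalence after $I$-completion.

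For (2), set $B=A/^Lp$. The cotangent complex $\mathbb L_{B_I^{\wedge}/B}$ becomes zero after tensoring with $B_I^{\wedge}\otimes^L_{\mathbb Z[\underline f]}\mathbb Z[\underline f]/(\underline f)$. Indeed, by base change along the Koszul quotient this is the cotangent complex of $B/^L\underline f\to B_I^{\wedge}/^L\underline f$, which is an equivalence, so that cotangent complex is $0$. By Nakayama for derived $I$-complete modules (equivalently, after tensoring with all $\underline f^n$ quotients, using the dévissage $\mathbb Z[\underline f]/(\underline f^n)$ built from $\mathbb Z[\underline f]/(\underline f)$), $(\mathbb L_{B_I^{\wedge}/B})_I^{\wedge}\simeq0$. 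The cotangent complex of $(A_I^{\wedge}/^Lp)/(A/^Lp)$ is $\mathbb L_{B_I^{\wedge}/B}$, since $A_I^{\wedge}/^Lp\simeq (A/^Lp)_I^{\wedge}$.

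It remains to pass through $F^*$. Here $F$ is the Frobenius of $C:=B_I^{\wedge}$, and $F^*N=N\otimes^L_{C,F}C$. We use that $F^*(N)\otimes^L_{\mathbb Z[\underline f]}\mathbb Z[\underline f]/(\underline f^{pn})$ is computed from $N\otimes^L_{\mathbb Z[\underline f]}\mathbb Z[\underline f]/(\underline f^{n})$. The reason is that $F(f_i)=f_i^p$, so tensoring with the $\underline f^{pn}$ Koszul complex on the target corresponds to the $\underline f^{n}$ Koszul complex on the source, by flatness of $\mathbb Z[\underline f]\to\mathbb Z[\underline f]$, $f\mapsto f^p$, together with the identification $\mathbb Z[\underline f]/(\underline f^{pn})\simeq \mathbb Z[\underline f]/(\underline f^{n})\otimes_{\mathbb Z[\underline f],f\mapsto f^p}\mathbb Z[\underline f]$. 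Thus $F^*$ carries modules with vanishing $I$-completion to modules with vanishing $I$-completion, because the $I$-adic and $I^{[p]}$-adic topologies coincide. This gives (2), and combining (1) and (2) with the cofiber sequence yields the claimed equivalence. I expect this Frobenius compatibility step to be the main technical point. I would verify it carefully via the Koszul description above, checking that $F^*$ commutes with the relevant Koszul tensor products in $\mathcal D(\mathbb F_p)$.
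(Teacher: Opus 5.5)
Your proof is correct and follows the paper's route. Both use the fundamental fiber sequence for $A\to A_I^{\wedge}$, derived Nakayama for the first term (Koszul quotients do not see completion), and vanishing of the completed third term because $B/^L f\to B_I^{\wedge}/^L f$ is an equivalence. The only differences are minor: the paper reduces to $I=(f)$ by induction on the number of generators, and it handles the Frobenius step via naturality of Frobenius along $C\to C/^Lf$, namely $F^*(N)\otimes^L_C C/^Lf\simeq (N\otimes^L_C C/^Lf)\otimes^L_{C/^Lf,F}C/^Lf$, which makes your exponent shift $n\mapsto pn$ unnecessary.
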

\begin{proof}
    By induction on the number of generators of $I$, we may assume that $I$ is generated by an element $f\in \pi_0(A)$. Consider the fundamental fiber sequence\[F\mathbb L_{A}\otimes^L_{A}A_f^{\wedge}\to  F\mathbb L_{A_{f}^{\wedge}}\to F^*(\mathbb L_{A_f^{\wedge}/A}\otimes_A^LA/^Lp)\to .\] Taking derived $f$-completion of this fiber sequence, we have the following fiber sequence:
    \[(F\mathbb L_{A}\otimes^L_{A}A_f^{\wedge})_f^{\wedge}\to  (F\mathbb L_{A_{f}^{\wedge}})_f^{\wedge}\to (F^*(\mathbb L_{A_f^{\wedge}/A}\otimes_A^LA/^Lp))_f^{\wedge}\to .\]
    Since $A_f^{\wedge}/^Lf\simeq A/^Lf$ holds, derived Nakayama's lemma implies that the canonical map $(F\mathbb L_A)_f^{\wedge}\to (F\mathbb L_A\otimes_AA_f^{\wedge})_f^{\wedge}$ is an equivalence. Therefore, it suffices to show $(F^*(\mathbb L_{A_f^{\wedge}/A}\otimes_A^LA/^Lp))_f^{\wedge}\simeq 0$. For simplicity of notations, we write $B:=A/^Lp$. Then its derived quotient by $f$ can be computed as follows.
    \begin{align*}
        \mathbb (L_{B_f^{\wedge}/B}\otimes^L_{B_f^{\wedge},F}B_f^{\wedge})\otimes^L_{B_f^{\wedge}}B/^Lf&\simeq \mathbb L_{B_f^{\wedge}/B}\otimes^L_{B_f^{\wedge},F}B/^Lf\\
        &\simeq \mathbb L_{B_f^{\wedge}/B}\otimes^L_{B_f^{\wedge}}B/^Lf\otimes^L_{B/^Lf,F}B/^Lf\\
        &\simeq \mathbb L_{B_f^{\wedge}/B}\otimes^L_{B_f^{\wedge}}(B_f^{\wedge}\otimes_B^LB/^Lf)\otimes^L_{B/^Lf,F}B/^Lf\\
        &\simeq (\mathbb L_{B_f^{\wedge}/B}\otimes_B^LB/^Lf)\otimes^L_{B/^Lf,F}B/^Lf\\
        &\simeq \mathbb L_{(B/^Lf)/(B/^Lf)}\otimes^L_{B/^Lf,F}B/^Lf\\
        &\simeq 0.
    \end{align*}
    Thus the conclusion follows from derived Nakayama's lemma.
\end{proof}
\begin{prop}\label{pro5.11.2}
    Let $C$ be an animated $\mathbb Z_{(p)}$-algebra. Then there is a canonical equivalence   \[F\mathbb L_{C[x_1,\dots,x_r]}\simeq (F\mathbb L_C\otimes_C^LC[x_1,\dots,x_r])\oplus \bigoplus_{i=1}^{r}(C[x_1,\dots,x_r]/p\cdot w(x_i)).\]
\end{prop}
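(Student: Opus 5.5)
We apply Theorem \ref{thm5.5} to the map $C\to B:=C[x_1,\dots,x_r]$, where $B=C\otimes^L_{\mathbb Z_{(p)}}\mathbb Z_{(p)}[x_1,\dots,x_r]$ is the derived base change of a polynomial algebra. This gives a cofiber sequence
\[F\mathbb L_C\otimes^L_CB\to F\mathbb L_B\to F^*(\mathbb L_{(B/^Lp)/(C/^Lp)})\to \]
in $\mathcal D(B/^Lp)$. Since $B/^Lp\simeq (C/^Lp)[x_1,\dots,x_r]$, the third term is $F^*\big(\bigoplus_{i=1}^r (B/^Lp)\,dx_i\big)$, and the Frobenius pullback of a free module is free on the same basis. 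So the third term is a free $B/^Lp$-module of rank $r$, and the sequence splits once we produce a section. (Alternatively, one could use Proposition \ref{pro4.10.1} on the pushout $B=C\otimes_{\mathbb Z_{(p)}}\mathbb Z_{(p)}[x_1,\dots,x_r]$ together with Proposition \ref{pro5.2}. But commuting with pushouts in the coslice of $\mathbb Z_{(p)}$ glues along $F\mathbb L_{\mathbb Z_{(p)}}\cong\mathbb F_p\cdot w(p)$, so the direct cofiber-sequence argument is cleaner.)

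To build the section, we take the universal Frobenius--Witt derivation $w:B\to F\mathbb L_B$ and the points $w(x_i)\in\pi_0$ of the underlying space. These define a map of $B/^Lp$-modules
\[s:\bigoplus_{i=1}^r B/^Lp\cdot e_i\to F\mathbb L_B,\qquad e_i\mapsto w(x_i).\]
The key check is that the composite of $s$ with the projection to $F^*\mathbb L_{(B/^Lp)/(C/^Lp)}$ sends $e_i$ to $F^*(dx_i)$. In other words, the map of Theorem \ref{thm5.5} should send $w(x)$ to $1\otimes dx$. Since this map is natural and is defined by left Kan extension from the polynomial case, it suffices to check the claim for $\mathbb Z_{(p)}[X]\to\mathbb Z_{(p)}[X,Y]$ with $x$ a variable. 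That is exactly the explicit description in \cite[Proposition 2.10.1]{Sai22}, where $F\Omega\to F^*\Omega$ sends $w(y)\mapsto 1\otimes dy$. Naturality in $C$ (via the map from $\mathbb Z_{(p)}[x_1,\dots,x_r]$) then transports the identity to general $C$.

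Given this, $s$ composed with the projection is an equivalence, which splits the cofiber sequence and yields the claimed decomposition. The main obstacle is making the compatibility of the Theorem \ref{thm5.5} map with $w(x_i)\mapsto dx_i$ rigorous at the animated level. The cleanest route is naturality of the cofiber sequence along the square formed by $\mathbb Z_{(p)}\to\mathbb Z_{(p)}[x]$ and $C\to C[x]$, which reduces the question to Proposition \ref{pro5.2} and Saito's computation.
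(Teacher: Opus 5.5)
Your argument is correct, but the paper takes a different route. It applies Proposition \ref{pro4.10.1} to the pushout square $C\leftarrow\mathbb Z_{(p)}\to\mathbb Z_{(p)}[x_1,\dots,x_r]$ and uses Proposition \ref{pro5.2}.

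Writing $B=C[x_1,\dots,x_r]$, this makes $F\mathbb L_B$ the pushout in $\mathcal D(B/^Lp)$ of
$F\mathbb L_C\otimes^L_CB\leftarrow B/^Lp\cdot w(p)\to B/^Lp\cdot w(p)\oplus\bigoplus_iB/^Lp\cdot w(x_i)$.
Because the functor lands in pointed objects, the right-hand map sends $w(p)$ to $w(p)$, so it is the inclusion of a direct summand. The pushout is therefore just $F\mathbb L_C\otimes^L_CB\oplus\bigoplus_iB/^Lp\cdot w(x_i)$.

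So your worry that gluing along $F\mathbb L_{\mathbb Z_{(p)}}$ makes that route less clean is misplaced. It is a one-line argument, and it needs no information about the map $F\mathbb L_B\to F^*\mathbb L_{(B/^Lp)/(C/^Lp)}$ of Theorem \ref{thm5.5}. That map is exactly the step you flag as the main obstacle.

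Your route splits the fundamental fiber sequence by hand. Its correctness rests on knowing that this map sends $w(x_i)$ to $1\otimes dx_i$. Your reduction of this to Saito's explicit map, by naturality along $(\mathbb Z_{(p)}\to\mathbb Z_{(p)}[x])\to(C\to C[x])$, is sound. This requires taking the equivalence of Theorem \ref{thm5.5} to be the left Kan extension of Saito's map, which the paper's proof implicitly does but never writes down.

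In exchange, your approach gives more in two ways:
\begin{enumerate}
\item It shows that the fundamental sequence for $C\to C[x_1,\dots,x_r]$ itself splits. The complementary summand is spanned by the $w(x_i)$ and maps isomorphically onto $F^*\mathbb L_{(B/^Lp)/(C/^Lp)}$.
\item The same argument works verbatim for any $C\to B$ with $\mathbb L_{(B/^Lp)/(C/^Lp)}$ free on $dx_1,\dots,dx_r$ for some $x_i\in\pi_0(B)$, e.g.\ $B$ \'etale over $C[x_1,\dots,x_r]$.
\end{enumerate}
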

\begin{proof}
    This follows by applying Proposition \ref{pro4.10.1} to the following pushout square of animated rings:
    \[ \begin{tikzpicture}[auto]
\node (a) at (0, 2) {$\mathbb Z_{(p)}$}; \node (x) at (4, 2) {$C$};
\node (b) at (0, 0) {$\mathbb Z_{(p)}[x_1,\dots,x_r]$};   \node (y) at (4, 0) {$C[x_1,\dots,x_r]$};
\draw[->] (a) to node {$\scriptstyle $} (x);
\draw[->] (x) to node {$\scriptstyle $} (y);
\draw[->] (a) to node[swap] {$\scriptstyle $} (b);
\draw[->] (b) to node[swap] {$\scriptstyle $} (y);
\end{tikzpicture}.\]
\end{proof}

%\begin{prop}\label{pro5.11.2}
 %   Let $R$ be a $p$-torsionfree regular noetherian ring. Then there is a canonical equivalence $F\mathbb L_R\simeq F\Omega_R$. 
%\end{prop}
%\begin{proof}
    %Consider the fundamental fiber sequence (Theorem \ref{thm5.5}):
    %\[F\mathbb L_{\mathbb Z_{(p)}}\otimes_{\mathbb Z_{(p)}}^LR\to F\mathbb L_{R}\to F^*(\mathbb L_{(R/p)/\mathbb F_p})\to .\]
    %associated to $\mathbb Z_{(p)}\to R$.
%\end{proof}

\section{Vanishing for perfectoid rings and an application to the regularity criterion}
In this section, we present several explicit computations of Frobenius--Witt cotangent complexes. In particular, we give two proofs of the vanishing of them for perfectoid rings. The first proof proceeds by decomposing a perfectoid ring as a fiber product of a $p$-torsion-free perfectoid ring and a perfect ring, and then reducing to these two cases. The second proof gives a direct argument using the fact that the functor of Frobenius--Witt cotangent complexes preserves pushouts. Finally, we study a relationship between regularity and the Frobenius--Witt cotangent complex based on the results established so far.

\begin{prop}\label{pro6.1}
    Let $A$ be a perfect $\delta$-ring. Then there is a canonical equivalence $F\mathbb L_A\simeq A/p\cdot w(p)$.
\end{prop}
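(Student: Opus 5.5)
Let $A$ be a perfect $\delta$-ring. Then $A$ is $p$-torsionfree and $A/p$ is a perfect $\mathbb F_p$-algebra. The plan is to use Corollary \ref{cor5.10}. It gives a cofiber sequence
\[A/^Lp\xrightarrow{w(p)}F\mathbb L_A\to F^*(\mathbb L_{(A/^Lp)/\mathbb F_p})\to\]
in $\mathcal D(A/^Lp)$. Since $A$ is $p$-torsionfree, we have $A/^Lp\simeq A/p$, which is a discrete perfect $\mathbb F_p$-algebra. For a perfect $\mathbb F_p$-algebra $R$ the cotangent complex $\mathbb L_{R/\mathbb F_p}$ vanishes. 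The reason is that the Frobenius $F_R$ induces on $\mathbb L_{R/\mathbb F_p}$ a map that is simultaneously zero (it acts by zero on $d(x^p)=px^{p-1}dx$, a fact checked on polynomial algebras and extended by animation) and an equivalence (since $F_R$ is an isomorphism). Hence $F^*(\mathbb L_{(A/p)/\mathbb F_p})\simeq 0$, and the map $w(p)\colon A/p\to F\mathbb L_A$ is an equivalence. This identifies $F\mathbb L_A$ with the free module $A/p\cdot w(p)$, canonically, since the map is the structure map $\widetilde m$.

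The main point to double-check is the input that $A$ is $p$-torsionfree with perfect reduction mod $p$. This is standard for perfect $\delta$-rings (\cite{BS22}, Lemma 2.28): if $px=0$, then applying $\delta$ and the identity $\delta(px)=p^p\delta(x)+\dots$ together with bijectivity of $\phi$ gives $x=0$. Also $\phi$ reduces to the Frobenius mod $p$, so $A/p$ is perfect, because $\phi$ is bijective and $A$ is $p$-torsionfree. The vanishing of $\mathbb L$ for perfect $\mathbb F_p$-algebras is classical (\cite{BMS19}), and I would simply cite it.

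The $\delta$-structure is not used beyond these two facts. So the statement really holds for any $p$-torsionfree $\mathbb Z_{(p)}$-algebra with perfect reduction, for instance $W(k)$ for $k$ perfect. That suggests the proof is essentially immediate, and no step is a serious obstacle. The only subtlety is making sure the cofiber sequence of Corollary \ref{cor5.10} is one of $A/^Lp$-modules, so that "free of rank one on $w(p)$" is meaningful. This follows from Theorem \ref{thm5.5} applied to $\mathbb Z_{(p)}\to A$ together with $F\mathbb L_{\mathbb Z_{(p)}}\simeq\mathbb F_p\cdot w(p)$ (Proposition \ref{pro5.2} with $r=0$).
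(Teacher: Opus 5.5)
Your proof is correct and is essentially the paper's argument. The paper applies Theorem \ref{thm5.5} to $\mathbb Z_{(p)}\to A$, which is exactly the content of Corollary \ref{cor5.10}, and uses \cite[Lemma 2.28]{BS22} for $p$-torsionfreeness, so that $\mathbb L_{A/\mathbb Z_{(p)}}\otimes^L_{\mathbb Z_{(p)}}\mathbb F_p\simeq\mathbb L_{(A/p)/\mathbb F_p}\simeq 0$ because $A/p$ is perfect; it then concludes $F\mathbb L_A\simeq(\mathbb F_p\cdot w(p))\otimes^L_{\mathbb Z_{(p)}}A\simeq A/p\cdot w(p)$, and your remark that only $p$-torsionfreeness and perfectness of $A/p$ are really used is accurate.
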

\begin{proof}
    By \cite[Lemma 2.28]{BS22}, $A$ is $p$-torsionfree, so $\mathbb L_{A/\mathbb Z_{(p)}}\otimes_{\mathbb Z_{(p)}}^L\mathbb F_p\simeq \mathbb L_{(A/p)/\mathbb F_p}\simeq 0$, where the last equivalence follows from the fact that $A/p$ is a perfect $\mathbb F_p$-algebra. By this observation and Theorem \ref{thm5.5}, the conclusion follows from the following calculation: \begin{align*}
        F\mathbb L_{A}&\simeq F\mathbb L_{\mathbb Z_{(p)}}\otimes^L_{\mathbb Z_{(p)}}A\\&\simeq (\mathbb F_p\cdot w(p))\otimes^L_{\mathbb Z_{(p)}}A\\&\simeq A/p\cdot w(p).
    \end{align*}
\end{proof}

\begin{coro}\label{cor6.2}
    Let $R$ be a $p$-torsionfree perfectoid ring. Then $F\mathbb L_R\simeq 0$ holds.
\end{coro}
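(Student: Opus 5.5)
The plan is to write $R$ as a quotient of a perfect $\delta$-ring and use the fundamental fiber sequence (Theorem \ref{thm5.5}) together with Proposition \ref{pro6.1}. Since $R$ is perfectoid, Fontaine's map $\theta\colon A:=A_{\mathrm{inf}}(R)\to R$ is surjective with kernel generated by a distinguished element $\xi$. Because $R$ is $p$-torsionfree, $\xi$ is a nonzerodivisor, so $R\simeq A/^L\xi$. Here $A$ is a perfect $\delta$-ring, and Proposition \ref{pro6.1} gives $F\mathbb L_A\simeq A/p\cdot w(p)$.

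Apply Theorem \ref{thm5.5} to $A\to R$. This gives a cofiber sequence
\[F\mathbb L_A\otimes^L_AR\to F\mathbb L_R\to F^*(\mathbb L_{(R/p)/(A/p)}).\]
The first term is $R/p\cdot w(p)$, free of rank one. For the third term, $R/p\simeq (A/p)/^L\bar\xi$, so $\mathbb L_{(R/p)/(A/p)}\simeq R/p[1]$ with generator $d\bar\xi$. The Frobenius pullback is again $R/p[1]$. Hence $F\mathbb L_R$ is computed by a map $\partial\colon R/p[1]\to R/p\cdot w(p)[1]$ of free rank-one $R/p$-modules, and $F\mathbb L_R\simeq 0$ is equivalent to $\partial$ being an isomorphism.

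To identify $\partial$, I would use the description of the boundary map in the fundamental sequence, checked on the polynomial model $\mathbb Z_{(p)}[X]\to\mathbb Z_{(p)}[X]/(X-\text{stuff})$ via Saito's sequence, or better via the universal Frobenius--Witt derivation. The boundary should send $d\xi$, pulled back by Frobenius, to $w(\xi)\in F\Omega_A\otimes R$. Now compute $w(\xi)$ in $F\mathbb L_A=A/p\cdot w(p)$. On a perfect $\delta$-ring the universal derivation satisfies $w(a)=\delta(a)\,w(p)$ up to a unit sign, because $a\mapsto(a,-\delta(a)m)$-type sections identify with the $\delta$-structure. Thus $w(\xi)=\pm\delta(\xi)w(p)$, and $\delta(\xi)$ is a unit because $\xi$ is distinguished. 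So $\partial$ is multiplication by a unit, hence an isomorphism, and $F\mathbb L_R\simeq 0$.

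The main obstacle is this boundary computation: precisely matching the connecting map of Theorem \ref{thm5.5} with $\xi\mapsto w(\xi)$, and establishing the formula $w(a)=\pm\delta(a)w(p)$ on $A$. For the formula, I would first check that $a\mapsto -\delta(a)\,m$ is a Frobenius--Witt derivation valued in $(A/p,m)$; this follows from the $\delta$-ring axioms mod $p$, with $\delta(p)=1-p^{p-1}\equiv 1$. I would then verify that it corresponds to the universal one under Proposition \ref{pro6.1}. For the boundary map, I would reduce by pushout (Proposition \ref{pro4.10.1}) to the universal case $\mathbb Z_{(p)}[X]\to\mathbb Z_{(p)}$, $X\mapsto 0$, where $\pi_1$ of the cofiber is read off from Saito's explicit exact sequence.
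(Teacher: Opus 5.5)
Your setup matches the paper's first proof: apply Theorem~\ref{thm5.5} to $\theta\colon A_{\mathrm{inf}}(R)\to R$, use Proposition~\ref{pro6.1} to identify the first term as $R/p\cdot w(p)$, and note that the third term is $F^*(I/I^2\otimes_R R/p)[1]$, free of rank one. Everything then reduces to the connecting map $\partial$. You differ from the paper in how you show $\partial$ is an isomorphism.

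\textbf{Your route.} You identify $\partial$ explicitly with $\xi\mapsto w(\xi)=\delta(\xi)w(p)$. This does go through as you sketch. By naturality of Theorem~\ref{thm5.5} in the arrow, compare with $\mathbb Z_{(p)}[x]\to\mathbb Z_{(p)}$, $x\mapsto 0$. There $\pi_1(F\mathbb L_{\mathbb Z_{(p)}})=0$ forces $\partial$ to map $\mathbb F_p$ isomorphically onto the kernel $\mathbb F_p\cdot w(x)$ of $F\Omega_{\mathbb Z_{(p)}[x]}\otimes\mathbb Z_{(p)}\to F\Omega_{\mathbb Z_{(p)}}$. Under base change, $w(x)\mapsto w(\xi)$.

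\textbf{The paper's route.} The paper avoids the step you flag as the main obstacle. Since the third term has no $\pi_0$, $\operatorname{coker}\partial\cong\pi_0(F\mathbb L_R)\cong F\Omega_R$ by Proposition~\ref{pro5.1}. Saito's classical exact sequence \cite[Proposition 2.3.2]{Sai22}, whose first map is $\xi\mapsto\delta(\xi)w(p)$, gives $F\Omega_R=0$ because $\delta(\xi)$ is a unit. So $\partial$ is a surjection between free $R/p$-modules of rank one, hence an isomorphism. The paper thus needs only the $\pi_0$-level identification already in Saito, while your route determines the derived boundary map itself.

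\textbf{A shortcut.} If you pursue your pushout idea, apply Proposition~\ref{pro4.10.1} directly to the pushout square $\mathbb Z_{(p)}[x]\to A_{\mathrm{inf}}(R)$, $x\mapsto\xi$. This makes Theorem~\ref{thm5.5} unnecessary. It is exactly the paper's second proof of Theorem~\ref{thm6.7}, and it works for every perfectoid $R$.

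\textbf{Two small corrections.} First, $\xi$ is a nonzerodivisor in $A_{\mathrm{inf}}(R)$ for every perfectoid $R$; this does not come from $R$ being $p$-torsionfree. Second, the sign is $+$. The map $a\mapsto\delta(a)m$ satisfies both Frobenius--Witt axioms mod $p$ and takes the value $\delta(p)m=m$ at $p$. By contrast, $a\mapsto-\delta(a)m$ takes the value $-m$ at $p$, so it is not valued in $(A/p,m)$ as you wrote.
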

\begin{proof}
    Let $\theta:A_{\mathrm{inf}}(R)\to R$ be the Fontaine's $\theta$-map defined in \cite[Section 3.1]{BMS18} and $I$ be the kernel of $\theta$. Applying the theorem to this map, we obtain the following fiber sequence\[F\mathbb L_{A_{\mathrm{inf}}(R)}\otimes_{A_{\mathrm{inf}}(R)}^LR\to F\mathbb L_R\to F^*(\mathbb L_{R/A_{\mathrm{inf}}(R)}\otimes_{R}^LR/p)\to. \] By the above proposition and the equivalence $\mathbb L_{R/A_{\mathrm{inf}}(R)}\simeq I/I^2[1]$, where $I/I^2$ is a free $R$-module of rank one, we have equivalences $H_i(F\mathbb L_R)\simeq 0$ for all $i\ge2$ and the following exact sequence\[0\to H_1(F\mathbb L_R)\to F^*(I/I^2\otimes_RR/p)\xrightarrow[]{w'} R/p\cdot w(p)\to F\Omega_R\to 0.\] On the other hand, we have the following exact sequence by \cite[Proposition 2.3.2]{Sai22}\[F^*(I/I^2\otimes_RR/p)\xrightarrow[]{w} R/p\cdot w(p)\to F\Omega_R\to 0,\]where $w$ is the map induced from the universal Frobenius--Witt derivation $A_{\mathrm{inf}}(R)\to F\Omega_{A_{\mathrm{inf}}}:a\mapsto \delta(a)w(p)$ (this follows from \cite[Lemma 1.3.2]{Sai22}). Here, we denote by $\delta$ the canonical $\delta$-structure on the ring of Witt vectors $A_{\mathrm{inf}}(R)=W(R^{\flat})$. Let $\xi$ be a generator of the ideal $I$. Then $\delta(\xi)$ is a unit in $A_{\mathrm{inf}}(R)$, so the map $w$ is surjective. This implies $F\Omega_R\cong 0$. This implies that the map $w'$ in the longer exact sequence is a surjection between free $R/p$-modules of rank one. Thus, $w'$ is an isomorphism, which implies $H_1(F\mathbb L_R)\cong 0$.
    
\end{proof}

Next, we compute the Frobenius--Witt cotangent complexes for $\mathbb F_p$-algebras. The key computation is the case of $\mathbb F_p$ (Lemma\ref{lem6.3}).

\begin{lemma}\label{lem6.3}
    $F\mathbb L_{\mathbb F_p}\simeq 0$ holds.
\end{lemma}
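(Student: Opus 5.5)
The plan is to apply the fundamental fiber sequence of Theorem \ref{thm5.5} to the canonical surjection $A=\mathbb Z_{(p)}\to B=\mathbb F_p$. Then I reduce the vanishing of $F\mathbb L_{\mathbb F_p}$ to showing that a single connecting map between free modules of rank one is invertible. I check invertibility on $\pi_0$, using the classical vanishing $F\Omega_{\mathbb F_p}=0$.

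Write $E:=\mathbb F_p/^Lp=\mathbb F_p\otimes^L_{\mathbb Z_{(p)}}\mathbb F_p$. This is an animated $\mathbb F_p$-algebra with $\pi_0E=\pi_1E=\mathbb F_p$ and no other homotopy. All terms of the sequence live in $\mathcal D(E)$, and I identify them as follows.
\begin{itemize}
\item By Proposition \ref{pro5.2} with $r=0$, we have $F\mathbb L_{\mathbb Z_{(p)}}\simeq \mathbb F_p\cdot w(p)$. Hence $F\mathbb L_{\mathbb Z_{(p)}}\otimes^L_{\mathbb Z_{(p)}}\mathbb F_p\simeq E\cdot w(p)$, which is free of rank one over $E$.
\item Since $\mathbb F_p=\mathbb Z_{(p)}/p$ with $p$ a nonzerodivisor, $\mathbb L_{\mathbb F_p/\mathbb Z_{(p)}}\simeq (p)/(p^2)[1]\simeq \mathbb F_p[1]$. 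By base change, $\mathbb L_{E/\mathbb F_p}\simeq \mathbb L_{\mathbb F_p/\mathbb Z_{(p)}}\otimes^L_{\mathbb F_p}E\simeq E[1]$. The Frobenius pullback of a free module is free, so $F^*(\mathbb L_{E/\mathbb F_p})\simeq E[1]$.
\end{itemize}
Theorem \ref{thm5.5} therefore gives a cofiber sequence in $\mathcal D(E)$:
\[E\cdot w(p)\to F\mathbb L_{\mathbb F_p}\to E[1]\xrightarrow{\ \partial\ } E\cdot w(p)[1].\]
Thus $F\mathbb L_{\mathbb F_p}\simeq \operatorname{Fib}(\partial)$, where $\partial[-1]\colon E\to E$ is an $E$-linear self-map of the free rank-one module $E$. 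Such a map is multiplication by an element of $\pi_0E=\mathbb F_p$. It is therefore an equivalence as soon as it is nonzero, or equivalently surjective, on $\pi_0$.

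To check this, take the long exact sequence in low degrees:
\[\pi_1(E[1])=\mathbb F_p\xrightarrow{\ \partial\ }\pi_0(E\cdot w(p))=\mathbb F_p\to \pi_0(F\mathbb L_{\mathbb F_p})\to 0.\]
By Proposition \ref{pro5.1}, $\pi_0(F\mathbb L_{\mathbb F_p})\cong F\Omega_{\mathbb F_p}$. This group vanishes: it is generated by $w(p)$, and in $\mathbb F_p$ we have $w(p)=w(0)=0$, since $w(0)=w(0\cdot 0)=0$ by the Leibniz rule. Hence $\partial$ is surjective on $\pi_0$, so it is multiplication by a unit, so it is an equivalence. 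Its fiber $F\mathbb L_{\mathbb F_p}$ is therefore $0$.

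The main obstacle is the bookkeeping in the second step. One must make sure the identifications of the outer terms, and the connecting map, are genuinely $E$-linear in $\mathcal D(E)$, not merely maps in $\mathcal D(\mathbb F_p)$. Only $E$-linearity lets us conclude from a $\pi_0$ computation that $\partial$ is an equivalence. One must also track the Frobenius pullback, and use that $\mathbb L_{E/\mathbb F_p}$ is free of rank one over $E$ rather than merely having the right homotopy groups. I would handle this through the base-change formula $\mathbb L_{(B/^Lp)/(A/^Lp)}\simeq \mathbb L_{B/A}\otimes^L_B B/^Lp$, as already used in Corollary \ref{cor5.6}.
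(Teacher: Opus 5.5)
Your argument is correct, and it takes a genuinely different route from the paper's.

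\textbf{The paper's route.} The paper never uses any $\mathbb F_p/^Lp$-linearity. It gets $\pi_0(F\mathbb L_{\mathbb F_p})=0$ from Proposition \ref{pro5.1} and Saito. It then computes $\pi_1(F\mathbb L_{\mathbb F_p})=0$ by hand from an explicit simplicial polynomial resolution $Q_\bullet\to\mathbb F_p$: the bar resolution of $\mathbb Z_{(p)}$ over $\mathbb Z_{(p)}[t]$, base changed along $t\mapsto p$, with the low-degree face maps written out on $F\Omega_{Q_\bullet}\otimes_{Q_\bullet}\mathbb Z_{(p)}$. Only then does it feed $\pi_0=\pi_1=0$ into the long exact sequence of the fundamental fiber sequence for $\mathbb Z_{(p)}\to\mathbb F_p$, read merely as a sequence of $\mathbb F_p$-vector spaces, to kill $\pi_i$ for $i\ge 2$.

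That $\pi_1$ computation is exactly what replaces your linearity argument. In $\mathcal D(\mathbb F_p)$ alone, your $E=\mathbb F_p/^Lp$ splits as $\mathbb F_p\oplus\mathbb F_p[1]$. So knowing that $\partial\colon\pi_1(E[1])\to\pi_0(E)$ is an isomorphism says nothing about the other component $\pi_2(E[1])\to\pi_1(E)$, and that component is what controls $\pi_1$ and $\pi_2$ of $F\mathbb L_{\mathbb F_p}$.

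\textbf{What your route needs.} Your argument is shorter and avoids the resolution entirely. The price is that it needs Theorem \ref{thm5.5} as a cofiber sequence in $\mathcal D(B/^Lp)$. The introduction states it that way, but the body only records the equivalence in $\mathcal D_{\ge0}(\mathbb F_p)$. The stronger form does hold:
\begin{enumerate}
\item the map $F\mathbb L_A\otimes^L_AB\to F\mathbb L_B$ is $B/^Lp$-linear by the functoriality of Proposition \ref{pro4.8.1};
\item the comparison of its cofiber with $F^*\mathbb L_{(B/^Lp)/(A/^Lp)}$ can be run as an equivalence of sifted-colimit-preserving functors with values in $\operatorname{Ani}(\operatorname{RingMod^{p-tor}})$;
\item on the polynomial generators of Lemma \ref{lem5.4} everything is discrete, and Saito's isomorphism is natural and linear there.
\end{enumerate}
You correctly identified this as the point needing care.

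With that in hand, the rest of your deduction is complete. Since $\pi_0\operatorname{Hom}_{\mathcal D(E)}(E,E)=\pi_0E=\mathbb F_p$, the map $\partial[-1]$ is multiplication by some $c\in\mathbb F_p$. You get $c\neq 0$ from $F\Omega_{\mathbb F_p}=0$, so $\partial[-1]$ is an isomorphism on every $\pi_i$ and its fiber $F\mathbb L_{\mathbb F_p}$ vanishes. Your elementary proof of $F\Omega_{\mathbb F_p}=0$, via generation by $w(p)$ and $w(p)=w(0)=0$, is a fine substitute for the citation of Saito.
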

\begin{proof}
    We see the ring $\mathbb Z_{(p)}$ as an algebra over the ring of polynomials $\mathbb Z_{(p)}[t]$ by $\mathbb Z_{(p)}[t]\to \mathbb Z_{(p)}:t\mapsto 0$. Let $P_{\bullet}\to \mathbb Z_{(p)}$ be the bar resolution of $\mathbb Z_{(p)}$ as a $\mathbb Z_{(p)}[t]$-algebra as in \cite[Construction 4.13]{Iye07}. By construction, each term $P_n$ of $P_{\bullet}$ is a polynomial ring of the form $P_n=\mathbb Z_{(p)}[t,t_1,\dots,t_n]$. Thus, the base change $Q_{\bullet}:=P_{\bullet}\otimes_{\mathbb Z_{(p)}[t]}\mathbb Z_{(p)}[t]/(t-p)$ is a simplicial resolution of $\mathbb F_p\simeq \mathbb Z_{(p)}\otimes^L_{\mathbb Z_{(p)}[t]}\mathbb Z_{(p)}[t]/(t-p)$ as a $\mathbb Z_{(p)}$-algebra, whose $n$-th term is of the form $Q_n=\mathbb Z_{(p)}[t_1,\dots,t_n]$. By \cite[Construction 4.13]{Iye07}, the face maps in the lower degrees can be written as follows:\begin{itemize}
        \item $d^2_0:\mathbb Z_{(p)}[t_1,t_2]\to \mathbb Z_{(p)}[t_1]:t_1\mapsto p, t_2\mapsto t_1$,
        \item $d^2_1:\mathbb Z_{(p)}[t_1,t_2]\to \mathbb Z_{(p)}[t_1]:t_1\mapsto t_1, t_2\mapsto t_1$,
        \item $d^2_2:\mathbb Z_{(p)}[t_1,t_2]\to \mathbb Z_{(p)}[t_1]:t_1\mapsto t_1, t_2\mapsto 0$,
        \item $d^1_0:\mathbb Z_{(p)}[t_1]\to \mathbb Z_{(p)}:t_1\mapsto p$,
        \item $d^1_1:\mathbb Z_{(p)}[t_1]\to \mathbb Z_{(p)}:t_1\mapsto 0$.
    \end{itemize}
    Thus, the lower terms of the complex associated to the simplicial abelian group $F\Omega_{Q_{\bullet}}\otimes_{Q_{\bullet}}\mathbb Z_{(p)}$ are as follows:

    \begin{align*}    
    \to \mathbb F_p\cdot w(p)\oplus \mathbb F_p\cdot w(t_1)\oplus \mathbb F_p\cdot w(t_2)\xrightarrow[]{\tiny{\begin{cases} w(p)\mapsto w(p)\\w(t_1)\mapsto w(p)\\w(t_2)\mapsto 0\end{cases}}}\mathbb F_p\cdot w(p)\oplus \mathbb F_p\cdot w(t_1)\xrightarrow[]{\tiny{\begin{cases}w(p)\mapsto 0\\w(t_1)\mapsto w(p)\end{cases}}}\mathbb F_p\cdot w(p).
    \end{align*}
    Thus, we have \[\pi_1(F\mathbb L_{\mathbb F_p})\cong H_1(F\Omega_{Q_{\bullet}}\otimes_{Q_{\bullet}}\mathbb Z_{(p)})\cong 0.\]

    On the other hand, we have $\pi_0(F\mathbb L_{\mathbb F_p})\cong F\Omega_{\mathbb F_p}\cong F^*\Omega_{\mathbb F_p/\mathbb Z_{(p)}}\cong 0$, here we used Proposition \ref{pro5.1} in the first isomorphism and \cite[Corollary 2.4.2]{Sai22} in the second isomorphism.

    Next, we prove $\pi_i(F\mathbb L_{\mathbb F_p})\cong 0$ for any $i\ge0$. We have proved this for $i=0$ and $i=1$ in the previous paragraphs. To see the vanishing for the higher degree, we use the fundamental fiber sequence (Theorem \ref{thm5.5}) associated to $\mathbb Z_{(p)}\to \mathbb F_p$:\[F\mathbb L_{\mathbb Z_{(p)}}\otimes^L_{\mathbb Z_{(p)}}\mathbb F_p\to F\mathbb L_{\mathbb F_p}\to F^*(\mathbb L_{\mathbb F_p/\mathbb Z_{(p)}}\otimes_{\mathbb Z_{(p)}}^L\mathbb F_p)\to .\]
    Since $F\mathbb L_{\mathbb Z_{(p)}}\simeq \mathbb F_p\cdot w(p)$, the first term $F\mathbb L_{\mathbb Z_{(p)}}\otimes^L_{\mathbb Z_{(p)}}\mathbb F_p$ is concentrated in homological degree $[0,1]$ and whose $0$-th homotopy group and $1$-st homotopy group are $\mathbb F_p$.  Since $p$ is a non-zerodivisor in $\mathbb Z_{(p)}$, the last term $F^*(\mathbb L_{\mathbb F_p/\mathbb Z_{(p)}}\otimes_{\mathbb Z_{(p)}}^L\mathbb F_p)$ is concentrated in the homological degree $[1,2]$ and whose $1$-st homotopy group and $2$-nd homotopy group are $\mathbb F_p$. Combining them, the associated long exact sequence is as follows:

\[\begin{tikzcd}[column sep=small,row sep=small]
0 \arrow[r] &
\pi_3(F\mathbb L_{\mathbb F_p}) \arrow[r] &
0 \arrow[dll,] \\
0 \arrow[r] &
\pi_2(F\mathbb L_{\mathbb F_p}) \arrow[r] &
\mathbb F_p \arrow[dll,] \\
\mathbb F_p \arrow[r] &
0 \arrow[r] &
\mathbb F_p \arrow[dll,] \\
\mathbb F_p \arrow[r] &
0 \arrow[r] &
0.
\end{tikzcd}
\]

This implies $\pi_i(F\mathbb L_{\mathbb F_p})\cong 0$ for all $i\ge2$. Thus, we have $F\mathbb L_{\mathbb F_p}\simeq 0$.

\end{proof}

By the above computation, we see that Frobenius--Witt cotangent complexes for rings of characteristic $p$ can be written using the Frobenius maps and cotangent complexes.
\begin{coro}\label{cor6.4}
    Let $R$ be an animated $\mathbb F_p$-algebra. Then we have a canonical equivalence \[F\mathbb L_{R}\simeq F^*(\mathbb L_{R/\mathbb F_p}\otimes^L(\mathbb F_p/^Lp)).\]
\end{coro}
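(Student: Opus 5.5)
Apply the fundamental fiber sequence of Theorem \ref{thm5.5} to the structure map $\mathbb F_p\to R$, viewed as a map of animated $\mathbb Z_{(p)}$-algebras. This gives a cofiber sequence
\[F\mathbb L_{\mathbb F_p}\otimes^L_{\mathbb F_p}R\to F\mathbb L_R\to F^*(\mathbb L_{(R/^Lp)/(\mathbb F_p/^Lp)})\to \]
in $\mathcal D(R/^Lp)$. By Lemma \ref{lem6.3}, $F\mathbb L_{\mathbb F_p}\simeq 0$, so the first term vanishes and we obtain an equivalence $F\mathbb L_R\simeq F^*(\mathbb L_{(R/^Lp)/(\mathbb F_p/^Lp)})$. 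It remains to rewrite the right-hand side in the form of the statement.

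For this I would use derived base change for the cotangent complex. The square with vertices $\mathbb F_p$, $R$, $\mathbb F_p/^Lp$ and $R/^Lp$ is a pushout of animated rings, since $R/^Lp\simeq R\otimes^L_{\mathbb Z_{(p)}}\mathbb F_p\simeq R\otimes^L_{\mathbb F_p}(\mathbb F_p/^Lp)$. Hence
\[\mathbb L_{(R/^Lp)/(\mathbb F_p/^Lp)}\simeq \mathbb L_{R/\mathbb F_p}\otimes^L_{\mathbb F_p}(\mathbb F_p/^Lp)\simeq \mathbb L_{R/\mathbb F_p}\otimes^L_R(R/^Lp),\]
which is exactly the object $\mathbb L_{R/\mathbb F_p}\otimes^L(\mathbb F_p/^Lp)$ in the statement. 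Applying $F^*$ along the Frobenius of $R/^Lp$ gives the claimed equivalence. Canonicity is automatic, because every map used is the natural one: the map in Theorem \ref{thm5.5} and the base change equivalence.

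The main point needing care is keeping track of which ring the Frobenius pullback is taken over. Theorem \ref{thm5.5} uses the absolute Frobenius of $B/^Lp=R/^Lp$, and the statement should be read with that Frobenius; this matches the convention in the proof of Proposition \ref{pro6.1}. One should also check that the vanishing of $F\mathbb L_{\mathbb F_p}\otimes^L_{\mathbb F_p}R$ needs only $F\mathbb L_{\mathbb F_p}\simeq 0$ as an $\mathbb F_p/^Lp$-module, which is immediate. Beyond this, the argument is a direct combination of Theorem \ref{thm5.5} and Lemma \ref{lem6.3}.
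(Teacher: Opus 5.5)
Your proposal is correct and follows the paper's proof: you apply the fundamental fiber sequence (Theorem \ref{thm5.5}) to $\mathbb F_p\to R$ and kill the first term with Lemma \ref{lem6.3}. The base-change identification $\mathbb L_{(R/^Lp)/(\mathbb F_p/^Lp)}\simeq \mathbb L_{R/\mathbb F_p}\otimes^L_{\mathbb F_p}(\mathbb F_p/^Lp)$, which you spell out, is the same one the paper leaves implicit (compare the proof of Corollary \ref{cor5.6}).
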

\begin{proof}
    This follows from applying Lemma \ref{lem6.3} to the fundamental fiber sequence (Theorem \ref{thm5.5}) associated to the structure map $\mathbb F_p\to R$. 
\end{proof}
\begin{coro}\label{cor6.5}
    Let $R$ be a perfect $\mathbb F_p$-algebra. Then $F\mathbb L_{R}\simeq 0$ holds.
\end{coro}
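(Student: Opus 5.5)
The plan is to deduce this directly from Corollary \ref{cor6.4}, which for any animated $\mathbb F_p$-algebra $R$ gives a canonical equivalence
\[F\mathbb L_R\simeq F^*(\mathbb L_{R/\mathbb F_p}\otimes^L(\mathbb F_p/^Lp)).\]
It therefore suffices to show that the right-hand side vanishes when $R$ is a perfect $\mathbb F_p$-algebra.

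The key input is the classical vanishing $\mathbb L_{R/\mathbb F_p}\simeq 0$ for perfect $\mathbb F_p$-algebras $R$. The argument runs as follows.
\begin{itemize}
\item The absolute Frobenius $\varphi_R$ induces on $\mathbb L_{R/\mathbb F_p}$ a map which is zero. By functoriality and animation from polynomial rings, where $d(x^p)=px^{p-1}dx=0$, this map factors through the zero map.
\item On the other hand, since $\varphi_R$ is an isomorphism of $\mathbb F_p$-algebras, the induced map $\varphi_R^*\mathbb L_{R/\mathbb F_p}\to \mathbb L_{R/\mathbb F_p}$ is an equivalence.
\item An equivalence which is also zero forces $\mathbb L_{R/\mathbb F_p}\simeq 0$.
\end{itemize}
This is standard (e.g.\ \cite{BMS19}, or the fact used already in the proof of Proposition \ref{pro6.1} and in Remark \ref{rem5.7}), so I would simply cite it.

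With this in hand, the remaining steps are formal:
\begin{itemize}
\item Tensoring with $\mathbb F_p/^Lp$ over $\mathbb F_p$ preserves zero, so $\mathbb L_{R/\mathbb F_p}\otimes^L(\mathbb F_p/^Lp)\simeq 0$.
\item Here $\mathbb F_p/^Lp\simeq \mathbb F_p\oplus\mathbb F_p[1]$, so this term is really $\mathbb L_{(R/^Lp)/(\mathbb F_p/^Lp)}$, the base change of $\mathbb L_{R/\mathbb F_p}$. Its vanishing follows from the base change property of cotangent complexes.
\item The derived Frobenius pullback $F^*$ is an exact functor, so it sends $0$ to $0$.
\end{itemize}

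Hence $F\mathbb L_R\simeq 0$. There is no real obstacle. The only point requiring care is to check that the term appearing in Corollary \ref{cor6.4} is the base change of $\mathbb L_{R/\mathbb F_p}$, so that its vanishing follows. Alternatively, one could argue via Remark \ref{rem5.7} and Corollary \ref{cor5.6} applied to $\mathbb F_p\to R$, combined with Lemma \ref{lem6.3}.
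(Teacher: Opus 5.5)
Your proposal is correct and is essentially the paper's proof: combine Corollary \ref{cor6.4} with the vanishing $\mathbb L_{R/\mathbb F_p}\simeq 0$ for perfect $\mathbb F_p$-algebras, which the paper cites as known. Your Frobenius argument for that vanishing (the induced map is both zero and an equivalence) is the standard one, and your remarks on base change to $\mathbb F_p/^Lp$ are harmless but not needed.
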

\begin{proof}
    By Corollary \ref{cor6.4}, we have $F\mathbb L_R\simeq F^*(\mathbb L_{R/\mathbb F_p}\otimes^L\mathbb F_p/^Lp)$. Since $\mathbb L_{R/\mathbb F_p}\simeq 0$ holds for any perfect $\mathbb F_p$-algebra $R$, we get the conclusion.
\end{proof}
Using Corollary \ref{cor6.5}, we can give a more conceptual proof to a special case of \cite[Corollary 4.12]{Sai22}.

\begin{coro}\label{cor6.5.1}
    Let $R$ be a local ring with perfect residue field $k$ of characteristic $p$. Then there exists a canonical isomorphism\[F\Omega_R\otimes_Rk\cong \pi_1(\mathbb L_{k/R}).\]
\end{coro}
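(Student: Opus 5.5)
The plan is to apply the fundamental fiber sequence (Theorem \ref{thm5.5}) to the map $R\to k$. Because $k$ is perfect, Corollary \ref{cor6.5} gives $F\mathbb L_k\simeq 0$. The cofiber sequence
\[F\mathbb L_R\otimes^L_R k\to F\mathbb L_k\to F^*(\mathbb L_{(k/^Lp)/(R/^Lp)})\to\]
then yields an equivalence
\[F^*(\mathbb L_{(k/^Lp)/(R/^Lp)})\simeq (F\mathbb L_R\otimes^L_R k)[1].\]
Taking $\pi_1$ of both sides, and using Proposition \ref{pro5.1} together with right exactness of $\otimes$ on $\pi_0$, the right-hand side becomes
\[\pi_0(F\mathbb L_R\otimes^L_R k)\cong F\Omega_R\otimes_R k.\]
The tensor $\otimes^L_R k$ is over $R$, but it factors through $R/^Lp$; the module $k/^Lp$ appearing in Theorem \ref{thm5.5} accounts for this, and $\pi_0$ only sees $F\Omega_R\otimes_{R}k$.

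Next I identify the left-hand side. Base change gives
\[\mathbb L_{(k/^Lp)/(R/^Lp)}\simeq \mathbb L_{k/R}\otimes^L_{\mathbb Z_{(p)}}\mathbb F_p\simeq \mathbb L_{k/R}\otimes^L_k (k/^Lp).\]
Since $p=0$ in $k$, we have $k/^Lp\simeq k\oplus k[1]$ as a $k$-module, though only as modules and not as rings. Hence $\pi_1(\mathbb L_{k/R}\otimes^L_{\mathbb Z}\mathbb F_p)$ sits in the exact sequence
\[0\to \pi_1(\mathbb L_{k/R})/p\to \pi_1(\mathbb L_{k/R}\otimes\mathbb F_p)\to \pi_0(\mathbb L_{k/R})[p]\to 0.\]
Here $\pi_0(\mathbb L_{k/R})=\Omega_{k/R}=0$ because $R\to k$ is surjective, and $\pi_1(\mathbb L_{k/R})$ is a $k$-vector space, so it is already $p$-torsion. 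This gives $\pi_1(\mathbb L_{(k/^Lp)/(R/^Lp)})\cong \pi_1(\mathbb L_{k/R})$, which in addition is the lowest nonvanishing homotopy group, since $\pi_0=0$.

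Finally I pass through $F^*$. Because $\mathbb L_{(k/^Lp)/(R/^Lp)}$ is $1$-connective, the pullback $F^*$ along Frobenius of $k/^Lp$ acts on the bottom group by base change along the Frobenius of $\pi_0=k$:
\[\pi_1(F^*\mathbb L)\cong \pi_1(\mathbb L)\otimes_{k,F}k.\]
As $k$ is perfect, Frobenius is an isomorphism, so this is canonically $\pi_1(\mathbb L_{k/R})$, at least as an abelian group, or as a $k$-module after twisting by the Frobenius automorphism. Combining the steps gives
\[F\Omega_R\otimes_R k\cong \pi_1(\mathbb L_{k/R}).\]

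The main obstacle I expect is the bookkeeping in the middle step: keeping track of the derived reduction $k/^Lp$ versus $k$, and the precise $k$-linear structure through the Frobenius twist. I would handle it by working with connectivity, using the fact that only the bottom homotopy group matters. That makes both the extra $k[1]$ summand and the Frobenius pullback harmless, since $F$ is bijective on $k$.
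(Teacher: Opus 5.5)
Your proposal is correct and follows the paper's proof: apply Theorem~\ref{thm5.5} to $R\to k$, use Corollary~\ref{cor6.5} to kill $F\mathbb L_k$, and read off the bottom homotopy groups. Your universal-coefficient step (using $\Omega_{k/R}=0$) and the connectivity computation $\pi_1(F^*M)\cong \pi_1(M)\otimes_{k,F}k$ just make explicit what the paper compresses into ``taking the first homotopy groups''; as in the paper, the final identification uses perfectness of $k$ and is $k$-linear only up to a Frobenius twist.
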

\begin{proof}
    By Theorem \ref{thm5.5}, there is a canonical fiber sequence:\[F\mathbb L_R\otimes_R^Lk\to F\mathbb L_k\to F^*(\mathbb L_{k/R}\otimes_R^LR/^Lp)\to.\]
    Since the middle term vanishes by Corollary \ref{cor6.5}, we have \[F\mathbb L_R\otimes_R^Lk\simeq F^*(\mathbb L_{k/R}\otimes_k^Lk/^Lp)[-1].\] Taking the first homotopy groups on the both sides, we get \[F\Omega_R\otimes_Rk\cong F^*(\pi_1(\mathbb L_{k/R}))\cong \pi_1(\mathbb L_{k/R}),\] here we used the perfectness of $k$ in the second isomorphism.
\end{proof}

Next, we prove the vanishing of the Frobenius--Witt cotangent complex for general perfectoid rings using Corollary \ref{cor6.2} and Corollary \ref{cor6.5}. To do this, we first prove the following lemma. The author is grateful to R. Ishizuka for explaining the proof of this lemma.
\begin{lemma}\label{lem6.6}
    Let $R$ be a perfectoid ring. Let $\overline R$ be the quotient of $R$ by $p$-torsion elements and $(R/p)_{\mathrm{red}}$ be the reduction of $R/p$. Then $\overline{R}$ is a $p$-torsionfree perfectoid ring and $(R/p)_{\mathrm{red}}$ is a perfect $\mathbb F_p$-algebra. Furthermore, the functor \[\mathcal{D}(R)\to \mathcal D(\overline{R}\times (R/p)_{\mathrm{red}}):M\mapsto M\otimes^L_R(\overline{R}\times (R/p)_{\mathrm{red}})\]
   is conservative.
\end{lemma}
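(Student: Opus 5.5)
We use the standard fiber product decomposition of a perfectoid ring (cf. \cite[Section 3]{BMS18} or Bhatt--Scholze): the square
\[R\to \overline R\times (R/p)_{\mathrm{red}}\rightrightarrows \overline R/p\]
exhibits $R\simeq \overline R\times_{\overline R/p}(R/p)_{\mathrm{red}}$, more precisely $R\simeq \overline R\times_{(\overline R/p)_{\mathrm{red}}}(R/p)_{\mathrm{red}}$. The proof has three steps.

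\textbf{Step 1: $\overline R$ is perfectoid and $p$-torsionfree.} Torsionfreeness is by construction. For perfectoidness we use two facts. First, $R[p^\infty]=R[p]$ for perfectoid $R$, and this ideal is $p$-adically closed, so $\overline R$ is $p$-complete. Second, writing $p=\varpi^p u$ with $\varpi$ admitting compatible $p$-power roots, the Frobenius $R/p\to R/p$ is surjective, and hence so is $\overline R/p\to\overline R/p$. The kernel of $\theta$ on $A_{\mathrm{inf}}$ stays principal after passing to the quotient, since $A_{\mathrm{inf}}(\overline R)$ is a quotient of $A_{\mathrm{inf}}(R)$ and a distinguished element maps to a distinguished element.

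\textbf{Step 2: $(R/p)_{\mathrm{red}}$ is perfect.} The Frobenius on $R/p$ has kernel generated by $\varpi$-type nilpotents, so the Frobenius on $(R/p)_{\mathrm{red}}$ is injective (reducedness) and surjective (surjectivity on $R/p$).

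\textbf{Step 3: conservativity.} Let $M\in\mathcal D(R)$ with $M\otimes^L_R\overline R\simeq 0$ and $M\otimes^L_R(R/p)_{\mathrm{red}}\simeq 0$; we must show $M\simeq 0$. Since the finite product ring gives the product of the two derived tensor products, this hypothesis is exactly the vanishing of the base change to $\overline R\times (R/p)_{\mathrm{red}}$. Tensor the fiber sequence
\[R\to \overline R\oplus (R/p)_{\mathrm{red}}\to (\overline R/p)_{\mathrm{red}}\]
with $M$. This sequence is exact as a short exact sequence of $R$-modules: the sum map is surjective, and its kernel is the fiber product. It therefore suffices to show $M\otimes^L_R(\overline R/p)_{\mathrm{red}}\simeq 0$. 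This follows because $(\overline R/p)_{\mathrm{red}}$ is an $(R/p)_{\mathrm{red}}$-algebra, so
\[M\otimes^L_R(\overline R/p)_{\mathrm{red}}\simeq (M\otimes^L_R(R/p)_{\mathrm{red}})\otimes^L_{(R/p)_{\mathrm{red}}}(\overline R/p)_{\mathrm{red}}\simeq 0.\]
Hence $M\simeq 0$.

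\textbf{Main obstacle.} The key input is the exactness of the fiber square, i.e.\ the injectivity of $R\to\overline R\times (R/p)_{\mathrm{red}}$, which amounts to $R[p]\cap \sqrt{pR}=0$. I would prove this first. If $x\in R[p]$ and $x\in\sqrt{pR}$, then $x^{p^n}\in pR$ for some $n$. Take compatible $p$-power roots of $x$ in the tilt. For perfectoid $R$ the $p$-torsion $R[p]$ equals $R[\sqrt{pR}]$ and consists of elements killed by every $\varpi^{1/p^n}$; combining this with the $p$-root characterization of $\sqrt{pR}$ should give $x=0$. This is exactly the statement of \cite[Lemma 2.23]{BS22}-type results, which I would cite as the essential input. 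Identifying the bottom corner with $(\overline R/p)_{\mathrm{red}}$ is a similar check.
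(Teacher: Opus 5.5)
Your argument is the paper's: cite the fiber-product decomposition $R\cong \overline R\times_{(\overline R/p)_{\mathrm{red}}}(R/p)_{\mathrm{red}}$ from \cite[\S 2.1.3]{CS24} (the paper also cites it rather than proving the injectivity you flag as the main obstacle), tensor the resulting short exact sequence with $M$, and show the third term vanishes. Your observation that $(\overline R/p)_{\mathrm{red}}$ is an $(R/p)_{\mathrm{red}}$-algebra supplies the justification for that vanishing, which the paper leaves implicit. One caveat: the paper also just cites \cite[\S 2.1.3]{CS24} for $\overline R$ being perfectoid, and you should do the same, because your Step 1 sketch is incomplete --- knowing that the image of a distinguished generator $\xi$ is distinguished in $W(\overline R^\flat)$ does not by itself show that it generates $\ker\theta_{\overline R}$.
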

\begin{proof}
    The former assertion is a well known fact and one can find this in \cite[\S 2.1.3]{CS24}. We prove the latter assertion. By \cite[\S 2.1.3]{CS24}, the canonical map $R\to \overline{R}\times_{(\overline{R}/p)_{\mathrm{red}}} (R/p)_{\mathrm{red}}$ is isomorphic. This implies that the following sequence is exact.\[0\to R\to (\overline{R}\times (R/p)_{\mathrm{red}})\to (\overline{R}/p)_{\mathrm{red}}\to 0.\] Let $M$ be an arbitrary object of $\mathcal D(R)$ such that $M\otimes_R^L(\overline{R}\times (R/p)_{\mathrm{red}})\simeq 0$ holds. On the other hand, the above exact sequence implies the following fiber sequence:\[M\to M\otimes^L_R(\overline{R}\times (R/p)_{\mathrm{red}})\to M\otimes^L_R(\overline{R}/p)_{\mathrm{red}}\to .\] By our assumption on $M$, the middle term and the right term are equivalent to $0$. Thus, we have $M\simeq 0$.
\end{proof}
\begin{theorem}\label{thm6.7}
    Let $R$ be a perfectoid ring. Then $F\mathbb L_R\simeq 0$ holds.
\end{theorem}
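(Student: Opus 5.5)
The plan is to reduce to the two special cases already treated, Corollary \ref{cor6.2} (the $p$-torsionfree perfectoid case) and Corollary \ref{cor6.5} (the perfect $\mathbb F_p$-algebra case). The bridge between them is the conservativity statement of Lemma \ref{lem6.6} together with the base change formula of Corollary \ref{cor5.6}. Write $S:=\overline R\times (R/p)_{\mathrm{red}}$. Since $F\mathbb L_R$ is an $R/^Lp$-module, and hence an $R$-module by restriction, Lemma \ref{lem6.6} reduces the claim to showing $F\mathbb L_R\otimes^L_R S\simeq 0$. It suffices to check this separately after base change to $\overline R$ and to $(R/p)_{\mathrm{red}}$, because $-\otimes^L_R S$ decomposes as the product of these two base changes.

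For each factor $R':=\overline R$ or $R':=(R/p)_{\mathrm{red}}$, the map $R\to R'$ is a map of perfectoid rings; here a perfect $\mathbb F_p$-algebra counts as perfectoid. I will apply Corollary \ref{cor5.6}, using the second bullet of Remark \ref{rem5.7}, which states that $\mathbb L_{R'/R}\otimes^L_R R/^Lp\simeq 0$ for such a map. This gives
\[F\mathbb L_R\otimes^L_R R'\simeq F\mathbb L_{R'}.\]
The right-hand side vanishes: for $R'=\overline R$ this is Corollary \ref{cor6.2}, since $\overline R$ is $p$-torsionfree perfectoid by Lemma \ref{lem6.6}, and for $R'=(R/p)_{\mathrm{red}}$ it is Corollary \ref{cor6.5}, since this ring is perfect. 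Combining the two factors gives $F\mathbb L_R\otimes^L_RS\simeq 0$, and Lemma \ref{lem6.6} then gives $F\mathbb L_R\simeq 0$.

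The main obstacle is justifying the vanishing $\mathbb L_{R'/R}\otimes^L_R R/^Lp\simeq 0$ for an arbitrary map of perfectoid rings, rather than only for a derived base change. Remark \ref{rem5.7} phrases this via $A_{\mathrm{inf}}$, and I would verify it directly. By base change along $\mathbb Z_{(p)}\to\mathbb F_p$, we have $\mathbb L_{R'/R}\otimes^L_{\mathbb Z_{(p)}}\mathbb F_p\simeq \mathbb L_{(R'/^Lp)/(R/^Lp)}$. For a perfectoid ring, $R/^Lp\simeq R^\flat/^L d$ with $d$ a distinguished element, and its image in $R'^\flat$ plays the same role for $R'$. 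Hence $R'/^Lp\simeq R'^\flat\otimes^L_{R^\flat}R/^Lp$, which follows from $A_{\mathrm{inf}}(R')\otimes^L_{A_{\mathrm{inf}}(R)}R\simeq R'$, a standard consequence of rigidity of perfect prisms. The relative cotangent complex is then the base change of $\mathbb L_{R'^\flat/R^\flat}$, and this vanishes because both rings are perfect.

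If one prefers to avoid this point, there is a simpler route for the $(R/p)_{\mathrm{red}}$ factor. By Corollary \ref{cor6.4}, $F\mathbb L$ of an $\mathbb F_p$-algebra is a Frobenius pullback of a cotangent complex, so the $\mathbb F_p$-side can be handled by comparing $R/^Lp\to (R/p)_{\mathrm{red}}$ directly. On reflection, the Remark \ref{rem5.7} route is cleaner and is the one I would commit to.
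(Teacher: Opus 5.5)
Your proposal is correct and is essentially the paper's own first proof. Lemma~\ref{lem6.6} reduces the claim to the base changes to $\overline R$ and $(R/p)_{\mathrm{red}}$; Corollary~\ref{cor5.6} together with Remark~\ref{rem5.7} identifies these with $F\mathbb L_{\overline R}$ and $F\mathbb L_{(R/p)_{\mathrm{red}}}$; and those vanish by Corollaries~\ref{cor6.2} and~\ref{cor6.5}. Your check of $\mathbb L_{R'/R}\otimes^L_R R/^Lp\simeq 0$ via tilting and rigidity just unpacks Remark~\ref{rem5.7}, which already covers maps between perfectoid rings.
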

\begin{proof}
    By Lemma \ref{lem6.6}, it suffices to prove $F\mathbb L_R\otimes^L_R\overline{R}\simeq 0$ and $F\mathbb L_R\otimes^L_R(R/p)_{\mathrm{red}}\simeq 0$. 
    
    We first prove the former assertion. Since $\overline{R}$ is a $p$-torsionfree perfectoid ring (Lemma \ref{lem6.6}), we have $F\mathbb L_{\overline R}\simeq 0$ by Corollary \ref{cor6.2}. By Corollary \ref{cor5.6} and Remark \ref{rem5.7}, we have $F\mathbb L_{R}\otimes_R^L\overline{R}\simeq F\mathbb L_{\overline R}$. Combining them, we get $F\mathbb L_R\otimes_R^L\overline{R}\simeq 0$. 

    For the latter assertion, the same argument reduces to showing $F\mathbb L_{(R/p)_{\mathrm{red}}}\simeq 0$. This was proved in Corollary \ref{cor6.5}.
\end{proof}

\begin{proof}[Another proof of Theorem \ref{thm6.7}]
    Let $\theta:A_{\mathrm{inf}}(R)\to R$ be the Fontaine's $\theta$-map. Since a generator $\xi\in A_{\mathrm{inf}}(R)$ of $\operatorname{ker}\theta$ is a non-zerodivisor, there is the following pushout square of animated $\mathbb Z_{(p)}$-algebras.
    \[ \begin{tikzpicture}[auto]
\node (a) at (0, 2) {$\mathbb Z_{(p)}[x]$}; \node (x) at (4, 2) {$A_{\mathrm{inf}}(R)$};
\node (b) at (0, 0) {$\mathbb Z_{(p)}$};   \node (y) at (4, 0) {$R$};
\draw[->] (a) to node {$\scriptstyle x\mapsto \xi$} (x);
\draw[->] (x) to node {$\scriptstyle $} (y);
\draw[->] (a) to node[swap] {$\scriptstyle x\mapsto 0$} (b);
\draw[->] (b) to node[swap] {$\scriptstyle $} (y);
\end{tikzpicture}.\]
Applying Proposition \ref{pro4.10.1} to this pushout square and using Proposition \ref{pro5.2} and Proposition \ref{pro6.1}, we have the following pushout square in $\mathcal D(R/^Lp)$:
\[\begin{tikzpicture}[auto]
\node (a) at (0, 2) {$((R/^Lp)\cdot w(p))\oplus ((R/^Lp)\cdot w(x))$}; \node (x) at (8, 2) {$(R/^Lp)\cdot w(p)$};
\node (b) at (0, 0) {$(R/^Lp)\cdot w(p)$};   \node (y) at (8, 0) {$F\mathbb L_R$};
\draw[->] (a) to node {$\scriptstyle \tiny{\begin{cases}
    w(p)\mapsto w(p)\\
    w(x)\mapsto w(\xi)=\delta(\xi)w(p)
\end{cases}}$} (x);
\draw[->] (x) to node {$\scriptstyle $} (y);
\draw[->] (a) to node[swap] {$\scriptstyle \tiny{\begin{cases}w(p)\mapsto w(p)\\w(x)\mapsto 0 \end{cases}}$} (b);
\draw[->] (b) to node[swap] {$\scriptstyle $} (y);
\end{tikzpicture},\] where in the upper horizontal map, we used the equality $w(\xi)=\delta(\xi)w(p)$, which follows from \cite[Lemma 1.3]{Sai22}. On the other hand, we have the following pushout square in $\mathcal D(R/^Lp)$: 
\[\begin{tikzpicture}[auto]
\node (a) at (0, 2) {$(R/^Lp)\cdot w(x)$}; \node (x) at (8, 2) {$((R/^Lp)\cdot w(p))\oplus ((R/^Lp)\cdot w(x))$};
\node (b) at (0, 0) {$0$};   \node (y) at (8, 0) {$(R/^Lp)\cdot w(p)$};
\draw[->] (a) to node {$\scriptstyle w(x)\mapsto w(x)$} (x);
\draw[->] (x) to node {$\scriptstyle $} (y);
\draw[->] (a) to node[swap] {$\scriptstyle x\mapsto 0$} (b);
\draw[->] (b) to node[swap] {$\scriptstyle $} (y);
\end{tikzpicture}.\]

Combining these pushout diagrams, we get another pushout diagram in $\mathcal D(R/^Lp)$:
\[\begin{tikzpicture}[auto]
\node (a) at (0, 2) {$(R/^Lp)\cdot w(x)$}; \node (x) at (6, 2) {$(R/^Lp)\cdot w(p)$};
\node (b) at (0, 0) {$0$};   \node (y) at (6, 0) {$F\mathbb L_R$};
\draw[->] (a) to node {$\scriptstyle w(x)\mapsto \delta(\xi)w(p)$} (x);
\draw[->] (x) to node {$\scriptstyle $} (y);
\draw[->] (a) to node[swap] {$\scriptstyle $} (b);
\draw[->] (b) to node[swap] {$\scriptstyle $} (y);
\end{tikzpicture}.\]

Since the upper horizontal map is induced by \[A_{\mathrm{inf}}(R)/p\xrightarrow[]{\times \delta(\xi)}A_{\mathrm{inf}}(R)/p\] and the element $\delta(\xi)\in A_{\mathrm{inf}}(R)$ is invertible, the upper horizontal map is an equivalence. Therefore, we get the conclusion.
 \end{proof}

 In the next two corollaries, we compute the Frobenius--Witt cotangent complexes of certain quotients of perfectoid rings. These computations will be applied to the regularity criterion for mixed characteristic complete noetherian $p$-torsionfree local rings in Corollary \ref{cor6.12}.
 
\begin{coro}\label{cor6.8}
    Let $R$ be a perfectoid ring. Let $f_1,\dots,f_r$ be a Koszul regular sequence in $R$. Let $S:=R/(f_1,\dots,f_r)$. Then there exists a canonical equivalence \[F\mathbb L_S\simeq (\bigoplus_{i=1}^r(S/^Lp)\cdot w(x_i))[1].\]
\end{coro}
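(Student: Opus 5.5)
Since $f_1,\dots,f_r$ is Koszul regular, $S$ is the derived quotient $R/^L(f_1,\dots,f_r)$, so I will write $S$ as a pushout of animated $\mathbb Z_{(p)}$-algebras and apply Proposition \ref{pro4.10.1}, as in the second proof of Theorem \ref{thm6.7}. Concretely, consider the square
\[ \begin{tikzpicture}[auto]
\node (a) at (0, 2) {$\mathbb Z_{(p)}[x_1,\dots,x_r]$}; \node (x) at (5, 2) {$R$};
\node (b) at (0, 0) {$\mathbb Z_{(p)}$};   \node (y) at (5, 0) {$S$};
\draw[->] (a) to node {$\scriptstyle x_i\mapsto f_i$} (x);
\draw[->] (x) to node {$\scriptstyle $} (y);
\draw[->] (a) to node[swap] {$\scriptstyle x_i\mapsto 0$} (b);
\draw[->] (b) to node[swap] {$\scriptstyle $} (y);
\end{tikzpicture}.\]
Koszul regularity means exactly that $R\otimes^L_{\mathbb Z_{(p)}[x_1,\dots,x_r]}\mathbb Z_{(p)}\simeq R/(f_1,\dots,f_r)=S$, because $\mathbb Z_{(p)}$ is resolved over the polynomial ring by the Koszul complex on the $x_i$. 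So this is a pushout square in $\operatorname{Ani(Ring)}_{\mathbb Z_{(p)}/}$.

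By Proposition \ref{pro4.10.1}, after base change to $S/^Lp$, $F\mathbb L_S$ is the pushout in $\mathcal D(S/^Lp)$ of
\[ F\mathbb L_{\mathbb Z_{(p)}}\otimes S \leftarrow F\mathbb L_{\mathbb Z_{(p)}[x_1,\dots,x_r]}\otimes S\to F\mathbb L_R\otimes S.\]
The three terms are as follows.
\begin{itemize}
\item By Proposition \ref{pro5.2}, the middle term is $(S/^Lp)\cdot w(p)\oplus\bigoplus_i (S/^Lp)\cdot w(x_i)$.
\item The left term is $(S/^Lp)\cdot w(p)$, and the left map kills the $w(x_i)$ and is the identity on $w(p)$.
\item By Theorem \ref{thm6.7}, the right term is $0$.
\end{itemize}

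Split off the $w(p)$ summand, on which the left map is an isomorphism, exactly as in the second proof of Theorem \ref{thm6.7}. The pushout then reduces to the pushout of $0\leftarrow \bigoplus_i (S/^Lp)\cdot w(x_i)\to 0$. That pushout is the suspension $(\bigoplus_i (S/^Lp)\cdot w(x_i))[1]$, which is the claimed formula.

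The main care point is making the splitting rigorous, which I would do by pasting pushout squares as in the second proof of Theorem \ref{thm6.7}. First, the middle term is the pushout of $\bigoplus_i (S/^Lp) w(x_i)\to 0$ along the inclusion into the middle term. Pasting this with the given square identifies $F\mathbb L_S$ with the pushout of $0\leftarrow \bigoplus_i (S/^Lp)w(x_i)\to F\mathbb L_R\otimes S\simeq 0$. No explicit description of the map $w(x_i)\mapsto w(f_i)$ is needed, because its target vanishes. I also need the pushout in $\mathcal D_{\ge0}(S/^Lp)_{(S/^Lp)/}$, which carries the pointing $w(p)$, to agree with the pushout of underlying modules. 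This holds because the pointings are compatible and the coslice pushout is computed on underlying objects. Finally, the statement should be read with $x_i$ in $w(x_i)$ denoting the polynomial variables, and "canonical" refers to the identification through this pushout.
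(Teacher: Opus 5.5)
Your proposal is correct and follows the paper's proof. You use the same pushout square $\mathbb Z_{(p)}[x_1,\dots,x_r]\to R$, $\mathbb Z_{(p)}[x_1,\dots,x_r]\to\mathbb Z_{(p)}$, which is a pushout by Koszul regularity, combined with Proposition \ref{pro4.10.1}, Proposition \ref{pro5.2} and Theorem \ref{thm6.7}; your pasting step just spells out the paper's ``this implies''. One small slip: in that pasting step it is the left term $(S/^Lp)\cdot w(p)$, not the middle term, that is the pushout of $0\leftarrow\bigoplus_i(S/^Lp)\cdot w(x_i)\to(S/^Lp)\cdot w(p)\oplus\bigoplus_i(S/^Lp)\cdot w(x_i)$, and with that correction the pasted square gives exactly $F\mathbb L_S\simeq(\bigoplus_i(S/^Lp)\cdot w(x_i))[1]$.
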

\begin{proof}
    Since the sequence $f_1,\dots,f_r$ is Koszul regular, the following diagram is a pushout diagram of animated $\mathbb Z_{(p)}$-algebras:
    \[\begin{tikzpicture}[auto]
\node (a) at (0, 2) {$\mathbb Z_{(p)}[x_1,\dots,x_r]$}; \node (x) at (4, 2) {$\mathbb Z_{(p)}$};
\node (b) at (0, 0) {$R$};   \node (y) at (4, 0) {$S$};
\draw[->] (a) to node {$\scriptstyle x_i\mapsto 0$} (x);
\draw[->] (x) to node {$\scriptstyle $} (y);
\draw[->] (a) to node[swap] {$\scriptstyle x_i\mapsto f_i$} (b);
\draw[->] (b) to node[swap] {$\scriptstyle $} (y);
\end{tikzpicture}.\]

By Proposition \ref{pro4.10.1}, Proposition \ref{pro5.2} and Theorem \ref{thm6.7}, the above pushout diagram induces the following pushout diagram in $\mathcal D(S/^Lp)$:
 \[\begin{tikzpicture}[auto]
\node (a) at (0, 2) {$(S/^Lp)\cdot w(p)\oplus \bigoplus_{i=1}^r(S/^Lp)\cdot w(x_i)$}; \node (x) at (6, 2) {$(S/^Lp)\cdot w(p)$};
\node (b) at (0, 0) {$0$};   \node (y) at (6, 0) {$F\mathbb L_S$};
\draw[->] (a) to node {$\scriptstyle \mathrm{pr}$} (x);
\draw[->] (x) to node {$\scriptstyle $} (y);
\draw[->] (a) to node[swap] {$\scriptstyle $} (b);
\draw[->] (b) to node[swap] {$\scriptstyle $} (y);
\end{tikzpicture}.\]
This implies $F\mathbb L_S\simeq (\bigoplus_{i=1}^r(S/^Lp)\cdot w(x_i))[1]$.
\end{proof}

%The author wonders more generally whether $F\mathbb L_S$, regarded as an object of $\mathcal{D}(S/^Lp)$, has flat amplitude concentrated in homological degree 1 for any quasiregular semiperfectoid ring $S$.

\begin{coro}\label{cor6.9}
    Let $S$ be a $p$-torsionfree quasiregular semiperfectoid ring. Then the Frobenius--Witt cotangent complex $F\mathbb L_S\in \mathcal D(S/p)$ has  Tor-amplitude in homological degree $1$.
\end{coro}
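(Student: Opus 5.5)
Since $S$ is quasiregular semiperfectoid, there is a perfectoid ring $R$ with a surjection $R\to S$ (by the definition of qrsp, one can take such a map). The plan is to apply the fundamental fiber sequence (Theorem \ref{thm5.5}) to $R\to S$:
\[F\mathbb L_R\otimes^L_RS\to F\mathbb L_S\to F^*(\mathbb L_{S/R}\otimes^L_SS/^Lp)\to .\]
By Theorem \ref{thm6.7} the first term vanishes. This gives
\[F\mathbb L_S\simeq F^*(\mathbb L_{S/R}\otimes^L_S S/p),\]
where $S/^Lp=S/p$ because $S$ is $p$-torsionfree.

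Next we use the standard facts about quasiregular semiperfectoid rings (as in \cite{BMS19}). For a qrsp $S$ and a perfectoid $R\to S$ surjective, $\mathbb L_{S/R}\simeq N[1]$, where $N$ is a $p$-completely flat $S$-module; in fact $N=I/I^2$ for $I=\ker(R\to S)$. One can see this from the fact that $\mathbb L_{S/\mathbb Z_p}$ has $p$-complete Tor-amplitude in $[1,1]$, together with $\mathbb L_{R/\mathbb Z_p}\otimes^L\mathbb F_p\simeq 0$ (Remark \ref{rem5.7}-type argument: $\mathbb L_{R/\mathbb Z_p}$ is $p$-completely zero). Tensoring with $S/p$ gives
\[\mathbb L_{S/R}\otimes^L_SS/p\simeq (N\otimes^L_S S/p)[1],\]
and this is $M[1]$ for a flat $S/p$-module $M$. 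Here $p$-complete flatness means exactly that $N\otimes^L_SS/p$ is a discrete flat $S/p$-module.

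Finally, the Frobenius pullback $F^*$ along $F\colon S/p\to S/p$ is the base change $-\otimes^L_{S/p,F}S/p$. It sends a flat module to a flat module, and hence preserves Tor-amplitude in degree $1$. So $F\mathbb L_S\simeq (F^*M)[1]$ with $F^*M$ flat over $S/p$, which is the claim.

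The main obstacle is the second step: justifying cleanly that $\mathbb L_{S/R}\otimes^L S/p$ is concentrated in degree $1$ with flat homology. I would first try to cite \cite[Section 4]{BMS19} for $\mathbb L_{S/\mathbb Z_p}$ and use the transitivity triangle for $\mathbb Z_p\to R\to S$ mod $p$, where the $R$-term vanishes. If the existence of a surjection from a perfectoid ring is not literally part of the definition, I would use a map $R\to S$ from a perfectoid ring. In that case I would argue the same way via the transitivity triangle, since only the vanishing of $\mathbb L_{R}\otimes^L\mathbb F_p$ is used.
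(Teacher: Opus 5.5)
Your outline is the paper's: the fundamental fiber sequence for a surjection $R\to S$ from a perfectoid ring, the first term killed by Theorem~\ref{thm6.7}, and $F^*$ preserving flat modules. The gap is in the middle step you flagged, and it is more than a missing citation. The claim $\mathbb L_{R/\mathbb Z_p}\otimes^L\mathbb F_p\simeq 0$ for perfectoid $R$ is false. Since $A_{\mathrm{inf}}(R)$ is $p$-torsionfree with perfect reduction $R^{\flat}$, one has $\mathbb L_{A_{\mathrm{inf}}(R)/\mathbb Z_p}\otimes^L\mathbb F_p\simeq \mathbb L_{R^{\flat}/\mathbb F_p}\simeq 0$. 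Combined with $\mathbb L_{R/A_{\mathrm{inf}}(R)}\simeq (\ker\theta/(\ker\theta)^2)[1]\cong R[1]$ (as in the proof of Corollary~\ref{cor6.2}), this gives $\mathbb L_{R/\mathbb Z_p}\otimes^L\mathbb F_p\simeq (R/^Lp)[1]\neq 0$. Remark~\ref{rem5.7} concerns maps \emph{between} perfectoid rings, and $\mathbb Z_{(p)}$ is not perfectoid. Your claim even contradicts Theorem~\ref{thm6.7}, which you also use: Corollary~\ref{cor5.6} applied to $\mathbb Z_{(p)}\to R$ would then give $F\mathbb L_R\simeq F\mathbb L_{\mathbb Z_{(p)}}\otimes^L_{\mathbb Z_{(p)}}R\simeq R/^Lp\neq 0$.

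With the correct value, the transitivity triangle for $\mathbb Z_p\to R\to S$ mod $p$ only gives $\mathbb L_{S/R}\otimes^L_SS/p\simeq\operatorname{Cofib}(S/p\xrightarrow{f}M)[1]$. Here $\mathbb L_{(S/p)/\mathbb F_p}\simeq M[1]$ with $M$ a flat $S/p$-module; that part of your input is correct, by quasisyntomicity plus $\Omega_{(S/p)/\mathbb F_p}=0$. This complex has homology in degrees $1$ and $2$, so you would still need $f$ to be injective with flat cokernel, and nothing in your argument supplies that. The paper imports the needed Tor-amplitude statement directly from \cite[Lemma 4.25]{BMS19}.

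Alternatively, you can close the gap inside the paper, since only the Frobenius pullback matters. Corollary~\ref{cor5.10} gives a cofiber sequence $S/p\xrightarrow{w(p)}F\mathbb L_S\to (F^*M)[1]$. Hence $H_i(F\mathbb L_S)=0$ for $i\ge 2$, and, using Proposition~\ref{pro5.1}, the sequence $0\to H_1(F\mathbb L_S)\to F^*M\to S/p\to F\Omega_S\to 0$ is exact. Taking $\pi_0$ of your fiber sequence, with $F\mathbb L_R\simeq 0$ and $\Omega_{(S/p)/(R/p)}=0$, gives $F\Omega_S=0$. So $F^*M\to S/p$ is a split surjection, and $H_1(F\mathbb L_S)$ is a direct summand of the flat module $F^*M$, which is the claim.
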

For the definitions of the notions of quasiregular semiperfectoid, see \cite[Definition 4.20]{BMS19}.
\begin{proof}
    Since $S$ is quasiregular semiperfectoid, there is a surjective ring map $R\twoheadrightarrow S$ from a perfectoid ring $R$. Consider the fundamental fiber sequence (Theorem \ref{thm5.5}) associated to this map:\[F\mathbb L_R\otimes_R^LS\to F\mathbb L_S\to F^*(\mathbb L_{S/R}\otimes_R^LR/^Lp)\to .\] The right term of this fiber sequence can be computed as follows:\begin{align*}
        F^*(\mathbb L_{S/R}\otimes_R^LR/^Lp)&\simeq F^*(\mathbb L_{S/R}\otimes_R^L(R\otimes^L_{\mathbb Z_{(p)}}\mathbb F_p))\\
        &\simeq F^*(\mathbb L_{S/R}\otimes^L_{\mathbb Z_{(p)}}\mathbb F_p)\\
        &\simeq F^*(\mathbb L_{S/R}\otimes_S^L(S\otimes^L_{\mathbb Z_{(p)}}\mathbb F_p))\\
        &\simeq F^*(\mathbb L_{S/R}\otimes_S^LS/p),
    \end{align*} where we used the $p$-torsionfreeness of $S$ in the last equivalence. By \cite[Lemma 4.25]{BMS19}, this object has Tor amplitude in homological degree $1$ as an object of $\mathcal D(S/p)$. On the other hand, the left term of the above fiber sequence is zero by Theorem \ref{thm6.7}. Combining them, we get the conclusion.
\end{proof}

Next, we see a relationship between the regularity of mixed-characteristic complete noetherian local rings and their Frobenius--Witt cotangent complexes. The assumption of completeness will be removed later (Theorem \ref{thm6.16}).

\begin{theorem}\label{thm6.10}
    Let $R$ be a $p$-torsionfree complete noetherian regular local ring whose residue field is characteristic $p$. Then the Frobenius--Witt cotangent complex $F\mathbb L_R$ of $R$ is concentrated in degree zero.
\end{theorem}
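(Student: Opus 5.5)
We will show that $F\mathbb L_R$ has no homology in degrees $\ge 1$. The plan is to present $R$ as the quotient of a power series ring over a Cohen ring, or over a ramified extension of one, by a regular element, and then compute with the fundamental fiber sequence (Theorem \ref{thm5.5}).

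\textbf{Step 1: the Cohen presentation.} By the Cohen structure theorem, since $R$ is complete regular local, $p$-torsionfree and of mixed characteristic, $R \cong C[[x_1,\dots,x_n]]/(f)$. Here $C$ is a Cohen ring (a complete discrete valuation ring with uniformizer $p$ and residue field $k$). The element $f$ is either $0$ or a regular element with $f\notin (p,x_1,\dots,x_n)^2$ lifting $p$ (the ramified case).

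\textbf{Step 2: smooth-type pieces.} We compute $F\mathbb L_{C[[x]]}$ for $C[[x]]:=C[[x_1,\dots,x_n]]$.
\begin{itemize}
\item For $C$: write $C$ as the $p$-completion of a filtered colimit of localizations of smooth $\mathbb Z_{(p)}$-algebras, e.g.\ via a $p$-basis of $k$. Corollary \ref{cor5.8} kills the completion. Corollary \ref{cor5.9} together with Proposition \ref{pro4.10.1} (sifted colimits) shows that $F\mathbb L_C$ is discrete, namely a free $k$-module.
\item For $C[[x]]$: Proposition \ref{pro5.11.2} computes $F\mathbb L_{C[x_1,\dots,x_n]}$, which is discrete and free over $C[x]/p$.
\item Passing to $C[[x]]$: apply Theorem \ref{thm5.5} to $C[x]\to C[[x]]$. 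The third term is $F^*(\mathbb L_{(k[[x]])/(k[x])})$. One needs this to vanish after the relevant completion; Proposition \ref{pro5.11.1} with $I=(x_1,\dots,x_n)$ reduces the question to the $I$-completed statement. I expect this to require care. Since $k[x]\to k[[x]]$ is flat, the claim is that $\mathbb L_{k[[x]]/k[x]}$ is derived $I$-complete-zero, which holds because it vanishes modulo $I$. Everything is then $I$-adically complete with finitely generated homology in the $F$-finite situation, or at least derived complete, so $F\mathbb L_{C[[x]]}$ is a discrete flat $k[[x]]$-module. The subtlety that $k$ need not be $F$-finite can be handled by noting that only the vanishing of higher homology matters, and derived completion of a flat module stays discrete.
\end{itemize}

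\textbf{Step 3: the regular quotient.} If $f=0$ we are done. Otherwise apply Theorem \ref{thm5.5} to $A:=C[[x]]\to R=A/(f)$. Since $f$ is a nonzerodivisor on $A$ and on $A/p$ (as $R$ is $p$-torsionfree), $\mathbb L_{R/A}\otimes^L R/^Lp\simeq (f)/(f^2)\otimes R/p\,[1]$, a free $R/p$-module of rank one in degree $1$. The long exact sequence gives $H_i(F\mathbb L_R)=0$ for $i\ge 2$, together with
\[0\to H_1(F\mathbb L_R)\to F^*((f)/(f^2)\otimes R/p)\xrightarrow{\partial} F\mathbb L_A\otimes_A R/p,\]
where $\partial$ sends the generator to $w(f)$. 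It remains to show that $w(f)$ is a nonzerodivisor-type element, so that $\partial$ is injective. As in the proof of Corollary \ref{cor6.2}, it suffices that the image of $w(f)$ generates a free direct summand.

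\textbf{Main obstacle.} The hard step is this injectivity of $\partial$. The plan is to expand $f=p\cdot u - g(x)$, with $g\in (x)$ and $u$ chosen appropriately so that $f\notin \mathfrak m^2$. In the ramified case one can arrange $f=p-g$ with $g\in (x)^{\ge 1}$ and $f\notin\mathfrak m^2$, so that after a coordinate change $g\equiv x_1 \bmod \mathfrak m^2$ or $g\in\mathfrak m^2$. Then
\[w(f)=w(p)-w(g)+(\text{terms from } P(p,-g)w(p)).\]
The basis $\{w(p),w(x_i),\dots\}$ from Proposition \ref{pro5.11.2} shows that the coefficient of $w(p)$ in $w(f)$ is $1+(\text{elements of }\mathfrak m)$, since $w(g)$ lies in $\mathfrak m\cdot w(p)+\sum A\,w(x_i)$ only up to $p$-th powers. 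This coefficient is a unit in the local ring $R/p$. Hence $w(f)$ spans a free rank-one direct summand, $\partial$ is split injective, $H_1(F\mathbb L_R)=0$, and $F\mathbb L_R$ is concentrated in degree zero.
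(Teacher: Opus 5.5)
Your route is genuinely different from the paper's. The paper never uses the Cohen presentation in this proof. It descends along a quasi-syntomic perfectoid cover $R\to S$ (Lemma \ref{lem6.10.1}) and uses $F\mathbb L_S\simeq 0$ (Theorem \ref{thm6.7}) to get $F\mathbb L_R\otimes^L_{R/p}S'/p\simeq F\mathbb L_{S'}[-1]$ for $S'=S\widehat\otimes_RS$. It then concludes by Corollary \ref{cor6.9} and faithful flatness, with no dependence on the residue field.

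Your skeleton is sound: Theorem \ref{thm5.5} for $A=C\llbracket x\rrbracket\to R=A/(f)$, then $H_{\ge 2}=0$ and $H_1=\ker\partial$. But Step 2 has a genuine gap. Theorem \ref{thm6.10} assumes no $F$-finiteness, and Proposition \ref{pro5.11.1} only identifies the derived $I$-completions of $F\mathbb L_{C[x]}$ and $F\mathbb L_{C\llbracket x\rrbracket}$. To conclude anything about $F\mathbb L_{C\llbracket x\rrbracket}$ itself you need it to be derived $I$-complete. The paper gets that only from Lemma \ref{lem6.15}, i.e.\ under $F$-finiteness. Your remark that ``the completion of a flat module stays discrete'' is a statement about the completion, not about the original object. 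The same problem affects your use of the basis from Proposition \ref{pro5.11.2}, which lives on $C[x]$, not on $C\llbracket x\rrbracket$.

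The repair is to avoid completion altogether: apply Theorem \ref{thm5.5} to $\mathbb Z_{(p)}\to A$. Since $\mathbb L_{k\llbracket x\rrbracket/\mathbb F_p}$ is a flat discrete module (Popescu, cf.\ Remark \ref{rem6.11}(1)), $F\mathbb L_A\simeq F\Omega_A$ sits in $0\to k\llbracket x\rrbracket\cdot w(p)\to F\Omega_A\to F^*\Omega_{k\llbracket x\rrbracket/\mathbb F_p}\to 0$. Hence it is flat over $A/p$, which is exactly what you need for $F\mathbb L_A\otimes^L_AR$ to be discrete.

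Second, your main obstacle is not resolved as written. Speaking of ``the coefficient of $w(p)$'' requires an $A/p$-linear retraction $\rho\colon F\Omega_A\to A/p$ with $\rho(w(p))=1$ and $\rho(w(x_i))\in\mathfrak m$. The second condition matters because the terms $h_i^pw(x_i)$ in $w(g)$, $g=\sum x_ih_i$, can have unit coefficients (e.g.\ $g=x_1$), and ``only up to $p$-th powers'' does not produce such a $\rho$.

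The missing idea is the $\delta$-structure on $A$ with $\delta(x_i)=0$ from Lemma \ref{lem6.10.1}. Then $\bar\delta:=\delta\bmod p$ is a Frobenius--Witt derivation $A\to A/p$ with $\bar\delta(p)=1-p^{p-1}\equiv 1$, so it induces $\rho$ with $\rho(w(a))=\bar\delta(a)$ and $\rho(w(x_i))=0$. Since $\delta$ preserves $(x)$ (as $\delta(x_ih)=x_i^p\delta(h)$) and $P(p,-g)\in pA$, you get $\rho(w(f))=\overline{\delta(p-g)}\in 1+(x)$, a unit. So $w(f)$ spans a free rank-one direct summand of $F\Omega_A\otimes_AR/p$.

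Also, instead of asserting $\partial=w(f)$, argue as in Corollary \ref{cor6.2}. The image of $\partial$ and the image of Saito's map $F^*((f)/(f^2)\otimes R/p)\xrightarrow{w}F\Omega_A\otimes_AR/p$ are both the kernel of the natural map $F\Omega_A\otimes_AR/p\to F\Omega_R$. Hence $\operatorname{im}\partial=R/p\cdot w(f)\cong R/p$, and $\partial$ is injective.

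With these repairs you get an elementary, perfectoid-free proof, essentially Saito's computation upgraded via Theorem \ref{thm5.5}, which even shows $F\Omega_R$ is flat over $R/p$. The paper's proof trades this explicit bookkeeping for the perfectoid vanishing theorem.
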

\begin{proof}
    There exists a quasi-syntomic cover $R\to S$ with $S$ perfectoid by Lemma \ref{lem6.10.1} below. Let $S':=S\widehat \otimes_RS$ be the $p$-completed tensor product. By \cite[Lemma 4.16]{BMS19}, $S'$ is a $p$-torsionfree quasiregular semiperfectoid ring. Applying Proposition \ref{pro4.10.1} to the following pushout diagram of animated $\mathbb Z_{(p)}$-algebras
\[\begin{tikzpicture}[auto]
\node (a) at (0, 2) {$R$}; \node (x) at (2, 2) {$S$};
\node (b) at (0, 0) {$S$};   \node (y) at (2, 0) {$S'$};
\draw[->] (a) to node {$\scriptstyle $} (x);
\draw[->] (x) to node {$\scriptstyle $} (y);
\draw[->] (a) to node[swap] {$\scriptstyle $} (b);
\draw[->] (b) to node[swap] {$\scriptstyle $} (y);
\end{tikzpicture}, \]
we get the following pushout diagram in $\mathcal D(\mathbb F_p)$:
\[\begin{tikzpicture}[auto]
\node (a) at (0, 2) {$F\mathbb L_R\otimes_{R/p}^LS'/p$}; \node (x) at (3, 2) {$0$};
\node (b) at (0, 0) {$0$};   \node (y) at (3, 0) {$F\mathbb L_{S'}$};
\draw[->] (a) to node {$\scriptstyle $} (x);
\draw[->] (x) to node {$\scriptstyle $} (y);
\draw[->] (a) to node[swap] {$\scriptstyle $} (b);
\draw[->] (b) to node[swap] {$\scriptstyle $} (y);
\end{tikzpicture},\]
where we used Theorem \ref{thm6.7}. Thus, we have \[F\mathbb L_{R}\otimes_{R/p}^LS'/p\simeq F\mathbb L_{S'}[-1].\] Since $S'$ is a $p$-torsionfree quasiregular semiperfectoid ring, Corollary \ref{cor6.9} implies that the object $F\mathbb L_{R}\otimes_{R/p}^LS'/p$ is concentrated in homological degree $0$. Combining this and the faithful flatness of $R/p\to S'/p$, we get the conclusion.    
\end{proof}
The next lemma was used above. This lemma was originally formulated under the assumption that the residue field is perfect, in which case it follows immediately from \cite[Example 3.8]{BIM19}. The author is grateful to R. Ishizuka for explaining how to remove this assumption. 
\begin{lemma}\label{lem6.10.1}
    Let $R$ be a complete noetherian regular local ring of residue characteristic $p$. Then there is a quasi-syntomic map $R\to S$ with $S$ perfectoid.
\end{lemma}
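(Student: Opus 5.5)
The plan is to use the Cohen structure theorem to write $R$ as a quotient of a power series ring over a Cohen ring, adjoin all $p$-power roots of the variables and of a $p$-basis of the residue field, and $p$-adically complete.

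\textbf{Step 1: presentation of $R$.} Let $k$ be the residue field and $d=\dim R$. Since $R$ is complete, Cohen's structure theorem gives a Cohen ring $C$ with residue field $k$ and a surjection
\[
C[[x_1,\dots,x_n]]\twoheadrightarrow R .
\]
Regularity of $R$ then pins down the kernel. Either $R\cong C[[x_1,\dots,x_{d-1}]]$ in the unramified case, or $R\cong C[[x_1,\dots,x_d]]/(f)$ with $f\equiv p$ modulo $(x_i)$ up to a unit, that is, $f$ an Eisenstein-type element. Write $T:=C[[x_1,\dots,x_n]]$ for the relevant power series ring. In both cases $R$ is a quotient of $T$ by a regular sequence of length at most one.

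\textbf{Step 2: perfectoid cover of $C$.} Choose a $p$-basis $\{b_j\}_{j\in J}$ of $k$ and lift each $b_j$ to an element $\tilde b_j\in C$. Let $C_\infty$ be the $p$-adic completion of
\[
C[\tilde b_j^{1/p^\infty},\,\zeta_{p^\infty}]
\]
(formally adjoining compatible $p$-power roots; adjoining $p^{1/p^\infty}$ instead of $\zeta_{p^\infty}$ also works). Then $C_\infty/p$ is semiperfect with Frobenius surjective, and $C_\infty$ is perfectoid; concretely it is $W(k^{\mathrm{perf}})$ with the $p$-power roots adjoined and completed. The map $C\to C_\infty$ should be quasi-syntomic. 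Flatness holds because $C\to W(k^{\mathrm{perf}})$ is flat (both are $p$-torsionfree and flat mod $p$ since $k\to k^{\mathrm{perf}}$ is flat), and adjoining $p$-power roots of $p$ is flat. For the cotangent complex, $\mathbb L_{k^{\mathrm{perf}}/k}\otimes^L_{k}k$ has Tor amplitude in $[-1,0]$: $k^{\mathrm{perf}}$ is a filtered colimit of $k[T_j]/(T_j^{p^m}-b_j)$, which are relative complete intersections, each with $\mathbb L$ of amplitude $[-1,0]$ and flat terms in the colimit. The same holds for the $p$-power roots of $p$.

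\textbf{Step 3: cover of $T$ and base change to $R$.} Set
\[
T_\infty:=(C_\infty[x_i^{1/p^\infty}])^\wedge_{(p,x)} .
\]
This is perfectoid, and $T\to T_\infty$ is quasi-syntomic: it is flat (after completion, by the noetherianity of $T$ together with flatness of perfect-root extensions, using the flatness criterion for adic completions) and its cotangent complex mod $p$ has amplitude $[-1,0]$ by the same colimit argument. Then put
\[
S:=T_\infty\widehat{\otimes}_T R=T_\infty/(f),
\]
or simply $S=T_\infty$ in the unramified case. In the ramified case $S$ is not a priori perfectoid, since a quotient of a perfectoid ring by a non-distinguished element need not be perfectoid. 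The fix is to arrange that the image of $f$ is distinguished by choosing $x_d\mapsto$ suitable roots. The simpler route is to avoid the issue altogether: replace $R\to S$ by the composite $R\to R\widehat\otimes_T T_\infty$ and take its perfectoidization via André's flatness lemma or the BS22 perfectoidization. Since $R$ is regular, $R\to (R\widehat\otimes_TT_\infty)_{\mathrm{perfd}}$ is $p$-completely faithfully flat, and quasi-syntomicity is then checked via the result that $p$-completely flat maps between quasi-syntomic rings whose target is perfectoid are quasi-syntomic.

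\textbf{Main obstacle.} I expect the hardest step to be the ramified case together with the verification of faithful flatness of $R\to S$ after perfectoidization. My first attempt will be Bhatt's result that a regular $R$ is flat over $T$-type perfectoid covers (via $p$-complete flatness being detected on $R/p$ regular, plus Kunz's theorem). Faithful flatness over $R$ then follows because $R$ is local and the map is local.
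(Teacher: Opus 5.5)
Your construction handles the unramified case, but there is a genuine gap in the ramified case (and also when $R$ has characteristic $p$). This is exactly where you locate the ``main obstacle''. Because you adjoin $\zeta_{p^\infty}$ or $p^{1/p^\infty}$, your $T_\infty$ is already perfectoid. As you note, $T_\infty/(f)$ is then a quotient of a perfectoid ring by one more element and need not be perfectoid. When $R$ has characteristic $p$ the relation is $p$ itself, and $T_\infty/p$ is not even reduced because of the adjoined roots.

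Your fallback via perfectoidization rests on two claims that are not available.
\begin{enumerate}
\item Regularity of $R$ does not make a perfectoid $R$-algebra $p$-completely flat. For example, $\mathbb Z_p\llbracket x\rrbracket\to\mathbb Z_p[\zeta_{p^\infty}]^{\wedge}_p$, $x\mapsto 0$, is a perfectoid algebra over a regular ring that is not $p$-completely flat. Flatness of your particular perfectoidization, and its nonvanishing modulo $\mathfrak m$, would need a separate argument, which you do not give.
\item ``A $p$-completely flat map from a quasi-syntomic ring to a perfectoid ring is quasi-syntomic'' is not a result you can quote, and it is not formal. In the triangle $(\mathbb L_{R/\mathbb Z_p}\otimes^L_RS)/^Lp\to S/^Lp[1]\to\mathbb L_{S/R}/^Lp$, the cofiber has Tor amplitude in $[0,1]$ only if the map on $\pi_1$ stays injective after every base change. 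For $R=T/(f)$ this is a genuine condition on how the relation $f$ maps into $\ker(\theta)\subseteq A_{\mathrm{inf}}(S)$. In effect it is the distinguishedness question you were trying to bypass.
\end{enumerate}

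The missing idea, which is what the paper uses, is not to adjoin any roots of $p$. Write uniformly $R\cong A/(p-f)$ with $A=C\llbracket x_1,\dots,x_r\rrbracket$ and $f\in(x_1,\dots,x_r)$. The unramified case is $f=x_r$, and characteristic $p$ is $f=0$. Give $A$ the $\delta$-structure extending a Frobenius lift on $C$ with $\delta(x_i)=0$. Then $\delta(p-f)\equiv\delta(p)=1-p^{p-1}$ modulo $(x_1,\dots,x_r)$, so $\delta(p-f)$ is a unit and $(A,(p-f))$ is a prism. Its perfection $A_{\mathrm{perf}}$ is essentially your $T_\infty$ built on $W(k^{\mathrm{perf}})$, i.e.\ on your $C_\infty$ \emph{before} adjoining $\zeta_{p^\infty}$. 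It is a perfect prism, so $S=A_{\mathrm{perf}}/(p-f)A_{\mathrm{perf}}$ is perfectoid.

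Now $R\to S$ is the base change of $A\to A_{\mathrm{perf}}$. Since quasi-syntomic maps are stable under base change and $p$-completed filtered colimits, it suffices that the Frobenius lift of $A$ is quasi-syntomic. This is a computation of the same kind as your Step 2: Frobenius kills differentials, so $\mathbb L_{\varphi_*k/k}$ and the analogous complex for $x_i\mapsto x_i^p$ have Tor amplitude in $[0,1]$. No flatness theorem for perfectoidizations is needed.
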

\begin{proof}
    By Cohen's structure theorem, there is an isomorphism\[R\cong C\llbracket x_1,\dots,x_r\rrbracket/(p-f),\] where $C$ is the Cohen ring of the residue field $k$ of $R$ and $f$ is an element of the ideal $(x_1,\dots,x_r)\subseteq C\llbracket x_1,\dots,x_r\rrbracket$. Then $C\llbracket x_1,\dots,x_r\rrbracket$ has the $\delta$-structure extended from a Frobenius lift on $C$ defined by $\delta(x_i)=0$. By \cite[Example 5.2]{IN26}, the pair $(A,I):=(C\llbracket x_1,\dots,x_r\rrbracket,(p-f))$ is a prism. Let $(A_{\mathrm{perf}},IA_{\mathrm{perf}})$ be the perfection of $(A,I)$ as a prism. Then the quotient $S:=A_{\mathrm {perf}}/IA_{\mathrm{perf}}$ is a perfectoid ring. To prove that the map $R\to S$ defined by the base change of $A\to A_{\mathrm{perf}}$ is quasisyntomic, it suffices to show that the map $A\to A_{\mathrm{perf}}$ is quasisyntomic. Since the class of quasisyntomic $A$-algebras are closed under derived $p$-completed filtered colimits, it suffices to show that the Frobenius lift $F$ on $A$ associated to the $\delta$-structure on $A$ is quasi-syntomic.  Denote by $F_C$ the Frobenius lift on $C$. Then we can write $F$ by the composition:\[A=C\llbracket x_1,\dots,x_r\rrbracket\xrightarrow[]{F_C\widehat \otimes\mathrm{id_{\mathbb Z\llbracket x_1,\dots,x_r\rrbracket}}}C\llbracket x_1,\dots,x_r\rrbracket\xrightarrow[]{x_i\mapsto x_i^p}C\llbracket x_1,\dots,x_r\rrbracket\]
    Therefore it suffices to show that the map $F_C$ and $\mathbb Z_p\llbracket x_1,\dots,x_r\rrbracket\xrightarrow[]{x_i\mapsto x_i^p}\mathbb Z_p\llbracket x_1,\dots,x_r\rrbracket$ are quasi-syntomic. First, we treat with the first map. By definition of Cohen rings, it suffices to show that the cotangent complex $\mathbb L_{\varphi_*k/k}$ has Tor amplitude in homological degree $[0,1]$, where $\varphi:k\to k$ is the Frobenius map and $\varphi_*k$ is the $k$-algebra $k$ whose structure map is $\varphi$. Consider the following transitive fiber sequence associated to $\mathbb F_p\to k\to \varphi_*k$:
    \[\mathbb L_{k/\mathbb F_p}\otimes_k^L\varphi_*k\to \mathbb L_{\varphi_*k/\mathbb F_p}\to \mathbb L_{\varphi_*k/k}\to .\] Since $\mathbb F_p$ is perfect and $k$ is regular, $\mathbb L_{k/\mathbb F_p}$ is a flat $k$-module. Since the Frobenius map kills differential forms, the first map $\mathbb L_{k/\mathbb F_p}\otimes_k^L\varphi_*k\to \mathbb L_{\varphi_*k/\mathbb F_p}$ in the above fiber sequence is a zero map. Combining them, we see that the complex $\mathbb L_{\varphi_*k/k}$ has Tor amplitude in homological degree $[0,1]$. Finally, we treat with the second map $\mathbb Z_p\llbracket x_1,\dots,x_r\rrbracket\xrightarrow[]{x_i\mapsto x_i^p}\mathbb Z_p\llbracket x_1,\dots,x_r\rrbracket$. To prove that this map is quasi-syntomic, it remains to prove that the complex $\mathbb L_{\varphi_*\mathbb F_p\llbracket x_1,\dots,x_r\rrbracket/\mathbb F_p\llbracket x_1,\dots,x_r\rrbracket}$ has Tor amplitude in homological degree $[0,1]$. The proof of this assertion is similar to the one for $\mathbb L_{\varphi_*k/k}$. Consider the transitive fiber sequence associated to $\mathbb F_p\to \mathbb F_p\llbracket x_1,\dots,x_r\rrbracket\to \varphi_*\mathbb F_p\llbracket x_1,\dots,x_r\rrbracket$:
    \[\mathbb L_{\mathbb F_p\llbracket x_1,\dots,x_r\rrbracket/\mathbb F_p}\otimes_{\mathbb F_p\llbracket x_1,\dots,x_r\rrbracket}^L\varphi_*{\mathbb F_p\llbracket x_1,\dots,x_r\rrbracket}\to \mathbb L_{\varphi_*\mathbb F_p\llbracket x_1,\dots,x_r\rrbracket/\mathbb F_p}\to \mathbb L_{\varphi_*\mathbb F_p\llbracket x_1,\dots,x_r\rrbracket/\mathbb F_p\llbracket x_1,\dots,x_r\rrbracket}\to\]
Since $\mathbb F_p$ is a perfect field and $\mathbb F_p\llbracket x_1,\dots,x_r\rrbracket$ is a regular ring, the complex $\mathbb L_{\mathbb F_p\llbracket x_1,\dots,x_r\rrbracket/\mathbb F_p}$ is a flat $\mathbb F_p\llbracket x_1,\dots,x_r\rrbracket$-module. From this and the fact that the first map in the above fiber sequence is zero, we get the conclusion.
\end{proof}

\begin{remark}\label{rem6.11}\begin{enumerate}
    %\item  The author wonders whether the above theorem holds for any noetherian regular local ring of mixed characteristic. 
    \item For a complete regular noetherian local ring $R$ of characteristic $p$, the conclusion of Theorem \ref{thm6.10} fails. Indeed, let $R:=\mathbb F_p\llbracket x\rrbracket$ be the ring of formal power series with coefficients in $\mathbb F_p$. Then Corollary \ref{cor6.4} implies \[F\mathbb L_R\simeq F^*(\mathbb L_{R/{\mathbb F_p}}\otimes^L_{\mathbb F_p}\mathbb F_p/^Lp).\] By Popescu's theorem (\cite[Tag 07GC]{Sta26}), the cotangent complex $\mathbb L_{R/\mathbb F_p}$ is flat over $R$. Thus, the right hand side is not concentrated in degree zero.
    \item Let $R$ be a quasisyntomic ring. Then the Frobenius--Witt cotangent complex $F\mathbb L_R$ has Tor amplitude in homological degree $[0,1]$. This follows from the fundamental fiber sequence (Theorem \ref{thm5.5}) associated to $\mathbb Z_p\to R$.
\end{enumerate}
\end{remark}

The following corollary is a derived variant of Saito's regularity criterion (\cite[Theorem 3.4]{Sai22}). The assumption of completeness will be removed later (Corollary \ref{cor6.17}).
\begin{coro}\label{cor6.12}
Let $R$ be a $p$-torsionfree complete noetherian local ring of residue characteristic $p$. Assume that the ring $R/p$ is $F$-finite. Then the following conditions are equivalent:\begin{enumerate}
        \item $R$ is regular,
        \item $F\mathbb L_R$ is a free $R/p$-module of rank $\operatorname{dim}R+\operatorname{log}_p[k:k^p]$,
    \end{enumerate}
    where $k$ is the residue field of $R$.
\end{coro}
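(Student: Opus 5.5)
The plan is to reduce both directions to Saito's regularity criterion \cite[Theorem 3.4]{Sai22}, which (under the present hypotheses: $p$-torsionfree noetherian local, residue characteristic $p$, $R/p$ $F$-finite) characterizes regularity of $R$ by $F\Omega_R$ being a free $R/p$-module of rank $\dim R+\log_p[k:k^p]$. The bridge between $F\Omega_R$ and $F\mathbb L_R$ is Proposition \ref{pro5.1}, which gives $\pi_0(F\mathbb L_R)\cong F\Omega_R$ for the discrete ring $R$, together with Theorem \ref{thm6.10}.

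For (1)$\Rightarrow$(2), I assume $R$ is regular. It is $p$-torsionfree, complete, noetherian and local of residue characteristic $p$, so Theorem \ref{thm6.10} shows that $F\mathbb L_R$ is concentrated in homological degree zero. Hence $F\mathbb L_R\simeq \pi_0(F\mathbb L_R)\cong F\Omega_R$ by Proposition \ref{pro5.1}. Saito's criterion, in the direction regular $\Rightarrow$ free of the stated rank, then identifies this module as free of rank $\dim R+\log_p[k:k^p]$.

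For (2)$\Rightarrow$(1), I assume $F\mathbb L_R$ is a free $R/p$-module of the given rank. Being a free module, it is discrete, so $F\Omega_R\cong\pi_0(F\mathbb L_R)\cong F\mathbb L_R$ is free of rank $\dim R+\log_p[k:k^p]$. The converse direction of \cite[Theorem 3.4]{Sai22} then gives regularity of $R$. Here one should check that Saito's hypotheses (noetherian local, $p$-torsionfree or $p\in\mathfrak m$, $F$-finiteness of $R/p$) are exactly those we have; the completeness assumption is needed only in the forward direction, through Theorem \ref{thm6.10}.

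The step I expect to be the main obstacle is matching the precise formulation of Saito's theorem: the rank normalization, and whether it is stated in terms of $F\Omega_R\otimes k$ or $F\Omega_R$ itself. If Saito's criterion is phrased as ``$F\Omega_R$ is free of that rank'', the argument is as above. If instead it is phrased via $\dim_k F\Omega_R\otimes k$, I would combine it with finiteness of $F\Omega_R$, which follows from $F$-finiteness via Saito's right exact sequence, and Nakayama's lemma to obtain the freeness statement. All the genuinely derived content is already contained in Theorem \ref{thm6.10}.
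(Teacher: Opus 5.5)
Your argument is sound, and it is the short derivation the paper itself mentions in the first sentence of its proof. Theorem \ref{thm6.10} makes $F\mathbb L_R$ discrete, Proposition \ref{pro5.1} identifies it with $F\Omega_R$, and the classical criterion handles both directions.

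Two small points on the citation. The paper invokes that criterion as \cite[Theorem 4.5]{Tak26}, a generalization of \cite[Theorem 3.4]{Sai22}. That is the safe reference for the exact hypotheses here ($p$-torsionfree, noetherian local, $R/p$ $F$-finite). Also, your fallback would not rescue (1)$\Rightarrow$(2) if the criterion only controlled $\dim_k F\Omega_R\otimes_R k$: Nakayama gives the number of generators, not freeness.

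The proof the paper actually writes out for (1)$\Rightarrow$(2) is different. It uses neither Theorem \ref{thm6.10} nor the forward half of the classical criterion.
\begin{enumerate}
\item It writes $R\cong A/I$, where $(A,I)=(C\llbracket x_1,\dots,x_r\rrbracket,(p-f))$ is the prism of Lemma \ref{lem6.10.1}, and puts $S=A_{\mathrm{perf}}/IA_{\mathrm{perf}}$.
\item It applies Proposition \ref{pro4.10.1} to the pushout of $R\leftarrow A\to A_{\mathrm{perf}}$. Using $F\mathbb L_{A_{\mathrm{perf}}}\simeq A_{\mathrm{perf}}/p\cdot w(p)$ (Proposition \ref{pro6.1}) and $F\mathbb L_S\simeq 0$ (Theorem \ref{thm6.7}), this yields $F\mathbb L_A\otimes^L_A S\simeq S/p\oplus(F\mathbb L_R\otimes^L_R S)$.
\item Faithful flatness of $R\to S$ then reduces the claim to $F\mathbb L_A$ being free of rank $\dim A+\log_p[k:k^p]$.
\item That last statement follows from completion invariance (Proposition \ref{pro5.11.1} with Lemma \ref{lem6.15}), the polynomial formula (Proposition \ref{pro5.11.2}), and the fundamental sequence for $\mathbb Z_{(p)}\to C$.
\end{enumerate}

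This route proves freeness and the rank count intrinsically; the drop by one from $A$ to $R$ is exactly the summand $S/p\cdot w(p)$. So it re-derives the forward direction of Saito's theorem rather than assuming it. Your route is shorter and more modular, but it hands the forward direction to the classical result. For (2)$\Rightarrow$(1), both you and the paper rely on that result.
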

\begin{proof}
    Though this follows from Theorem \ref{thm6.10} and \cite[Theorem 4.5]{Tak26}, which is a generalization of \cite[Theorem 3.4]{Sai22}, we give another proof of the implication $(1)\Rightarrow (2)$ in terms of Frobenius--Witt cotangent complexes.

    By Lemma \ref{lem6.10.1} and its proof, there is a prism $(A,I):=(C\llbracket x_1,\dots,x_r\rrbracket,(p-f))$ such that $R\cong A/I$. Let $(A_{\mathrm{perf}},IA_{\mathrm{perf}})$ be the perfection of $(A,I)$ as a prism. Write $S:=A_{\mathrm{perf}}/IA_{\mathrm{perf}}$, which is perfectoid. Then we have the following pushout square of animated $\mathbb Z_{(p)}$-algebras:
    \[\begin{tikzpicture}[auto]
\node (a) at (0, 2) {$A$}; \node (x) at (2, 2) {$R$};
\node (b) at (0, 0) {$A_{\mathrm{perf}}$};   \node (y) at (2, 0) {$S$};
\draw[->] (a) to node {$\scriptstyle $} (x);
\draw[->] (x) to node {$\scriptstyle $} (y);
\draw[->] (a) to node[swap] {$\scriptstyle $} (b);
\draw[->] (b) to node[swap] {$\scriptstyle $} (y);
\end{tikzpicture}. \]
Applying Proposition \ref{pro4.10.1} to this, we get the following pushout square:
\[\begin{tikzpicture}[auto]
\node (a) at (0, 2) {$F\mathbb L_A\otimes_A^LS$}; \node (x) at (4, 2) {$F\mathbb L_R\otimes_R^LS$};
\node (b) at (0, 0) {$F\mathbb L_{A_{\mathrm{perf}}}\otimes^L_{A_{\mathrm{perf}}}S$};   \node (y) at (4, 0) {$F\mathbb L_S$};
\draw[->] (a) to node {$\scriptstyle $} (x);
\draw[->] (x) to node {$\scriptstyle $} (y);
\draw[->] (a) to node[swap] {$\scriptstyle $} (b);
\draw[->] (b) to node[swap] {$\scriptstyle $} (y);
\end{tikzpicture}. \]
By Proposition \ref{pro6.1} and Theorem \ref{thm6.7}, the above pushout square can be identified with the following pushout square:
\[\begin{tikzpicture}[auto]
\node (a) at (0, 2) {$F\mathbb L_A\otimes_A^LS$}; \node (x) at (4, 2) {$F\mathbb L_R\otimes_R^LS$};
\node (b) at (0, 0) {$S/p$};   \node (y) at (4, 0) {$0$};
\draw[->] (a) to node {$\scriptstyle $} (x);
\draw[->] (x) to node {$\scriptstyle $} (y);
\draw[->] (a) to node[swap] {$\scriptstyle $} (b);
\draw[->] (b) to node[swap] {$\scriptstyle $} (y);
\end{tikzpicture}. \]
Thus we have \[F\mathbb L_A\otimes_A^LS\simeq (S/p)\oplus (F\mathbb L_R\otimes_R^LS).\] Since $R\to S$ is faithfully flat, it reduces to show that the complex $F\mathbb L_A$ is a free $A/p$-module of rank $\operatorname{dim}A+\operatorname{log}_p[k:k^p]$. By Proposition \ref{pro5.11.1}, there is a canonical equivalence\[(F\mathbb L_{C\llbracket x_1,\dots, x_r\rrbracket})_{(x_1,\dots,x_r)}^{\wedge}\simeq (F\mathbb L_{C[x_1,\dots,x_r]})_{(x_1,\dots,x_r)}^{\wedge}.\] By Lemma \ref{lem6.15} below, $F\mathbb L_{C\llbracket x_1,\dots, x_r \rrbracket}$ is already complete. Thus the above equivalence implies the following equivalence
\[F\mathbb L_{C\llbracket x_1,\dots,x_r\rrbracket}\simeq (F\mathbb L_{C[x_1,\dots,x_r]})_{(x_1,\dots,x_r)}^{\wedge}.\] Therefore, it reduces to show that the complex $F\mathbb L_{C[x_1,\dots,x_r]}$ is free of rank $\operatorname{dim}(C[x_1,\dots,x_r])+\operatorname{log}_p[k:k^p]$. By Proposition \ref{pro5.11.2}, it suffices to show that $F\mathbb L_C$ is free of rank $\operatorname{dim}C+\operatorname{log}_p[k:k^p]$. Consider the fundamental fiber sequence (Theorem \ref{thm5.5})
\[F\mathbb L_{\mathbb Z_{(p)}}\otimes^L_{\mathbb Z_{(p)}}C\to F\mathbb L_C\to F^*(\mathbb L_{k/\mathbb F_p})\to\] 
associated to $\mathbb Z_{(p)}\to C$. Then the left term is a free $k$-module of rank one and the right term is a free $k$-module of rank $\operatorname{log}_p[k:k^p]$, so we get the conclusion.
\end{proof}

Next, we remove the completeness assumption in the above Theorem . For this purpose, we first establish the following lemma, which was also used in the above proof.
\begin{lemma}\label{lem6.15}
    Let $R$ be a $p$-torsionfree noetherian ring such that $R/p$ is $F$-finite. Then $\pi_i(F\mathbb L_R)$ is a finitely generated $R/p$-module for any $i\ge0$.
\end{lemma}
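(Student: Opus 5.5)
We will use the fundamental fiber sequence of Theorem \ref{thm5.5} for the structure map $\mathbb Z_{(p)}\to R$, written as in Corollary \ref{cor5.10}:
\[R/p\cdot w(p)\longrightarrow F\mathbb L_R\longrightarrow F^*(\mathbb L_{(R/p)/\mathbb F_p})\to .\]
Here we use that $R$ is $p$-torsionfree, so $R/^Lp\simeq R/p$ is discrete, noetherian and $F$-finite. The left term is a free $R/p$-module of rank one, since $F\mathbb L_{\mathbb Z_{(p)}}\simeq\mathbb F_p\cdot w(p)$. By the long exact sequence of homotopy groups, and because $R/p$ is noetherian so finitely generated modules are closed under kernels, cokernels and extensions, it suffices to prove that every homotopy group $\pi_i(F^*\mathbb L_{(R/p)/\mathbb F_p})$ is finitely generated over $R/p$.

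Write $B:=R/p$ and $\varphi:B\to B$ for the absolute Frobenius. We have $F^*\mathbb L_{B/\mathbb F_p}=\mathbb L_{B/\mathbb F_p}\otimes^L_{B,\varphi}B$. Consider the transitivity sequence for $\mathbb F_p\to B\xrightarrow{\varphi}B$, where the target is regarded as a $B$-algebra via $\varphi$:
\[\mathbb L_{B/\mathbb F_p}\otimes^L_{B,\varphi}B\to \mathbb L_{\varphi_*B/\mathbb F_p}\to \mathbb L_{\varphi_*B/B}\to .\]
The first map is induced by $\varphi$ on cotangent complexes. It is zero: it is the animation of the functorial map induced by Frobenius on polynomial rings, which kills $d$ because $d(x^p)=px^{p-1}dx=0$. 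This is the same observation used in the proof of Lemma \ref{lem6.10.1}. Being careful that this is null as a map of complexes, and not merely on homotopy groups, is delicate. To avoid that issue we use instead the split cofiber sequence only at the level of the long exact sequence: we just need finiteness, and finiteness follows from the long exact sequence even without splitting. From it, $\pi_i(F^*\mathbb L_{B/\mathbb F_p})$ sits between $\pi_{i+1}(\mathbb L_{\varphi_*B/B})$ and $\pi_i(\mathbb L_{\varphi_*B/\mathbb F_p})$. Here we regard $\mathbb L_{\varphi_*B/\mathbb F_p}$ as a $B$-module via the target $B$, and the identification of this source copy is irrelevant for finiteness.

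It therefore remains to see two things. First, $\pi_i(\mathbb L_{\varphi_*B/B})$ is finitely generated. This holds because $\varphi:B\to B$ is a finite map of noetherian rings, by $F$-finiteness, so $\varphi_*B$ is a finite type $B$-algebra. One can then resolve it by a simplicial polynomial $B$-algebra with finitely many variables in each degree, and the homotopy groups of the cotangent complex of a finite type algebra over a noetherian ring are finitely generated (standard, e.g. via \cite{Sta26}). Second, $\pi_i(\mathbb L_{\varphi_*B/\mathbb F_p})=\pi_i(\mathbb L_{B/\mathbb F_p})$ is finitely generated, viewed as a $B$-module through $\varphi$. Since $\varphi$ is finite, finite generation over $B$ through the identity implies finite generation through $\varphi$ only in the wrong direction, so this needs care. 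Here is the main obstacle: $\mathbb L_{B/\mathbb F_p}$ itself need not have finitely generated homotopy, since $B$ is not of finite type over $\mathbb F_p$.

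To handle it, we will use the sequence from the other side. The composite $\mathbb F_p\to B\xrightarrow{\varphi} B$ equals $\varphi_B\circ(\mathbb F_p\to B)$ up to the Frobenius of $\mathbb F_p$, which is the identity. Moreover, $\mathbb L_{\varphi_*B/\mathbb F_p}\simeq\mathbb L_{B/\mathbb F_p}$ with the module structure twisted. Since the first map is zero on homotopy, we get short exact sequences
\[0\to\pi_{i+1}(\mathbb L_{\varphi_*B/\mathbb F_p})\to\pi_{i+1}(\mathbb L_{\varphi_*B/B})\to\pi_i(F^*\mathbb L_{B/\mathbb F_p})\to0.\]
Thus $\pi_i(F^*\mathbb L_{B/\mathbb F_p})$ is a quotient of $\pi_{i+1}(\mathbb L_{\varphi_*B/B})$, which is finitely generated by the first point. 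This is the key step. Its justification is the vanishing on homotopy of the map induced by Frobenius, which we will check by reducing to polynomial resolutions, where the Frobenius endomorphism of a simplicial polynomial resolution induces zero on Kähler differentials termwise. Combining this with the first paragraph finishes the proof.
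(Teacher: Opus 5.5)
Your proposal is correct and is essentially the paper's proof: you reduce via the fundamental fiber sequence for $\mathbb Z_{(p)}\to R$ to $F^*\mathbb L_{(R/p)/\mathbb F_p}$, then use the transitivity sequence for $\mathbb F_p\to R/p\xrightarrow{F}F_*(R/p)$, whose first map vanishes because Frobenius kills differentials, so each $\pi_i(F^*\mathbb L_{(R/p)/\mathbb F_p})$ is a quotient of the finitely generated module $\pi_{i+1}(\mathbb L_{F_*(R/p)/(R/p)})$. The detour in the middle is unnecessary, and your aside that $\pi_i(\mathbb L_{B/\mathbb F_p})$ might fail to be finitely generated is actually false for noetherian $F$-finite $B$ (your own short exact sequences exhibit it as a submodule of a finitely generated module), but your final argument does not depend on it.
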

\begin{proof}
    Consider the fundamental fiber sequence (Theorem \ref{thm5.5})
    \[F\mathbb L_{\mathbb Z_{(p)}}\otimes_{\mathbb Z_{(p)}}R\to F\mathbb L_R\to F^*(\mathbb L_{(R/p)/\mathbb F_p})\to \] associated to $\mathbb Z_{(p)}\to R$. By this fiber sequence, it reduces to show that $\pi_i(F^*(\mathbb L_{(R/p)/\mathbb F_p}))$ is finitely generated for any $i\ge0$. For simplicity of notation, we write $S:=R/p$. Consider the transitive fiber sequence
    \[\mathbb F^*(L_{S/\mathbb F_p})\to \mathbb L_{F_*S/\mathbb F_p}\to \mathbb L_{F_*S/S}\to\]
    associated to $\mathbb F_p\to S\xrightarrow[]{F}F_*S$. The first map in the above fiber sequence is induced by the Frobenius map, so it is a zero map. Since $S\xrightarrow[]{F} F_*S$ is finite, $\pi_i(\mathbb L_{F_*S/S})$ is finitely generated for any $i\ge0$. Since $S$ is noetherian, these observations imply the conclusion.
\end{proof}

In the first version of this paper, the author could not remove the assumption of completeness in Theorem \ref{thm6.10}. The proof of the following generalized assertion is inspired by R. Takeuchi's talk.
\begin{theorem}\label{thm6.16}
    Let $R$ be a $p$-torsionfree noetherian regular local ring whose residue characteristic is $p$. Assume that $R/p$ is $F$-finite. Then the Frobenius--Witt cotangent complex $F\mathbb L_R$ is concentrated in degree zero.
\end{theorem}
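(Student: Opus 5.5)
We reduce to the complete case, Theorem \ref{thm6.10}, by passing to the $\mathfrak m$-adic completion $\widehat R$ of $R$ and using Proposition \ref{pro5.11.1} together with the finiteness statement Lemma \ref{lem6.15}. Let $\mathfrak m$ be the maximal ideal of $R$; it is finitely generated. Then $\widehat R=R^{\wedge}_{\mathfrak m}$ (derived and classical completion agree since $R$ is noetherian). $\widehat R$ is again a $p$-torsionfree complete noetherian regular local ring of residue characteristic $p$: flatness of $R\to\widehat R$ gives $p$-torsionfreeness, and regularity passes to the completion. By Theorem \ref{thm6.10}, $F\mathbb L_{\widehat R}$ is therefore concentrated in degree $0$.

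First, we apply Proposition \ref{pro5.11.1} with $I=\mathfrak m$:
\[(F\mathbb L_R)^{\wedge}_{\mathfrak m}\simeq (F\mathbb L_{\widehat R})^{\wedge}_{\mathfrak m}.\]
Next, we identify both completions with honest module-level completions.

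\begin{itemize}
\item By Lemma \ref{lem6.15}, each $\pi_i(F\mathbb L_R)$ is a finitely generated $R/p$-module, and $R/p$ is noetherian.
\item For a noetherian ring and a complex with finitely generated homotopy groups, derived $\mathfrak m$-completion is computed degreewise. That is, $\pi_i((F\mathbb L_R)^{\wedge}_{\mathfrak m})\cong \pi_i(F\mathbb L_R)\otimes_R\widehat R$; for instance, this follows by writing the completion as $-\otimes^L_R\widehat R$ on pseudo-coherent complexes.
\item The same applies to $F\mathbb L_{\widehat R}$. Here $\widehat R/p=R/p\otimes_R\widehat R$ is again $F$-finite, being a completion of an $F$-finite ring, hence a quotient of a power series ring over an $F$-finite ring. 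So Lemma \ref{lem6.15} applies to $\widehat R$.
\item Moreover, $(F\mathbb L_{\widehat R})^{\wedge}_{\mathfrak m}\simeq F\mathbb L_{\widehat R}$, since finitely generated modules over the complete ring $\widehat R/p$ are $\mathfrak m$-adically complete.
\end{itemize}

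Combining these, we obtain
\[\pi_i(F\mathbb L_R)\otimes_R\widehat R\cong \pi_i(F\mathbb L_{\widehat R})=0 \quad\text{for } i\ge1.\]

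Finally, $R\to\widehat R$ is faithfully flat because $R$ is noetherian local. Hence $\pi_i(F\mathbb L_R)=0$ for all $i\ge1$, so $F\mathbb L_R$ is concentrated in degree zero.

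The main obstacle is the second step: justifying that derived $\mathfrak m$-completion of $F\mathbb L_R$ agrees with $F\mathbb L_R\otimes^L_R\widehat R$ degreewise. This requires two things. The first is that Lemma \ref{lem6.15} really yields a complex that is (at least bounded-below) pseudo-coherent over the noetherian ring $R/p$; this is fine because finitely generated homotopy over a noetherian ring gives pseudo-coherence. The second is the standard fact that $M^{\wedge}_{\mathfrak m}\simeq M\otimes^L_R\widehat R$ for pseudo-coherent $M$ over noetherian $R$. I would prove the latter by reducing, via Postnikov truncations and the finite-type homotopy, to finitely generated modules. For those, derived completion equals classical completion, which equals $M\otimes_R\widehat R$ by Artin–Rees.

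The $F$-finiteness of $\widehat R/p$ is a minor secondary check, needed to invoke Lemma \ref{lem6.15} for $\widehat R$. Alternatively, it can be bypassed by noting that $F\mathbb L_{\widehat R}$ is already a finite free $\widehat R/p$-module, by Theorem \ref{thm6.10} together with the computation in Corollary \ref{cor6.12}, hence complete.
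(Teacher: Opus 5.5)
Your proposal is correct and follows essentially the same route as the paper: compare $F\mathbb L_R$ with $F\mathbb L_{\widehat R}$ after derived $\mathfrak m$-completion via Proposition \ref{pro5.11.1}, use Lemma \ref{lem6.15} together with the noetherian fact that derived $\mathfrak m$-completion of a complex with finitely generated homotopy agrees with $-\otimes^L_R\widehat R$, and conclude by faithful flatness of $R\to\widehat R$. The only difference is cosmetic: you use Theorem \ref{thm6.10} plus an explicit check that $F\mathbb L_{\widehat R}$ is derived $\mathfrak m$-complete, whereas the paper cites Corollary \ref{cor6.12} directly and leaves that completeness implicit.
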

\begin{proof}
    By Proposition \ref{pro5.11.1}, there is a canonical equivalence
    \[(F\mathbb L_R)_{\mathfrak m}^{\wedge}\simeq (F\mathbb L_{R_{\mathfrak m}^{\wedge}})_{\mathfrak m}^{\wedge},\] where $\mathfrak m$ is the maximal ideal of $R$. Since $R$ is noetherian, Corollary \ref{cor6.12} implies that the right hand side term is concentrated in degree zero. Thus it suffices to show that there is a canonical equivalence\[(F\mathbb L_R)_{\mathfrak m}^{\wedge}\simeq F\mathbb L_R\otimes_R^LR_{\mathfrak m}^{\wedge},\] because the canonical map $R\to R_{\mathfrak m}^{\wedge}$ is faithfully flat. To prove this, it suffices to show that $\pi_i(F\mathbb L_R)$ is finitely generated for any $i\ge 0$ by \cite[Tag 0A06]{Sta26}. This is Lemma \ref{lem6.15}.
\end{proof}
The following corollary is a generalization of Corollary \ref{cor6.12}.
\begin{coro}\label{cor6.17}
    Let $R$ be a $p$-torsionfree noetherian local ring with residue field $k$ of characteristic $p$. Assume that $R/p$ is $F$-finite. Then the following are equivalent:\begin{enumerate}
        \item $R$ is regular
        \item $F\mathbb L_R$ is a free $R/p$-module of rank $\operatorname{dim}R+\operatorname{log}_p[k:k^p]$.
    \end{enumerate}
\end{coro}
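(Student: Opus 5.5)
The strategy is to reduce both directions to the complete case, Corollary \ref{cor6.12}, by passing to the $\mathfrak m$-adic completion $\widehat R:=R_{\mathfrak m}^{\wedge}$. First we check that $\widehat R$ satisfies the hypotheses of Corollary \ref{cor6.12}. It is a complete noetherian local ring with residue field $k$, and $\dim\widehat R=\dim R$. It is $p$-torsionfree because $R\to\widehat R$ is flat. Finally, $\widehat R/p\cong (R/p)^{\wedge}$ is $F$-finite, since completions of $F$-finite noetherian rings are $F$-finite: the Frobenius pushforward of the completion is the base change of the finite module $F_*(R/p)$. Moreover, $R$ is regular if and only if $\widehat R$ is regular.

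The key comparison is an equivalence
\[F\mathbb L_R\otimes^L_R\widehat R\simeq F\mathbb L_{\widehat R}.\]
By Proposition \ref{pro5.11.1} we have $(F\mathbb L_R)^{\wedge}_{\mathfrak m}\simeq (F\mathbb L_{\widehat R})^{\wedge}_{\mathfrak m}$. By Lemma \ref{lem6.15}, applied to both $R$ and $\widehat R$, all homotopy groups of $F\mathbb L_R$ and $F\mathbb L_{\widehat R}$ are finitely generated over the noetherian rings $R/p$ and $\widehat R/p$. Hence \cite[Tag 0A06]{Sta26} identifies the left side with $F\mathbb L_R\otimes^L_R\widehat R$, and the right side with $F\mathbb L_{\widehat R}$ itself, since $\widehat R$ is already $\mathfrak m$-complete. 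This is the same argument used in Theorem \ref{thm6.16}.

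For $(1)\Rightarrow(2)$: if $R$ is regular, then $\widehat R$ is regular, so by Corollary \ref{cor6.12} $F\mathbb L_{\widehat R}$ is a free $\widehat R/p$-module of rank $n:=\dim R+\log_p[k:k^p]$. By Theorem \ref{thm6.16}, $F\mathbb L_R$ is discrete and equal to $F\Omega_R$ by Proposition \ref{pro5.1}; this is a finitely generated $R/p$-module $M$ with $M\otimes_{R/p}\widehat R/p$ free of rank $n$. Since $R/p\to\widehat R/p$ is faithfully flat and local, $M$ is flat. Being finitely generated over a noetherian local ring, $M$ is therefore free, and it has rank $n$. (One could also quote \cite[Theorem 4.5]{Tak26} directly here.)

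For $(2)\Rightarrow(1)$: if $F\mathbb L_R$ is free of rank $n$ over $R/p$, then by the comparison $F\mathbb L_{\widehat R}\simeq F\mathbb L_R\otimes^L_R\widehat R$ is free of rank $n$ over $\widehat R/p$. Corollary \ref{cor6.12} then gives that $\widehat R$ is regular, and hence $R$ is regular.

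The main obstacle is the comparison step. It requires Lemma \ref{lem6.15} for $\widehat R$, and therefore the $F$-finiteness of $\widehat R/p$, together with the derived-completion identification in \cite[Tag 0A06]{Sta26}. Both need to be stated carefully in $\mathcal D(R/p)$ rather than $\mathcal D(R)$; this is harmless, because $\mathfrak m$-completion is compatible with restriction along $R\to R/p$.
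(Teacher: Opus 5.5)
Your proof is correct and follows the paper's approach. For $(1)\Rightarrow(2)$ you use the comparison $F\mathbb L_{\widehat R}\simeq F\mathbb L_R\otimes^L_R\widehat R$ from the proof of Theorem \ref{thm6.16}, then faithfully flat descent and Corollary \ref{cor6.12}, as the paper does; you also make explicit two points the paper leaves implicit, namely the $F$-finiteness of $\widehat R/p$ and the completeness of $F\mathbb L_{\widehat R}$ via Lemma \ref{lem6.15}. The only difference is in $(2)\Rightarrow(1)$: the paper applies \cite[Theorem 4.5]{Tak26} directly to $F\Omega_R=\pi_0(F\mathbb L_R)$, whereas you pass through the completion and Corollary \ref{cor6.12}, whose own $(2)\Rightarrow(1)$ rests on that same citation.
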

\begin{proof}
    The implication $(2)\Rightarrow (1)$ follows from \cite[Theorem 4.5]{Tak26}.

    We will prove the converse direction. We further assume that $R$ is regular. By the proof of Theorem \ref{thm6.16}, there is a canonical equivalence:
    \[F\mathbb L_{R_{\mathfrak m}^{\wedge}}\simeq F\mathbb L_R\otimes_R^LR_{\mathfrak m}^{\wedge}.\]
    Since $R\to R_{\mathfrak m}^{\wedge}$ is faithfully flat, the conclusion follows from Corollary \ref{cor6.12}.
\end{proof}

\section{An application to the deformation theory of $\delta$-structures}
In this section, assuming the existence of a Frobenius lift, we describe the obstruction to the existence of a compatible $\delta$-structure, as well as the space of such $\delta$-structures, in terms of the Frobenius--Witt cotangent complex. The question of whether a Frobenius lift arises from a $\delta$-structure was studied by Rezk in \cite{Rez19}, where the following definition was introduced.
\begin{definition}[\cite{Rez19}]
Let $A$ be a ring. The ring $\overline{V}(A)$ is the subring of $R\times R$ defined by \[\overline V(A):=\{(a,b)\in A\times A|\, b\equiv a^p\mod p\}.\]
\end{definition}

\begin{lemma}[\cite{Rez19}]\label{D.1}
    Let $A$ be a ring. Then the following is a square zero extension of rings:
    \[0\to A[p]\xrightarrow[]{a\mapsto (0,a)} W_2(A)\xrightarrow[]{(a,b)\mapsto (a,a^p+pb)}\overline V(A)\to 0.\]
    Furthermore, the induced $\overline V(A)$-module structure on $A[p]$ can be written as $(a,b)c=a^pc$ for any $(a,b)\in \overline{V}(A)$ and $c\in A[p]$.
\end{lemma}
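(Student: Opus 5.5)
We will verify the statement directly from the ghost-coordinate description of $W_2(A)$. Recall that $W_2(A)$ has underlying set $A\times A$ with ring operations determined by the ghost map $w:(a,b)\mapsto (a,a^p+pb)$. Concretely, the operations are
\[(a,b)+(c,d)=(a+c,\,b+d-P(a,c)),\qquad (a,b)(c,d)=(ac,\,a^pd+c^pb+pbd).\]
These are the universal polynomial formulas, so $w$ is a ring homomorphism $W_2(A)\to A\times A$.

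\textbf{Step 1: image and kernel of $w$.} The image of $w$ lies in $\overline V(A)$, since $a^p+pb\equiv a^p \bmod p$. It is all of $\overline V(A)$: given $(a,b')$ with $b'\equiv a^p \bmod p$, write $b'=a^p+pb$ and take $(a,b)$. The kernel consists of those $(a,b)$ with $a=0$ and $pb=0$, that is, the pairs $(0,c)$ with $c\in A[p]$.

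\textbf{Step 2: the inclusion is additive.} We check that $c\mapsto(0,c)$ is additive on $A[p]$, and more generally that the kernel is an ideal. Indeed $(0,c)+(0,d)=(0,c+d-P(0,0))=(0,c+d)$, because $P(0,0)=0$. This step requires no torsion hypothesis.

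\textbf{Step 3: square zero and the module structure.} For square zero, compute $(0,c)(0,d)=(0,\,0+0+pcd)$, and $pcd=0$ because $c\in A[p]$. For the module structure, compute $(a,b)(0,c)=(0,\,a^pc+0\cdot b+pbc)=(0,a^pc)$, again using $pc=0$. This shows that $(a,b)$ acts by $a^p$. The action depends only on $a=\mathrm{pr}_1$, so it descends along $w$ to $\overline V(A)$. A preimage of $(a,b')\in\overline V(A)$ has first coordinate $a$, so the induced action is $(a,b')c=a^pc$, as claimed. Together with Steps 1 and 2, this gives the exact sequence and identifies it as a square-zero extension.

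\textbf{Main obstacle.} The only real subtlety is making sure the explicit multiplication and addition formulas for $W_2$ are correct. The multiplication formula is pinned down by the ghost equation $(a^p+pb)(c^p+pd)=(ac)^p+p(a^pd+c^pb+pbd)$. This determines the second coordinate uniquely over the universal ring $\mathbb Z[a,b,c,d]$, which is $p$-torsionfree, and hence it holds for all rings by specialization. The addition formula is checked the same way using Definition \ref{def2.1}.
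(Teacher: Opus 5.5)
Your proposal is correct. The paper gives no proof of this lemma; it calls the verification straightforward and defers to Rezk's Proposition 3.2, and your argument is exactly that direct check. You use the explicit length-two Witt vector formulas with the ghost map as the quotient map and $pc=0$ to kill the $pcd$ and $pbc$ terms.
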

\begin{proof}
    The proof is straightforward and is omitted. See \cite[Proposition 3.2]{Rez19} for details.
\end{proof}
In order to describe the obstruction to the existence of a $\delta$-structure, we introduce the notion of extension groups of pointed modules in the special case needed for our applications.

\begin{definition}
    Let $R$ be an animated ring. Let $(M,m):=(m:R\to M)$ be an object of the coslice category $\mathcal D(R)_{R/}$. Let $N$ be an object of $\mathcal{D}(R)$. For any integer $i$, we define the $i$-th extension group $\operatorname{Ext}^i_R((M,m),(N,0))\in \operatorname{Mod}(\mathbb Z)$ by \[\pi_0(\operatorname{Hom}_{\operatorname{Fun}(\Delta^1,\mathcal D(R))}((M,m),(N,0)[i])),\] where we regard $(M,m)$ (resp. $(N,0)$) as an object of the stable $\infty$-category $\operatorname{Fun}(\Delta^1,\mathcal{D}(R))$ via $m:R\to M$ (resp. $0\to N$).
\end{definition}

The following theorem is the main result of this section.
\begin{theorem}\label{D.2}
Let $A$ be a $\mathbb Z_{(p)}$-algebra equipped with a Frobenius lift $\phi$.
\begin{enumerate}
\item Suppose that $A$ admits a $\delta$-structure $\delta_0$ satisfying $\phi = (-)^p + p\delta_0$.
Then the set
\[\{\delta\in \operatorname{End}_{\operatorname{Set}}(A)|\,\delta \text{ is a }\delta\text{-structure on }A \text{ satisfying} \ \phi=(-)^p+p\delta\}\]
is a torsor under $\operatorname{Ext}^0_{A/^Lp}((F\mathbb L_A,w(p)),(A[p],0))$.

\item There exists an element
\[\xi\in \operatorname{Ext}^1_{A/^Lp}((F\mathbb L_A,w(p)),(A[p],0))\]
such that $\xi$ is zero if and only if $A$ admits a $\delta$-structure $\delta$ satisfying $\phi=(-)^p+p\delta$. Furthermore, the element $\xi$ only depends on $A$ and $\phi$.
\end{enumerate}
\end{theorem}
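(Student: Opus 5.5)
The plan is to translate the problem into lifting a ring map through a square-zero extension. Then I would run a deformation-theoretic argument in which the ordinary cotangent complex is replaced by $F\mathbb L_A$. The translation rests on the fact that $W_2(-,-,-)$ is the Frobenius-twisted analogue of the trivial square-zero extension.

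\emph{Step 1 (reformulation).} A $\delta$-structure on $A$ with $\phi=(-)^p+p\delta$ is the same as a ring map $s\colon A\to W_2(A)$, $a\mapsto (a,\delta(a))$, lifting the ring map
\[g\colon A\to \overline V(A),\quad a\mapsto (a,\phi(a)),\]
through the surjection $W_2(A)\to \overline V(A)$ of Lemma \ref{D.1}. I would form the pullback $E:=A\times_{g,\overline V(A)}W_2(A)$. By Lemma \ref{D.1}, the map $E\to A$ is a square-zero extension of $A$ by $A[p]$, and $A$ acts on $A[p]$ through $a\cdot c=a^pc$. The desired $\delta$'s are then exactly the ring-theoretic sections of $E\to A$. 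For two such sections $\delta,\delta_0$, the difference $d=\delta-\delta_0$ is a map $A\to A[p]$. I will check directly from the Witt vector formulas that $d$ is a Frobenius--Witt derivation. It satisfies $d(p)=0$, because $\phi(p)=p$ forces $\delta(p)=(p-p^p)/p$ for every admissible $\delta$. Conversely, adding such a $d$ to a section gives another section. By Proposition \ref{pro2.8} and the universal property (Proposition \ref{pro4.7}), the set of such $d$ is
\[\pi_0\operatorname{FWDer}(A,(A[p],0))=\operatorname{Ext}^0_{A/^Lp}((F\mathbb L_A,w(p)),(A[p],0)).\]
Here the pointing $0$ in the target matches the arrow-category description of $\operatorname{Ext}$: a map from $(A/^Lp\to F\mathbb L_A)$ to $(0\to N)$ is a map $F\mathbb L_A\to N$ together with a nullhomotopy of its restriction along $w(p)$. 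This gives part (1), and the proof is essentially elementary.

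\emph{Step 2 (the obstruction).} For a connective $A/^Lp$-module $M$, I would consider the animated ring $W_2(A,M[1],0)$ of Definition \ref{def4.1} and Remark \ref{rem3.20}, with $\mathrm{pr}_1$ and its zero section $z$, which corresponds to the zero derivation. The key claim is a loop identity:
\[A\times_{z,W_2(A,M[1],0),z}A\simeq W_2(A,M,0).\]
This should follow from Remark \ref{rem4.2}(3), since on underlying spaces both sides are $A\times M$; I still need to compare the ring structures. More generally, each point
\[\eta\in \pi_0\operatorname{FWDer}(A,(M[1],0))=\operatorname{Ext}^1_{A/^Lp}((F\mathbb L_A,w(p)),(M,0))\]
produces an extension $E_\eta:=A\times_{\eta,W_2(A,M[1],0),z}A$ of $A$ by $M$. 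By construction, sections of $E_\eta\to A$ correspond to paths from $\eta$ to $0$. Hence $E_\eta$ has a section if and only if $\eta=0$, and in that case the sections form a torsor under $\pi_0\operatorname{FWDer}(A,(M,0))$. This recovers part (1) as well.

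\emph{Step 3 (main obstacle).} It remains to show that the concrete extension $E$ from Step 1 is of the form $E_\xi$ for some $\xi$, with $M=A[p]$. This is the analogue of the theorem that every square-zero extension is classified by a derivation into the shifted module; that theorem is usually proved via Postnikov and cotangent-complex machinery. My first approach would avoid the general theory and construct the map directly. Choose a simplicial polynomial resolution $P_\bullet\to A$. Polynomial rings admit $\delta$-structures, so lifts of $\phi$ to each $P_n$ exist levelwise. I would then assemble the defects between these lifts, which are Frobenius--Witt derivations by the computation in Step 1, into a class in
\[\pi_0\operatorname{Hom}\bigl((F\mathbb L_A,w(p)),(A[p],0)[1]\bigr).\]
This uses Corollary \ref{cor4.11.1}, which says that $F\mathbb L_A$ is the animation of $F\Omega$, so it may be computed from such a resolution. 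I will then check that this class vanishes exactly when a compatible system of sections descends to $A$, which is the criterion for $\delta$ to exist. Independence of the choices (the resolution and the levelwise lifts) will give the final assertion that $\xi$ depends only on $A$ and $\phi$. I expect the main difficulty to be making this identification of $E$ with $E_\xi$ rigorous, including the coherence of the levelwise choices.
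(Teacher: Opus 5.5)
Your part (1) is correct and is essentially the paper's argument. You identify $\operatorname{Ext}^0_{A/^Lp}((F\mathbb L_A,w(p)),(A[p],0))$ with the Frobenius--Witt derivations $A\to A[p]$ vanishing at $p$, then check the torsor property by hand. One small fix: $\phi(p)=p$ only gives $p\delta(p)=p-p^p$, which does not determine $\delta(p)$ when $A$ has $p$-torsion. The equality $d(p)=0$ holds because $\delta(1)=0$ and the sum rule determine $\delta$ on the image of $\mathbb Z$.

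The gap is in part (2). Everything there rests on your Step 3: that the obstruction to splitting $E\to A$ is a class in $\operatorname{Ext}^1_{A/^Lp}((F\mathbb L_A,w(p)),(A[p],0))$ which vanishes exactly when a section exists and depends only on $(A,\phi)$. You leave this as the ``main obstacle'', to be settled by an unspecified coherent assembly of levelwise defects over a resolution. As written, (2) is not proved.

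The missing idea is one you already wrote down. Since $A$ acts on the kernel by $a\cdot c=a^pc$, $E$ is an \emph{ordinary} square-zero extension of $A$ by the $A$-module $F_*A[p]$, so no Frobenius--Witt analogue of the classification of square-zero extensions is needed. Classical deformation theory gives the obstruction as the pullback along $f\colon a\mapsto(a,\phi(a))$ of the class of $W_2(A)\to\overline V(A)$ from Lemma \ref{D.1}. It lies in $\operatorname{Ext}^1_A(\mathbb L_{A/\mathbb Z},F_*A[p])$, depends only on $(A,\phi)$, and vanishes if and only if $E$ splits. The paper then converts this group using Theorem \ref{thm5.5} in the form of Corollary \ref{cor5.10}, $\operatorname{Cofib}(A/^Lp\xrightarrow{w(p)}F\mathbb L_A)\simeq F^*(\mathbb L_{A/\mathbb Z}\otimes^L_{\mathbb Z}\mathbb F_p)$. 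Combined with the adjunction $F^*\dashv F_*$ and base change along $A\to A/^Lp$, this gives
\[\operatorname{Ext}^1_{A/^Lp}((F\mathbb L_A,w(p)),(A[p],0))\cong\pi_0\operatorname{Hom}_{\mathcal D(A/^Lp)}\bigl(F^*(\mathbb L_{A/\mathbb Z}\otimes^L_{\mathbb Z}\mathbb F_p),A[p][1]\bigr)\cong\operatorname{Ext}^1_A(\mathbb L_{A/\mathbb Z},F_*A[p]).\]
Your loop identity and Step 2 are then not needed at all.

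If you prefer to keep your resolution route, it can be completed more simply than you expect:
\begin{enumerate}
\item Fix one lift $s_0\colon P_0\to E$ of $P_0\to A$.
\item Then $D:=s_0d_0-s_0d_1$ is a derivation $P_1\to F_*A[p]$, i.e.\ a Frobenius--Witt derivation into $A[p]$ killing $p$.
\item $D$ is a $1$-cocycle in the cosimplicial group $n\mapsto\operatorname{Der}(P_n,F_*A[p])$, and its class is independent of $s_0$.
\item The class vanishes if and only if $s_0$ can be modified to coequalize $d_0,d_1$, i.e.\ to factor through $A=\operatorname{coeq}(P_1\rightrightarrows P_0)$.
\end{enumerate}
Identifying this $H^1$ with the pointed $\operatorname{Ext}^1$ only uses that $F\mathbb L_A$ is computed from the free modules $F\Omega_{P_\bullet}$, with $w(p)$ part of a basis (Proposition \ref{pro4.10.1}, Corollary \ref{cor4.11.1}). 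No levelwise $\delta$-structures on the $P_n$ and no higher coherences are required.
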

\begin{proof}
    (1). First, we have \begin{align*}
        \operatorname{Ext}^0_{A/^Lp}((F\mathbb L_A,w(p)),(A[p],0))&\cong \operatorname{Hom}_{\mathcal{D}(A/^Lp)}(\operatorname{Cofib}(A/^Lp\xrightarrow[]{w(p)}F\mathbb L_A),A[p])\\&\cong \operatorname{Hom}_{\operatorname{Mod}(A/p)_{(A/p)/}}((F\Omega_A,w(p)),(A[p],0))\\&\cong \{d\in\operatorname{FWDer}(A,A[p])|\, d(p)=0 \}.
    \end{align*}
    For each Frobenius--Witt derivation $d:A\to A[p]$ with $d(p)=0$ and $\delta$-structure on $A$ with $\phi=(-)^p+p\delta$, the sum $\delta_d:=\delta+d$ is a $\delta$-structure on $A$ satisfying $\phi=(-)^p+p\delta_d$. Indeed, this follows from the following direct computations:
    \begin{align*}
        \delta_d(ab)&=\delta(ab)+d(ab)\\&=a^p(\delta(b)+d(b))+b^p(\delta(a)+d(a))+p\delta(a)\delta(b)\\&\overset{(i)}{=}a^p(\delta(b)+d(b))+b^p(\delta(a)+d(a))+p(\delta(a)+d(a))(\delta(b)+d(b))\\&=a^p\delta_d(b)+b^p\delta_d(a)+p\delta_d(a)\delta_d(b)\\
    \delta_d(a+b)&=\delta(a+b)+d(a+b)\\&=\delta(a)+\delta(b)-P(a,b)+d(a)+d(b)-P(a,b)d(p)\\&\overset{(ii)}{=}\delta(a)+\delta(b)-P(a,b)+d(a)+d(b)\\
        &=\delta_d(a)+\delta_d(b)-P(a,b),
    \end{align*}
    for any $a,b\in A$, where we used $pd(a)=0=pd(b)$ in $(i)$ and $d(p)=0$ in $(ii)$. Therefore, we get a free action of the group $ \operatorname{Ext}^0_{A/^Lp}((F\mathbb L_A,w(p)),(A[p],0))$ on the set of $\delta$-structures $\delta$ on $A$ satisfying $\phi=(-)^p+p\delta$. It remains to show that this action is transitive. Let $\delta$ and $\delta'$ be two $\delta$-structures on $A$ satisfying $\phi=(-)^p+p\delta$ and $\phi=(-)^p+p\delta'$. Write $d:=\delta-\delta'$. Then we can check that this map is a Frobenius--Witt derivation $d:A\to A[p]$ satisfying $d(p)=0$. Indeed, this follows from the following direct computations:
    \begin{align*}
        d(a+b)&=\delta(a+b)-\delta'(a+b)\\&=(\delta(a)+\delta(b))-(\delta'(a)+\delta'(b))\\&=d(a)+d(b),\\
        d(ab)&=(a^p\delta(b)+b^p\delta(a))-(a^p\delta'(b)+b^p\delta'(a))+p\delta(a)\delta(b)-p\delta'(a)\delta'(b)\\&\overset{(iii)}{=}a^p(\delta-\delta')(b)+b^p(\delta-\delta')(a)
    \end{align*}
    for any $a,b\in A$, where we used $p\delta(a)=p\delta'(a)$ and $p\delta(b)=p\delta'(b)$ in $(iii)$.

    (2).Using $\phi$, we can define a ring map $f:A\to \overline V(A):a\mapsto (a,\phi(a))$. $A$ has a $\delta$-structure $\delta$ satisfying $\phi=(-)^p+p\delta$ if and only if this map lifts along the square-zero extension $W_2(A)\twoheadrightarrow \overline V(A)$ of Lemma \ref{D.1}. This is equivalent to the assertion that the pull back of the exact sequence in Lemma \ref{D.1} along $f:A\to \overline V(A)$ splits. Combining these arguments, we see that the obstruction class of the existence of a $\delta$-structure on $A$ satisfying $\phi=(-)^p+p\delta$ is the image of the element of $\operatorname{Ext}^1_{\overline{V}(A)}(\mathbb L_{\overline V(A)/\mathbb Z},A[p])$ corresponding the extension in Lemma \ref{D.1} under the map \[\operatorname{Ext}^1_{\overline{V}(A)}(\mathbb L_{\overline V(A)/\mathbb Z},A[p])\to \operatorname{Ext}^1_A(\mathbb L_{A/\mathbb Z},F_*A[p])\] induced by the map $f:A\to \overline{V}(A)$, where we used the second part of Lemma \ref{D.1} to ensure that the $A$-module $f_*A[p]$ can be written by $F_*A[p]$ using the Frobenius map $F$ on $A/p$. Thus, it remains to show that there is a canonical isomorphism\[\operatorname{Ext}^1_A(\mathbb L_{A/\mathbb Z},F_*A[p])\cong \operatorname{Ext}^1_{A/^Lp}((F\mathbb L_A,w(p)),(A[p],0)).\] This follows from the following computation:\begin{align*}
        \operatorname{Ext}^1_{A/^Lp}((F\mathbb L_A,w(p)),(A[p],0))&=\pi_0(\operatorname{Hom}_{\mathcal D(A/^Lp)}(\operatorname{Cofib}(A/^Lp\xrightarrow[]{w(p)}F\mathbb L_A),A[p][1]))\\&\overset{(i)}{\cong} \pi_0(\operatorname{Hom}_{\mathcal D(A/^Lp)}(F^*(\mathbb L_{A/\mathbb Z}\otimes^L_{\mathbb Z}\mathbb F_p),A[p][1]))\\&\cong \pi_0(\operatorname{Hom}_{\mathcal D(A/^Lp)}(\mathbb L_{A/\mathbb Z}\otimes^L_{\mathbb Z}\mathbb F_p,F_*(A[p][1])))\\&\cong \pi_0(\operatorname{Hom}_{\mathcal D(A)}(\mathbb L_{A/\mathbb Z},F_*(A[p][1])))\\&\cong \operatorname{Ext}^1_A(\mathbb L_{A/\mathbb Z},F_*A[p]),
    \end{align*} where we used Theorem \ref{thm5.5} in $(i)$.
\end{proof}

\section{A conjecture on the cotangent complex over the field of one element}
As we have already seen, the Frobenius--Witt cotangent complex is defined as a complex over $\mathbb F_p$. In this section, assuming the existence of the field with one element, we propose a conjectural extension of the Frobenius--Witt cotangent complex to a complex over $\mathbb Z_{(p)}$.

Let $R\to S$ be a map of $\mathbb Z_{(p)}$-rings. The fundamental fiber sequence (Theorem \ref{thm5.5}) \[F\mathbb L_R\otimes_R^LS\to F\mathbb L_S\to F^*(\mathbb L_{S/R}\otimes_R^LR/^Lp)\to \] can be seen as the Frobenius pullback of the modulo $p$ of a conjectural transitive fiber sequence associated to a conjectural ring maps $\mathbb F_1\to R\to S$:\[\mathbb L_{R/\mathbb F_1}\otimes_R^LS\to \mathbb L_{S/{\mathbb F_1}}\to \mathbb L_{S/R}\to .\] By this observation, it is natural to think the following conjecture:
\begin{conj}
    There exists a functor\[\mathbb L_{(-)/\mathbb F_1}:\operatorname{Ani(Ring)}_{\mathbb Z_{(p)}/}\to \mathcal{D}(\mathbb Z_{(p)})\] of $\infty$-categories such that there exists a natural equivalence: \[F^*(\mathbb L_{R/\mathbb F_1}\otimes_R^LR/^Lp)\simeq F\mathbb L_{R}\] for any animated $\mathbb Z_{(p)}$-algebra $R$, where $F$ is the Frobenius endomorphism on $R/^Lp$.
\end{conj}
\begin{remark}
    This conjecture is reminiscent of a guess proposed by B. Bhatt at the end of \cite{Bha21}.
\end{remark}


\begin{thebibliography}{BMS18}

\bibitem[Bha21]{Bha21}
B.~Bhatt.
\newblock{\em p-adic cohomology theories via stacks}.
\newblock A conference on the occasion of Takeshi Saito's 60th birthday (2021), {\url{https://youtu.be/v2Jfk-NTjp4?si=5qSYokoVrr_4RhJO}}.

\bibitem[BIM19]{BIM19}
B.~Bhatt, S.~B.~Iyengar and L.~Ma.
\newblock{\em Regular rings and perfect(oid) algebras}.
\newblock Comm. Algebra {\bf 47} (2019), no.~6, 2367--2383; MR3957103


\bibitem[BL22]{BL22}
B.~Bhatt, J.~Lurie.
\newblock{\em The prismatization of p-adic formal schemes}.
\newblock arXiv:2201.06124. (2022)

\bibitem[BMS18]{BMS18}
B.~Bhatt, M.~Morrow, P.~Scholze.
\newblock{\em Integral p-adic Hodge theory}.
\newblock Publ. Math. Inst. Hautes Études Sci. 128 (2018), 219--397. MR 3905467
\bibitem [BMS19]{BMS19}
B.~Bhatt, M.~Morrow, P.~Scholze.
\newblock{\em Topological Hochschild homology and integral p-adic Hodge theory}.
\newblock Publ. Math. Inst. Hautes Études Sci. 129 (2019), 199--310. MR3949030

\bibitem[BS22]{BS22}
B.~Bhatt, P.~Scholze.
\newblock{\em Prisms and prismatic cohomology}.
\newblock
Ann. of Math. (2) 196 (2022), no. 3, 1135--1275. MR4502597

\bibitem[ČS24]{CS24}
K.~Česnavičius, P.~Scholze.
\newblock{\em Purity for flat cohomology}.
\newblock Ann. of Math. (2) 199 (2024), no. 1, 51--180. MR4681144

\bibitem[IN26]{IN26}
R.~Ishizuka, K.~Nakazato.
\newblock{\em Prismatic Kunz's theorem}.
\newblock J. Algebra {\bf 693} (2026), 732--769; MR5022261

\bibitem[Iye07]{Iye07}
S.~B. Iyengar.
\newblock{\em Andr\'e-Quillen homology of commutative algebras}.
\newblock {\it Interactions between homotopy theory and algebra} (2007), 203--234, Contemp. Math., 436, Amer. Math. Soc., Providence, RI, ; MR2355775

%\bibitem[Ill71]{Ill71}
%L.~Illusie
%\newblock{\em Complexe cotangent et déformations. I}.
%\newblock 
%Lecture Notes in Math., Vol. 239
%Springer-Verlag, Berlin-New York, 1971. xv+355 pp.
\bibitem[Lur09]{Lur09}
J.~Lurie.
\newblock{\em Higher topos theory}.
\newblock 
Ann. of Math. Stud., 170
Princeton University Press, Princeton, NJ, 2009. xviii+925 pp.
ISBN:978-0-691-14049-0
ISBN:0-691-14049-9

\bibitem[Lur17]{Lur17}
J.~Lurie.
\newblock{\em Higher algebra}.
\newblock
{\url{https://people.math.harvard.edu/~lurie/papers/HA.pdf}, sep 2017}

\bibitem[Lur18]{Lur18}
J.~Lurie.
\newblock{\em Spectral algebraic geometry}.
\newblock 
{\url{https://www.math.ias.edu/~lurie/papers/SAG-rootfile.pdf}, feb 2018.}

\bibitem[Lur26]{Lur26}
J.~Lurie.
\newblock{\em Kerodon}.
\newblock {\url{https://kerodon.net/}, apr 2026.}




\bibitem[Mao24]{Mao24}
Z.~Mao.
\newblock{\em Revisiting derived crystalline cohomology}.
\newblock 
Bull. Soc. Math. France 152 (2024), no. 4, 659–784. MR4851406

\bibitem[Rez19]{Rez19}
C.~Rezk.
\newblock{\em Etale extensions of $\lambda$-rings}.
\newblock{\url{https://rezk.web.illinois.edu/etale-lambda.pdf}}.

\bibitem[Sai22]{Sai22}
T.~Saito.
\newblock{\em Frobenius--Witt differentials and regularity}.
\newblock Algebra Number Theory 16 (2022), no. 2, 369–391. MR4412577

\bibitem[Sta26]{Sta26}
Stacks Project Authors.
\newblock{\em Stacks Project}.
\newblock{\url{https://stacks.math.columbia.edu/}}.

\bibitem[Tak26]{Tak26}
R.~Takeuchi.
\newblock{\em A criterion for log regularity via log Frobenius-Witt differentials}.
\newblock{arXiv:2604.17394. (2026)}.
\end{thebibliography}
\end{document}